\newcommand{\apref}[3]{\hyperref[#2]{#1\ref*{#2}#3}}
\newcommand{\bewend}{\qed}
\newcommand{\mc}[1]{\mathcal #1}
\newcommand{\fd}{\mc F}
\newcommand{\mf}[1]{\mathfrak #1}
\DeclareMathOperator{\Ext}{ext}
\DeclareMathOperator{\Int}{int}
\DeclareMathOperator{\height}{ht}
\DeclareMathOperator{\cl}{cl}
\DeclareMathOperator{\End}{End}
\newcommand{\vs}{\text{vs}}
\newcommand\R{\mathbb{R}}
\newcommand\C{\mathbb{C}}
\newcommand{\h}{\mathbb{H}}
\DeclareMathOperator{\id}{id}
\DeclareMathOperator{\Ima}{Im}
\DeclareMathOperator{\Rea}{Re}
\DeclareMathOperator{\GL}{GL}
\DeclareMathOperator{\PU}{PU}
\DeclareMathOperator{\diag}{diag}
\DeclareMathOperator{\PSp}{PSp}
\newcommand{\dg}{\overline D^g}
\newcommand{\sceq}{\mathrel{\mathop:}=}
\newcommand\mminus{\!\smallsetminus\!}
\newcommand\eg{\mbox{e.\,g., }}
\newcommand\wrt{\mbox{w.\,r.\,t.\@ }}
\newcommand\ie{\mbox{i.\,e., }}
\newcommand{\wt}{\widetilde}
\newcommand{\wh}{\widehat}
\newcommand{\mat}[4]{\begin{pmatrix} #1&#2\\#3&#4\end{pmatrix}}
\newcommand{\textmat}[4]{\left(\begin{smallmatrix} #1&#2 \\ #3&#4
\end{smallmatrix}\right)}
\newcommand{\eps}{\varepsilon}
\newcommand{\Res}{\text{res}}
\newcommand{\hg}{\overline H^g}
\newcommand{\bhg}{\partial_g}
\renewcommand{\bewend}{}
\theoremstyle{plain}
\newtheorem{proposition}{Proposition}[section]
\newtheorem{theorem}[proposition]{Theorem}
\newtheorem{corollary}[proposition]{Corollary}
\newtheorem{lemma}[proposition]{Lemma}
\theoremstyle{definition}
\newtheorem{definition}[proposition]{Definition}
\newtheorem{convention}[proposition]{Convention}
\newtheorem{defirem}[proposition]{Definition and Remark}
\newtheorem{remdef}[proposition]{Remark and Definition}
\theoremstyle{remark}
\newtheorem{remark}[proposition]{Remark}
\begin{document}

\title{Ford fundamental domains in symmetric spaces of rank one}
\author[A. Pohl]{Anke D. Pohl}
\thanks{The author was partially supported by the International Research Training Group 1133 ``Geometry and Analysis of Symmetries'', the Sonderforschungsbereich/Transregio 45 ``Periods, moduli spaces and arithmetic of algebraic varieties'', and the Max-Planck-Institut f\"ur Mathematik in Bonn.}
\address{Max-Planck-Institut f\"ur Mathematik \\  Vivatsgasse 7 \\
53111 Bonn \\ Germany}
\email{pohl@mpim-bonn.mpg.de}

\keywords{Ford fundamental domains, Isometric fundamental regions, Rank
one symmetric spaces, Isometric spheres, Cygan metric}
\subjclass[2000]{53C35, 22E40, 52C22}

\maketitle

\begin{abstract}
We show the existence of isometric (or Ford) fundamental regions for a large
class of subgroups of the isometry group of any rank one Riemannian symmetric
space of noncompact type. The proof does not use the classification of
symmetric spaces. All hitherto known existence results of isometric
fundamental regions and domains are essentially subsumed by our work.
\end{abstract}

\section{Introduction}

Let $D$ be a rank one Riemannian symmetric space of noncompact type and denote
its full group of Riemannian isometries by $G$. Suppose that $\Gamma$ is a
subgroup of $G$. A subset $Y$ of $D$ is called a \textit{fundamental region for
$\Gamma$ in $D$} if $Y$ is open, the translates of $Y$ by each two elements of
$\Gamma$ are disjoint, and $D$ is covered by the family of $\Gamma$-translates
of
the closure of $Y$. If, in addition, $Y$ is connected, then it is called a
\textit{fundamental domain} for $\Gamma$ in $D$.

In this article we show the existence of so-called isometric (or Ford)
fundamental regions for a large class of subgroups of $G$, see
Theorem~\ref{fund_region2}, Corollary~\ref{region_basic} and
Proposition~\ref{fdspecialcase}. In many situations, the fundamental regions will turn
out
to actually be fundamental domains (see Corollary~\ref{domain_basic} and
Proposition~\ref{fdspecialcase}). Moreover, in Section~\ref{sec_proj}, we show that our
results subsume all previously known existence results of isometric fundamental
regions and domains. 

Let $\Gamma$ be an admissible subgroup of $G$. The distinctive trait of an
isometric fundamental region for $\Gamma$ is that it consists of
two building blocks. One of them is a fundamental region $\fd_\infty$ for the
stabilizer group $\Gamma_\infty$ of $\infty$. The other one is the common
part
of the exteriors of all isometric spheres of $\Gamma$ (see
Section~\ref{sec_fundreg} for a definition). Then the isometric
fundamental region is the set
\[
\fd \sceq \fd_\infty \cap \bigcap_{g\in\Gamma\mminus\Gamma_\infty} \Ext I(g),
\]
where $\Ext I(g)$ denotes the exterior of the isometric sphere $I(g)$ of 
$g\in\Gamma\mminus\Gamma_\infty$. These fundamental regions are of
interest for several applications. For example, they reflect the geometry of
$\Gamma$ in a way
particularly adjusted to the needs of the construction of a symbolic dynamics
for
the geodesic flow on the orbifold $\Gamma\backslash D$ (see \cite{Pohl_diss}).

Our proof of the existence of isometric fundamental regions does not use the
classification of rank one Riemannian symmetric spaces of noncompact type.
This was made possible by the classification-free constructions of all these
spaces provided by Cowling, Dooley, Kor\'anyi and Ricci in \cite{CDKR1} and
\cite{CDKR2} resp.\@  by Kor\'anyi and Ricci in \cite{Koranyi_Ricci} and
\cite{Koranyi_Ricci_proofs}. These constructions appear to be the first ones
which are uniform both on the level of spaces and on the level of  isometry
groups.
The approach in \cite{CDKR1} and \cite{CDKR2} is intimately connected with the
restricted root space decomposition of the Lie algebra of the isometry group of
the symmetric space, and hence it is the method of choice for considerations of
algebraic nature. In contrast, the construction in \cite{Koranyi_Ricci} and
\cite{Koranyi_Ricci_proofs} reflects the geometric side of the spaces. 
Both constructions are amazingly easy to work with. Moreover, one can
effortless switch from one construction to the other and translate
insights and advantages from one model to the other. We
recall both constructions in Section~\ref{sec_spaces}.

Using their work we provide a uniform definition of
the notion of an isometric sphere and its exterior in Section~\ref{sec_fundreg}.
The
uniformity on the level of isometry groups then allows to stick to a
classification-free treatment of
the isometric spheres, which finally results in a classification-free proof of
the existence of isometric fundamental regions.

For real, complex and quaternionic hyperbolic spaces, there already exist
several (different and also non-equivalent) definitions of isometric spheres in
the literature, \eg in \cite{Ford} for the hyperbolic plane, in
\cite{Katok_fuchsian} for the upper half plane model and the disk model of
two-dimensional real hyperbolic space, in \cite{Marden} for three-dimensional
real hyperbolic space, in \cite{Apanasov} and \cite{Apanasov2} for real
hyperbolic spaces of arbitrary dimension, in \cite{Parker}, \cite{Goldman} and
\cite{Kamiya} for complex hyperbolic spaces, and in \cite{Kim_Parker} for
quaternionic hyperbolic spaces. Moreover, for certain subgroups of the isometry
group of real
and complex hyperbolic spaces, the existence of isometric fundamental regions
was
already known. More precisely, Apanasov (\cite{Apanasov} and \cite{Apanasov2})
provides the hitherto most general treatment of groups acting on real hyperbolic
spaces. In \cite{Ford}, Ford investigates the case of the hyperbolic plane.
However, his definition of fundamental region is not equivalent to our
definition. Therefore, his result cannot be compared to our one. Groups acting
on complex hyperbolic spaces are considered by Kamiya in \cite{Kamiya}. In
Section~\ref{sec_proj}, we will investigate
which
definitions of isometric spheres are subsumed by our uniform one, and we will
show that the known isometric fundamental regions are special cases of
Theorem~\ref{fund_region2}.  

Throughout we will use the following notation. If $T$ is a topological space and
$U$ a subset of $T$, then the  closure of $U$ is denoted by
$\overline U$ or $\cl(U)$ and its boundary is denoted by $\partial U$. Moreover,
we write $U^\circ$ for the interior of $U$. The complement of $U$ in $T$ is
denoted by $\complement U$ or $T\mminus U$.

For two arbitrary sets $A$ and $B$, the complement of $B$ in $A$ is written as
$A\mminus B$. If $\sim$ is an equivalence relation on $A$, then $A/_\sim$
denotes the set of equivalence classes. Likewise, if $\Gamma$ a group acting on
$A$, then we write $A/\Gamma$ for the space of right cosets. Moreover, if $p$
is a point of $A$, then $\Gamma_p$ denotes the stabilizer group $\{g\in
\Gamma\mid  gp=p\}$ of $p$ in $\Gamma$.

\section{Classification-free constructions}\label{sec_spaces}

The basic objects of the classification-free construction of rank one
Riemannian symmetric spaces of noncompact type in \cite{CDKR1} and
\cite{CDKR2} are so-called $H$-type (Heisenberg type) algebras. In contrast, in
\cite{Koranyi_Ricci} and \cite{Koranyi_Ricci_proofs}, Kor\'anyi and Ricci
construct these symmetric spaces from so-called $J^2C$-module structures. 

In this section we introduce these notions and we briefly recall the two
constructions. Mostly we duplicate, for the convenience of the reader, material
from their
work. Nevertheless, in parts it can be considered as complementary, \eg
Lemma~\ref{decompnonunique} on the (non-)uniqueness of decompositions of
$H$-type algebras,
the notion of ordered decompositions, and a refined definition of an isomorphism
between $H$-type algebras resp.\@ between $C$-module structures (which corrects
a minor inaccuracy). Moreover, we prove in detail that the Cayley transform is
an isometry, and that the group $M$  given in \cite{CDKR1} and \cite{CDKR2} and
that in \cite{Koranyi_Ricci} and \cite{Koranyi_Ricci_proofs} are
indeed the same.

All omitted proofs can be found in \cite{CDKR1} or \cite{CDKR2} for statements
in the language of $H$-type algebras, and in \cite{Koranyi_Ricci} or
\cite{Koranyi_Ricci_proofs} for those in the language of $J^2C$-modules. As long
as no confusion can arise, each inner product is denoted by
$\langle\cdot,\cdot\rangle$ and its associated norm by $|\cdot|$.
\label{def_inner}

\subsection{$H$-type algebras and the $J^2$-condition}\label{sec_Htype}

A vector space is called \textit{Euclidean} if it is a finite-dimensional real
vector space endowed with an inner product. A Lie algebra is called
\textit{Euclidean} if, in addition to being a Lie algebra, it is a Euclidean
vector space. For a vector space $\mf v$ let $\End_\vs(\mf v)$ denote the group
and vector space of endomorphisms of $\mf v$. If $\mf v$ carries additional
structures, then the elements of $\End_\vs(\mf v)$ are not required to be
compatible with these structures. In particular, if $\mf v$ is Euclidean, then
$\varphi\in\End_\vs(\mf v)$ need not be orthogonal.

\begin{definition}
Let $\mf n$ be a Euclidean Lie algebra. Then $\mf n$ is said to be an \textit{$H$-type algebra} 
if 
\begin{enumerate}[(H1)]
\item\label{H1} there are two subvector spaces $\mf v$, $\mf z$ of $\mf n$ (each of which may be trivial) such that 
\[ [\mf n, \mf z] = \{0\},\quad [\mf n,\mf n]\subseteq \mf z,\]
and $\mf n$ is the orthogonal direct sum of $\mf z$ and $\mf v$, 
\item\label{H2} for all $X\in\mf v$, all $Z\in\mf z$ we have
\[ |J(Z)X| = |Z|\cdot |X|\]
where $J\colon\mf z\to\End_{\text{vs}}(\mf v)$ is the $\R$-linear map defined by 
\[ \langle J(Z)X,Y\rangle = \langle Z, [X,Y]\rangle\] 
for all $X,Y\in\mf v$, all $Z\in\mf z$. The map $J$ is well-defined and unique by Riesz' Representation Theorem (or its finite-dimensional counterpart). 
\end{enumerate}
\end{definition}

If (\apref{H}{H1}{}) holds, then $\mf n$ is either abelian or two-step
nilpotent. In the first case we call $\mf n$ \textit{degenerate}, in the second
\textit{non-degenerate}. 

The construction of a symmetric space from an $H$-type algebra $\mf n$ depends
on the choice of $\mf z$ and $\mf v$ in the
decomposition $\mf z\oplus\mf v$ of $\mf n$. We call the pair $(\mf z,\mf v)$ an
\textit{ordered decomposition} of $\mf n$. The following lemma shows that the
ordered decomposition is unique unless $\mf n$ is non-trivial and abelian (which
precisely is the reason for calling abelian $H$-type algebras degenerate). It
will turn out that both possible ordered decompositions of a non-trivial
abelian $H$-type algebra give rise to the same symmetric space, but in different
models. Nevertheless, this non-uniqueness calls for a careful notion of
isomorphisms between $H$-type algebras, which we will discuss after the lemma.

We denote the center of a Lie algebra $\mf g$ by $Z(\mf g)$.

\begin{lemma}\label{decompnonunique}
Let $\mf n$ be an $H$-type algebra. If $\mf n$ is non-abelian or $\mf n =
\{0\}$, then the ordered decomposition $(\mf z,\mf v)$ of $\mf n$ is unique. In
this case, we have $\mf z=Z(\mf n)$ and $\mf v=Z(\mf n)^\perp$. If $\mf n$ is
abelian and $\mf n \not= \{0\}$, then there are two ordered decompositions of
$\mf n$, namely $(\mf z,\mf v) = (Z(\mf n), \{0\}) = (\mf n, \{0\})$ and $(\mf
z,\mf v)=(\{0\},\mf n)$.
\end{lemma}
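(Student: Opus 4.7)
The plan is to treat the three cases ($\mathfrak n = \{0\}$, $\mathfrak n$ non-abelian, $\mathfrak n$ abelian and non-trivial) in turn, and in each case use (H\ref{H1}) and (H\ref{H2}) to pin down the admissible choices of $(\mathfrak z, \mathfrak v)$. The case $\mathfrak n = \{0\}$ is immediate, as both summands must be $\{0\}$.

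For the non-abelian case I would show that $\mathfrak z = Z(\mathfrak n)$; the orthogonality of the decomposition then forces $\mathfrak v = Z(\mathfrak n)^\perp$. The inclusion $\mathfrak z \subseteq Z(\mathfrak n)$ is immediate from $[\mathfrak n,\mathfrak z] = \{0\}$ in (H\ref{H1}). For the reverse inclusion, write an arbitrary $X \in Z(\mathfrak n)$ as $X = Z + V$ with $Z \in \mathfrak z$ and $V \in \mathfrak v$ using the orthogonal direct sum; since $\mathfrak z \subseteq Z(\mathfrak n)$, we obtain $V = X - Z \in Z(\mathfrak n) \cap \mathfrak v$, so it suffices to show $Z(\mathfrak n) \cap \mathfrak v = \{0\}$. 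Pick $V \in Z(\mathfrak n) \cap \mathfrak v$. Then $[V,Y] = 0$ for every $Y \in \mathfrak v$, so for each $Z \in \mathfrak z$ and each $Y \in \mathfrak v$ the defining identity yields
\[
\langle J(Z)V, Y\rangle = \langle Z, [V,Y]\rangle = 0,
\]
hence $J(Z)V = 0$ for every $Z \in \mathfrak z$. Applying (H\ref{H2}) gives $|Z|\cdot|V| = 0$ for every $Z\in\mathfrak z$. Since $\mathfrak n$ is non-abelian, $[\mathfrak n,\mathfrak n] \subseteq \mathfrak z$ together with $[\mathfrak n,\mathfrak n] \neq \{0\}$ forces $\mathfrak z \neq \{0\}$, so choosing $Z \neq 0$ yields $V = 0$. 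This closes the non-abelian case.

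For the remaining case, assume $\mathfrak n \neq \{0\}$ is abelian. Then $[X,Y] = 0$ for all $X,Y \in \mathfrak v$, so $J \equiv 0$, and (H\ref{H2}) degenerates to the requirement $|Z|\cdot|X| = 0$ for all $Z \in \mathfrak z$ and $X \in \mathfrak v$. This prohibits $\mathfrak z$ and $\mathfrak v$ from being simultaneously non-zero, so an ordered decomposition must have $\mathfrak v = \{0\}$ or $\mathfrak z = \{0\}$. Both options do satisfy (H\ref{H1}) and (H\ref{H2}) vacuously, producing exactly the two decompositions $(\mathfrak n, \{0\})$ and $(\{0\}, \mathfrak n)$ as claimed.

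The only real obstacle is the step $Z(\mathfrak n) \cap \mathfrak v = \{0\}$; everything else is bookkeeping with the axioms. Its delicacy is precisely that it needs (H\ref{H2}) in tandem with the non-abelian hypothesis (the latter ensures $\mathfrak z \neq \{0\}$, without which the implication $J(Z)V = 0 \Rightarrow V = 0$ would collapse, as indeed happens in the abelian case).
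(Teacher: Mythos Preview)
Your proof is correct and follows essentially the same approach as the paper: both hinge on showing $Z(\mf n)\cap\mf v=\{0\}$ by using $J(Z)V=0$ together with (H\ref{H2}) and the non-triviality of $\mf z$. The only difference is organizational---the paper branches on whether $\mf z=\{0\}$ while you branch on abelian versus non-abelian---but the substance is identical.
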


\begin{proof} 
Suppose that $(\mf z,\mf v)$ is an ordered decomposition of $\mf n$. Then $\mf
v$ is uniquely determined by $\mf z$, namely $\mf v= \mf z^\perp$. Since $[\mf
z,\mf z] = \{0\}$, we know that $\mf z$ is a subvector space of $Z(\mf n)$ (even
a subalgebra). If $\mf z = \{0\}$, then $[\mf n,\mf n] \subseteq \mf z= \{0\}$.
In this case, $\mf n$ is abelian and $(\mf z,\mf v) = (\{0\},\mf n)$.

Suppose now that $\mf z\not= \{0\}$. We have to prove that $\mf z = Z(\mf n)$.
For contradiction assume that $\mf z\not= Z(\mf n)$, hence $\dim \mf z <
\dim Z(\mf n)$. Then there is a non-trivial element $X\in \mf v\cap Z(\mf n)$.
Fix some $Z\in\mf z$, $Z\not=0$. For all $Y\in\mf v$ it follows that 
\[
 \langle J(Z)X,Y\rangle = \langle Z, [X,Y]\rangle = 0.
\]
Thus $J(Z)X=0$. But then 
\[
 |J(Z)X| = 0 \not= |Z| \cdot |X|,
\]
which is a contradiction to (\apref{H}{H2}{}). Therefore, $\mf z = Z(\mf n)$. 

This shows that for non-abelian $\mf n$ or for $\mf n= \{0\}$, the pair $(\mf z,
\mf v) = (Z(\mf n), Z(\mf n)^\perp)$ is the only candidate for an ordered
decomposition of $\mf n$. Because there is at least one by hypothesis, $(Z(\mf
n), Z(\mf n)^\perp)$ is indeed an ordered decomposition of $\mf n$. For
non-trivial abelian $\mf n$ we have the two candidates $(\{0\}, \mf n)$ and
$(\mf n, \{0\})$, which both clearly satisfy (\apref{H}{H1}{}) and
(\apref{H}{H2}{}).  \bewend
\end{proof}

Let $\mf n$ be a non-trivial abelian $H$-type algebra. Then $\mf n$ admits the
two ordered decompositions $(\mf n, \{0\})$ and $(\{0\}, \mf n)$. The
isomorphism $\id_{\mf n}$ of $\mf n$ as a Euclidean Lie algebra does not respect
these decompositions. In Section~\ref{sec_fromHtoC} we will see that preserving the
decompositions is essential for the bijection between $H$-type algebras and
$C$-module structures. Therefore, from now on, we will always consider an
$H$-type algebra $\mf n$ as being equipped with a (fixed) ordered decomposition
and denote it by $\mf n=(\mf z, \mf v, J)$ or, briefly, by $(\mf z, \mf v, J)$.
\label{def_fixed} Although the map $J$ is determined by $\mf z$ and $\mf v$, we
keep it in the triple to fix a notation for it.

\begin{definition}
Let $\mf n_j = (\mf z_j, \mf v_j, J_j)$, $j=1,2$, be  $H$-type algebras. An
\textit{isomorphism} from $\mf n_1$ to $\mf n_2$ is a pair $(\varphi,\psi)$ of
isomorphisms of Euclidean vector spaces $\varphi\colon \mf z_1\to \mf z_2$ and
$\psi\colon \mf v_1\to \mf v_2$ such that the diagram
\[
\xymatrix{
\mf z_1\times\mf v_1 \ar[r]^{J_1} \ar[d]_{\varphi\times\psi} & \mf v_1 \ar[d]^{\psi}
\\
\mf z_2\times\mf v_2 \ar[r]^{J_2} & \mf v_2
}
\]
commutes.
\end{definition}

The following lemma shows that isomorphism of $H$-type algebras is a refined
notion of isomorphism of Euclidean Lie algebras.

\begin{lemma}\label{isosgleich} 
Let $\mf n_j = (\mf z_j, \mf v_j, J_j)$, $j=1,2$, be  $H$-type algebras and
suppose that the map $(\varphi,\psi)\colon \mf n_1\to\mf n_2$ is an isomorphism.
Then $\varphi\times\psi\colon \mf z_1\oplus \mf v_1 \to \mf z_2\oplus\mf v_2$ is
an isomorphism of Euclidean Lie algebras.
\end{lemma}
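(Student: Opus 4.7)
The plan is to verify the three ingredients of an isomorphism of Euclidean Lie algebras: bijective linearity, preservation of the inner product, and preservation of the bracket. The first two are essentially bookkeeping from the hypothesis that $\varphi$ and $\psi$ are isomorphisms of Euclidean vector spaces, together with (\apref{H}{H1}{}), which ensures that $\mf n_j = \mf z_j \oplus \mf v_j$ is an \emph{orthogonal} direct sum. Since $\varphi\times\psi$ is by construction the direct sum of two linear isometries on orthogonal summands, it is itself a linear isometry of $\mf n_1$ onto $\mf n_2$, in particular bijective and inner-product preserving. This handles everything except the bracket.

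For the bracket, I would first reduce to brackets of two elements from $\mf v_1$. By bilinearity it suffices to check the compatibility $(\varphi\times\psi)[A,B] = [(\varphi\times\psi)A,(\varphi\times\psi)B]$ on pairs $A,B$ each lying in $\mf z_1$ or $\mf v_1$. If at least one of them lies in $\mf z_1$, both sides vanish by (\apref{H}{H1}{}) applied to $\mf n_1$ and $\mf n_2$ respectively. So the only nontrivial case is $A=X$, $B=Y$ with $X,Y \in \mf v_1$, where we need to show
\[
\varphi([X,Y]) = [\psi(X),\psi(Y)]
\]
as elements of $\mf z_2$.

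To verify this equation it is enough to show that both sides have the same inner product with every $W\in\mf z_2$. Writing $W=\varphi(Z)$ for a (unique) $Z\in\mf z_1$, one side gives, using that $\varphi$ is an isometry and the defining relation of $J_1$,
\[
\langle \varphi([X,Y]),\varphi(Z)\rangle = \langle [X,Y],Z\rangle = \langle J_1(Z)X,Y\rangle.
\]
For the other side the commutativity of the diagram yields $J_2(\varphi(Z))\psi(X)=\psi(J_1(Z)X)$, and then the fact that $\psi$ is an isometry gives
\[
\langle [\psi(X),\psi(Y)],\varphi(Z)\rangle = \langle J_2(\varphi(Z))\psi(X),\psi(Y)\rangle = \langle \psi(J_1(Z)X),\psi(Y)\rangle = \langle J_1(Z)X,Y\rangle.
\]
Both expressions agree, so $\varphi([X,Y]) = [\psi(X),\psi(Y)]$ and $\varphi\times\psi$ is a Lie algebra homomorphism.

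There is no real obstacle here; the only subtlety is keeping track of where the two defining properties of an $H$-type algebra are invoked, namely (\apref{H}{H1}{}) to reduce to the $\mf v_1\times \mf v_1$ case, and the $J$-relation to transport the bracket computation to the inner product side where the commutative diagram can be applied.
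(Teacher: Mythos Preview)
Your proof is correct and follows essentially the same approach as the paper: both reduce to showing $\varphi([X,Y])=[\psi(X),\psi(Y)]$ for $X,Y\in\mf v_1$ and verify this by pairing against an arbitrary element of $\mf z_2$ using the defining relation of $J$ together with the commutative diagram and the fact that $\varphi,\psi$ are isometries. Your write-up is slightly more explicit about why the reduction to $\mf v_1\times\mf v_1$ is legitimate and why the Euclidean-vector-space part is automatic, but the argument is the same.
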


\begin{proof} For all $Z\in\mf z_2$ and all $X,Y\in\mf v_1$ we have
\begin{align*}
\big\langle Z, [\psi(X),\psi(Y)]\big\rangle & = \big\langle J_2(Z)(\psi(X)), \psi(Y)\big\rangle = \big\langle \psi\big(J_1(\varphi^{-1}(Z))X\big), \psi(Y)\big\rangle
\\
&  = \big\langle J_1(\varphi^{-1}(Z))X, Y\big\rangle = \big\langle \varphi^{-1}(Z), [X,Y]\big\rangle 
\\
& = \big\langle Z, \varphi([X,Y])\big\rangle.
\end{align*}
Since $\langle\cdot,\cdot\rangle\vert_{\mf z_2\times\mf z_2}$ is non-degenerate, it follows that $\varphi([X,Y])=[\psi(X),\psi(Y)]$. Define $\chi\sceq \varphi\times \psi$ and let $Z_1, Z_2\in\mf z_1$, $X_1, X_2\in\mf v_1$. Then 
\[
 [Z_1+X_1, Z_2+X_2] = [X_1,X_2]
\]
and
\begin{align*}
[\chi(Z_1+X_1), \chi(Z_2+X_2)] & = [\varphi(Z_1) + \psi(X_1), \varphi(Z_2) + \psi(X_2)]
\\ & = [\psi(X_1), \psi(X_2)] = \varphi([X_1,X_2])
\\ & = \chi([Z_1+X_1,Z_2+X_2]).
\end{align*} 
Finally, $\chi$ is clearly an isomorphism of Euclidean vector spaces. This
completes the proof. 
\bewend \end{proof}

\begin{remark}
Suppose that $\mf n_1,\mf n_2$ are non-degenerate $H$-type algebras and let the
map $\chi\colon \mf n_1 \to \mf n_2$ be an isomorphism between $\mf n_1$ and
$\mf n_2$ as Euclidean Lie algebras. Then $\chi(Z(\mf n_1)) = Z(\mf n_2)$. Hence
Lemma~\ref{decompnonunique} implies that $\chi$ is an isomorphism of $H$-type
algebras. In turn, Lemma~\ref{isosgleich} shows that the isomorphisms between
$\mf n_1$ and $\mf n_2$ as $H$-type algebras coincide with the isomorphisms
between $\mf n_1$ and $\mf n_2$ as Euclidean Lie algebras.
\end{remark}

For an $H$-type algebra $(\mf z,\mf v, J)$  we often write $J_ZX$ instead of
$J(Z)X$, and we abbreviate the set $\{ J_Z \mid Z\in\mf
z\}$ with $J_{\mf z}$.

\begin{definition}
An $H$-type algebra $\mf n=(\mf z,\mf v,J)$ is said to satisfy the
\textit{$J^2$-condition} 
if 
\begin{equation}\tag{H3}\label{H3}
\forall\, X\in\mf v\ \forall\, Z_1,Z_2\in\mf z\colon \big( \langle Z_1,Z_2\rangle = 0\ \Rightarrow\ \exists\, Z_3\in\mf z\colon J_{Z_1}J_{Z_2}X = J_{Z_3}X\big).
\end{equation}
\end{definition}
If $\mf n$ is abelian, then \eqref{H3} is trivially satisfied.

\subsection{$C$-module structures and the $J^2$-condition}\label{sec_Cmodules}

A \textit{$C$-module structure} is a triple $(C,V,J)$  consisting of two Euclidean vector spaces $C$ and $V$ and an $\R$-bilinear map $J\colon C\times V\to V$  satisfying the following properties:
\begin{enumerate}[(M1)]
\item\label{M1} there exists $e\in C\mminus\{0\}$ such that $J(e,v)=v$ for all $v\in V$,
\item\label{M2} for all $\zeta\in C$ and all $v\in V$ we have $|J(\zeta,v)| = |\zeta| |v|$.
\end{enumerate}

The cases where $V=\{0\}$ or $C=\R e$ are not excluded. We refer to these as
\textit{degenerate}. If $V\not=\{0\}$ and $C\not= \R e$, then the $C$-module
structure $(C,V,J)$ is called \textit{non-degenerate}. For brevity, a $C$-module
structure $(C,V,J)$ is sometimes called a $C$-module structure\footnote{The
``$C$'' is ``$C$-module structure'' or in ``$C$-module structure on $V$'' does
not refer to the Euclidean space $C$ in the triple $(C,V,J)$. Hence, if
$(C',V,J')$ satisfies (\apref{M}{M1}{}) and (\apref{M}{M2}{}), then it is still
called a $C$-module structure on $V$.} on $V$. One easily proves the following
lemma.

\begin{lemma}\label{eunique}
Let $(C,V,J)$ be a $C$-module structure. If $V\not=\{0\}$, then the element $e$ in (\apref{M}{M1}{}) is uniquely determined and of unit length. 
\end{lemma}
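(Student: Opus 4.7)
The statement splits into two assertions: that $|e|=1$ and that $e$ is uniquely determined, both under the hypothesis $V\neq\{0\}$. My plan is to dispatch each directly from the two defining properties (\apref{M}{M1}{}) and (\apref{M}{M2}{}), exploiting the $\R$-bilinearity of $J$.

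For the norm assertion, I would pick any nonzero $v\in V$ (available since $V\neq\{0\}$) and apply (\apref{M}{M2}{}) to the identity $J(e,v)=v$ furnished by (\apref{M}{M1}{}). This gives $|v|=|J(e,v)|=|e|\,|v|$, and dividing by $|v|\neq 0$ yields $|e|=1$.

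For uniqueness, I would suppose $e,e'\in C\mminus\{0\}$ both satisfy (\apref{M}{M1}{}). Then for every $v\in V$, bilinearity of $J$ in the first slot gives $J(e-e',v)=J(e,v)-J(e',v)=v-v=0$. Fixing some $v\neq 0$ and invoking (\apref{M}{M2}{}) again yields $0=|J(e-e',v)|=|e-e'|\,|v|$, so $|e-e'|=0$ and hence $e=e'$.

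There is no real obstacle here; the only care needed is to use the hypothesis $V\neq\{0\}$ at the right moment to produce a nonzero test vector in $V$, without which the division in both steps would be unjustified (and indeed both claims fail when $V=\{0\}$, since then any $e\in C\mminus\{0\}$ trivially satisfies (\apref{M}{M1}{})).
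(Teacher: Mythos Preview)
Your proof is correct and is exactly the straightforward argument one would expect; the paper in fact omits the proof entirely, stating only that ``one easily proves'' the lemma. Your use of a nonzero test vector together with (\apref{M}{M2}{}) to extract both $|e|=1$ and $e=e'$ is the natural route.
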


The construction of a symmetric space from a $C$-module structure (satisfying the $J^2$-condition defined below) depends on the choice of $e$ in (\apref{M}{M1}{}) and uses $|e| = 1$.
If $(C,V,J)$ is a $C$-module structure with $V=\{0\}$, then $J$ vanishes everywhere. Thus every element $a\in C\mminus\{0\}$ satisfies $J(a,\cdot)=\id_V$. In this case, we endow $C$ with a distinguished vector $e$ of unit length and fix it (sometimes) in the notation as $(C,e,V,J)$. The in\-fluence of the particular choice of $e$ on the constructed symmetric space is much weaker than that of the different ordered decompositions of degenerate $H$-type algebras. In fact, the choice of $e$ determines the orthogonal decomposition $C=\R e\oplus C'$ (see below). If $e_1, e_2$ are two choices for $e$, then there is an isomorphism between $\R e_1\oplus C'_1$ and $\R e_2 \oplus C'_2$ as Euclidean vector spaces which respects the decompositions. In turn, $(C,e_1,V,J)$ and $(C,e_2,V,J)$ are isomorphic as $C$-module structures (see below for the definition of isomorphism).

If $(C,V,J)$ is a non-degenerate $C$-module structure, then the element $e$ in (\apref{M}{M1}{}) is unique by Lemma~\ref{eunique}. For reasons of uniformity, also in this case, we will often use the notation $(C,e,V,J)$ for $(C,V,J)$.

If $(C,V,J)$ is a $C$-module structure, then we will use $J_\zeta v$ or $\zeta v$ to abbreviate $J(\zeta, v)$. Further, we set $Cv\sceq \{ \zeta v\mid \zeta \in C\}$ for $v\in V$.

A $C$-module structure $(C,V,J)$ is said to satisfy the \textit{$J^2$-condition} if
\begin{equation}\tag{M3}\label{M3}
C(Cv) = Cv \quad\text{for all $v\in V$.}
\end{equation}
In this case, $V$ is called\footnote{As with the ``$C$'' in ``$C$-module structure'', the ``$J^2$'' in ``$J^2$-condition'' and ``$J^2C$-module'' does not refer to the map $J$.} a \textit{$J^2C$-module} and $(C,V,J)$ a \textit{$J^2C$-module structure}.

Let $(C_1,e_1,V_1,J_1)$ and $(C_2,e_2,V_2,J_2)$ be $C$-module structures. An \textit{isomorphism}\footnote{In \cite{Koranyi_Ricci} and \cite{Koranyi_Ricci_proofs} the condition $\varphi(e_1)=e_2$ is omitted from the definition. However, as discussed, this condition is needed and indeed this stronger notion of isomorphism is used in their work.} 
from $(C_1,e_1,V_1,J_1)$ to $(C_2,e_2,V_2,J_2)$ is a pair $(\varphi,\psi)$ of isomorphisms of Euclidean vector spaces $\psi\colon V_1\to V_2$ and $\varphi\colon C_1\to C_2$ with $\varphi(e_1)=e_2$ such that the diagram
\[
\xymatrix{
C_1\times V_1  \ar[r]^{J_1} \ar[d]_{\varphi\times\psi} & V_1 \ar[d]^\psi
\\
C_2\times V_2 \ar[r]^{J_2} & V_2
}
\]
commutes.

The requirement that $\varphi(e_1)=e_2$ is relevant only if one of the $C$-module structures is degenerate. In fact, if $(C_1,e_1,V_1,J_1)$ and $(C_2,e_2,V_2,J_2)$ are non-degenerate $C$-module structures and $(\varphi,\psi)$ is a pair of isomorphisms of Euclidean vector spaces $\psi\colon V_1\to V_2$ and $\varphi\colon C_1\to C_2$ such that $J_2 \circ (\varphi\times\psi) = \psi\circ J_1$, then 
\[
J_2( \varphi(e_1), v) = \psi\big( J_1( e_1, \psi^{-1}(v) )\big)  = v
\]
for each $v\in V$. The uniqueness of $e_2$ shows that $\varphi(e_1)= e_2$.

For a $C$-module structure $(C,e,V,J)$ let $C' \sceq e^\perp$ denote the orthogonal complement of $\R e$ in $C$. For $\zeta = ae+z\in C$ with $a\in\R$ and $z\in C'$ we set $\Rea \zeta \sceq a$, the \textit{real part} 
of $\zeta$, and $\Ima \zeta \sceq z$,\label{def_realpart} the \textit{imaginary part\footnote{Note that, in contrast to the usual definition in complex analysis, if $C=\C$ and $\zeta=a+ib\in\C$, then one has here $\Ima\zeta = ib$.}}
of $\zeta$. Further we set $\overline\zeta \sceq ae - z$, the \textit{conjugate} of $\zeta$. We will use the identification $\R e \to \R$, $ae \mapsto a$, of Euclidean vector spaces.

\subsection{Bijection between $H$-type algebras and $C$-module structures}\label{sec_fromHtoC}

Let $\mf n=(\mf z,\mf v, J)$ be an $H$-type algebra. Endow $\R$ with the
standard inner product and consider the Euclidean direct sum $\mf c \sceq
\R\oplus\mf z$. The map $\widetilde J \colon \mf c \times \mf v \to \mf v$,
defined  by
\[
\widetilde J(t+Z,X) \sceq tX + J_ZX 
\]
for all $t+Z \in \R\oplus\mf z$, $X\in\mf v$, is $\R$-bilinear. Since $\langle 
J_ZX, X\rangle = \langle Z, [X,X]\rangle = 0$, we further find
\begin{align*}
|\widetilde J(t+Z,X)|^2 & = |tX + J_ZX|^2 = t^2 |X|^2 + |J_ZX|^2
\\
&  = t^2 |X|^2 + |Z|^2 |X|^2 = (t^2 + |Z|^2) |X|^2 
\\
& = |t+Z|^2 |X|^2,
\end{align*}
hence $|\widetilde J(t+Z,X)|= |t+Z| |X|$. Moreover, for each $X\in\mf v$ we have
\[
\wt J(1,X) = X.
\]
Therefore, $(\mf c, \mf v, \wt J)$ is a $C$-module structure with $e=1$. The
condition \eqref{H3} is easily seen to be equivalent to 
\begin{equation}\tag{H3'}\label{H3'}
\widetilde J(\mf c)\widetilde J(\mf c) X = \widetilde J(\mf c) X \quad\text{for all $X\in\mf v$.}
\end{equation}
Thus, $(\mf c,\mf v, \wt J)$ satisfies the $J^2$-condition if and only if $\mf
n$ does. This construction provides an assignment of a $C$-module structure to
each $H$-type algebra (with fixed ordered decomposition).

Vice versa, let $(C, e, V,J)$ be a $C$-module structure and let $[\cdot,
\cdot]\colon V\times V\to C'$ be the map defined by 
\begin{equation}\label{bracket}
\langle z, [x,y]\rangle = \langle J(z,x), y\rangle 
\end{equation}
for all $z\in C'$, all $x,y\in V$. Riesz' Representation Theorem (or its
finite-dimensional counterpart) shows that $[\cdot, \cdot]$ is well-defined. We
extend $[\cdot,\cdot]$ to the Euclidean direct sum $C'\oplus V$ by 
\[
[z_1 + v_1 , z_2+v_2] \sceq [v_1,v_2]
\]
for all $z_j+v_j\in C'\oplus V$. This map is $\R$-bilinear. Since $J_z$ is
skew-symmetric for each $z\in C'$ (cf.\@ Section~\ref{sec_B} below), the (extended)
map $[\cdot,\cdot]$ is anti-symmetric. Moreover, $[V,V]\subseteq C'$ and
$[V,C']=[C',C'] = \{0\}$ imply the Jacobi identity for $[\cdot,\cdot]$. Thus,
$C'\oplus V$ endowed with $[\cdot,\cdot]$ is a Euclidean Lie algebra. 
Let $J'\colon C' \to \End_\vs(V)$ denote the map $J'(z)(v) = J(z,v)$. Then
$(C', V, J')$ is an $H$-type algebra. Using the equivalence of \eqref{H3} and
\eqref{H3'}, we see that this $H$-type algebra satisfies the $J^2$-condition if
and only if $(C,V,J)$ does so.

Using the identification $e=1$ from Section~\ref{sec_Cmodules}, the construction
of a $C$-module structure from an $H$-type algebra
\begin{align*}
(\mf z, \mf v, J) &\mapsto (\R\oplus\mf z, 1, \mf v, \widetilde J)
\intertext{and that of an $H$-type algebra from a $C$-module structure}
(C,e,V,J) & \mapsto (C', V, J')
\end{align*}
are inverse to each other. Moreover, one easily sees that these constructions
are equivariant under isomorphisms of $C$-module structures resp.\@ of $H$-type
algebras. More precisely, if $(\varphi,\psi)\colon (\mf z_1,\mf v_1, J_1) \to
(\mf z_2,\mf v_2, J_2)$ is an isomorphism of $H$-type algebras, then 
\[
(\id\times\varphi, \psi)\colon (\R\oplus\mf z_1, 1, \mf v_1, \widetilde J_1) \to (\R\oplus\mf z_2,1,\mf v_2, \widetilde J_2)
\]
is an isomorphism of $C$-module structures. 

Conversely, if $(\varphi,\psi)\colon (C_1,e_1,V_1,J_1)\to (C_2,e_2,V_2,J_2)$ is
an isomorphism of $C$-module structures, then $\varphi(C'_1)=C'_2$ and the map 
\[
(\varphi\vert_{C'_1},\psi)\colon (C'_1,V^{\hphantom{'}}_1,J'_1)\to
(C'_2,V^{\hphantom{'}}_2,J'_2)
\]
is an isomorphism of $H$-type algebras.

\subsection{The model $D$}\label{sec_D}

Let $\mf n = (\mf z, \mf v, J)$ be an $H$-type algebra. Further let $\mf a$ be
a one-dimensional Euclidean Lie algebra and fix an element $H$ of unit length in
$\mf a$. Then $\mf a$ is spanned by $H$. We denote by $\mf s$ the Euclidean
direct sum Lie algebra $\mf a \oplus \mf n = \mf a \oplus \mf z \oplus \mf v$ 
endowed with the Lie bracket that is determined by requiring that 
\begin{align*}
[H,X] & = \tfrac12 X & \text{for all $X\in\mf v$}
\\
[H,Z] & = Z & \text{for all $Z\in\mf z$}
\end{align*}
and that equals the original Lie bracket on $\mf n$, when restricted to $\mf n$. 

Let $\exp(\mf s)$ be the connected, simply connected Lie group with Lie algebra
$\mf s$. We identify the tangent space to $\exp(\mf s)$ at the identity with
$\mf s$. Further we endow $\exp(\mf s)$ with the left-$\exp(\mf s)$-invariant
Riemannian metric that coincides with the inner product on $\mf s$ at the
identity of $\exp(\mf s)$. We parametrize $\exp(\mf s)$ by 
\[
\left\{
\begin{array}{ccl}
\R^+\times\mf z\times\mf v & \to  & \exp(\mf s)
\\
(t,Z,X) & \mapsto & \exp(Z+X)\exp( (\log t)H)
\end{array}
\right.
\]
and set $S\sceq \R^+\times\mf z\times \mf v$. By requiring this parametrization
to be a diffeomorphism and an isometry, $S$ inherits the structure of a
connected, simply connected Lie group with Riemannian metric. The
Campbell-Baker-Hausdorff formula for $\mf n = \mf z\oplus\mf v$ shows that the
group operations on $\exp(\mf s)$ correspond on $S$ to the group operations
\begin{align}
\label{mult_S} \big(t_1, Z_1, X_1\big) \big(t_2, Z_2, X_2\big) & =
\big(t_1t_2, Z_1+t_1Z_2 + \tfrac12 t_1^{1/2} [X_1,X_2], X_1 + t_1^{1/2} X_2\big)
\\
\big(t,Z,X\big)^{-1} & = \big(t^{-1}, -t^{-1}Z, -t^{-1/2}X\big)
\end{align}
 for all $(t_j,Z_j,X_j), (t,Z,X)\in S$. The differential structure on $S$
coincides with the differential structure on the open subset $\R^+\times\mf
z\times\mf v$ of some $\R^m$. Now let 
\[
D\sceq \left\{ (t,Z,X)\in \R\times\mf z\times \mf v \left\vert\ t > \tfrac14|X|^2\right.\right\}
\]
and consider the bijection
\begin{equation}\label{Theta}
\Theta\colon\left\{
\begin{array}{ccl}
\R\times \mf z\times \mf v & \to & \R\times\mf z\times \mf v
\\[1mm]
(t,Z,X) & \mapsto & \big(t+\tfrac14|X|^2, Z, X\big).
\end{array}
\right.
\end{equation}
Then $\Theta(S)=D$, so that we define the structure of a Riemannian manifold on
$D$ by requiring $\Theta$ to be an isometry. The differential structure on $D$
is identical to that of $D$ being an open subset of $\R\times \mf z\times \mf
v$. Moreover, $\Theta$ induces a simply transitive action of $S$ on $D$ by
defining
\[
s\cdot p \sceq \Theta\big(s\Theta^{-1}(p)\big)
\]
for $s\in S$ and $p\in D$. In coordinates $s=(t_s,Z_s,X_s)$ and $p=(t_p, Z_p, X_p)$ this action reads 
\begin{equation}\label{ANaction}
s\cdot p = \big(t_st_p + \tfrac14|X_s|^2 + \tfrac12t_s^{1/2}\langle X_s,X_p\rangle, Z_s + t_sZ_p + \tfrac12t_s^{1/2}[X_s,X_p], X_s + t_s^{1/2}X_p\big).
\end{equation}
Due to the definition of the Riemannian metric, $S$ obviously acts by isometries.
We call $o_D\sceq (1,0,0)$ the \textit{base point} of $D$. The geodesic inversion $\sigma$ of $D$ at $o_D$ is given by 
\[
\sigma (t,Z,X) = \frac{1}{t^2 + |Z|^2} \big(t, -Z, (-t+J_Z)X\big)
\]
for all $(t,Z,X)\in D$. Then $\sigma$ is an isometry, and hence $D$ a symmetric space, if and only if $\mf n$ satisfies the $J^2$-condition. In this case, $D$ has rank one and, if in addition $\mf n$ is non-trivial, then $D$ is of noncompact type.
If $\mf n = \{0\}$, then the constructed space $D$ is the rank one Euclidean
symmetric space $\R$.

Conversely, let $D$ be a rank one symmetric space of noncompact type. Suppose that $\mf g$ is the simple Lie algebra of the Lie group of Riemannian isometries of $D$. Let $\vartheta$ be a Cartan involution of $\mf g$, and let $\mf k$ and $\mf p$ be its $+1$- resp.\@ $-1$-eigenspace. Fix a maximal abelian subalgebra $\mf a$ of $\mf p$ and choose a vector $H\in \mf a$ which spans $\mf a$. Then the decomposition of $\mf g$ into restricted root spaces is 
\[
\mf g = \mf g_{-2\alpha} \oplus \mf g_{-\alpha} \oplus ( \mf a\oplus\mf m) \oplus \mf g_\alpha \oplus \mf g_{2\alpha},
\]
where 
\[
\mf g_\beta \sceq \{ X\in\mf g \mid  [H,X] = \beta(H) X \}
\]
for the linear functional $\beta\colon \mf a \to \R$ and 
\[
\mf m \sceq \{ X\in \mf k \mid [H,X] = 0 \}.
\]
We suppose that $H$ is normalized such that $\alpha(H) = \tfrac12$. If we
set $p\sceq \dim \mf g_\alpha$ and $q\sceq \dim \mf g_{2\alpha}$ and if we endow
$\mf n\sceq \mf g_{2\alpha}\oplus \mf g_\alpha$ with the inner product 
\[
 \langle X, Y\rangle \sceq -\frac{1}{p+4q} B(X,\vartheta Y)
\]
where $B$ is the Killing form of $\mf g$, then $\mf n = (\mf g_{2\alpha}, \mf g_\alpha, J)$ is an $H$-type algebra with $J^2$-condition. The symmetric space constructed from $(\mf g_{2\alpha}, \mf g_\alpha, J)$ is exactly $D$. This means that each rank one Riemannian symmetric space of noncompact type arises from the construction above.

\subsection{The ball model $B$}\label{sec_B}

Let $(C,e,V,J)$ be a $J^2C$-module structure and let $W\sceq C\oplus V$ be the Euclidean direct sum of $C$ and $V$. Consider the unit disc
\[
B \sceq \{ w\in W \mid |w| < 1\}
\]
in $W$ and endow it with the differential structure induced from $W$. In the following we will define a Riemannian metric on $B$ with respect to which $B$ is a rank one Riemannian symmetric space of noncompact type if $(C,V)\not=(\R e, \{0\})$. 

Polarization of the equation in (\apref{M}{M2}{}) shows that we have
\begin{equation}\label{polar}
 \langle \zeta u, \eta v\rangle + \langle \eta u, \zeta v\rangle = 2\langle \zeta,\eta\rangle \langle u,v\rangle
\end{equation}
for all $\zeta,\eta\in C$ and all $u,v\in V$. For  $\zeta\in C$ let $J_\zeta^*$
denote the adjoint of $J_\zeta$. Then  \eqref{polar} implies 
\begin{equation}\label{adjoint}
 J_\zeta^* = J_{\overline\zeta}.
\end{equation}
Moreover we have
\begin{equation}\label{square}
 J_{\overline\zeta} J_{\vphantom{\overline\zeta}\zeta} = |\zeta|^2 \id_V =
J_{\vphantom{\overline\zeta}\zeta} J_{\overline\zeta}.
\end{equation}
Thus, if we set $\zeta^{-1}\sceq |\zeta|^{-2} \overline\zeta$ for $\zeta\in C\mminus\{0\}$, then we have
\begin{equation}\label{inverse}
  \zeta^{-1}( \zeta v) = v = \zeta ( \zeta^{-1} v)
\end{equation}
for all $v\in V$. In Section~\ref{sec_division}  we will see that, if $V\not=\{0\}$, there is a multiplication on $C$ such that $\zeta^{-1}$ is the inverse of $\zeta$.

\begin{definition}
Let $(\zeta, v), (\eta, u)\in W\mminus\{0\}$. Then $(\zeta,v)$ is called
\textit{equivalent} to $(\eta,u)$,  if
either $\zeta = 0 = \eta$ and $u\in Cv$,  or $\zeta\not=0\not=\eta$ and
$\zeta^{-1}v = \eta^{-1}u$. In this case, we write $(\zeta, v) \sim
(\eta, u)$.
\end{definition}

One easily proves that $\sim$ is an equivalence relation on $W\mminus\{0\}$. For an element $w\in W\mminus\{0\}$ let 
\[
Cw \sceq \{ w'\in W\mminus\{0\} \mid w'\sim w\} \cup  \{0\}
\]
denote the equivalence class of $w$ together with the element $0\in W$. Since $B$ is an open subset of the real vector space $W$, we shall identify the tangent space $T_wB$\label{def_tangent} to the point $w\in B$ with $W$. The Riemannian metric $w \mapsto \langle \cdot, \cdot\rangle_{w-}$ on $B$ is defined by 
\begin{equation}\label{metric0}
 \langle X,Y\rangle_{0-} \sceq 4\langle X,Y\rangle \qquad\text{on $T_0B$}
\end{equation}
and, for $w\in B\mminus\{0\}$, by
\begin{equation}\label{B_metric}
 \langle X,Y\rangle_{w-} \sceq 
\begin{cases}
4\frac{\langle X,Y\rangle}{ (1-|w|^2)^2} & \text{if $X,Y\in Cw$}
\\
4\frac{\langle X,Y\rangle}{ 1-|w|^2} & \text{if $X,Y\in (Cw)^\perp$}
\\
0 & \text{if $X\in Cw, Y\in (Cw)^\perp$ (or vice versa).}
\end{cases}
\end{equation}

\subsection{The Cayley transform}

Let $\mf n = (\mf z,\mf v,J)$ be a non-trivial $H$-type algebra which satisfies
the $J^2$-condition. Further let $\mf a$ be a one-dimensional Euclidean Lie
algebra with fixed unit length vector $H$. Suppose that $(C,e,V,J)$ is the
$J^2C$-module structure associated to $(\mf z,\mf v, J)$ (see
Section~\ref{sec_fromHtoC}). We consider the Riemannian symmetric spaces $D$ and
$B$ which are constructed in Section~\ref{sec_D} resp.\@ \ref{sec_B}. 

The \textit{Cayley transform} (in $\R\times\mf z\times \mf v$-coordinates, see
\cite[(2.10a)]{CDKR2})
\[
\mc C\colon\left\{
\begin{array}{ccl}
B & \to & D
\\
(t,Z,X) & \mapsto & \frac{1}{(1-t)^2 + |Z|^2} \big( 1-t^2-|Z|^2, 2Z, 2(1-t+J_Z)X\big)
\end{array}
\right.
\]
is clearly a diffeomorphism from $B$ to $D$. Its inverse (see \cite[(2.10b)]{CDKR2}) is given by 
\[
\mc C^{-1}\colon\left\{
\begin{array}{ccl}
D & \to &  B
\\
(t,Z,X) & \mapsto & \frac{1}{(1+t)^2 + |Z|^2}\big( -1+t^2+|Z|^2, 2Z, (1+t-J_Z)X\big).
\end{array}
\right.
\]

\begin{proposition}\label{Cayleyisometry}
The Cayley transform is an isometry.
\end{proposition}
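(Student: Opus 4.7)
The plan is to verify that $\mc C$ is isometric at the base point $0\in B$ by direct computation, and then to propagate this to all of $B$ using the simply transitive isometric action of $S$ on $D$. At the base point, a direct substitution gives $\mc C(0,0,0) = (1,0,0) = o_D$. Differentiating componentwise at the origin, where the denominator $(1-t)^2+|Z|^2$ equals $1$ and all nonlinear contributions vanish, yields $d\mc C_0(\dot t,\dot Z,\dot X) = (2\dot t,\,2\dot Z,\,2\dot X)$. Tracing the $D$-metric at $o_D$ back through $\Theta$ and the parametrization of $\exp(\mf s)$ by $S$ — both of which have the identity as their differential at $(1,0,0)$ under the natural coordinate identifications — one sees that the inner product on $T_{o_D}D$ is the Euclidean one $\dot t^2 + |\dot Z|^2 + |\dot X|^2$. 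Since the ball metric at $0$ is $4\langle\cdot,\cdot\rangle$ by \eqref{metric0}, this gives $\|d\mc C_0(\cdot)\|^2_{o_D} = 4(\dot t^2+|\dot Z|^2+|\dot X|^2) = \|\cdot\|^2_{0-}$, so $\mc C$ is isometric at $0$.

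For propagation, for each $s\in S$ set $\tilde s := \mc C^{-1}\circ L_s\circ \mc C\colon B\to B$, where $L_s$ denotes the action of $s$ on $D$. Since $s\mapsto\tilde s$ is a group homomorphism and composition preserves the property of being a $B$-isometry, it suffices to check that $\tilde s$ is a $B$-isometry for each $s$ in a generating subset of $S$. Granted this, for any $w=\tilde s(0)$, differentiating the identity $\mc C\circ\tilde s = L_s\circ\mc C$ at $0$ gives
\[
d\mc C_w\circ d\tilde s|_0 = dL_s|_{o_D}\circ d\mc C_0.
\]
The right-hand side is a composition of isometries and $d\tilde s|_0$ is an isometry by assumption, so $d\mc C_w$ is an isometry; since the $\tilde s$-orbits cover $B$, this establishes the isometry everywhere.

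The main obstacle is thus the verification that $\tilde s$ preserves the ball metric \eqref{B_metric} for $s$ in a generating set. By \eqref{mult_S}, $S$ is generated by the dilations $(a,0,0)$, the $\mf z$-translations $(1,Z,0)$, and the $\mf v$-translations $(1,0,X)$. Using the formulas for $\mc C$, $\mc C^{-1}$, and \eqref{ANaction}, one writes each $\tilde s$ as an explicit rational map on $B$. The central algebraic ingredients for the check are the polarization identity \eqref{polar}, the adjoint relation \eqref{adjoint}, the square identity \eqref{square}, and the $J^2$-condition \eqref{M3}. The chief subtlety is that the ball metric is \emph{not} conformal — directions in $Cw$ are scaled by $(1-|w|^2)^{-2}$ but directions in $(Cw)^\perp$ only by $(1-|w|^2)^{-1}$ — so one must verify that $d\tilde s_w$ carries the decomposition $T_wB = Cw\oplus(Cw)^\perp$ to the analogous decomposition at $\tilde s(w)$ with correctly matched scaling on each summand. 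The $\mf v$-translation case is the most delicate, since it mixes the $C'$- and $V$-components of $W$ through $J$; the $J^2$-condition, which ensures $C(Cv) = Cv$, is precisely what is needed to close $Cw$ under the required operations.
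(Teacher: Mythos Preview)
Your strategy is logically valid but takes a route quite different from the paper's, and the step that carries the real weight is left as a sketch. The paper does not propagate from the base point via the $S$-action. Instead it invokes \cite[Theorem~6.8]{CDKR2}, which already computes the pullback $\mc C^*$ of the $D$-metric to $B$ in closed form: at each $p\in B\mminus\{0\}$ the pullback splits $T_pB$ orthogonally as $(\R p\oplus T_p^{(2)})\oplus T_p^{(1)}$, with scaling factors $4(1-|p|^2)^{-2}$ and $4(1-|p|^2)^{-1}$ respectively, and with explicit descriptions of the summands. The paper's own work is then a short algebraic verification that $\R p\oplus T_p^{(2)} = Cp$ (hence $T_p^{(1)}=(Cp)^\perp$), done by a case split on whether $\zeta=0$, $v=0$, or both are nonzero; this identifies the pullback with \eqref{B_metric}.

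Your route trades this external citation for the direct claim that each conjugate $\tilde s$ of an $S$-generator is a $B$-isometry. The reduction is correct, and your base-point computation $d\mc C_0 = 2\,\id$ is right. But the deferred verification is exactly where the content lies: for the $\mf v$-translation $s=(1,0,X)$, the map $\tilde s$ is a genuinely complicated rational map on $B$, and checking that $d\tilde s_w$ sends $Cw$ to $C\tilde s(w)$ with the correct anisotropic scaling is a computation at least as long as the paper's entire argument, and uses the $J^2$-condition in the same essential place. You correctly identify the tools and the delicate case, but ``one writes each $\tilde s$ as an explicit rational map'' followed by a list of identities is a plan, not a proof. To complete your approach you would need to carry out the $\mf v$-translation case in full; alternatively, once you allow yourself to cite \cite[Theorem~6.8]{CDKR2}, the transitivity machinery becomes superfluous and you are back to the paper's shorter argument.
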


\begin{proof}
In \cite{CDKR2}, the pullback of the Riemannian metric of $D$ to $B$ via $\mc C$ is described as follows: Let $p\in B$. Denote the tangent space to $B$ at $p$ by $T_pB$, which we identify with $\R\times\mf z\times \mf v$. If $\| \mf X\|_p$ denotes the norm of $\mf X\in T_pB$ induced by the pullback of the Riemannian metric on $D$, then 
\[
\| \mf X\|_0 = 2|\mf X|
\]
for all $\mf X\in T_0B$. For $p\in B\mminus\{0\}$ we have
\[
\|\mf X\|^2_p = 
\begin{cases}
4\frac{|\mf X|^2}{1-|p|^2} & \text{if $\mf X\in T_p^{(1)}$,}
\\[1mm]
4\frac{|\mf X|^2}{(1-|p|^2)^2} & \text{if $\mf X\in \R p\oplus T_p^{(2)}$}
\end{cases}
\]
where 
\[
 T_pB = \R p \oplus T_p^{(2)} \oplus T_p^{(1)}
\]
is a direct sum which is orthogonal \wrt Euclidean metric and Riemannian inner product on $T_pB$. Theorem~6.8 in \cite{CDKR2} provides explicit formulas for $T_p^{(1)}$ and $\R p \oplus T_p^{(2)}$, which we will state in the following. For $X\in \mf v$ let $\mf j(X) \sceq J_{\mf z} X$ and define $\mf k(X)$ to be the orthogonal complement of $\R X$ in 
\[
\mf j(X)^\perp = \{ Y\in \mf v\mid \forall\, Z\in\mf z\colon \langle J_ZX,Y\rangle =0\}.
\]
For $p=(t,Z,X)\in B\mminus\{0\}$ we have
\begin{enumerate}[(i)]
\item $T_p^{(1)} = \mf v$ and $\R p \oplus T_p^{(2)} = \R \oplus \mf z$ if $X=0$,
\item $T_p^{(1)} = \R \oplus \mf z \oplus \mf k(X)$ and $\R p \oplus T_p^{(2)} = \mf j(X) \oplus \R X$ if $(t,Z) = (0,0)$,
\item in the remaining cases,
\begin{align*}
T_p^{(1)} & = \mf k(X) \oplus\left\{ \big(|X|^2 u, |X|^2W, - (u+J_W)(t-J_Z)W \big) \left\vert\  W\in\mf z, u\in \R        \vphantom{\big(|X|^2 u, |X|^2W, - (u+J_W)(t-J_Z)W \big) } \right.\right\}
\intertext{and}
\R p \oplus T_p^{(2)} & = \left\{ \big( (t^2+|Z|^2)u, (t^2+|Z|^2)W, (u+J_W)(t-J_Z)X\big) \left\vert\ W\in\mf z, u\in \R  \vphantom{\big( (t^2+|Z|^2)u, (t^2+|Z|^2)W, (u+J_W)(t-J_Z)X\big)}           \right.\right\}.
\end{align*}
\end{enumerate}
Note that this subsumes the degenerate cases. On $T_0B$, the Riemannian inner
product \eqref{metric0} obviously coincides with this one. For $p\in
B\mminus\{0\}$, \eqref{B_metric} implies that it suffices to show that $\R p
\oplus T_p^{(2)} = Cp$. To that end let $p=(\zeta, v) = (t,Z,X) \in
B\mminus\{0\}$ (hence $\zeta=(t,Z)\in C = \R\times \mf z$ and $v=X\in V=\mf
v$). If $v=0$, then 
\[
Cp = C\times \{0\} = C = \R\times\mf z = \R p \oplus
T_p^{(2)}.
\]
If $\zeta = 0$, then 
\[
Cp = Cv = (\R + J_{\mf z})X = \R p \oplus
T_p^{(2)}.
\]
If $\zeta \not=0$ and $v\not=0$, then $(\eta, u) \in Cp$ if and only
if $\eta \not=0$ and $\zeta^{-1} v  = \eta^{-1} u$, or $(\eta, u) = 0$. This
means that in both cases 
\[
u = J_\eta J_{\zeta^{-1}} v = |\zeta|^{-2} J_\eta J_{\overline\zeta} v = J_{|\zeta|^{-2}\eta}J_{\overline\zeta} v.
\]
Hence
\begin{align*}
Cp & = \left\{ \big( \eta, J_{|\zeta|^{-2}\eta}J_{\overline\zeta} v\big) \left\vert\ \eta\in C  \vphantom{\big( \eta, J_{|\zeta|^{-2}\eta}J_{\overline\zeta} v\big)}   \right.\right\} = \left\{ \big( |\zeta|^2\xi, J_\xi J_{\overline\zeta} v\big) \left\vert\ \xi\in C \vphantom{\big( |\zeta|^2\xi, J_\xi J_{\overline\zeta} v\big) }   \right.\right\} = \R p \oplus T_p^{(2)}.
\end{align*}
This completes the proof. 
\bewend \end{proof}

Note that for $\mf n = \{0\}$ resp.\@ $(C,V) = (\R e, \{0\})$ Proposition~\ref{Cayleyisometry} shows that the model $B$ is isometric to the Euclidean symmetric space $\R$.

\subsection{The map $\beta_2$}\label{def_beta2}

Let $(C,e,V,J)$ be a $J^2C$-module structure. In this section we introduce a map
\[
\beta_2\colon V\times V\to C,
\]
which will be shown to be $C$-hermitian, \ie $\beta_2$ is $\R$-bilinear and for all $u,v\in V$ we have $\beta_2(u,v) = \overline{\beta_2(v,u)}$. The map $\beta_2$ encodes the inner product and the Lie bracket on $V$.
We define $\beta_2\colon V\times V\to C$ by\footnote{This is $J^*$ in \cite{Koranyi_Ricci}.}
\[ \langle \beta_2(v,u),\zeta\rangle \sceq \langle J_\zeta u, v\rangle \quad\text{for all $\zeta\in C$.}\]

\begin{lemma} \label{conj_scalar}
For $\zeta,\eta\in C$ we have $\langle \eta, \overline\zeta\rangle = \langle \overline\eta, \zeta\rangle$.
\end{lemma}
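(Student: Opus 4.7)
The plan is a direct computation using the orthogonal decomposition $C = \R e \oplus C'$ that underlies the definitions of real part, imaginary part, and conjugation.

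First I would write $\zeta = ae + z$ and $\eta = be + w$ with $a,b \in \R$ and $z,w \in C'$. By the definition of conjugation in Section~\ref{sec_Cmodules}, $\overline\zeta = ae - z$ and $\overline\eta = be - w$. Since $C = \R e \oplus C'$ is an orthogonal direct sum, every cross term of the form $\langle e, z\rangle$ or $\langle e, w\rangle$ vanishes, and since $|e|=1$ by Lemma~\ref{eunique}, we have $\langle e, e\rangle = 1$.

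Then I would expand both sides:
\[
\langle \eta, \overline\zeta\rangle = \langle be + w,\, ae - z\rangle = ab - \langle w, z\rangle,
\]
\[
\langle \overline\eta, \zeta\rangle = \langle be - w,\, ae + z\rangle = ab - \langle w, z\rangle,
\]
and conclude equality. There is no real obstacle here; the identity is essentially the statement that conjugation is an isometry fixing $e$ and negating $C'$, so the bilinear form $\langle \cdot, \overline{\cdot}\rangle$ is symmetric. The only thing to be mindful of is treating the degenerate case $C = \R e$ (where $z = w = 0$) and the case $V = \{0\}$ (where $e$ is only fixed as a choice of distinguished unit vector, cf.\@ Section~\ref{sec_Cmodules}); in both situations the computation above still applies verbatim.
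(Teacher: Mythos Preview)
Your proof is correct and takes essentially the same approach as the paper: both decompose $\zeta$ and $\eta$ along $C = \R e \oplus C'$, expand bilinearly, and use orthogonality of $\R e$ and $C'$ to reduce each side to $ab - \langle w, z\rangle$. The paper writes out the vanishing cross terms explicitly before dropping them, whereas you discard them at once, but the argument is the same.
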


\begin{proof} Let $\zeta = a + x$ and $\eta = b + y$ ($a,b\in\R$, $x,y\in C'$) be the decompositions of $\zeta$ and $\eta$ \wrt $C=\R \oplus C'$. 
Since $\R$ and $C'$ are orthogonal, we find
\begin{align*}
\langle \overline\zeta,\eta\rangle & = \langle a, b\rangle - \langle x,b\rangle + \langle a,y\rangle - \langle x,y\rangle
\\ & = \langle a,b\rangle - \langle x,y\rangle
\\ & = \langle a, b\rangle + \langle x, b\rangle - \langle a, y\rangle - \langle x,y\rangle
\\ & = \langle \zeta,\overline\eta\rangle. 
\end{align*}
This proves the lemma. 
\bewend \end{proof}

\begin{proposition} \label{beta_2_real}The map $\beta_2\colon V\times V\to C$ is
$\R$-bilinear. Further we have
\begin{enumerate}[{\rm(i)}]
\item \label{skew} $\beta_2(v,u) = \overline{\beta_2(u,v)}$ for all $u,v\in V$,
\item \label{inner} $\beta_2(v,v) = \langle v,v\rangle$ for all $v\in V$.
\end{enumerate}
\end{proposition}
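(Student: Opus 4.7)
The proof will proceed in three independent steps, using only the defining identity of $\beta_2$ together with a few identities already established in the excerpt, namely \eqref{adjoint}, Lemma~\ref{conj_scalar}, $J_e = \id_V$, and the skew-symmetry of $J_z$ for $z \in C'$.

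First, the $\R$-bilinearity: for fixed $\zeta \in C$ the map $(u,v)\mapsto \langle J_\zeta u, v\rangle$ is $\R$-bilinear, since $J_\zeta$ is $\R$-linear and the inner product on $V$ is bilinear. The defining equation
\[
\langle \beta_2(v,u),\zeta\rangle = \langle J_\zeta u, v\rangle \qquad\text{for all $\zeta\in C$}
\]
together with non-degeneracy of $\langle\cdot,\cdot\rangle$ on $C$ forces $\beta_2$ itself to be $\R$-bilinear.

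For (i), I would show equality of $\beta_2(v,u)$ and $\overline{\beta_2(u,v)}$ by testing against every $\zeta \in C$. Using Lemma~\ref{conj_scalar} and then the defining identity,
\[
\bigl\langle \overline{\beta_2(u,v)},\zeta\bigr\rangle = \bigl\langle \beta_2(u,v),\overline\zeta\bigr\rangle = \langle J_{\overline\zeta} v, u\rangle.
\]
On the other hand, $\langle \beta_2(v,u),\zeta\rangle = \langle J_\zeta u, v\rangle$, and by \eqref{adjoint} we have $J_\zeta^* = J_{\overline\zeta}$, so $\langle J_\zeta u, v\rangle = \langle u, J_{\overline\zeta} v\rangle = \langle J_{\overline\zeta}v, u\rangle$. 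Comparing, the two expressions agree for every $\zeta$, and non-degeneracy of the inner product on $C$ yields (i).

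For (ii), decompose $\zeta = ae + z$ with $a \in \R$ and $z \in C'$. Using $J_e = \id_V$ I get $J_\zeta v = av + J_z v$, and since $J_z$ is skew-symmetric for $z\in C'$ the term $\langle J_z v, v\rangle$ vanishes. Hence
\[
\langle \beta_2(v,v),\zeta\rangle = \langle J_\zeta v, v\rangle = a\,\langle v,v\rangle = \langle v,v\rangle\,\langle e,\zeta\rangle = \bigl\langle \langle v,v\rangle e,\zeta\bigr\rangle,
\]
so by non-degeneracy $\beta_2(v,v) = \langle v,v\rangle\, e$, which under the identification $\R e \leftrightarrow \R$ of Section~\ref{sec_Cmodules} is exactly $\langle v,v\rangle$. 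There is no genuine obstacle; the only point requiring a little care is keeping track of the identification $\R \hookrightarrow \R e \subset C$ when interpreting the value $\langle v,v\rangle$ as an element of $C$.
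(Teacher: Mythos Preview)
Your proof is correct and, for $\R$-bilinearity and part~(i), essentially identical to the paper's argument (same use of \eqref{adjoint} and Lemma~\ref{conj_scalar}). For part~(ii) you take a slightly different route: the paper first invokes (i) to conclude $\beta_2(v,v)=\overline{\beta_2(v,v)}\in\R$ and then simply evaluates $\langle\beta_2(v,v),e\rangle = \langle ev,v\rangle = \langle v,v\rangle$, whereas you test against an arbitrary $\zeta = ae+z$ and use skew-symmetry of $J_z$ for $z\in C'$ (itself a consequence of \eqref{adjoint}) to kill the imaginary contribution directly. Both are fine; the paper's version is a touch shorter since it piggybacks on (i), while yours has the mild advantage of making (ii) logically independent of (i).
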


\begin{proof} One easily sees that $\beta_2$ is $\R$-bilinear.  Using Lemma~\ref{conj_scalar} we find
\[ \langle \beta_2(v,u), \zeta\rangle = \langle J_\zeta u, v\rangle = \langle u, J_{\overline \zeta} v\rangle =  \langle\beta_2(u,v), \overline \zeta\rangle = \langle \overline{\beta_2(u,v)}, \zeta\rangle\]
for all $u,v\in V$ and all $\zeta\in C$. Hence $\beta_2(v,u) = \overline{\beta_2(u,v)}$, which proves \eqref{skew}. 
Finally let $v\in V$. From $\beta_2(v,v) = \overline{\beta_2(v,v)}$ it follows that $\beta_2(v,v)\in\R$. Then
\[ \beta_2(v,v) = \langle \beta_2(v,v), e\rangle = \langle ev, v\rangle = \langle v,v\rangle,\]
which shows \eqref{inner}. 
\bewend \end{proof}

\begin{lemma}\label{re_and_im} For $v_1,v_2\in V$ we have  
\[ \Rea \beta_2(v_1,v_2) = \langle v_1,v_2\rangle \quad\text{and}\quad \Ima\beta_2(v_1,v_2) = [v_2,v_1].\]
\end{lemma}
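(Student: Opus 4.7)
The plan is to split $\beta_2(v_1,v_2)$ according to the orthogonal decomposition $C = \R e \oplus C'$ and identify the two pieces separately by testing them against elements of $\R e$ and of $C'$ respectively.

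For the real part, I would use the identification $\R e \to \R$, $ae \mapsto a$, introduced in Section~\ref{sec_Cmodules}, so that $\Rea \beta_2(v_1,v_2) = \langle \beta_2(v_1,v_2), e\rangle$. Plugging $\zeta = e$ into the defining formula for $\beta_2$ and using property (\apref{M}{M1}{}), namely $J_e v_2 = v_2$, gives $\langle \beta_2(v_1,v_2), e\rangle = \langle J_e v_2, v_1\rangle = \langle v_2, v_1\rangle = \langle v_1, v_2\rangle$. This settles the first identity.

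For the imaginary part, I would note that $\Ima\beta_2(v_1,v_2)$ lies in $C'$ by definition and that $[v_2,v_1]\in C'$ from the extension of the bracket in Section~\ref{sec_fromHtoC}. Since the inner product is non-degenerate on $C'$, it suffices to show that these two vectors pair equally with every $\zeta\in C'$. For such $\zeta$ the component in $\R e$ of $\beta_2(v_1,v_2)$ drops out, so $\langle \Ima\beta_2(v_1,v_2), \zeta\rangle = \langle \beta_2(v_1,v_2),\zeta\rangle$, which by definition equals $\langle J_\zeta v_2, v_1\rangle$. On the other side, the defining equation \eqref{bracket} of the bracket yields $\langle \zeta, [v_2,v_1]\rangle = \langle J_\zeta v_2, v_1\rangle$ as well, and the two expressions match.

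No serious obstacle is anticipated: the statement is essentially a repackaging of the definitions of $\beta_2$, of the bracket \eqref{bracket}, and of $\Rea, \Ima$, combined with the normalization $J_e = \id_V$ from (\apref{M}{M1}{}). The only minor subtlety is being careful that $\Ima$ and the bracket both take values in $C'$ so that testing against $C'$ alone is sufficient.
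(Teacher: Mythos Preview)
Your argument is correct. For the imaginary part you follow essentially the same idea as the paper --- test against every $\zeta\in C'$ and invoke the defining relation of the bracket --- though you reach the key equality $\langle\Ima\beta_2(v_1,v_2),\zeta\rangle=\langle J_\zeta v_2,v_1\rangle$ in one step by orthogonality of $\R e$ and $C'$, whereas the paper first rewrites $\Ima\beta_2(v_1,v_2)=\tfrac12\beta_2(v_1,v_2)-\tfrac12\beta_2(v_2,v_1)$ via Proposition~\ref{beta_2_real} and then uses the skew-symmetry of $J_\zeta$ to combine the two halves. For the real part your route is genuinely more direct: you simply evaluate the defining formula at $\zeta=e$ and use $J_e=\id_V$, while the paper obtains $\Rea\beta_2(v_1,v_2)=\langle v_1,v_2\rangle$ by polarizing the identity $\beta_2(v,v)=|v|^2$ from Proposition~\ref{beta_2_real}\eqref{inner}. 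Your approach avoids the detour through polarization and the prior proposition; the paper's approach, on the other hand, makes the link to the hermitian structure of $\beta_2$ explicit.
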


\begin{proof} Let $v_1,v_2\in V$. By Proposition~\ref{beta_2_real} we have
\begin{align*} 
|v_1|^2 + 2\langle v_1,v_2\rangle + |v_2|^2 & = |v_1+v_2|^2 = \beta_2(v_1+v_2,v_1+v_2) 
\\
& = |v_1|^2 + \beta_2(v_1,v_2) + \beta_2(v_2,v_1) + |v_2|^2
\\ 
& = |v_1|^2 + \beta_2(v_1,v_2) + \overline{\beta_2(v_1,v_2)} + |v_2|^2
\\ 
& = |v_1|^2 + 2\Rea\beta_2(v_1,v_2) + |v_2|^2.
\end{align*}
Hence $\Rea\beta_2(v_1,v_2) = \langle v_1,v_2\rangle$. To show the second claim,
note that Proposition~\ref{beta_2_real} implies that 
\[
\Ima \beta_2(v_1,v_2) = \tfrac12\beta_2(v_1,v_2) - \tfrac12\overline{\beta_2(v_1,v_2)} = \tfrac12\beta_2(v_1,v_2) - \tfrac12\beta_2(v_2,v_1).
\]
For each $\zeta\in C'$ it follows that 
\begin{align*}
\langle \zeta, \Ima \beta_2(v_1,v_2)\rangle & = \tfrac12 \langle\zeta,\beta_2(v_1,v_2)\rangle - \tfrac12\langle\zeta,\beta_2(v_2,v_1)\rangle
\\ & = \tfrac12\langle J_\zeta v_2, v_1\rangle - \tfrac12\langle J_\zeta v_1,v_2\rangle
\\ & = \tfrac12\langle J_\zeta v_2,v_1\rangle + \tfrac12\langle v_1, J_\zeta v_2\rangle
\\ & = \langle J_\zeta v_2,v_1\rangle = \langle \zeta, [v_2,v_1]\rangle.
\end{align*}
Since $\langle\cdot,\cdot\rangle\vert_{C'\times C'}$ is non-degenerate and $\Ima
\beta_2(v_1,v_2)\in C'$ and $[v_2,v_1]\in C'$, it follows that
$\Ima\beta_2(v_1,v_2) = [v_2,v_1]$. 
\bewend \end{proof}

\subsection{The isometry group}\label{sec_isomgroup}

Let $\mf n = (\mf z,\mf v, J)$ be a non-trivial $H$-type algebra which
satisfies the $J^2$-condition, and let $\mf a$ be a one-dimensional Euclidean
Lie algebra. Construct the Euclidean Lie algebra $\mf s$ and the spaces $S$ and
$D$ as in Section~\ref{sec_D}. We denote by $(C,e,V,J)$ the  $J^2C$-module
structure $(\R\oplus\mf z, 1, \mf v, \wt J)$ which is isomorphic to $\mf n$.
Let $G$ denote the full isometry group of $D$. Suppose that $N$ resp.\@ $A$ are
the connected, simply connected Lie groups with Lie algebra $\mf n$ resp.\@ $\mf
a$. Then $N$ and $A$ are subgroups of $S$, more precisely, $S$ is the semidirect
product $AN$. In the parametrization of $S$, the groups $N$ and $A$ are given by
\begin{align*}
N & = \left\{ n_{(Z,X)} \sceq (1,Z,X) \left\vert\  (Z,X) \in \mf z\times\mf v \vphantom{n_{(Z,X)}} \right.\right\}
\intertext{and}
A & = \left\{ a_t \sceq (t,0,0) \left\vert\  t\in \R^+ \vphantom{a_t} \right.\right\}.
\end{align*}
Let $K$ be the stabilizer of the base point $o_D=(1,0,0)$ in $G$ and let $M\sceq Z_K(A)$ be the centralizer of $A$ in $K$. Recall the geodesic inversion 
\[
 \sigma(t,Z,X) = \frac{1}{t^2 + |Z|^2}\big( t, -Z , (-t+J_Z)X\big)
\]
at the origin $o_D$ from Section~\ref{sec_D}.

\begin{theorem}[Theorem~6.4 in \cite{CDKR2}] 
The Lie group $G$ has the Bruhat decomposition $MAN\cup N\sigma MAN$. 
\end{theorem}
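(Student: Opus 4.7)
The plan is to deduce the decomposition from the $G$-action on the boundary $\partial D$. Let $\infty\in\partial D$ denote the ideal endpoint at $t\to\infty$ of the $A$-geodesic through the base point $o_D$. Since two elements of $G$ lie in the same left coset of $\operatorname{Stab}_G(\infty)$ if and only if they agree on $\infty$, the problem reduces to: (i) identifying $\operatorname{Stab}_G(\infty)=MAN$, (ii) computing the image $\sigma\cdot\infty$, and (iii) showing $N$ acts transitively on $\partial D\mminus\{\infty\}$. Granting these, if $g\cdot\infty=\infty$ then $g\in MAN$, and otherwise one picks $n\in N$ with $n\sigma\cdot\infty=g\cdot\infty$, so $(n\sigma)^{-1}g\in MAN$ and $g\in N\sigma MAN$.

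For (ii) and (iii) I would extend the $S$-action from $D$ to $\overline D$ by continuity and compute directly. Taking $t\to\infty$ in the geodesic inversion formula of Section~\ref{sec_D} yields $\sigma\cdot\infty=(0,0,0)\in\partial D\mminus\{\infty\}$, while \eqref{ANaction} evaluated at $s=n_{(Z,X)}$ and $p=(0,0,0)$ gives $n_{(Z,X)}\cdot(0,0,0)=\bigl(\tfrac14|X|^2,Z,X\bigr)$, parametrizing the whole set $\partial D\mminus\{\infty\}=\bigl\{(t,Z,X)\mid t=\tfrac14|X|^2\bigr\}$ as $(Z,X)$ ranges over $\mf z\times\mf v$.

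Item (i) carries the essential work. The inclusion $MAN\subseteq\operatorname{Stab}_G(\infty)$ is by inspection: $A$ and $N$ visibly fix $\infty$, and $M=Z_K(A)$ must permute the two-point set $\{\infty,\sigma\cdot\infty\}$ of $A$-fixed boundary points; since $M$ also fixes $o_D$, swapping these endpoints would reverse the geodesic through $o_D$, forcing $M$ to fix $\infty$ itself. For the reverse inclusion I would invoke the Iwasawa-type decomposition $G=KAN$ (immediate from the simple transitivity of $S=AN$ on $D$) to write any $g\in\operatorname{Stab}_G(\infty)$ as $g=kan$, reducing the task to showing $K\cap\operatorname{Stab}_G(\infty)\subseteq M$. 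Such a $k$ fixes both $o_D$ and $\infty$, hence the unique geodesic ray from $o_D$ to $\infty$ pointwise; together with its $\sigma$-image this is the whole $A$-orbit of $o_D$. For each $a\in A$ the element $kak^{-1}$ is then a transvection along the same axis agreeing with $a$ at $o_D$, hence equal to $a$, so $k\in Z_K(A)=M$.

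The step I expect to be the main obstacle is precisely this last identification $K\cap\operatorname{Stab}_G(\infty)\subseteq M$: it depends on knowing that $D$ is uniquely geodesic (so that fixing $o_D$ and the ideal endpoint $\infty$ truly fixes the geodesic ray pointwise) and that $A$ acts as the full group of transvections along its axis (so that two transvections with the same axis and the same image of $o_D$ coincide). These are standard properties of symmetric spaces of noncompact type, but they are the geometric inputs without which the proof does not close.
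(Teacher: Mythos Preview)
The paper does not supply its own proof of this statement; it simply quotes Theorem~6.4 of \cite{CDKR2} and uses the result as input. So there is no in-paper argument against which to compare your proposal.

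On its own merits, your outline is correct and follows the standard geometric route to the Bruhat decomposition via the boundary action: identify $\operatorname{Stab}_G(\infty)$, compute $\sigma\cdot\infty$, and use the simple transitivity of $N$ on $\partial D\mminus\{\infty\}$. Your computations in (ii) and (iii) are fine, and the argument for $K\cap\operatorname{Stab}_G(\infty)\subseteq M$ via the transvection characterization is the standard one. One small sharpening: in the inclusion $M\subseteq\operatorname{Stab}_G(\infty)$, the reason an element $m\in M=Z_K(A)$ cannot swap the two $A$-fixed boundary points is not merely that this would reverse the geodesic through $o_D$, but that a reversal would force $mam^{-1}=a^{-1}$ for each $a\in A$, contradicting $m\in Z_K(A)$; your phrasing leaves this implicit. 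The geometric inputs you flag (unique geodesics in a Hadamard manifold, and that a transvection along a given axis is determined by its value at one point of the axis) are indeed the only external facts needed, and both are standard for symmetric spaces of noncompact type.
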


Let $\mc X\sceq \R\times\mf z\times \mf v \cup \{\infty\}$ be the one-point compactification of $\R\times\mf z\times\mf v$, where $\infty$ denotes the point at infinity. A compactification of $D$ is then given by the closure of $D$ in $\mc X$, namely
\begin{align*}
\dg & = \left\{ (t,Z,X) \in \R\times\mf z\times \mf v \left\vert\ t\geq \tfrac14|X|^2\right.\right\} \cup \{\infty\}
\\
& = \left\{ (\zeta, v) \in C\times V \left\vert\  \Rea \zeta \geq  \tfrac14|v|^2\right.\right\} \cup \{\infty\}.
\end{align*}
The space $\dg$ is precisely the geodesic compactification of $D$, see, \eg
\cite[Section~I.2]{Borel_Ji} or \cite[Proposition~1.7.6]{Eberlein}.
Let $\overline B$ be the closed unit ball in $W=\R\times\mf z\times \mf v$.
Then the Cayley transform $\mc C\colon B\to D$ extends (uniquely) to a
homeomorphism $\overline B \to \dg$. Therefore, \cite[Corollary~6.2]{CDKR2} amounts
to the following proposition.

\begin{proposition}\label{extends}
The action of $G$ extends continuously to $\dg$.
\end{proposition}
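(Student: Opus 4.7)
The strategy is to establish continuous extension for a family of generators of $G$. By the Bruhat decomposition $G=MAN\cup N\sigma MAN$ it suffices to extend the actions of the four pieces $M$, $A$, $N$ and the geodesic inversion $\sigma$ separately and then compose. I would work in the model $D$ for $A$, $N$ and $\sigma$, where the explicit formulas of Section~\ref{sec_D} are available, and in the ball model $B$ for the compact piece $M$, transporting the result by the boundary extension of the Cayley transform.

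First, for $s=(t_s,Z_s,X_s)\in AN=S$ the action on $D$ is given by the polynomial formula \eqref{ANaction}. That formula already defines a continuous self-map of $\R\times\mf z\times\mf v$; setting $s\cdot\infty\sceq\infty$ extends it to the one-point compactification $\mc X$. Continuity at $\infty$ is immediate from the dominant monomials in \eqref{ANaction}: if $p$ leaves any compact set in $\R\times\mf z\times\mf v$ then, using the condition $t\ge\tfrac14|X|^2$ on $D$, at least one coordinate of $s\cdot p$ also tends to infinity. Restriction yields a continuous self-map of $\dg$.

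Second, for $\sigma$ the formula from Section~\ref{sec_D} shows that the only point of $\dg$ where it is not manifestly continuous is $(0,0,0)$, since the denominator $t^2+|Z|^2$ vanishes on $\dg$ only there (the defining inequality $t\ge\tfrac14|X|^2$ forces $X=0$ whenever $t=0=Z$). I extend $\sigma$ by $\sigma(0,0,0)\sceq\infty$ and $\sigma(\infty)\sceq(0,0,0)$. A direct size estimate of the three components shows that $\sigma(t,Z,X)\to\infty$ as $(t,Z,X)\to(0,0,0)$ in $\dg$, since the first two coordinates alone have norm $(t^2+|Z|^2)^{-1/2}$, and, conversely, $\sigma(t,Z,X)\to(0,0,0)$ as $(t,Z,X)\to\infty$ because the denominator dominates the numerator (which is at most linear in each coordinate, recalling $|J_ZX|=|Z||X|$). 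Together with continuity away from these two points, this proves $\sigma$ extends continuously to $\dg$.

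Third, for $M$: since $M$ stabilizes $o_D$, it is natural to pass through the Cayley transform, which carries $o_D$ to $0\in W=C\oplus V$. By construction the elements of $M$ act as Euclidean orthogonal transformations of $W$ respecting the decompositions used to define the metric \eqref{B_metric}; linear isometries of a Euclidean space automatically extend continuously to the closed ball $\overline B$. By Proposition~\ref{Cayleyisometry}, together with the already noted extension of $\mc C$ to a homeomorphism $\overline B\to\dg$, this transports to a continuous $M$-action on $\dg$. Finally, combining the four cases, the action of any $g\in G$ on $D$ is a finite composition of the four extended continuous maps, and a composition of continuous maps is continuous, so the whole $G$-action extends continuously to $\dg$. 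The main obstacle is the behavior of $\sigma$ near its two singular points $(0,0,0)$ and $\infty$, where continuity requires the explicit size estimates indicated above; all other pieces extend essentially by inspection.
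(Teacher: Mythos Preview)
Your approach is sound and differs from the paper's. The paper does not argue directly here: it observes that the Cayley transform extends to a homeomorphism $\overline B\to\dg$ and then simply cites \cite[Corollary~6.2]{CDKR2} for the continuous extension of the action to $\overline B$. Your explicit treatment via the Bruhat decomposition and the formulas of Section~\ref{sec_D} is more self-contained; the size estimates you indicate for $\sigma$ near its two singular points are correct (for continuity at $\infty$, the constraint $t\ge\tfrac14|X|^2$ on $\dg$ gives $|X|\le 2(t^2+|Z|^2)^{1/4}$, so together with $|(-t+J_Z)X|=(t^2+|Z|^2)^{1/2}|X|$ all three components of $\sigma(t,Z,X)$ are bounded by a negative power of $(t^2+|Z|^2)^{1/2}$).

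The one point that needs care is the $M$ step. You write that ``by construction the elements of $M$ act as Euclidean orthogonal transformations of $W$'', but in the paper $M$ is defined abstractly as $Z_K(A)$; its identification with the automorphism group of the $J^2C$-module structure, and in particular the linear action on $B$, is the content of the proposition immediately \emph{after} Proposition~\ref{extends}. That later proof does not rely on Proposition~\ref{extends} (it rests on \cite[Proposition~4.1]{Koranyi_Ricci_proofs}), so there is no circularity---you may simply forward-reference it. Alternatively, one can bypass the issue by invoking the general fact that isometries of a Hadamard manifold extend continuously to the geodesic compactification, which handles all of $G$ at once; but that is closer in spirit to the paper's own citation than to your explicit strategy.
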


The Bruhat decomposition of $G$ implies that the stabilizer $G_\infty$ of $\infty$ in $G$ equals $MAN$.\label{def_Ginfty}

For future purposes we need explicit formulas for the action of the groups $M$, $A$ and $N$ on $\dg$. The action of $N$ and $A$ in $\R\times\mf z\times\mf v$-coordinates is already given in \eqref{ANaction}. Suppose that $(\zeta, v)\in \dg\mminus\{\infty\}$. For $a_t\in A$ we have
\[
a_t\infty = \infty \qquad\text{and}\qquad a_t(\zeta, v) = (t\zeta, t^{1/2}v).
\]
For $n_{(Z,X)}\in N$ we get $n_{(Z,X)}\infty = \infty$ and
\[
n_{(Z,X)}(\zeta, v)  = \big( \tfrac14|X|^2 + Z + \zeta + \tfrac12\beta_2(v,X), X+v\big).
\]
In $C\times V$-coordinates, the geodesic inversion $\sigma$ reads as
\[
 \sigma\infty = 0,\quad \sigma 0 = \infty,
\]
and for $(\zeta,v)\in\dg\mminus\{0,\infty\}$,
\begin{align*}
\sigma(\zeta, v) & = \frac{1}{(\Rea \zeta)^2 + |\Ima \zeta|^2} \big( \Rea\zeta, -\Ima\zeta, J_{-\Rea\zeta + \Ima\zeta}v\big)
\\
& = |\zeta|^{-2} \big( \overline\zeta, J_{-\overline\zeta}v\big) = \big(\zeta^{-1}, J_{-|\zeta|^{-2}\overline\zeta}v\big)
\\
& = \big(\zeta^{-1},-\zeta^{-1}v\big) = \zeta^{-1}(1,-v).
\end{align*}
The Cayley transform in $C\times V$-coordinates is
\begin{align*}
\mc C & \colon\left\{
\begin{array}{ccl}
B & \to & D 
\\
(\zeta,v) & \mapsto & |1-\zeta|^{-2}\big( 1+ 2\Ima\zeta - |\zeta|^2, 2(1-\overline\zeta)v \big)
\end{array}
\right.
\intertext{with}
\mc C^{-1} & \colon\left\{
\begin{array}{ccl}
D & \to & B 
\\
(\eta,u) & \mapsto & |1+\eta|^{-2}\big( -1+2\Ima\eta + |\eta|^2, 2(1+\overline\eta)u\big).
\end{array}
\right.
\end{align*}
The following proposition provides the explicit form of the group $M$.

\begin{proposition}
The group $M$ is the group of automorphisms of the $J^2C$-module structure
$(C,e,V,J)$, that is, the elements of $M$ are the pairs $(\varphi,\psi)$ of
orthogonal endomorphisms $\varphi\colon C\to C$ with $\varphi(e) = e$ and
$\psi\colon V\to V$ such that the diagram
\[
\xymatrix{
C\times V \ar[r]^J \ar[d]_{\varphi\times\psi} & V \ar[d]^{\psi}
\\
C\times V \ar[r]^J & V
}
\]
commutes. If $(\varphi,\psi)\in M$, then its action on $\dg$ is given by $(\varphi,\psi)(\infty) = \infty$ and $(\varphi,\psi)(\zeta,v) = (\varphi(\zeta),\psi(v))$ for $(\zeta,v)\in\dg\mminus\{\infty\}$.
\end{proposition}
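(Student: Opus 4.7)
The plan is to establish a bijection between $M$ and automorphisms of $(C,e,V,J)$ via the adjoint action on the Lie algebra $\mf n$, using the centralizer property to force the block-diagonal structure $\varphi' \times \psi$ on $\mf z\oplus\mf v$, and then extending $\varphi'$ to $C=\R e\oplus \mf z$ by $\varphi(ae+Z)=ae+\varphi'(Z)$. The action formula will follow from writing interior points via the simply transitive $S$-action on $D$.

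For the forward direction, given $(\varphi,\psi)$ with $\varphi(e)=e$ and $J\circ(\varphi\times\psi)=\psi\circ J$, I would restrict $\varphi$ to $C'=\mf z$ and verify that $(\varphi|_{\mf z},\psi)$ extends by the identity on $\mf a$ to a Lie algebra automorphism of $\mf s=\mf a\oplus\mf z\oplus\mf v$; this needs $\varphi([X,Y])=[\psi(X),\psi(Y)]$, which follows by pairing with $Z\in\mf z$ and using the defining identity $\langle Z,[X,Y]\rangle=\langle J_ZX,Y\rangle$ together with orthogonality of $\varphi$ and the intertwining relation. Exponentiating gives a Lie group automorphism of $S$ that fixes the identity and preserves the inner product on $\mf s$; transporting via the simply transitive action $s\mapsto s\cdot o_D$ produces an isometry of $D$ that fixes $o_D$ (hence lies in $K$) and commutes with $A$ (directly visible from $a_t(\zeta,v)=(t\zeta,t^{1/2}v)$), so it lies in $M$.

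For the reverse direction, let $m\in M$. Since $K$ acts by isometries fixing $o_D$, the identification $T_{o_D}D\cong\mf s$ coming from the $S$-action makes $dm|_{o_D}$ correspond to $\operatorname{Ad}(m)|_{\mf s}$, which is therefore orthogonal. Because $MAN$ is a subgroup (Bruhat), $m$ normalizes $N$ and $A$, so $\operatorname{Ad}(m)$ preserves $\mf n$. Commutativity with $A$ means $\operatorname{Ad}(m)$ commutes with $\operatorname{Ad}(a_t)$, whose eigenspaces on $\mf n$ are exactly $\mf v$ (eigenvalue $t^{1/2}$) and $\mf z$ (eigenvalue $t$) in light of $[H,X]=\tfrac12X$ and $[H,Z]=Z$. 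Hence $\operatorname{Ad}(m)|_{\mf n}=\varphi'\oplus\psi$ with $\varphi'\in O(\mf z)$, $\psi\in O(\mf v)$. Define $\varphi\in O(C)$ by $\varphi(e)=e$ and $\varphi|_{\mf z}=\varphi'$. The Lie algebra condition $\varphi'([X,Y])=[\psi(X),\psi(Y)]$, unpacked via $\langle Z,[\cdot,\cdot]\rangle=\langle J_Z\cdot,\cdot\rangle$ and orthogonality of $\varphi'$, yields $J_{\varphi'(Z)}\psi(X)=\psi J_ZX$ for $Z\in\mf z$; combined with $\varphi(e)=e$, $J_e=\operatorname{id}$, this gives $J\circ(\varphi\times\psi)=\psi\circ J$, i.e. $(\varphi,\psi)$ is an automorphism of $(C,e,V,J)$.

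It remains to check the action formula on $\dg$. By the Bruhat decomposition $G_\infty=MAN$, $m$ fixes $\infty$. Any interior point $p\in D$ has the form $s\cdot o_D$ for a unique $s=(t',Z',X')\in S=\R^+\times\mf z\times\mf v$, and $\Theta(s)=(t'+\tfrac14|X'|^2,Z',X')$, so in $(\zeta,v)$-coordinates $p=(\Rea\zeta+\Ima\zeta,v)$ with $\Rea\zeta=t'+\tfrac14|X'|^2$, $\Ima\zeta=Z'$, $v=X'$. Writing $s=a_{t'}n_{(Z'/t',X'/t'^{1/2})}$ and conjugating by $m$ gives $msm^{-1}=a_{t'}n_{(\varphi'(Z')/t',\psi(X')/t'^{1/2})}=(t',\varphi'(Z'),\psi(X'))$, so $m\cdot p=(msm^{-1})\cdot o_D=(t'+\tfrac14|\psi(X')|^2,\varphi'(Z'),\psi(X'))=(\Rea\zeta+\varphi'(\Ima\zeta),\psi(v))=(\varphi(\zeta),\psi(v))$, using $|\psi(X')|=|X'|$. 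By continuity this extends to $\partial D\smallsetminus\{\infty\}$; the one subtle point — and the main place one needs care — is justifying the pivotal orthogonality of $\operatorname{Ad}(m)|_{\mf n}$ (which relies on identifying $T_{o_D}D$ isometrically with $\mf s$ via the $S$-orbit map) and the bookkeeping between the $(t,Z,X)$- and $(\zeta,v)$-coordinate systems.
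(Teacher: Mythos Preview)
Your argument is correct and takes a genuinely different route from the paper. The paper transfers everything to the ball model $B$ via the Cayley transform, invokes \cite[Proposition~4.1]{Koranyi_Ricci_proofs} as a black box to identify $\wt K$ with the isometries fixing $0$ and the automorphism group with those also fixing $(1,0)\in\partial B$, and then carries out a lengthy explicit computation of $\wt a_t$ on $B$ to verify commutation; the converse direction is handled by a boundary fixed-point argument (only $(\pm1,0)$ are $\wt A$-fixed, and $(-1,0)$ is ruled out using $\wt\sigma$). Your approach stays on $D$ and is purely Lie-theoretic: the key observation that $m\cdot(s\cdot o_D)=(msm^{-1})\cdot o_D$ identifies $dm_{o_D}$ with $\operatorname{Ad}(m)|_{\mf s}$, after which the eigenspace decomposition of $\operatorname{ad}(H)$ on $\mf n$ forces the block form $\varphi'\oplus\psi$ and orthogonality comes for free from $m$ being an isometry. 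This avoids both the external citation and the explicit Cayley computations. One small tightening: your appeal to ``$MAN$ is a subgroup'' to deduce that $m$ normalizes $N$ is weaker than what you actually need and use two lines later---the cleaner (and self-contained) justification is exactly your eigenspace argument: $\operatorname{Ad}(m)$ commutes with $\operatorname{ad}(H)$ since $m$ centralizes $A$, hence preserves each root space $\mf g_\alpha$, $\mf g_{2\alpha}$, and therefore $\mf n$. With that adjustment the reverse direction needs no reference to Bruhat at all.
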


\begin{proof}
Let $\wt G \sceq \mc C^{-1} G \mc C$. For each $g\in G$ set $\wt g\sceq \mc
C^{-1} g \mc C$, and for each subset $T$ of $G$ let $\wt T\sceq \mc C^{-1} T \mc
C$ be the corresponding subset of $\wt G$. Clearly, $\wt G$ is the full isometry
group of $B$. We will first characterize the centralizer $Z_{\wt K}(\wt A)$ of
$\wt A$ in $\wt K$ as a subgroup of $\wt G$. Let $\wt M$ denote the automorphism
group of $(C,e,V,J)$ and define the action of $(\varphi,\psi)\in \wt M$ on $B$
by $(\varphi,\psi)(\zeta,v) \sceq (\varphi(\zeta),\psi(v))$. We will show that
$\wt M = Z_{\wt K}(\wt A)$. By \cite[Proposition~4.1]{Koranyi_Ricci_proofs} we have
that $\wt M\subseteq \wt K$. Let $a_t\in A$ and $(\zeta, v)\in B$. Then one
easily calculates that 
\begin{align*}
\wt a_t(\zeta, v) & = \mc C^{-1}\circ a_t \circ \mc C (\zeta, v)
\\
& = \left| |1-\zeta|^2 + t(1+2\Ima\zeta - |\zeta|^2) \right|^{-2} \cdot
\\
& \quad \cdot \Big(-|1-\zeta|^4 + 4t\Ima\zeta + t^2 \left|1+2\Ima\zeta-|\zeta|^2\right|^2, 
\\
& \quad\qquad\qquad 4t^{1/2} |1-\zeta|^2 \big(|1-\zeta|^2 + t(1-\Ima\zeta - |\zeta|^2) \big) \big( (1-\overline\zeta)v\big) \Big).
\end{align*}
Suppose that $\wt m = (\varphi,\psi) \in \wt M$. Since $\varphi(e) = e$, we have that $\varphi(\overline\zeta) = \overline{\varphi(\zeta)}$ and $\varphi(\Ima\zeta) = \Ima \varphi(\zeta)$ for each $\zeta\in C$. Moreover, $|\zeta| = |\varphi(\zeta)|$ for each $\zeta\in C$.  Then the first component of $\wt m\circ \wt a_t(\zeta, v)$ is given by 
\begin{align*}
& \varphi\left( \frac{ -|1-\zeta|^4 + 4t\Ima\zeta + t^2 \left|1+2\Ima\zeta - |\zeta|^2\right|^2}{\big| |1-\zeta|^2 + t(1+2\Ima\zeta - |\zeta|^2) \big|^{2}}\right)=
\\
& \hspace{3cm} 
= \frac{ -|1-\zeta|^4 + 4t\Ima\varphi(\zeta) + t^2\left|1+2\Ima\zeta - |\zeta|^2\right|^2}{ \big||1-\zeta|^2 + t(1+2\Ima\zeta - |\zeta|^2) \big|^{2}}
\\
& \hspace{3cm} 
= \frac{ -|1-\varphi(\zeta)|^4 + 4t\Ima\varphi(\zeta) + t^2\left|1+2\Ima\varphi(\zeta) - |\varphi(\zeta)|^2\right|^2}{ \big||1-\varphi(\zeta)|^2 + t(1+2\Ima\varphi(\zeta) - |\varphi(\zeta)|^2) \big|^{2}}.
\end{align*}
This is the first component of $\wt a_t\circ\wt m(\zeta,v)$. The second component of $\wt m\circ \wt a_t(\zeta, v)$ reads as
\begin{align*}
&\psi\left( \frac{\big( |1-\zeta|^2 + t(1-2\Ima\zeta -|\zeta|^2) \big)\big( (1-\overline\zeta)v\big)}{\big| |1-\zeta|^2 + t(1+2\Ima\zeta - |\zeta|^2) \big|^{2}}\right) = 
\\
& \hspace{2cm}
= \frac{\varphi\big( |1-\zeta|^2 + t(1-2\Ima\zeta -|\zeta|^2) \big) \psi\big( (1-\overline\zeta)v\big) }{\big| |1-\zeta|^2 + t(1+2\Ima\zeta - |\zeta|^2) \big|^{2}}
\\
& \hspace{2cm}
= \frac{\big( |1-\zeta|^2 + t(1-2\Ima\varphi(\zeta) -|\zeta|^2) \big) \big(
\varphi(1-\overline\zeta)\psi(v)\big) }{\big| |1-\zeta|^2 + t(1+2\Ima\zeta -
|\zeta|^2) \big|^{2}}
\\
& \hspace{2cm}
= \frac{\big( |1-\varphi(\zeta)|^2 + t(1-2\Ima\varphi(\zeta) -|\varphi(\zeta)|^2) \big) \big( (1-\overline{\varphi(\zeta)})\psi(v)\big) }{\big| |1-\varphi(\zeta)|^2 + t(1+2\Ima\varphi(\zeta) - |\varphi(\zeta)|^2) \big|^{2}}.
\end{align*}
Clearly, this is the second component of $\wt a_t\circ\wt m(\zeta,v)$. Hence, $\wt m$ commutes with each $\wt a_t$. This shows that $\wt M \subseteq Z_{\wt K}(\wt A)$.

Conversely, suppose that $\wt m \in Z_{\wt K}(\wt A)$. We have to show that
$\wt m(1,0) = (1,0)$. Then \cite[Proposition~4.1]{Koranyi_Ricci_proofs} implies that
$\wt m\in\wt M$. For each $t\in \R^+$, we have 
\[
 \wt a_t(1,0) = \lim_{n\to\infty} \wt a_t\big( 1-\tfrac1n,0\big) = (1,0).
\]
Thus, $\wt a_t( \wt m(1,0)) = \wt m(1,0)$. The only points on $\partial B$ that
are invariant under each $\wt a_t$ are $(1,0)$ and $(-1,0)$. Seeking a
contradiction assume that $\wt m(1,0)=(-1,0)$. Since $\wt \sigma = -\id$, we 
get $\wt\sigma\circ\wt m(1,0) = (1,0)$. By
\cite[Proposition~4.1]{Koranyi_Ricci_proofs} we have $\wt\sigma\circ\wt m \in \wt M$.
Our previous argument then shows that $\wt\sigma\circ\wt m$ commutes with all
$\wt a_t$. Therefore
\[
\wt a_t\circ \wt\sigma\circ\wt m =\wt\sigma\circ\wt m\circ \wt a_t = \wt\sigma \circ \wt a_t \circ \wt m,
\]
which means that $\wt\sigma$ commutes with each $\wt a_t$. This is a
contradiction. Hence it follows that $\wt m(1,0) = (1,0)$, which by
\cite[Proposition~4.1]{Koranyi_Ricci_proofs} shows that $\wt m\in \wt M$. Therefore
$\wt M = Z_{\wt K}(\wt A)$.
Now let $\wt m = (\varphi,\psi) \in \wt M$ and set $m\sceq \mc C \circ \wt
m\circ \mc C^{-1}$. For $(\eta,u)\in D$ we find
\begin{align*}
m(\eta, u) & = \mc C \circ \wt m\circ \mc C^{-1}(\eta, u) 
\\
& = \mc C \circ \wt m\left( |1+\eta|^{-2}\big( 1+2\Ima\eta + |\eta|^2, 2(1+\overline\eta)u \big) \right)
\\
& = \mc C\left( |1+\varphi(\eta)|^{-2} \big( 1+2\Ima\varphi(\eta) + |\varphi(\eta)|^2, 2(1+\overline{\varphi(\eta)})\psi(u) \big) \right) 
\\
& = ( \varphi(\eta), \psi(u) ).
\end{align*}
The remaining claim follows directly from continuity or, alternatively, from the
extension of the Cayley transform to $\overline B$. 
\bewend \end{proof}

\section{Isometric fundamental regions}\label{sec_fundreg}

Throughout this section let $(C,e,V,J)$ be a $J^2C$-module structure such that
$(C,V)\not=(\R e, \{0\})$ and suppose that $(\mf z,\mf v, J)$ is the
corresponding $H$-type algebra with $J^2$-condition. Recall the model $D$ of the
rank one Riemannian symmetric space of noncompact type which is constructed from
$(C,e,V,J)$ resp.\@ $(\mf z,\mf v,J)$ in Section~\ref{sec_D}. Let $G$ denote the
full isometry group of $D$.

The purpose of this section is to prove the existence of isometric fundamental
regions for certain subgroups $\Gamma$ of $G$. For this we first have to define
the notion of the isometric sphere of $g\in G\mminus G_\infty$, which is a
sphere \wrt Cygan metric. The Cygan metric is a metric on
$\dg\mminus\{\infty\}$ which arises from a certain group norm on
$\dg\mminus\{\infty\}$. This group norm is an extension of the Heisenberg
pseudonorm.
\subsection{The Cauchy-Schwarz Theorem for $\beta_2$}\label{sec_CSU}

In our setting, the proof of the Cauchy-Schwarz Theorem as it is usually taught in linear algebra cannot be adapted to the map $\beta_2$. Therefore we provide an alternative proof for which the following two lemmas are needed.

\begin{lemma}\label{orth} 
Let $u, v\in V$. Then $\beta_2(v,u)=0$ if and only if $v\in (Cu)^\perp$.
\end{lemma}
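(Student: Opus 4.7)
The plan is to prove this by directly unpacking the definition of $\beta_2$ and invoking non-degeneracy of the inner product on $C$. Recall from Section~\ref{def_beta2} that $\beta_2(v,u)\in C$ is characterized by
\[
\langle \beta_2(v,u),\zeta\rangle = \langle J_\zeta u, v\rangle \quad\text{for all $\zeta\in C$,}
\]
while the set $Cu = \{J_\zeta u \mid \zeta\in C\}$ by definition (see Section~\ref{sec_Cmodules}).

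First I would observe that $v\in (Cu)^\perp$ means precisely that $\langle v, J_\zeta u\rangle = 0$ for every $\zeta\in C$. By the defining relation for $\beta_2$, this is equivalent to $\langle \beta_2(v,u),\zeta\rangle = 0$ for every $\zeta\in C$. Since $\langle\cdot,\cdot\rangle$ is a (non-degenerate) inner product on the Euclidean space $C$, this last condition holds if and only if $\beta_2(v,u)=0$. Combining the two equivalences yields the claim.

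There is no real obstacle here: once one writes down the defining identity for $\beta_2$, the result is immediate from non-degeneracy of the inner product on $C$. The only subtlety worth flagging is that the element $0\in V$ is tacitly included in $Cu$ (it is $J_0 u$), so the orthogonality condition ranges over all of $C$ and not just $C\setminus\{0\}$; this matters only cosmetically and does not affect the argument.
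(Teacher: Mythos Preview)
Your proof is correct and follows essentially the same approach as the paper: both arguments unwind the defining identity $\langle \beta_2(v,u),\zeta\rangle = \langle J_\zeta u,v\rangle$ and invoke non-degeneracy of the inner product on $C$.
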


\begin{proof} We have $\beta_2(v,u) =0$ if and only if
\[ \langle \beta_2(v,u),\zeta\rangle = 0 \quad\text{for all $\zeta\in C$.}\]
By the definition of $\beta_2$, this holds if and only if
\[ \langle \zeta u, v\rangle = 0 \quad\text{for all $\zeta\in C$,}\]
hence if and only if $v\in (Cu)^\perp$. 
\bewend \end{proof}

\begin{lemma}\label{multiplicative} Let $u\in V$ and $\lambda\in C$. Then $\beta_2(\lambda u, u) = |u|^2\lambda$.
\end{lemma}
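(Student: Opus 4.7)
My plan is to unwind the definition of $\beta_2$ and reduce the claim to the polarization identity \eqref{polar}. Concretely, for every $\zeta \in C$ I would write
\[
\langle \beta_2(\lambda u, u), \zeta\rangle = \langle J_\zeta u, \lambda u\rangle = \langle \zeta u, \lambda u\rangle,
\]
so the target identity $\beta_2(\lambda u, u) = |u|^2\lambda$ is, by non-degeneracy of $\langle \cdot,\cdot\rangle$ on $C$, equivalent to the scalar identity $\langle \zeta u, \lambda u\rangle = |u|^2 \langle \zeta, \lambda\rangle$ holding for every $\zeta \in C$.

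To obtain this scalar identity, I would apply \eqref{polar} with the substitution $\eta = \lambda$ and $v = u$:
\[
\langle \zeta u, \lambda u\rangle + \langle \lambda u, \zeta u\rangle = 2\langle \zeta, \lambda\rangle \langle u, u\rangle.
\]
Symmetry of the inner product collapses the left-hand side to $2\langle \zeta u, \lambda u\rangle$, and dividing by $2$ yields $\langle \zeta u, \lambda u\rangle = |u|^2 \langle \zeta, \lambda\rangle$ as required.

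Combining the two steps gives $\langle \beta_2(\lambda u, u), \zeta\rangle = \langle |u|^2\lambda, \zeta\rangle$ for every $\zeta \in C$, and the non-degeneracy of the inner product on $C$ (which is the same tool used implicitly via Riesz in the very definition of $\beta_2$) finishes the argument. There is no real obstacle here; the only thing to be careful about is that $|u|^2 \lambda$ is to be read with $|u|^2 \in \mathbb{R}$ acting on $\lambda \in C$ by scalar multiplication, which is compatible with the identification $\mathbb{R} e = \mathbb{R}$ used throughout Section~\ref{sec_Cmodules}.
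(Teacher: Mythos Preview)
Your proof is correct and follows essentially the same approach as the paper's own argument: both unwind the definition of $\beta_2$, apply the polarization identity \eqref{polar} with $\eta=\lambda$ and $v=u$, and conclude via non-degeneracy of the inner product on $C$. The paper compresses the polarization step into a single equality, whereas you spell out the symmetry-and-halving, but the logic is identical.
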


\begin{proof} By the definition of $\beta_2$ and the polarization \eqref{polar} we have
\begin{align*}
\langle \beta_2(\lambda u, u),\zeta\rangle & = \langle \zeta u,\lambda u\rangle = \langle \lambda,\zeta\rangle |u|^2 = \langle |u|^2\lambda, \zeta\rangle
\end{align*}
for all $\zeta\in C$.  Hence $\beta_2(\lambda u,u)=|u|^2\lambda$. 
\bewend \end{proof}

\begin{proposition}\label{CSU} Let $u,v\in V$. Then 
\[ |\beta_2(u,v)| \leq |u| |v|.\]
Equality holds if and only if $v\in Cu$ or $u\in Cv$.
\end{proposition}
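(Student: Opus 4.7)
The plan is to run a projection argument in $V$, relative to the subspace $Cv$, using the two preceding lemmas as the key algebraic inputs. First, I would dispose of the degenerate case $v=0$: there both sides of the inequality vanish and $v=0\in Cu$ trivially, so equality is automatic. Henceforth assume $v\neq 0$.

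The central step is to decompose $u$ orthogonally as $u=\alpha v + w$ with $\alpha\in C$ and $w\in(Cv)^\perp$. By Lemma~\ref{orth}, $w\in(Cv)^\perp$ is equivalent to $\beta_2(w,v)=0$, i.e.\ to $\beta_2(u,v)=\beta_2(\alpha v,v)$. Applying Lemma~\ref{multiplicative} with the roles $u\leftrightarrow v$, $\lambda\leftrightarrow\alpha$ yields $\beta_2(\alpha v,v)=|v|^2\alpha$, so the decomposition exists and is unique with
\[
\alpha \;=\; |v|^{-2}\,\beta_2(u,v), \qquad w \;=\; u - \alpha v.
\]

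Since $\alpha v\in Cv$ and $w\in(Cv)^\perp$ are orthogonal in $V$, Pythagoras gives $|u|^2 = |\alpha v|^2 + |w|^2$. Using (\apref{M}{M2}{}) we have $|\alpha v|^2=|\alpha|^2|v|^2$, so
\[
|u|^2\,|v|^2 \;=\; |\alpha|^2 |v|^4 + |w|^2 |v|^2 \;\geq\; |\alpha|^2 |v|^4 \;=\; |\beta_2(u,v)|^2,
\]
which proves $|\beta_2(u,v)|\leq|u||v|$. Equality holds precisely when $|w|=0$, i.e.\ when $u=\alpha v\in Cv$. To recover the symmetric form of the equality condition stated in the proposition, note that, in the remaining case $u\neq 0$, one may run the same argument with the roles of $u$ and $v$ swapped (using $|\beta_2(v,u)|=|\overline{\beta_2(u,v)}|=|\beta_2(u,v)|$ from Proposition~\ref{beta_2_real}) to obtain equality iff $v\in Cu$; when $u=0$, one has $u\in Cv$ trivially. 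Thus equality is equivalent to $u\in Cv$ or $v\in Cu$.

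There is no real obstacle here beyond bookkeeping: the only subtlety is that $\beta_2$ takes values in $C$ rather than $\R$, which prevents a direct imitation of the usual $\langle u-\lambda v,u-\lambda v\rangle\geq 0$ argument with scalar $\lambda$; the projection formulation via Lemmas~\ref{orth} and \ref{multiplicative} sidesteps this cleanly, and (\apref{M}{M2}{}) takes care of the norm identity $|\alpha v|=|\alpha||v|$ needed to turn the Pythagorean identity into the desired inequality.
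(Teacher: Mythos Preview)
Your proof is correct and follows essentially the same projection argument as the paper: decompose one vector orthogonally with respect to the $C$-line through the other, then apply Lemmas~\ref{orth} and \ref{multiplicative} together with Pythagoras. The only cosmetic difference is that you project $u$ onto $Cv$ while the paper projects $v$ onto $Cu$, and you write the scalar $\alpha=|v|^{-2}\beta_2(u,v)$ explicitly whereas the paper leaves it implicit.
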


\begin{proof} Let $(v_1,v_2)\in Cu \times (Cu)^\perp$ be the unique pair such that $v=v_1+v_2$. Using that $\beta_2$ is $C$-hermitian (see Proposition~\ref{beta_2_real}), Lemma~\ref{orth} yields
\[ \beta_2(u,v) = \beta_2(u,v_1) + \beta_2(u,v_2) = \beta_2(u,v_1).\]
Then Lemma~\ref{multiplicative} shows
\[ |\beta_2(u,v_1)|^2 = |u|^2 |v_1|^2.\]
Clearly 
\[ |v_1|^2 \leq |v_1|^2 + |v_2|^2 = |v|^2, \]
where equality holds if and only if $v_2=0$, hence if and only if $v=v_1\in Cu$. 
Thus,
\[ |\beta_2(u,v)|^2 = |\beta_2(u,v_1)|^2 = |u|^2 |v_1|^2 \leq |u|^2 |v|^2, \]
where the inequality is an equality if and only if $u=0$ or $v\in Cu$. This
proves the claim. 
\bewend \end{proof}
\subsection{H-coordinates, Cygan metric, and isometric
spheres}\label{sec_isospheres}

Let $z=(t,Z,X)\in \dg\mminus\{\infty\}$ and recall the map $\Theta$ from
\eqref{Theta}. The \textit{horospherical coordinates} or \textit{H-coordinates}
of $z$ relative to the origin $o_D$ of $D$ are defined as
\[
 \Theta^{-1}(z) = \big( t-\tfrac14|X|^2, Z, X\big).
\]
To avoid confusion with the ordinary coordinates of $\dg$, H-coordinates will
be subscripted by $h$. Hence,  the H-coordinates of $z$ are denoted by $\big(
t-\tfrac14|X|^2, Z, X\big)_h$.

Since $\Theta$ induces a bijection between $\dg\mminus\{\infty\}$ and the
topological closure 
\[
\overline S = \R^+_0\times\mf z\times \mf v
\]
of $S$ in $\R\times\mf z\times\mf v$, there is a geometric characterization of
H-coordinates. We elaborate on this in the following remark.

\begin{remdef}
Let  $z=(t, Z,X)=(\zeta, v)\in \dg\mminus\{\infty\}$. The \textit{horosphere}
through $z$ with center $\infty$ is the $N$-orbit of $z$. We extend the group
$A$ to the set
\[ A^+ \sceq A \cup \{ a_0\}, \]
where $a_0\colon \dg \to \dg$ is defined by $a_0 \infty \sceq \infty$ and $a_0z
\sceq 0$ for all $z\in \dg\mminus\{\infty\}$.
Then there is a unique pair $(a_s, n) \in A^+ \times N$ such that 
\[ na_s(o_D) = z.\]
The \textit{height} of $z$ is defined as
\[ \height(z)\sceq t - \tfrac14 |X|^2 = \Rea\zeta - \tfrac14|v|^2. \]
Formula~\eqref{ANaction} shows that $s=\height(z)$ and $n=(1,Z,X)$. Hence the
H-coordinates of $z$ are 
\[ (\height(z), Z, X)_h = (\height(z), \Ima\zeta, v)_h.\]
This means that the H-coordinates are given by the height of the
$\infty$-centered horosphere on which $z$ lies and the coordinates of $z$ in the
canonical parametrization of this horosphere.
\end{remdef}

The Cygan metric on $D$ is a metric on $\dg\mminus\{\infty\}$ which arises from
a certain group norm. If $(\mc G,\cdot)$ is a group with neutral element $1_{\mc
G}$, then a map
$p\colon \mc G\to \R^+_0$ is called a \textit{group norm} if 
\begin{enumerate}[(GN1)]
\item\label{GN1}  $p(g) = 0$ if and only if $g=1_{\mc G}$,
\item\label{GN2}  $p(g^{-1}) = p(g)$ for all $g\in \mc G$,
\item\label{GN3}  $p(gh) \leq p(g) + p(h)$ for all $g,h\in \mc G$.
\end{enumerate}
Suppose that $p$ is a group norm on $\mc G$, then the map $d\colon \mc G\times
\mc G\to \R$, $d(g,h) \sceq p(g^{-1}h)$ is a metric on $\mc G$. It is called the
\textit{metric induced by $p$}.

A well-known example of a group norm and a pre-form of the group norm for the
Cygan metric is the Heisenberg pseudonorm on $N$. More precisely, $N$ inherits
an inner product from its canonical bijection to $\mf z\times\mf v$. Then the 
\textit{Heisenberg group norm} $q$ (which is also known as the
\textit{Heisenberg pseudonorm}) on $N$  is defined by   
\[ q(Z,X) \sceq \left| \tfrac14 |X|^2 + Z \right|^{1/2} = \left( \tfrac1{16}|X|^4 + |Z|^2 \right)^{1/4}.\]
The last equality holds because $\mf z$ and $\R \cong\mf a$ are orthogonal.
Since each height level set of $\dg\mminus\{\infty\}$ is isomorphic to $N$, the 
metric induced from the Heisenberg group norm measures the distance between two
elements in the same height level set.

To be able to also measure the distance between elements in different height
level sets, we extend the Heisenberg pseudonorm to the direct product of the
groups $(\R, +)$ (``differences between height level sets'') and $N$ by
\[ 
p\colon\left\{
\begin{array}{ccl}
\R\times N & \to & \R
\\[1mm]
(k,Z,X) &\mapsto & \left| \tfrac14 |X|^2 + |k| + Z\right|^{1/2}.
\end{array}
\right.
\]
Obviously, $p\vert_{\{0\}\times N} =q$.

\begin{proposition} 
The map $p$ is a group norm on $\R\times N$. 
\end{proposition}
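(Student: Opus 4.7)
The plan is to verify the three group-norm axioms (GN1)--(GN3) in turn. The identity of $\R\times N$ is $(0,0,0)$; its multiplication and inversion, inherited from the group law on $N$ described in~\eqref{mult_S}, read
\[
(k_1,Z_1,X_1)(k_2,Z_2,X_2)=\bigl(k_1+k_2,\,Z_1+Z_2+\tfrac12[X_1,X_2],\,X_1+X_2\bigr),\qquad (k,Z,X)^{-1}=(-k,-Z,-X).
\]

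For (GN1) and (GN2) I exploit the orthogonal splitting $C=\R e\oplus\mf z$: the element $\tfrac14|X|^2+|k|+Z$ has real part $\tfrac14|X|^2+|k|\geq 0$ and $\mf z$-part $Z$, so $\bigl|\tfrac14|X|^2+|k|+Z\bigr|^2=(\tfrac14|X|^2+|k|)^2+|Z|^2$. This vanishes precisely when $X=0$, $k=0$ and $Z=0$, which gives (GN1); and since the inverse in $\R\times N$ only flips the sign of $Z$, the norm is unchanged under inversion, which gives (GN2).

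For (GN3), set $p_i\sceq p(k_i,Z_i,X_i)$ and $a_i\sceq\tfrac14|X_i|^2+|k_i|+Z_i\in C$, so that $|a_i|=p_i^2$. Expanding $\tfrac14|X_1+X_2|^2=\tfrac14|X_1|^2+\tfrac14|X_2|^2+\tfrac12\langle X_1,X_2\rangle$ and using Lemma~\ref{re_and_im} to recognize $\langle X_1,X_2\rangle+[X_1,X_2]=\beta_2(X_2,X_1)$, one obtains the crucial identity
\[
a_1+a_2+\tfrac12\beta_2(X_2,X_1)=\tfrac14|X_1+X_2|^2+(|k_1|+|k_2|)+Z_1+Z_2+\tfrac12[X_1,X_2].
\]
The argument of $p$ on the product differs from this only in carrying $|k_1+k_2|$ instead of $|k_1|+|k_2|$; since both are nonnegative and $|k_1+k_2|\leq|k_1|+|k_2|$, the orthogonal splitting of $C$ forces
\[
\bigl|\tfrac14|X_1+X_2|^2+|k_1+k_2|+Z_1+Z_2+\tfrac12[X_1,X_2]\bigr|\leq \bigl|a_1+a_2+\tfrac12\beta_2(X_2,X_1)\bigr|.
\]
The triangle inequality in $C$, the Cauchy--Schwarz bound $|\beta_2(X_2,X_1)|\leq |X_1||X_2|$ from Proposition~\ref{CSU}, and the estimate $|X_i|\leq 2p_i$ (immediate from $p_i^2=|a_i|\geq\tfrac14|X_i|^2$) then yield
\[
\bigl|a_1+a_2+\tfrac12\beta_2(X_2,X_1)\bigr|\leq p_1^2+p_2^2+\tfrac12\cdot 2p_1\cdot 2p_2=(p_1+p_2)^2,
\]
and taking square roots establishes (GN3).

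The main obstacle is this last triangle-inequality step; all other axioms follow routinely from the orthogonality of $\R e$ and $\mf z$ in $C$. The key observation enabling the estimate is that the bracket cross-term $\tfrac12[X_1,X_2]$ produced by the $N$-multiplication combines with the real cross-term $\tfrac12\langle X_1,X_2\rangle$ produced by expanding $|X_1+X_2|^2$ into exactly $\tfrac12\beta_2(X_2,X_1)$, so that Proposition~\ref{CSU}---the Cauchy--Schwarz inequality for the $C$-hermitian form $\beta_2$---plays the role of the classical real Cauchy--Schwarz inequality used in the familiar proof that the Heisenberg pseudonorm is a group norm.
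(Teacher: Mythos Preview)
Your proof is correct and follows essentially the same approach as the paper's own proof: both verify (GN1)--(GN2) from the orthogonal splitting $C=\R e\oplus\mf z$, and both handle (GN3) by first replacing $|k_1+k_2|$ by $|k_1|+|k_2|$ via that same orthogonality, then recognizing the cross-term as $\tfrac12\beta_2(X_2,X_1)$ via Lemma~\ref{re_and_im}, and finally applying Proposition~\ref{CSU} together with the estimate $\tfrac12|X_i|\leq p_i$ to reach $(p_1+p_2)^2$. The only difference is packaging---you introduce the shorthand $a_i$ and state the intermediate identity explicitly---but the argument is the same.
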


\begin{proof} The neutral element of $\R\times N$ is $(0,0,0)$, and $\mf z$ is
orthogonal to $\R$. This implies that $p$ satisfies (\apref{GN}{GN1}{}). To
prove (\apref{GN}{GN2}{}) let $g=(k,Z,X)\in \R\times N$. Then  
\begin{align*} 
p( g^{-1}) & = p( -k, -Z, -X) = \left| \tfrac14 |X|^2 + |k| - Z \right|^{1/2}
 = \left( \left( \tfrac14|X|^2 + |k|\right)^2 + |Z|^2\right)^{1/4}
\\ & = \left| \tfrac14 |X|^2 + |k| + Z \right|^{1/2}
 = p(g).
\end{align*} 
The triangle equality (\apref{GN}{GN3}{}) is shown in several steps. For each $g=(k,Z,X)\in \R\times N$ we have
\begin{align*}
p(g) & = \left|\tfrac14|X|^2 + |k|+ Z \right|^{1/2} = \left( \left( \tfrac14 |X|^2 + |k|\right)^2 + |Z|^2 \right)^{1/4}
\\ & \geq \left( \left(\tfrac14 |X|^2\right)^2 \right)^{1/4} = \tfrac12 |X|.
\end{align*}
This and Proposition~\ref{CSU} yield that
\[ \tfrac14 |\beta_2(X_2,X_1)| \leq \tfrac{ |X_1|}{2} \tfrac{|X_2|}{2} \leq p(k_1,Z_1,X_1) p(k_2,Z_2,X_2) \]
for all $(k_j,Z_j,X_j)\in \R\times N$, $j=1,2$. Further, for all $(Z,X)\in N$ and $k_1,k_2\in\R$, we find
\begin{align*}
\left| \tfrac14 |X|^2 + |k_1+k_2| + Z\right|^{1/2} & = \left( \left( \tfrac14 |X|^2 + |k_1+k_2|\right)^2 + |Z|^2\right)^{1/4}
\\ & \leq \left( \left( \tfrac14 |X|^2 + |k_1| + |k_2|\right)^2 + |Z|^2\right)^{1/4}
\\ & =  \left| \tfrac14 |X|^2 + |k_1| + |k_2| + Z \right|^{1/2}.
\end{align*}
Now let $g_j=(k_j,Z_j, X_j)\in \R\times N$, $j=1,2$. Then  
\begin{align*}
p( g_1g_2) & = p\big(k_1+k_2, Z_1+Z_2 + \tfrac12 [X_1,X_2], X_1 + X_2\big)
\\ & = \left| \tfrac14 |X_1+X_2|^2 + |k_1+k_2| + Z_1+Z_2 + \tfrac12 [X_1,X_2]\right|^{1/2}
\\ & \leq \left| \tfrac14 |X_1+X_2|^2 + |k_1|+|k_2| + Z_1 + Z_2 + \tfrac12 [X_1,X_2]\right|^{1/2}
\\ & = \left| \tfrac14 |X_1|^2 + |k_1|+ Z_1 + \tfrac14 |X_2|^2 + |k_2| + Z_2 + \tfrac12\left(\langle X_1,X_2\rangle + [X_1,X_2] \right) \right|^{1/2}.
\end{align*}
Recall from Lemma~\ref{re_and_im} that 
\[
 \langle X_1, X_2\rangle + [X_1,X_2] = \beta_2(X_2,X_1).
\]
Then
\begin{align*}
p(g_1g_2) & \leq \left| \tfrac14 |X_1|^2 + |k_1|+ Z_1 + \tfrac14 |X_2|^2 + |k_2| + Z_2 + \tfrac12\beta_2(X_2,X_1)\right|^{1/2}
\\
& \leq \left[ \left|\tfrac14 |X_1|^2 + |k_1|+ Z_1 \right| + \left|\tfrac14 |X_2|^2 + |k_2| + Z_2 \right| + \tfrac12 \left|\beta_2(X_2,X_1) \right| \right]^{1/2}
\\ & =  \left[ p(g_1)^2 + p(g_2)^2 + \tfrac12|\beta_2(X_2,X_1)| \right]^{1/2}
\\ & \leq \left[ p(g_1)^2 + p(g_2)^2 + 2p(g_1)p(g_2) \right]^{1/2}
\\ & = p(g_1) + p(g_2).
\end{align*}
This completes the proof. 
\bewend \end{proof}

The definition of the Cygan metric uses the H-coordinates on $\dg\mminus\{\infty\}$. We use the  map
\[
\kappa\colon \left\{
\begin{array}{ccl}
\dg\mminus\{\infty\} & \to & \R\times N
\\
z & \mapsto & \Theta^{-1}(z)
\end{array}
\right.
\]
to assign to each element of $\dg\mminus\{\infty\}$ its H-coordinates. The
advantage of using H-coordinates is that the points of $\dg\mminus\{\infty\}$
get
embedded into the group $\R\times N$. This in turn allows to perform the group
operations of $\R\times N$ on  $\kappa(z_1), \kappa(z_2)$ for elements
$z_1,z_2\in \dg\mminus\{\infty\}$.

\begin{definition} 
The \textit{Cygan metric on $D$} is given by
\[
\varrho\colon\left\{
\begin{array}{ccl}
\dg\mminus\{\infty\} \times \dg\mminus\{\infty\} & \to & \R
\\
(g_1,g_2) & \mapsto & p\big(\kappa(g_1)^{-1}\kappa(g_2)\big).
\end{array}
\right.
\]
Since $\kappa$ is injective, the Cygan metric is in fact a metric on $\dg\mminus\{\infty\}$.
\end{definition}

The following lemma provides worked out formulas for the Cygan metric. It can be proved by a straightforward calculation.

\begin{lemma}\label{Cyganformula}
Let $z_j=(\zeta_j, v_j) = (k_j, Z_j, X_j)_h$ be elements of $\dg\mminus\{\infty\}$. Then the Cygan metric is given by
\begin{align*}
\varrho(z_1,z_2) & =  
\left| \tfrac14 |X_1-X_2|^2 + |k_1-k_2| +  Z_1-Z_2 + \tfrac12\Ima\beta_2(X_2,X_1) \right|^{1/2}
\\ & = \left| \tfrac14 |v_1|^2 + \tfrac14 |v_2|^2 + |\height(z_1) - \height(z_2)| + \Ima\zeta_1 - \Ima\zeta_2 - \tfrac12\beta_2(v_1,v_2)\right|^{1/2}.
\end{align*}
\end{lemma}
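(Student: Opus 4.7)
The plan is to unwind the definition $\varrho(z_1,z_2) = p\bigl(\kappa(z_1)^{-1}\kappa(z_2)\bigr)$ and carry out the two required simplifications: first express the right-hand side in the H-coordinates $(k_j,Z_j,X_j)$, and then rewrite the resulting expression in terms of the ordinary coordinates $(\zeta_j,v_j)$ using Lemma~\ref{re_and_im}.

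For the first equality I would proceed as follows. By construction $\kappa(z_j)=(k_j,Z_j,X_j)\in\R\times N$, so using the group law on $\R\times N$ (product of $(\R,+)$ with the multiplication \eqref{mult_S} specialized to $t_1=t_2=1$) one computes
\[
\kappa(z_1)^{-1}\kappa(z_2) = \bigl(k_2-k_1,\; Z_2-Z_1-\tfrac12[X_1,X_2],\; X_2-X_1\bigr).
\]
Applying $p$ and observing that $|k_2-k_1|=|k_1-k_2|$, $|X_2-X_1|=|X_1-X_2|$, and that the element $Z_2-Z_1-\tfrac12[X_1,X_2]\in\mf z$ has the same norm as its negative $Z_1-Z_2+\tfrac12[X_1,X_2]$, one obtains
\[
\varrho(z_1,z_2) = \left|\tfrac14|X_1-X_2|^2 + |k_1-k_2| + Z_1-Z_2 + \tfrac12[X_1,X_2]\right|^{1/2}.
\]
Lemma~\ref{re_and_im} gives $[X_1,X_2]=\Ima\beta_2(X_2,X_1)$, yielding the first stated formula.

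For the second equality I would pass from H-coordinates to the ordinary coordinates $(\zeta_j,v_j)$ via the relations $k_j=\height(z_j)=\Rea\zeta_j-\tfrac14|v_j|^2$, $Z_j=\Ima\zeta_j$, and $X_j=v_j$. Splitting the argument of $|\cdot|^{1/2}$ into its $\R$-part and its $\mf z$-part, the real part becomes
\[
\tfrac14|v_1-v_2|^2 + |\height(z_1)-\height(z_2)| = \tfrac14|v_1|^2+\tfrac14|v_2|^2-\tfrac12\langle v_1,v_2\rangle + |\height(z_1)-\height(z_2)|,
\]
and by Proposition~\ref{beta_2_real} together with Lemma~\ref{re_and_im} we have $\Rea\beta_2(v_1,v_2)=\langle v_1,v_2\rangle$, so the $-\tfrac12\langle v_1,v_2\rangle$ contribution matches $-\tfrac12\Rea\beta_2(v_1,v_2)$. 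The $\mf z$-part becomes
\[
\Ima\zeta_1-\Ima\zeta_2+\tfrac12[v_1,v_2] = \Ima\zeta_1-\Ima\zeta_2-\tfrac12\Ima\beta_2(v_1,v_2),
\]
again by Lemma~\ref{re_and_im}. Combining the two parts produces exactly the second formula.

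The computation is routine; the only place requiring care is the sign bookkeeping in the $\mf z$-component, where one has to pass between $[X_1,X_2]$ and $[X_2,X_1]=\Ima\beta_2(X_1,X_2)=-\Ima\beta_2(X_2,X_1)$ and account for the fact that $|\,\cdot\,|$ on $\R\oplus\mf z$ is insensitive to flipping the sign of the $\mf z$-component but sensitive to the sign of the $\R$-part. This step is the main potential source of error, but it is just a matter of keeping track of conjugations and of the antisymmetry of the bracket.
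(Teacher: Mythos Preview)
Your proof is correct and follows exactly the straightforward calculation that the paper alludes to (the paper omits the proof entirely, stating only that it ``can be proved by a straightforward calculation''). Your treatment of the sign issue in the $\mf z$-component---using that $|a+W|=|a-W|$ for $a\in\R$, $W\in\mf z$---is precisely the right observation, and your invocation of Lemma~\ref{re_and_im} to pass between brackets and imaginary parts of $\beta_2$ is accurate.
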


\begin{convention}
Let $g\in G\mminus G_\infty$. Whenever in the
following we write $g=n_1\sigma m a_t n_2$, it
should be understood that $n_1,n_2\in N$, $a_t\in A$ and $m\in M$.
\end{convention}

\begin{definition}\label{def_isospheres}
Let $g\in G\mminus G_\infty$. Suppose that  $g=n_1\sigma ma_tn_2$, and let
$R(g) \sceq t^{-1/4}$. Then the
set 
\[ I(g) \sceq \big\{ z\in D \ \big\vert\ \varrho\big(z,g^{-1}\infty\big) = R(g) \big\} \]
is called the \textit{isometric sphere of $g$}. Further, 
\[ \Ext I(g) \sceq \big\{ z\in D \ \big\vert\  \varrho\big(z,g^{-1}\infty\big) > R(g) \big\} \]
is called the \textit{exterior} of $I(g)$, and
\[ \Int I(g) \sceq \big\{ z\in D \ \big\vert\  \varrho\big(z,g^{-1}\infty\big) < R(g) \big\} \]
is called the \textit{interior} of $I(g)$. The value $R(g)$ is
the \textit{radius}  of $I(g)$.
\end{definition}

\begin{lemma}\label{metric_formula} Let $g=n_1\sigma ma_tn_2\in G\mminus G_\infty$ and $n_2=(1, Z_{s2}, X_{s2})$. Then 
\[ \varrho\big( (\zeta, v), g^{-1}\infty\big) = \Big| \tfrac14 |X_{s2}|^2 + Z_{s2} + \zeta + \tfrac12\beta_2(v,X_{s2})\Big|^{1/2}.\]
Further $\varrho\big( \cdot, g^{-1}\infty\big)$ is unbounded on $D$, and hence $\Ext I(g) \not=\emptyset$.
\end{lemma}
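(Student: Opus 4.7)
The plan is to first locate the point $g^{-1}\infty$ in $\dg$ by decomposing $g^{-1}$ and using the action formulas collected in Section~\ref{sec_isomgroup}, and then to substitute into the closed form of the Cygan metric given in Lemma~\ref{Cyganformula}.

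Since $g = n_1\sigma ma_tn_2$, we have $g^{-1} = n_2^{-1}a_t^{-1}m^{-1}\sigma^{-1}n_1^{-1}$, so I would compute $g^{-1}\infty$ by applying these factors to $\infty$ from the right. The point $\infty$ is fixed by $n_1^{-1}\in N\subseteq G_\infty$. Next, $\sigma^{-1}=\sigma$ (the geodesic inversion is an involution) and $\sigma\infty=0$ in $C\times V$-coordinates. The three remaining factors all lie in $MA$ and hence fix the origin $0\in\dg\mminus\{\infty\}$: indeed $a_s\cdot 0=0$ by \eqref{ANaction}, and $(\varphi,\psi)\cdot 0=0$ by the explicit form of $M$. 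Consequently
\[
g^{-1}\infty = n_2^{-1}\cdot 0.
\]
Writing $n_2=(1,Z_{s2},X_{s2})$, one has $n_2^{-1}=(1,-Z_{s2},-X_{s2})$, and the $N$-action formula from Section~\ref{sec_isomgroup} yields
\[
g^{-1}\infty = \big(\tfrac14|X_{s2}|^2-Z_{s2},\ -X_{s2}\big).
\]
In particular, this point has $\Rea$-part $\tfrac14|X_{s2}|^2=\tfrac14|{-X_{s2}}|^2$, so it sits on $\partial_g D$ and has height $0$.

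Next, I would apply the second form of the Cygan metric from Lemma~\ref{Cyganformula} to $z_1=(\zeta,v)$ and $z_2=g^{-1}\infty$. With $v_2=-X_{s2}$ one has $\tfrac14|v_2|^2=\tfrac14|X_{s2}|^2$, $\height(z_2)=0$ (so $|\height(z_1)-\height(z_2)|=\height(z_1)=\Rea\zeta-\tfrac14|v|^2$ since $z_1\in D$), $\Ima\zeta_2=-Z_{s2}$, and by $\R$-bilinearity $\beta_2(v,-X_{s2})=-\beta_2(v,X_{s2})$. Collecting the five terms, the contributions $\tfrac14|v|^2$ from $\tfrac14|v_1|^2$ and from $\height(z_1)$ cancel, and $\Rea\zeta+\Ima\zeta=\zeta$, giving
\[
\varrho\big((\zeta,v),g^{-1}\infty\big)^2 \;=\; \Big|\tfrac14|X_{s2}|^2+Z_{s2}+\zeta+\tfrac12\beta_2(v,X_{s2})\Big|,
\]
which is the claimed identity.

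For the unboundedness, I would test the formula along the one-parameter curve $z(s)=(s,0,0)\in D$ for $s>0$. Then $\zeta=s$, $v=0$, and $\beta_2(0,X_{s2})=0$, so the argument of $|\cdot|$ equals $(\tfrac14|X_{s2}|^2+s)+Z_{s2}\in \R\oplus\mf z$. Since $\R$ and $\mf z$ are orthogonal in $C$, its modulus is at least $\tfrac14|X_{s2}|^2+s$, which tends to $\infty$ with $s$. Hence $\varrho(\cdot,g^{-1}\infty)$ is unbounded on $D$, and because $R(g)$ is finite, the sublevel complement $\Ext I(g)$ is nonempty. The only potential snag is a consistent bookkeeping of real versus imaginary parts in $C$ when using the second form of the Cygan metric; beyond that the argument is purely a substitution.
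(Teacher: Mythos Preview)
Your proof is correct and follows essentially the same approach as the paper: compute $g^{-1}\infty = n_2^{-1}0 = (\tfrac14|X_{s2}|^2 - Z_{s2},\,-X_{s2})$ and then substitute into Lemma~\ref{Cyganformula}. The only cosmetic difference is in the unboundedness argument, where the paper pushes a generic point $(\zeta,v)$ along the ray $(t\zeta,v)$, $t\to\infty$, and uses the reverse triangle inequality, while you use the simpler ray $(s,0,0)$; both work equally well.
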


\begin{proof} We have
\[ g^{-1}\infty  = n_2^{-1}0 = \left(\tfrac14 |X_{s2}|^2 - Z_{s2}, -X_{s2} \right).\]
Thus, the expression for $\varrho( (\zeta,v), g^{-1}\infty)$ follows
immediately from Lemma~\ref{Cyganformula}. We now prove the second part of the
claim. Let $(\zeta, v)\in D$ and $t>1$. Then 
\[ \height\big( (t\zeta, v) \big) = t\Rea \zeta - \tfrac14 |v|^2 > \Rea\zeta -\tfrac14 |v|^2 > 0.\]
Hence $(t\zeta, v)\in D$. Further 
\begin{align*}
\varrho\big( (t\zeta, v), g^{-1}\infty\big)^2 & = \left| t\zeta + \tfrac14|X_{s2}|^2 + Z_{s2} + \tfrac12 \beta_2(v,X_{s2})\right| 
\\  & \geq t|\zeta| - \left|\tfrac14|X_{s2}|^2 + Z_{s2} + \tfrac12\beta_2(v,X_{s2}) \right|,
\end{align*}
which converges to $\infty$ for $t\to\infty$. Hence $\varrho\big(\cdot,
g^{-1}\infty\big)$ is unbounded. This completes the proof. 
\bewend \end{proof}

\begin{lemma} \label{propext}
Let $g\in G\mminus G_\infty$. Then
\begin{enumerate}[{\rm (i)}]
\item \label{propexti} $\Ext I(g)$ and $\Int I(g)$ are open,
\item \label{propextii} $\overline{\Ext I(g)} = \big\{ z\in D \ \big\vert\ \varrho\big(z,g^{-1}\infty\big) \geq R(g) \big\} = \complement\Int I(g)$,
\item \label{propextiii} $\overline{\Int I(g)} = \big\{ z\in D \ \big\vert\ \varrho(z,g^{-1}\infty) \leq R(g) \big\} = \complement\Ext I(g)$.
\item \label{propextiv} If $(\zeta, v) \in \overline{\Int I(g)}$, then
$(\zeta-s,v) \in \Int I(g)$ for each $s\in (0,\height(\zeta))$.
\item \label{propextv} If $(\zeta, v)\in \overline{\Ext I(g)}$, then
$(\zeta+s,v)\in \Ext I(g)$ for each $s>0$.
\end{enumerate}
\end{lemma}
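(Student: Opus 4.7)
My approach reduces the lemma to a single monotonicity property of the Cygan distance along vertical translations: for each $(\zeta,v)\in D$ the real-valued function
\[
\varphi(s)\sceq \varrho\bigl((\zeta+s,v),g^{-1}\infty\bigr)^{2}
\]
is strictly increasing in $s$ on the interval $(-\height((\zeta,v)),\infty)$, the left endpoint being dictated by the requirement that $(\zeta+s,v)$ lie in $D$. Part (i) is handled separately at the outset: Lemma~\ref{metric_formula} exhibits $\varrho(\cdot,g^{-1}\infty)$ on $D$ as $|w|^{1/2}$ for a $w\in C$ depending polynomially, hence continuously, on the coordinates, so $\Ext I(g)$ and $\Int I(g)$ are preimages of open rays under a continuous function and are open.

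For the monotonicity, write $n_{2}=(1,Z_{s2},X_{s2})$ and set
\[
w\sceq \tfrac14|X_{s2}|^{2}+Z_{s2}+\zeta+\tfrac12\beta_{2}(v,X_{s2})\in C,
\]
so that Lemma~\ref{metric_formula} gives $\varphi(s)=|w+s|^{2}=(\Rea w+s)^{2}+|\Ima w|^{2}$ by the orthogonal decomposition $C=\R e\oplus C'$. Since $Z_{s2}\in C'$ and $\Rea\beta_{2}(v,X_{s2})=\langle v,X_{s2}\rangle$ by Lemma~\ref{re_and_im}, completing the square yields
\[
\Rea w = \Rea\zeta+\tfrac14|X_{s2}|^{2}+\tfrac12\langle v,X_{s2}\rangle
       = \height((\zeta,v))+\tfrac14|v+X_{s2}|^{2}.
\]
Consequently $\varphi'(s)=2(\Rea w+s)=2(\height((\zeta,v))+s)+\tfrac12|v+X_{s2}|^{2}>0$ for all $s>-\height((\zeta,v))$, giving strict monotonicity.

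Granting this, parts (ii)--(v) fall out. The middle equalities in (ii) and (iii) are reformulations of the definitions of interior and exterior; the inclusions $\overline{\Ext I(g)}\subseteq\{\varrho\geq R(g)\}$ and $\overline{\Int I(g)}\subseteq\{\varrho\leq R(g)\}$ follow from part (i) since each right-hand side is closed. For the reverse inclusions it suffices to realize every $z_{0}=(\zeta_{0},v_{0})\in I(g)$ as a limit point from both sides: the sequence $(\zeta_{0}+1/n,v_{0})$ lies in $\Ext I(g)$ by monotonicity, while $(\zeta_{0}-1/n,v_{0})\in D\cap\Int I(g)$ for large $n$ (since $\height(z_{0})>0$), and both converge to $z_{0}$. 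Parts (iv) and (v) then combine these identifications with monotonicity: if $(\zeta,v)\in\overline{\Int I(g)}$ then $\varphi(0)\leq R(g)^{2}$ and $\varphi(-s)<\varphi(0)$ for $0<s<\height((\zeta,v))$, and symmetrically for (v).

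The single substantive computation is the identity $\Rea w = \height((\zeta,v)) + \tfrac14|v+X_{s2}|^{2}$, and its geometric content is the conceptual point of the lemma: the real part of the $C$-valued quantity whose norm computes the squared Cygan distance is bounded below by the height of the base point, with a perfect-square error, so moving vertically upward from any point of $D$ strictly increases the distance to the boundary point $g^{-1}\infty$. Everything else is continuity plus an elementary parabola argument.
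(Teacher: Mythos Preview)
Your approach is essentially the same as the paper's: both proofs reduce everything to the identity $\Rea w = \height((\zeta,v)) + \tfrac14|v+X_{s2}|^{2} > 0$ and then use it to show that the Cygan distance to $g^{-1}\infty$ is strictly monotone under vertical translation, the paper by direct comparison at points of $I(g)$ and you via a derivative. One slip to fix: since Lemma~\ref{metric_formula} gives $\varrho = |w|^{1/2}$, your $\varphi(s)=\varrho^{2}$ equals $|w+s|$, not $|w+s|^{2}$; either redefine $\varphi=\varrho^{4}$ or correct the derivative to $\varphi'(s)=(\Rea w+s)/|w+s|$ --- in either case the sign is still governed by $\Rea w + s > 0$ and the argument goes through unchanged.
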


\begin{proof}
Suppose that $g=n_1\sigma ma_tn_2$ with $n_2=(1,Z_2,X_2)$. Then $R(g)=t^{-1/4}$ and, by Lemma~\ref{metric_formula}, 
\[
\varrho\big( (\zeta,v), g^{-1}\infty \big) = \left| \tfrac14|X_2|^2 + Z_2 + \zeta + \tfrac12\beta_2(v,X_2) \right|^{1/2}
\]
for each $(\zeta, v) \in D$. Since $\beta_2(\cdot, X_2) \colon V\to C$ is $\R$-linear (see Proposition~\ref{beta_2_real}), the map
\[
f \colon \left\{
\begin{array}{ccl}
D & \to & \R
\\
(\zeta, v) & \mapsto & \left| \tfrac14|X_2|^2 + Z_2 + \zeta + \tfrac12\beta_2(v,X_2) \right|
\end{array}
\right.
\]
is continuous. Then 
\[
\Ext I(g)  = f^{-1}\big( (t^{-1/2},\infty) \big) \quad\text{and}\quad 
\Int I(g)  = f^{-1}\big( (-\infty, t^{-1/2}) \big).
\]
This shows that $\Ext I(g)$ and $\Int I(g)$ are open. Moreover, it follows that $\overline{\Int I(g)}$ is contained in $f^{-1}\big( (-\infty, t^{-1/2}]\big)$. For the converse inclusion relation, it suffices to show that $f^{-1}(t^{-1/2}) \subseteq \overline{\Int I(g)}$. Let $z_0=(\zeta_0,v_0) \in f^{-1}(t^{-1/2})$. Then 
\begin{align*}
t^{-1/2} & = \left| \tfrac14|X_2|^2 + Z_2 + \zeta_0 + \tfrac12\beta_2(v_0,X_2)\right|
\\
& = \left[ \left|\tfrac14|X_2+v_0|^2 + \height(\zeta_0) \right|^2 + \left| Z_2 + \Ima\zeta_0 + \tfrac12\Ima\beta_2(v_0,X_2) \right|^2 \right]^{1/2}.
\end{align*}
For each $s\in (0,\height(\zeta_0))$ it follows that
\begin{align*}
t^{-1/2} & > \left[ \left|\tfrac14|X_2+v_0|^2 + \height(\zeta_0) - s \right|^2 + \left| Z_2 +\Ima\zeta_0 + \tfrac12\Ima\beta_2(v_0,X_2)\right|^2 \right]^{1/2}
\\
& = \left| \tfrac14|X_2|^2 + Z_2 + \zeta_0-s + \tfrac12\beta_2(v_0,X_2) \right|.
\end{align*}
Thus, $(\zeta_0-s,v_0)\in\Int I(g)$ for each $s\in (0,\height(\zeta_0))$. Hence
\[
\lim_{s\searrow 0} (\zeta_0-s,v_0) = (\zeta_0,v_0) \ \in \overline{\Int I(g)}.
\]
This proves \eqref{propextiii} and \eqref{propextiv}. The proof of
\eqref{propextii} is analogous to that of \eqref{propextiii}, and the proof of
\eqref{propextv} is analogous to that of \eqref{propextiv}. 
\bewend \end{proof}

\begin{proposition}\label{clext}
We have
\[
\overline{\bigcap_{g\in\Gamma\mminus\Gamma_\infty}\Ext I(g)} = \bigcap_{g\in\Gamma\mminus\Gamma_\infty} \overline{\Ext I(g)} = D\mminus\bigcup_{g\in\Gamma\mminus\Gamma_\infty} \Int I(g).
\]
\end{proposition}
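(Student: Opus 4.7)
The statement is a three-way equality, so I would split it into two parts: the second equality and the first equality.

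For the second equality $\bigcap_{g} \overline{\Ext I(g)} = D\mminus\bigcup_{g} \Int I(g)$, I would simply invoke Lemma~\ref{propext}(\ref{propextii}), which gives $\overline{\Ext I(g)} = D \mminus \Int I(g)$ for every $g\in\Gamma\mminus\Gamma_\infty$. Intersecting over all such $g$ turns the intersection of complements into the complement of a union, which is exactly what is needed.

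For the first equality, the inclusion $\overline{\bigcap_{g}\Ext I(g)} \subseteq \bigcap_{g} \overline{\Ext I(g)}$ is a general topological fact (the closure of an intersection is contained in the intersection of closures, since each $\Ext I(g)$ contains the intersection, and closures are monotone). The main work, and what I expect to be the only non-formal step, is the reverse inclusion $\bigcap_{g} \overline{\Ext I(g)} \subseteq \overline{\bigcap_{g}\Ext I(g)}$. Here one must show that a point lying in every $\overline{\Ext I(g)}$ is approximated by points lying in every $\Ext I(g)$ simultaneously; a priori this is not obvious because the approximating sequences for different $g$ need not coincide.

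The crucial observation is that the shift in the first coordinate works uniformly across all $g$: by Lemma~\ref{propext}(\ref{propextv}), if $(\zeta,v)\in\overline{\Ext I(g)}$, then $(\zeta+s,v)\in\Ext I(g)$ for every $s>0$, and this holds for each $g\in\Gamma\mminus\Gamma_\infty$ separately but with the same shift. So given $z=(\zeta,v)\in\bigcap_{g}\overline{\Ext I(g)}$, set $z_s\sceq (\zeta+s,v)$ for $s>0$. Since $\Rea(\zeta+s)=\Rea\zeta+s>\tfrac14|v|^2$, we have $z_s\in D$, and by the lemma $z_s\in\Ext I(g)$ for every $g$, hence $z_s\in\bigcap_{g}\Ext I(g)$. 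As $s\searrow 0$ we get $z_s\to z$, so $z\in\overline{\bigcap_{g}\Ext I(g)}$. This closes the chain of inclusions and proves the proposition. The entire argument is short; the real content is the uniformity provided by part~(\ref{propextv}) of the preceding lemma.
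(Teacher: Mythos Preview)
Your proof is correct and rests on the same key idea as the paper's: the uniform first-coordinate shift from Lemma~\ref{propext} handles the one non-trivial inclusion. The only difference is presentational: the paper passes to complements, works with $\bigl(\bigcup_g \overline{\Int I(g)}\bigr)^\circ$ versus $\bigcup_g \Int I(g)$, and invokes the downward shift of part~\eqref{propextiv}, whereas you argue directly on exteriors using the upward shift of part~\eqref{propextv}. Your route is slightly more direct; both are equally valid and essentially dual to one another.
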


\begin{proof}
Lemma~\ref{propext}\eqref{propextiii} states that $\complement \Ext I(g) = \overline{\Int I(g)}$ for each $g\in\Gamma\mminus\Gamma_\infty$. Therefore
\[ 
\complement\Bigg( \overline{\bigcap_{g\in\Gamma\mminus\Gamma_\infty} \Ext I(g)}\Bigg)   = \Bigg( \bigcup_{g\in\Gamma\mminus\Gamma_\infty} \complement \Ext I(g) \Bigg)^\circ 
 = \Bigg(\bigcup_{g\in\Gamma\mminus\Gamma_\infty} \overline{\Int I(g)} \Bigg)^\circ.
\]
By Lemma~\ref{propext}\eqref{propexti}, the interior of each isometric sphere is open. Thus
\[\bigcup_{g\in\Gamma\mminus\Gamma_\infty} \Int I(g) = \Bigg(\bigcup_{g\in\Gamma\mminus\Gamma_\infty}\Int I(g) \Bigg)^\circ,
\]
 and hence
\[
\bigcup_{g\in\Gamma\mminus\Gamma_\infty} \Int I(g) \subseteq \Bigg(\bigcup_{g\in\Gamma\mminus\Gamma_\infty}\overline{\Int I(g)} \Bigg)^\circ.
\]
In the following we will prove the converse inclusion relation. Let $z =
(\zeta,v)$ be an element of $\big(\bigcup\{\, \overline{\Int I(g)} \mid
g\in\Gamma\mminus\Gamma_\infty\} \big)^\circ$ and pick $\eps > 0$ such that 
\[
z_\eps \sceq z + \eps = (\zeta+\eps,v) \in \bigcup_{g\in\Gamma\mminus\Gamma_\infty} \overline{\Int I(g)}.
\]
Fix some $k\in\Gamma\mminus\Gamma_\infty$ such that $z_\eps \in \overline{\Int
I(k)}$. Lemma~\ref{propext}\eqref{propextiv} yields that $z = z_\eps - \eps$ is
in $\Int I(k)$. Thus $z\in\bigcup\{ \Int I(g) \mid
g\in\Gamma\mminus\Gamma_\infty\}$. This shows that 
\[
\Bigg( \bigcup_{g\in\Gamma\mminus\Gamma_\infty} \overline{\Int I(g)} \Bigg)^\circ \subseteq \bigcup_{g\in\Gamma\mminus\Gamma_\infty} \Int I(g).
\]
In turn, 
\[
\complement \bigcup_{g\in\Gamma\mminus\Gamma_\infty} \Int I(g) = \overline{\bigcap_{g\in\Gamma\mminus\Gamma_\infty} \Ext I(g) }.
\]
Finally we have $\complement \Int I(g) = \overline{\Ext I(g)}$ for each $g\in\Gamma\mminus\Gamma_\infty$ by Lemma~\ref{propext}\eqref{propextiii}. Therefore
\[
\overline{\bigcap_{g\in\Gamma\mminus\Gamma_\infty} \Ext I(g) } = \complement
\bigcup_{g\in\Gamma\mminus\Gamma_\infty} \Int I(g) =
\bigcap_{g\in\Gamma\setminus\Gamma_\infty} \complement\Int I(g) =
\bigcap_{g\in\Gamma\mminus\Gamma_\infty} \overline{\Ext I(g)}. 
\] 
\bewend \end{proof}

Let $g\in G\mminus G_\infty$ and $(\zeta, v) \in \dg\mminus\{\infty\}$. Suppose that $g=n_1\sigma ma_tn_2$ with $n_j = (1,Z_{sj}, X_{sj})$ and $m=(\varphi, \psi)$. A lengthy but easy calculation shows that
\begin{align} \label{g_formula} 
g(\zeta , v) =  
\Big(\tfrac14 |X_{s1}|^2 + Z_{s1} + xt^{-1} - \tfrac12 t^{-1/2}\beta_2(x\psi(X_{s2} + v), X_{s1}), \hspace{1cm}
\\ \nonumber X_{s1} - xt^{-1/2}\psi(X_{s2}+v)\Big)
\end{align}
where
\[ x \sceq \left[ \varphi\left( \tfrac14 |X_{s2}|^2 + Z_{s2} + \zeta + \tfrac12 \beta_2(v, X_{s2}) \right) \right]^{-1}.\]
This explicit expression for the action of $g$ on $D$ is used in the following two lemmas. We denote the set of orthogonal endomorphisms of $C$ resp.\@ of $V$ by $O(C)$ resp.\@ $O(V)$.

\begin{lemma}\label{exttoint} 
If $g\in G\mminus G_\infty$, then $g$ maps $\Ext I(g)$ onto $\Int I(g^{-1})$, and $I(g)$ onto $I(g^{-1})$. 
\end{lemma}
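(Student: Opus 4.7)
The plan is to establish the single identity
\[
\varrho(gz, g\infty) \cdot \varrho(z, g^{-1}\infty) = R(g)^2
\]
for all $z\in D$, together with $R(g) = R(g^{-1})$. Granting these, $z\in\Ext I(g)$ means $\varrho(z,g^{-1}\infty) > R(g)$, hence $\varrho(gz, g\infty) < R(g) = R(g^{-1})$, i.e., $gz\in\Int I(g^{-1})$; the equality case yields $g(I(g))\subseteq I(g^{-1})$. Surjectivity onto $\Int I(g^{-1})$ and $I(g^{-1})$ follows at once by applying the same identity with the roles of $g$ and $g^{-1}$ swapped: for any $w$ in $\Int I(g^{-1})$ (resp.\ $I(g^{-1})$), the point $g^{-1}w$ satisfies $\varrho(g^{-1}w, g^{-1}\infty) = R(g)^2/\varrho(w,g\infty) > R(g)$ (resp.\ $= R(g)$) and so lies in $\Ext I(g)$ (resp.\ $I(g)$), furnishing a preimage.

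To obtain $R(g) = R(g^{-1})$, I would put $g^{-1}$ into Bruhat form. Writing $m=(\varphi,\psi)\in M$, the relations $\varphi(\bar\zeta)=\overline{\varphi(\zeta)}$ (since $\varphi\in O(C)$ preserves the decomposition $C=\R e\oplus C'$) and $\psi(J_\zeta v) = J_{\varphi(\zeta)}\psi(v)$ combine with the explicit formula $\sigma(\zeta,v)=(\zeta^{-1},-\zeta^{-1}v)$ to give $\sigma m = m\sigma$; and a direct computation in $\R\times\mf z\times\mf v$-coordinates verifies $a_{t^{-1}}\sigma = \sigma a_t$. Starting from $g=n_1\sigma m a_t n_2$ and using also $[M,A]=\{1\}$, these commutations produce
\[
g^{-1} = n_2^{-1} a_{t^{-1}} m^{-1} \sigma n_1^{-1} = n_2^{-1} \sigma m^{-1} a_t n_1^{-1},
\]
already in Bruhat form, so $R(g^{-1}) = t^{-1/4} = R(g)$.

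For the identity itself I would apply Lemma~\ref{metric_formula} to $g^{-1}$, whose $n_2$-factor is $n_1^{-1} = (1,-Z_{s1},-X_{s1})$, yielding
\[
\varrho(z', g\infty) = \bigl| \tfrac14 |X_{s1}|^2 - Z_{s1} + \zeta' - \tfrac12 \beta_2(v', X_{s1}) \bigr|^{1/2}
\]
for arbitrary $z'=(\zeta',v')\in D$. Substituting $z' = g(\zeta,v)$ from \eqref{g_formula}, with the abbreviation $x=[\varphi(Y)]^{-1}$ and $Y = \tfrac14|X_{s2}|^2+Z_{s2}+\zeta+\tfrac12\beta_2(v,X_{s2})$, and expanding via $\R$-bilinearity of $\beta_2$ together with $\beta_2(X_{s1},X_{s1})=|X_{s1}|^2$ (Proposition~\ref{beta_2_real}), the two $\tfrac12|X_{s1}|^2$ contributions cancel, and the two copies of $\tfrac12 t^{-1/2}\beta_2(x\psi(X_{s2}+v),X_{s1})$ cancel with opposite signs, leaving only $xt^{-1}$. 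Hence $\varrho(g(\zeta,v), g\infty) = |x|^{1/2}t^{-1/2}$; since $\varphi$ is orthogonal one has $|x|=|Y|^{-1}$, and $|Y|=\varrho(z,g^{-1}\infty)^2$ by Lemma~\ref{metric_formula}, which completes the identity. The main obstacle is precisely this calculation: tracking which quantities sit in $C$ versus $V$ (in particular that $x\in C$ acts on $V$-elements via $J$), and recognising $\beta_2(X_{s1},X_{s1})=|X_{s1}|^2$ as the hinge that triggers the clean cancellation.
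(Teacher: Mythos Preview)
Your proposal is correct and follows essentially the same approach as the paper: both establish $R(g)=R(g^{-1})$ via the Bruhat decomposition of $g^{-1}$, then prove the key identity $\varrho(gz,g\infty)\cdot\varrho(z,g^{-1}\infty)=t^{-1/2}$ by substituting the explicit formula \eqref{g_formula} into Lemma~\ref{metric_formula} and invoking $\beta_2(X_{s1},X_{s1})=|X_{s1}|^2$ together with orthogonality of $\varphi$. You are slightly more thorough than the paper in justifying the commutations $\sigma m=m\sigma$ and $a_{t^{-1}}\sigma=\sigma a_t$ needed for the Bruhat form of $g^{-1}$, and in spelling out surjectivity, but the core computation is identical.
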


\begin{proof}  Suppose that $g=n_1\sigma ma_tn_2$ with $n_j=(1,Z_{sj}, X_{sj})$
and $m=(\varphi, \psi)$. Then we have $g^{-1} =  n_2^{-1} \sigma m a_t
n_1^{-1}$. Hence it follows that
\[
R(g) = R(g^{-1}) = t^{-1/4}
\]
and
\[ g^{-1} I(g^{-1}) = \big\{ z\in D \ \big\vert\  \varrho( gz, g\infty) = R(g) \big\}.\]
In the following we will compare $\varrho(gz, g\infty)$ to $\varrho\big(z,
g^{-1}\infty\big)$. Let $(\zeta, v)$ be in $D$. Set 
\[ x \sceq \left[ \varphi\left( \tfrac14 |X_{s2}|^2 + Z_{s2} + \zeta + \tfrac12 \beta_2(v, X_{s2}) \right) \right]^{-1}.\]
Then \eqref{g_formula} and Lemma~\ref{metric_formula} yield that
\begin{align*}
\varrho\big(&g(\zeta, v), g\infty\big) = \left| \tfrac12|X_{s1}|^2 + xt^{-1} - \tfrac12\beta_2(X_{s1},X_{s1})\right|^{1/2}.
\end{align*}
By Proposition~\ref{beta_2_real}, $\beta_2(X_{s1},X_{s1})  = |X_{s1}|^2$. Thus
\[
\varrho\big(g(\zeta,v),g\infty\big) = \left|xt^{-1}\right|^{1/2}= |x|^{1/2} t^{-1/2}.
\]
Since $\varphi\in O(C)$, it follows that
\begin{align*} 
|x|^{1/2} & = \left| \varphi\left(\tfrac14 |X_{s2}|^2 + Z_{s2} + \zeta + \tfrac12 \beta_2(v, X_{s2}) \right)\right|^{-1/2} 
\\ & = \left| \tfrac14 |X_{s2}|^2 + Z_{s2} + \zeta + \tfrac12 \beta_2(v, X_{s2})\right|^{-1/2}
\\ & = \varrho\big((\zeta, v), g^{-1}\infty\big)^{-1}.
\end{align*}
Therefore 
\[ \varrho\big( (\zeta, v), g^{-1}\infty\big) \varrho\big( g(\zeta,v), g\infty\big) = t^{-1/2}.\]
Hence, $(\zeta,v)\in I(g)$ if and only if 
\[
\varrho\big( g(\zeta,v), g\infty \big) = t^{-1/4}.
\]
This is equivalent to $g(\zeta, v)\in I(g^{-1})$. In turn, $gI(g)  = I(g^{-1})$.
Further, we have $(\zeta, v)\in \Ext I(g)$ if and only if 
\[
 t^{-1/4} > \varrho\big( g(\zeta,v), g\infty\big).
\]
Therefore $g\Ext I(g) = \Int I(g^{-1})$. 
\bewend \end{proof}

\begin{lemma}\label{howch}
Let $n=(1,Z,X)\in N$, $a_s\in A$, and $m=(\varphi,\psi)\in M$. 
\begin{enumerate}[{\rm (i)}]
\item\label{howchi} If $n'\sceq \big(1,sZ, s^{1/2}X\big)$, then $a_sn=n'a_s$.
\item\label{howchii} For all $u,v\in V$ we have $\varphi\big( \beta_2(v,u) \big) = \beta_2(\psi(v),\psi(u))$.
\item\label{howchiii} If $n'\sceq \big(1,\varphi(Z),\psi(X)\big)$, then $mn=n'm$.
\end{enumerate}
\end{lemma}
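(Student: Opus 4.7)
The three parts are all direct computations, and my plan is to handle them essentially independently, with part (ii) feeding into part (iii).

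For part (i), I would just multiply out both sides using the group law \eqref{mult_S} on $S=AN$. On the left,
\[
a_s n = (s,0,0)(1,Z,X) = \bigl(s,\, 0 + sZ + \tfrac12 s^{1/2}[0,X],\, 0 + s^{1/2}X\bigr) = (s, sZ, s^{1/2}X),
\]
and on the right,
\[
n' a_s = (1, sZ, s^{1/2}X)(s,0,0) = \bigl(s,\, sZ + 0 + \tfrac12[s^{1/2}X,0],\, s^{1/2}X + 0\bigr) = (s, sZ, s^{1/2}X).
\]
These coincide.

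For part (ii), I would test the identity against an arbitrary $\zeta\in C$. By the description of $M$, the map $\varphi$ is orthogonal on $C$, the map $\psi$ is orthogonal on $V$, and the intertwining relation $\psi\circ J_\zeta = J_{\varphi(\zeta)}\circ \psi$ holds for all $\zeta\in C$. Using the definition of $\beta_2$, together with the orthogonality of $\varphi$ and $\psi$ and the intertwining, I compute
\begin{align*}
\langle \varphi(\beta_2(v,u)),\,\varphi(\zeta)\rangle
  &= \langle \beta_2(v,u), \zeta\rangle = \langle J_\zeta u, v\rangle = \langle \psi(J_\zeta u), \psi(v)\rangle \\
  &= \langle J_{\varphi(\zeta)}\psi(u), \psi(v)\rangle = \langle \beta_2(\psi(v),\psi(u)),\,\varphi(\zeta)\rangle.
\end{align*}
Since $\varphi$ is an orthogonal automorphism of $C$, the element $\varphi(\zeta)$ ranges over all of $C$ as $\zeta$ does, and the non-degeneracy of $\langle\cdot,\cdot\rangle$ on $C$ forces $\varphi(\beta_2(v,u)) = \beta_2(\psi(v),\psi(u))$.

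For part (iii), rather than manipulate group elements abstractly, I would verify the equality of the \emph{actions} of $mn$ and $n'm$ on $\dg\setminus\{\infty\}$ and invoke the faithfulness of the isometric action. Using the explicit formulas for $N$- and $M$-actions stated just before Proposition~3.6, a point $(\zeta,v)\in\dg\setminus\{\infty\}$ satisfies
\[
mn(\zeta,v) = m\bigl(\tfrac14|X|^2 + Z + \zeta + \tfrac12\beta_2(v,X),\, X+v\bigr) = \bigl(\varphi\bigl(\tfrac14|X|^2 + Z + \zeta + \tfrac12\beta_2(v,X)\bigr),\, \psi(X+v)\bigr).
\]
Since $\varphi$ is $\R$-linear, fixes $e$ (hence acts as the identity on $\R e\cong\R$, so $\varphi(|X|^2)=|X|^2$), and is orthogonal (so $|X|=|\psi(X)|$), and since part (ii) gives $\varphi(\beta_2(v,X))=\beta_2(\psi(v),\psi(X))$, this becomes
\[
\bigl(\tfrac14|\psi(X)|^2 + \varphi(Z) + \varphi(\zeta) + \tfrac12\beta_2(\psi(v),\psi(X)),\, \psi(X)+\psi(v)\bigr),
\]
which is precisely $n'm(\zeta,v)$ from the definition of $n'$. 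Both maps also send $\infty$ to $\infty$, so $mn=n'm$ as elements of $G$. The only mild obstacle is bookkeeping in part (iii) — making sure every occurrence of $|X|^2$, $Z$, and $\beta_2(v,X)$ is correctly transported through $\varphi$ — but part (ii) supplies exactly the one nontrivial ingredient needed.
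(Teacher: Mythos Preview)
Your proof is correct and follows essentially the same approach as the paper: part~(i) is the same direct multiplication, part~(ii) is the same inner-product test (you pair against $\varphi(\zeta)$ while the paper pairs against $\zeta$ and inserts $\varphi^{-1}$, which is an equivalent bookkeeping choice), and part~(iii) is the same verification of the action on points using~(ii). The only cosmetic difference is that you work on $\dg\mminus\{\infty\}$ and invoke faithfulness explicitly, whereas the paper computes on $D$, but the argument is otherwise identical.
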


\begin{proof}
Claim \eqref{howchi} is shown by direct calculation. To prove \eqref{howchii} recall that $\varphi\in O(C)$, $\psi\in O(V)$, and that $\psi\big(J(\eta, w)\big)= J\big(\varphi(\eta),\psi(w)\big)$ for each $(\eta,w)\in C\oplus V$. For each $\zeta\in C$ we have
\begin{align*}
\left\langle\varphi\big(\beta_2(v,u)\big),\zeta\right\rangle & = \left\langle \beta_2(v,u),\varphi^{-1}(\zeta)\right\rangle 
= \left\langle J\big(\varphi^{-1}(\zeta), u\big), v\right\rangle 
\\
& = \left\langle \psi\big( J\big(\varphi^{-1}(\zeta),u\big)\big), \psi(v) \right\rangle 
= \left\langle J(\zeta, \psi(u)), \psi(v)\right\rangle 
\\
& = \left\langle\beta_2(\psi(v),\psi(u)), \zeta\right\rangle.
\end{align*}
Thus $\varphi\big(\beta_2(v,u)\big) = \beta_2(\psi(v),\psi(u))$. For the proof of \eqref{howchiii} let $(\zeta, v)\in D$. Then 
\begin{align*}
m\big( n(\zeta,v) \big) & =  \left( \varphi\left(\tfrac14|X|^2 + Z + \zeta + \tfrac12\beta_2(v,X)  \right), \psi(X+v) \right)
\\
& = \left( \tfrac14|\psi(X)|^2  + \varphi(Z) + \varphi(\zeta) + \tfrac12 \varphi\big( \beta_2(v,X) \big), \psi(X)+\psi(v)\right).
\end{align*}
Now \eqref{howchii} yields 
\begin{align*}
m\big(n(\zeta,v)\big) & = \left( \tfrac14|\psi(X)|^2 + \varphi(Z) + \varphi(\zeta) + \tfrac12\beta_2\big(\psi(v),\psi(X)\big), \psi(X) + \psi(v) \right)
\\
& = n'\big( \varphi(\zeta), \psi(v) \big) = n'\big( m(\zeta,v) \big).
\end{align*}
Hence $mn=n'm$. 
\bewend \end{proof}

\begin{proposition}\label{stabmap} Let $g\in G_\infty$ and $h\in G\mminus
G_\infty$. Then 
\begin{align*} gI(h) & = I\big(ghg^{-1}\big),
\\ g\Int I(h) & = \Int I\big(ghg^{-1}\big), 
\\ g\Ext I(h) & = \Ext\big(ghg^{-1}\big).
\end{align*}
\end{proposition}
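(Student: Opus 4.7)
The plan: reduce to two facts. (a) Every $g\in G_\infty$ acts on $\dg\mminus\{\infty\}$ as a Cygan similarity, with scaling factor depending only on its $A$-component. (b) The radius $R(\cdot)$ transforms by the same factor under conjugation by $g$. Since $I(h)$, $\Int I(h)$, $\Ext I(h)$ are characterized purely by comparing $\varrho(\cdot,h^{-1}\infty)$ with $R(h)$, rescaling both sides then translates each (in)equality directly into the corresponding one for $ghg^{-1}$.

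For (a), write $g=ma_sn$ and treat the three factors separately. The group law \eqref{mult_S} shows that $n=(1,Z_0,X_0)\in N$ acts on H-coordinates by left multiplication by $(0,Z_0,X_0)\in\R\times N$ on $\kappa(z)$, so $n$ is a $\varrho$-isometry by the very definition of $\varrho$. For $m=(\varphi,\psi)\in M$: the conditions $\varphi(e)=e$ and orthogonality of $\varphi,\psi$ (so $\varphi$ preserves the splitting $\R\oplus C'$ and norms on $C$), combined with the identity $\beta_2(\psi(v),\psi(u))=\varphi(\beta_2(v,u))$ from Lemma~\ref{howch}\eqref{howchii}, substitute into Lemma~\ref{Cyganformula} to give $\varrho(mz_1,mz_2)=\varrho(z_1,z_2)$. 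For $a_s\in A$: H-coordinates transform as $(k,Z,X)_h\mapsto(sk,sZ,s^{1/2}X)_h$, and Lemma~\ref{Cyganformula} combined with $\R$-bilinearity of $\beta_2$ (Proposition~\ref{beta_2_real}) yields $\varrho(a_sz_1,a_sz_2)=s^{1/2}\varrho(z_1,z_2)$. In total, $\varrho(gz,gw)=s^{1/2}\varrho(z,w)$ for all $z,w\in\dg\mminus\{\infty\}$.

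For (b), the identity $\varrho(kz_0,k\infty)\,\varrho(z_0,k^{-1}\infty)=R(k)^2$ established inside the proof of Lemma~\ref{exttoint} holds for every $k\in G\mminus G_\infty$ and every $z_0\in D$. Since $g\infty=\infty$, one has $(ghg^{-1})^{\pm1}\infty=gh^{\pm1}\infty\in\dg\mminus\{\infty\}$, so $ghg^{-1}\in G\mminus G_\infty$; applying the identity with $k=ghg^{-1}$ at the point $gz_0$ and using $ghg^{-1}\cdot gz_0=g(hz_0)$ together with two applications of~(a) gives
$$R(ghg^{-1})^2 \;=\; s^{1/2}\varrho(hz_0,h\infty)\cdot s^{1/2}\varrho(z_0,h^{-1}\infty) \;=\; s\,R(h)^2,$$
hence $R(ghg^{-1})=s^{1/2}R(h)$. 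Combining (a) and (b) with $gh^{-1}\infty=(ghg^{-1})^{-1}\infty$: for each relation $\square\in\{=,<,>\}$ and each $z\in D$, the equivalence $\varrho(z,h^{-1}\infty)\,\square\, R(h) \iff \varrho(gz,(ghg^{-1})^{-1}\infty)\,\square\, R(ghg^{-1})$ follows upon multiplying both sides by $s^{1/2}$. This yields all three asserted inclusions; the reverse inclusions are obtained by applying $g^{-1}$. The main technical obstacle is the $M$-case in~(a), where real and imaginary parts, the orthogonal structure on $C$, and the map $\beta_2$ must be tracked simultaneously inside Lemma~\ref{Cyganformula}; Lemma~\ref{howch}\eqref{howchii} is tailored precisely to that step, and everything else is formal.
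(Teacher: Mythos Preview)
Your proof is correct. Both you and the paper split $g\in G_\infty=MAN$ into its three factors and verify case by case that each factor acts as a Cygan similarity; the content of your step~(a) is exactly what the paper establishes via Lemma~\ref{metric_formula} in each of its three cases (comparing $\varrho(g^{-1}z,h^{-1}\infty)$ with $\varrho(z,(ghg^{-1})^{-1}\infty)$). The genuine difference lies in step~(b): the paper computes $R(ghg^{-1})$ by explicitly working out the Bruhat decomposition of $ghg^{-1}$ in each of the three cases (using Lemma~\ref{howch} to commute $a_s$ or $m$ past the $n_j$), whereas you extract it in one stroke from the functional identity $\varrho(kz_0,k\infty)\,\varrho(z_0,k^{-1}\infty)=R(k)^2$ proved inside Lemma~\ref{exttoint}, combined with the already-established similarity~(a). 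Your route is shorter and more conceptual; the paper's route has the minor advantage of not relying on an identity buried in another proof and of making the conjugated Bruhat decomposition visible (which could be useful elsewhere). Either way, Lemma~\ref{howch}\eqref{howchii} is the workhorse for the $M$-case, and you correctly identified it as such.
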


\begin{proof} It suffices to prove the claim for the three cases $g\in A$, $g \in N$, and $g \in M$. This we will do separately. Let $h=n_1\sigma m a_t n_2$ with $n_j=(1, Z_{j}, X_{j})$. 

Suppose first that $g=a_s$. Let $(\zeta, v)\in D$. For $j\in\{1,2\}$ set $n'_j
\sceq \big(1, sZ_j, s^{1/2}X_j\big)$. Lemma~\ref{howch} implies that 
\begin{align*}
ghg^{-1} & = a_s n_1\sigma ma_t n_2a_s^{-1} = n'_1\sigma m a_s^{-1} a_t a_s^{-1}n'_2 = n'_1\sigma m a_{s^{-2}t}n'_2.
\end{align*}
Therefore $R\big(ghg^{-1}\big) = s^{1/2}t^{-1/4} = s^{1/2} R(h)$. By Lemma~\ref{metric_formula} we have 
\begin{align*} 
\varrho\big( (\zeta, v), (ghg^{-1})^{-1}\infty\big) &= \left|\tfrac14 s|X_{2}|^2
+ sZ_{2} + \zeta + \tfrac12s^{1/2}\beta_2(v, X_{2})\right|^{1/2} 
\end{align*}
and
\begin{align*}
\varrho\big( a_s^{-1}(\zeta, v), h^{-1}\infty\big)  & = \left|\tfrac14 |X_{2}|^2 + Z_{2} + s^{-1}\zeta + \tfrac12 s^{-1/2}\beta_2(v,X_{2}) \right|^{1/2}
\\ & = s^{-1/2} \varrho\big( (\zeta, v), (ghg^{-1})^{-1}\infty\big).
\end{align*}
Then 
\begin{align*}
gI(h) &  = \left\{ z\in D \left\vert\  \varrho\big(g^{-1}z, h^{-1}\infty\big) = R(h) \right.\right\}
\\
& = \left\{ z\in D \left\vert\ \varrho\big(z, (ghg^{-1})^{-1}\infty\big) = s^{1/2}R(h) \right.\right\}
\\
& = \left\{ z\in D \left\vert\ \varrho\big( z, (ghg^{-1})^{-1}\infty\big) = R(ghg^{-1}) \right.\right\}
\\
& = I\big(ghg^{-1}\big).
\end{align*}
Analogously, we see that $g\Ext I(h) = \Ext I\big(ghg^{-1}\big)$ and $g\Int I(h) = \Int I\big(ghg^{-1}\big)$.

Now suppose that $g=n_3=(1, Z_{3}, X_{3})$. Then $ghg^{-1} = (n_3n_1)\sigma ma_t (n_2n_3^{-1})$ and (see Lemma~\ref{re_and_im})
\begin{align*} 
n_2n_3^{-1}  = \left(1, Z_{2} - Z_{3} - \tfrac12 \Ima\beta_2(X_{3},X_{2}), X_{2} - X_{3}\right).
\end{align*}
Therefore, by Lemmas~\ref{metric_formula} and \ref{re_and_im},
\begin{align*}
\varrho\big(& (\zeta, v) , (n_3hn_3^{-1})^{-1}\infty\big)  = 
\\ 
& =
 \Big|\tfrac14|X_{2}|^2 + \tfrac14|X_{3}|^2 -\tfrac12\beta_2(X_{3},X_{2}) + Z_{2} - Z_{3}  + \zeta + \tfrac12\beta_2(v,X_{2}-X_{3}) \Big|^{1/2}.
\end{align*}
On the other hand, we have
\begin{align*}
\varrho\big( &n_3^{-1}(\zeta, v)  , h^{-1}\infty\big)  = 
\\ & =\Big| \tfrac14|X_{2}|^2 +Z_{2} + \tfrac14|X_{3}|^2 - Z_{3} + \zeta - \tfrac12\beta_2(v,X_{3}) + \tfrac12 \beta_2(-X_{3} + v, X_{2})  \Big|^{1/2}
\\
& = \Big| \tfrac14|X_2|^2 + \tfrac14|X_3|^2 + Z_2 - Z_3 + \zeta - \tfrac12\beta_2(X_3,X_2) + \tfrac12\beta_2(v,X_2-X_3)  \Big|^{1/2}
\\ & = \varrho\big( (\zeta, v) , n_3h^{-1}n_3^{-1}\infty\big).
\end{align*}
Since $R(h)= R\big(ghg^{-1}\big)$, the claim follows for this case.

Finally suppose that $g=m_2=(\varphi, \psi)$ and set $n'_j\sceq
\big(1,\varphi(Z_j),\psi(X_j)\big)$ for $j=1,2$. Lemma~\ref{howch} shows that 
\begin{align*} 
ghg^{-1} &  = m_2n_1\sigma m  a_t  n_2 m_2^{-1}  = n'_1\sigma m_2^{-1}mm_2^{-1}a_t n'_2.
\end{align*}
From this it follows that $R(h) = R\big(m_2hm_2^{-1}\big)$. Moreover, we have
\[
\varrho\big( (\zeta, v), (ghg^{-1})^{-1}\infty \big)  = \Big| \tfrac14 |\psi(X_{2})|^2 + \varphi(Z_{2}) + \zeta + \tfrac12 \beta_2(v, \psi(X_{2})) \Big|^{1/2}. 
\]
From Lemma~\ref{howch} it follows  that 
\[
\varrho\big( (\zeta, v), (ghg^{-1})^{-1}\infty \big)  = \Big| \tfrac14 |X_{2}|^2 + \varphi(Z_{2}) + \zeta + \tfrac12 \varphi\big(\beta_2(\psi^{-1}(v),X_{2})\big)\Big|^{1/2}.
\]
On the other side we find
\begin{align*}
\varrho\big( g^{-1}(\zeta, v), h^{-1}\infty \big) & = \varrho\big((\varphi^{-1}(\zeta), \psi^{-1}(v)), h^{-1}\infty\big)
\\ & = \Big| \tfrac14 |X_{2}|^2 + Z_{2} + \varphi^{-1}(\zeta) + \tfrac12 \beta_2(\psi^{-1}(v), X_{2}) \Big|^{1/2}
\\ & = \Big| \tfrac14 |X_{2}|^2 + \varphi(Z_{2}) + \zeta + \tfrac12 \varphi\big(\beta_2(\psi^{-1}(v), X_{2})\big)\Big|^{1/2}
\\ & = \varrho\big( (\zeta, v), (ghg^{-1})^{-1}\infty\big).
\end{align*}
From this it follows that $gI(h) = I\big(ghg^{-1}\big)$, $g\Ext I(h) = \Ext
I\big(ghg^{-1}\big)$, and $g\Int I(h) = \Int I\big(ghg^{-1}\big)$.  
\bewend \end{proof}

An immediate consequence of Proposition~\ref{stabmap} is the following corollary.

\begin{corollary}\label{invariance}
Let $\Gamma$ be a subgroup of $G$ and $g\in \Gamma_\infty$. Then
\begin{align*} g\bigcap_{h\in\Gamma\mminus\Gamma_\infty} \Ext I(h) & =
\bigcap_{h\in\Gamma\mminus\Gamma_\infty} \Ext I(h) 
\intertext{and}
g\bigcup_{h\in\Gamma\mminus\Gamma_\infty} \Int I(h) & =
\bigcup_{h\in\Gamma\mminus\Gamma_\infty} \Int I(h).
\end{align*}
\end{corollary}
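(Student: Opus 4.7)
The plan is to apply Proposition~\ref{stabmap} termwise to each isometric sphere in the intersection/union, and then reindex using the fact that conjugation by $g\in\Gamma_\infty$ permutes $\Gamma\mminus\Gamma_\infty$. Since $g$ acts on $D$ as a bijection (being an isometry), it commutes with arbitrary unions and intersections of subsets of $D$:
\[
g\bigcap_{h\in\Gamma\mminus\Gamma_\infty}\Ext I(h) = \bigcap_{h\in\Gamma\mminus\Gamma_\infty} g\Ext I(h),
\qquad
g\bigcup_{h\in\Gamma\mminus\Gamma_\infty}\Int I(h) = \bigcup_{h\in\Gamma\mminus\Gamma_\infty} g\Int I(h).
\]
By Proposition~\ref{stabmap}, for each $h\in\Gamma\mminus\Gamma_\infty\subseteq G\mminus G_\infty$ we have $g\Ext I(h)=\Ext I(ghg^{-1})$ and $g\Int I(h)=\Int I(ghg^{-1})$, so the right-hand sides become $\bigcap_{h}\Ext I(ghg^{-1})$ and $\bigcup_{h}\Int I(ghg^{-1})$ respectively.

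Next I would verify that the inner automorphism $\mathrm{Inn}_g\colon h\mapsto ghg^{-1}$ of $\Gamma$ restricts to a bijection of $\Gamma\mminus\Gamma_\infty$ onto itself. Since $\Gamma_\infty$ is a subgroup of $\Gamma$, it suffices to check that $\mathrm{Inn}_g(\Gamma_\infty)=\Gamma_\infty$. For $h\in\Gamma_\infty$, the hypothesis $g\in\Gamma_\infty$ (hence $g\infty=\infty$ and $g^{-1}\infty=\infty$) gives $(ghg^{-1})\infty = gh\infty = g\infty=\infty$, so $ghg^{-1}\in\Gamma_\infty$; the reverse inclusion follows by the same argument applied to $g^{-1}$.

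Reindexing the intersection and union via $k=ghg^{-1}$ therefore yields
\[
\bigcap_{h\in\Gamma\mminus\Gamma_\infty}\Ext I(ghg^{-1}) = \bigcap_{k\in\Gamma\mminus\Gamma_\infty}\Ext I(k),
\qquad
\bigcup_{h\in\Gamma\mminus\Gamma_\infty}\Int I(ghg^{-1}) = \bigcup_{k\in\Gamma\mminus\Gamma_\infty}\Int I(k),
\]
which are exactly the claimed equalities. There is no real obstacle here: the statement is a direct bookkeeping consequence of Proposition~\ref{stabmap}, and the only point worth highlighting is that the assumption $g\in\Gamma_\infty$ (rather than merely $g\in\Gamma$) is precisely what is needed for conjugation to stabilize $\Gamma_\infty$.
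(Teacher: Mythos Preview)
Your argument is correct and is exactly the unpacking the paper has in mind: it states the corollary as an immediate consequence of Proposition~\ref{stabmap}, and your reindexing via the bijection $h\mapsto ghg^{-1}$ of $\Gamma\mminus\Gamma_\infty$ is precisely the one-line justification needed.
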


\begin{lemma}\label{ht_mut} Let  $z\in D$ and $g\in G\mminus G_\infty$. Then 
\[ \height(z) = \left( \frac{\varrho(z,g^{-1}\infty)}{R(g)}\right)^4\height(gz).\]
\end{lemma}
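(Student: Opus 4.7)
The plan is to compute $\height(gz)$ directly from the explicit formula \eqref{g_formula}, observing that almost everything cancels, and then express the remaining factor in terms of $\varrho(z,g^{-1}\infty)$ and $R(g)$ using quantities already computed in the proof of Lemma~\ref{exttoint}.

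Writing $g=n_1\sigma m a_t n_2$ with $n_j=(1,X_{sj},Z_{sj})$ and $m=(\varphi,\psi)$, set
\[
x \sceq \left[\varphi\left(\tfrac14|X_{s2}|^2 + Z_{s2} + \zeta + \tfrac12\beta_2(v,X_{s2})\right)\right]^{-1},
\]
as in \eqref{g_formula}. From the computation inside the proof of Lemma~\ref{exttoint} we already have $|x|^{1/2}=\varrho(z,g^{-1}\infty)^{-1}$, and from Definition~\ref{def_isospheres}, $R(g)=t^{-1/4}$. Hence it will suffice to show that $\height(gz)=t^{-1}|x|^2\,\height(z)$.

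First I would compute $\Rea\zeta'$, where $\zeta'$ is the $C$-component of $gz$ in \eqref{g_formula}. Since $Z_{s1}\in\mf z\subseteq C'$ contributes nothing to the real part and $\Rea\beta_2(\cdot,\cdot)=\langle\cdot,\cdot\rangle$ by Lemma~\ref{re_and_im}, this yields
\[
\Rea\zeta'=\tfrac14|X_{s1}|^2+t^{-1}\Rea x-\tfrac12 t^{-1/2}\langle x\psi(X_{s2}+v),X_{s1}\rangle.
\]
Then I would compute $|v'|^2$ where $v'=X_{s1}-xt^{-1/2}\psi(X_{s2}+v)$, using property (\apref{M}{M2}{}) together with $\psi\in O(V)$ to evaluate $|x\psi(X_{s2}+v)|=|x||X_{s2}+v|$. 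Subtracting $\tfrac14|v'|^2$ from $\Rea\zeta'$, the $\tfrac14|X_{s1}|^2$ terms cancel and the two cross terms involving $\langle x\psi(X_{s2}+v),X_{s1}\rangle$ cancel by symmetry of the inner product, leaving
\[
\height(gz)=t^{-1}|x|^2\left(\Rea(x^{-1})-\tfrac14|X_{s2}+v|^2\right),
\]
where I have used $\Rea x=|x|^2\Rea(x^{-1})$, which follows from $x^{-1}=|x|^{-2}\overline x$.

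To finish, I would observe that $\varphi\in O(C)$ with $\varphi(e)=e$ preserves both real and imaginary parts, hence
\[
\Rea(x^{-1})=\tfrac14|X_{s2}|^2+\Rea\zeta+\tfrac12\langle v,X_{s2}\rangle,
\]
since $Z_{s2}\in C'$; expanding $\tfrac14|X_{s2}+v|^2=\tfrac14|X_{s2}|^2+\tfrac12\langle X_{s2},v\rangle+\tfrac14|v|^2$ and subtracting yields exactly $\Rea\zeta-\tfrac14|v|^2=\height(z)$. Thus $\height(gz)=t^{-1}|x|^2\height(z)$, and substituting $t=R(g)^{-4}$ and $|x|^{-2}=\varrho(z,g^{-1}\infty)^{4}$ gives the stated identity. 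The one mildly delicate point is keeping track of which quantities live in $C$ versus $V$ when extracting real and imaginary parts, but otherwise the proof is essentially a cancellation calculation; the main substantive input is the cancellation of cross terms together with the interpretation of the remaining scalar factor as $(\varrho/R)^4$.
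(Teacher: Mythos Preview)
Your proof is correct and follows essentially the same approach as the paper's: compute $\height(gz)$ directly from \eqref{g_formula}, observe the cancellation of the $\tfrac14|X_{s1}|^2$ and cross terms, rewrite the surviving factor $\Rea(x^{-1})-\tfrac14|X_{s2}+v|^2$ as $\height(z)$ using that $\varphi$ preserves real parts, and then identify $t^{-1}|x|^2$ with $(R(g)/\varrho(z,g^{-1}\infty))^4$. The only cosmetic differences are that you cite the identity $|x|^{1/2}=\varrho(z,g^{-1}\infty)^{-1}$ from the proof of Lemma~\ref{exttoint} rather than recomputing it from Lemma~\ref{metric_formula}, and that you wrote $n_j=(1,X_{sj},Z_{sj})$ with the coordinates transposed (the paper's convention is $(1,Z_{sj},X_{sj})$), though your subsequent use of the quantities is consistent with the correct convention.
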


\begin{proof} Let $z=(\zeta, v)$ and $g=n_1\sigma ma_tn_2$ with $n_j=(1, Z_{j},
X_{j})$ and $m=(\varphi, \psi)$. We first evaluate $\height(gz)$. Set 
\[
 x \sceq \left[\varphi\left( \tfrac14|X_2|^2 + Z_2 + \zeta + \tfrac12\beta_2(v,X_2) \right) \right]^{-1}.
\]
Using \eqref{g_formula} we find
\begin{align*}
\height(gz) & = \Rea\left( \tfrac14|X_1|^2 + Z_1 + xt^{-1} - \tfrac12 t^{-1}\beta_2\big( x\psi(X_2+v),X_1\big) \right) 
\\
&  \hspace{6cm} - \tfrac14\left| X_1 - xt^{-1/2}\psi(X_2+v) \right|^2
\\
& = \tfrac14|X_1|^2 + t^{-1}\Rea(x) - \tfrac12 t^{-1/2} \left\langle x\psi(X_2+v),X_1 \right\rangle 
\\
&  \hspace{6cm} - \tfrac14\left| X_1 -xt^{-1/2}\psi(X_2+v) \right|^2
\\
& = t^{-1}\Rea(x) - \tfrac14 t^{-1} |x|^2 \cdot |X_2+v|^2
\\
& = t^{-1}|x|^2\left[ \Rea\big(|x|^{-2}x\big) - \tfrac14|X_2+v|^2 \right].
\end{align*}
Since $\Rea\varphi(\eta) = \Rea\eta$ for each $\eta\in C$, it follows that 
\begin{align*}
\Rea\big( |x|^{-2}x \big) & = \Rea\big( \overline x^{-1} \big) = \Rea \big(x^{-1}\big)
\\
& = \Rea \left( \tfrac14|X_2|^2 + Z_2 +\zeta + \tfrac12\beta_2(v,X_2) \right)
\\
& = \tfrac14|X_2|^2 + \Rea \zeta + \tfrac12\langle v,X_2\rangle
\\
& = \tfrac14|X_2+v|^2 + \Rea\zeta - \tfrac14|v|^2.
\end{align*}
Thus,
\[
\height(gz) = t^{-1}|x|^2 \left[ \Rea \zeta - \tfrac14|v|^2 \right] = t^{-1}|x|^2\height(z).
\]
Lemma~\ref{metric_formula} shows that
\begin{align*}
\varrho\big( (\zeta,v), g^{-1}\infty \big) & = \left| \tfrac14|X_2|^2 + Z_2 + \zeta + \tfrac12\beta_2(v,X_2) \right|^{1/2} = |x|^{-1/2}.
\end{align*}
Since $R(g) = t^{-1/4}$, we have
\[
\height(z) = t|x|^{-2} \height(gz) = \left(\frac{\varrho(z,g^{-1}\infty)}{R(g)} \right)^4 \height(gz).
\] 
\bewend \end{proof}
\subsection{Fundamental regions}\label{sec_isomfundregion}

A subgroup $\Gamma$ of $G$ is said to be \textit{of type~(O)} if 
\[ \bigcap_{g\in\Gamma\mminus\Gamma_\infty} \Ext I(g) = D \big\backslash \overline{\bigcup_{g\in\Gamma\mminus\Gamma_\infty} \Int I(g)}. \]
For a subset $S$  of $G$ and let $\langle S\rangle$ denote\label{def_bS} the subgroup of $G$ generated by $S$. Then $S$ is said to be of \textit{of type~(F)}, if for each $z\in D$ the maximum of the set 
\[ \big\{ \height(gz) \ \big\vert\  g\in \langle S\rangle\big\}\]
exists.

\begin{theorem}\label{fund_region2} Let $\Gamma$ be a subgroup of $G$ of type
(O) such that $\Gamma\mminus\Gamma_\infty$ is of type (F). Suppose that
$\fd_\infty$ is a fundamental region for $\Gamma_\infty$ in $D$ satisfying
\[ \overline \fd_\infty \cap \bigcap_{g\in\Gamma\mminus\Gamma_\infty} \overline{\Ext I(g)} = \overline{\fd_\infty\cap \bigcap_{g\in\Gamma\mminus\Gamma_\infty} \Ext I(g)}.\] Then 
\[ \fd \sceq \fd_\infty \cap \bigcap_{g\in\Gamma\mminus\Gamma_\infty} \Ext I(g) \]
is a fundamental region for $\Gamma$ in $D$. 
\end{theorem}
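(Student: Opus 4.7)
The plan is to verify directly the three defining properties of a fundamental region for
\[
\fd = \fd_\infty \cap \bigcap_{g\in\Gamma\mminus\Gamma_\infty}\Ext I(g):
\]
openness, pairwise disjointness of the $\Gamma$-translates of $\fd$, and coverage of $D$ by $\Gamma$-translates of $\overline{\fd}$. Openness is immediate from type~(O): that hypothesis identifies $\bigcap_g \Ext I(g)$ with $D\mminus\overline{\bigcup_g \Int I(g)}$, which is open, and $\fd_\infty$ is open by assumption, so their intersection is open.

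For disjointness, let $\gamma\in\Gamma\mminus\{e\}$. If $\gamma\in\Gamma_\infty$, then Corollary~\ref{invariance} gives $\gamma\bigcap_g \Ext I(g) = \bigcap_g \Ext I(g)$, whence $\fd\cap\gamma\fd\subseteq\fd_\infty\cap\gamma\fd_\infty=\emptyset$ because $\fd_\infty$ is a fundamental region for $\Gamma_\infty$. If instead $\gamma\in\Gamma\mminus\Gamma_\infty$, then also $\gamma^{-1}\in\Gamma\mminus\Gamma_\infty$, so $\fd\subseteq\Ext I(\gamma)\cap\Ext I(\gamma^{-1})$. Applying $\gamma$ to the first factor and invoking Lemma~\ref{exttoint} gives $\gamma\fd\subseteq\gamma\Ext I(\gamma)=\Int I(\gamma^{-1})$, which is disjoint from $\Ext I(\gamma^{-1})\supseteq\fd$.

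The substantial part is coverage. Given $z\in D$, type~(F) provides an element $\gamma_0\in\langle\Gamma\mminus\Gamma_\infty\rangle\subseteq\Gamma$ at which $g\mapsto\height(gz)$ attains its maximum over $\langle\Gamma\mminus\Gamma_\infty\rangle$. I claim $\gamma_0 z\in\bigcap_{h\in\Gamma\mminus\Gamma_\infty}\overline{\Ext I(h)}$: if instead $\gamma_0 z\in\Int I(h)$ for some such $h$, then $\varrho(\gamma_0 z,h^{-1}\infty)<R(h)$, and Lemma~\ref{ht_mut} forces $\height(h\gamma_0 z)>\height(\gamma_0 z)$, contradicting maximality since $h\gamma_0\in\langle\Gamma\mminus\Gamma_\infty\rangle$. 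Next, pick $\gamma_\infty\in\Gamma_\infty$ with $\gamma_\infty\gamma_0 z\in\overline{\fd_\infty}$; Corollary~\ref{invariance} together with the continuity of the $\Gamma_\infty$-action shows that $\bigcap_h\overline{\Ext I(h)}$ is $\Gamma_\infty$-invariant, so $\gamma_\infty\gamma_0 z$ lies in $\overline{\fd_\infty}\cap\bigcap_h\overline{\Ext I(h)}$. The hypothesis on $\fd_\infty$, sharpening the always-valid inclusion $\overline{A\cap B}\subseteq\overline{A}\cap\overline{B}$ via Proposition~\ref{clext}, identifies this intersection with $\overline{\fd}$, giving $z\in(\gamma_\infty\gamma_0)^{-1}\overline{\fd}$.

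I expect the coverage step to be the main hurdle: it depends on three ingredients fitting together, namely type~(F) to produce a height maximizer, Lemma~\ref{ht_mut} to convert "not maximal" into "lies in some $\Int I(h)$", and the compatibility hypothesis on $\fd_\infty$ (together with Proposition~\ref{clext}) to ensure that landing in $\overline{\fd_\infty}\cap\bigcap_h\overline{\Ext I(h)}$ really places the point in $\overline{\fd}$ rather than merely in a larger closed set.
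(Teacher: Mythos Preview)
Your proof is correct and follows essentially the same route as the paper's: openness from type~(O), disjointness via Lemma~\ref{exttoint} and Corollary~\ref{invariance}, and coverage by maximizing height over $\langle\Gamma\mminus\Gamma_\infty\rangle$, applying Lemma~\ref{ht_mut} to force the maximizer into $\bigcap_h\overline{\Ext I(h)}$, then moving into $\overline{\fd_\infty}$ by a $\Gamma_\infty$-element and invoking the compatibility hypothesis. The only cosmetic difference is that the paper cites Proposition~\ref{clext} to pass from ``not in $\bigcap_h\overline{\Ext I(h)}$'' to ``in some $\Int I(h)$'', whereas you use this implicitly (it amounts to Lemma~\ref{propext}\eqref{propextii}); and the paper applies Corollary~\ref{invariance} directly to the closed intersection, while you route through continuity of the $\Gamma_\infty$-action---both are valid.
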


\begin{proof}
Since $\Gamma$ is of type (O), the set $\bigcap\{\Ext I(g)\mid
g\in\Gamma\mminus\Gamma_\infty\}$ is open. Then $\fd_\infty$ being open as a
fundamental region for $\Gamma_\infty$ in $D$ yields that $\fd$ is open. 

Now let $z\in \fd$ and $g\in\Gamma\mminus\{\id\}$. If $g\in\Gamma_\infty$, then
$gz\notin\fd_\infty$ since the $\Gamma_\infty$-translates of $\fd_\infty$ are
pairwise disjoint. If $g\in\Gamma\mminus\Gamma_\infty$, then $z\in\Ext I(g)$.
Lemma~\ref{exttoint} states that $gz\in \Int I\big(g^{-1}\big)$, and hence
$gz\notin \Ext I\big(g^{-1}\big)$.  Thus, in each case, $gz\notin \fd$. This
shows that the $\Gamma$-translates of $\fd$ are pairwise disjoint.

It remains to prove that $D \subseteq \Gamma\cdot \overline \fd$. To that end
let $z\in D$ and set 
\[
A\sceq \bigcap\big\{\, \overline{\Ext I(g)}\ \big\vert\ 
g\in\Gamma\mminus\Gamma_\infty\big\}.
\]
Since $\Gamma\mminus\Gamma_\infty$ is of type
(F), the set $\langle\Gamma\mminus\Gamma_\infty\rangle z$ contains an element of
maximal height, say $w$. We claim that $w\in A$. Seeking a contradiction assume
that $w\notin A$. Proposition~\ref{clext} yields the existence of
$h\in\Gamma\mminus\Gamma_\infty$ with $w\in \Int I(h)$. Then, by definition,
\[
 \varrho(w,h^{-1}\infty) < R(h).
\]
This and Lemma~\ref{ht_mut} show that $\height(hw) > \height(w)$, which is a contradiction to the choice of $w$. Thus, $w\in A$. 

Since $\Gamma_\infty\overline{\fd_\infty} = D$, there is $k\in\Gamma_\infty$ such that $kw\in \overline \fd_\infty$. Corollary~\ref{invariance} implies that $kw\in A$. Finally,
\[
kw\in \overline\fd_\infty \cap A = \overline{ \fd_\infty \cap \bigcap_{g\in\Gamma\mminus\Gamma_\infty}\Ext I(g) } = \overline\fd.
\]
This completes the proof. 
\bewend \end{proof}

\begin{remark}\label{specialtypeO}
Let $\Gamma$ be a subgroup of $G$ and suppose that the set 
\[
\{ \Int I(g) \mid g\in\Gamma\mminus\Gamma_\infty\}
\]
of interiors of all isometric spheres is locally finite. Then 
\[
\overline{ \bigcup_{g\in\Gamma\mminus\Gamma_\infty} \Int I(g) } = \bigcup_{g\in\Gamma\mminus\Gamma_\infty} \overline{\Int I(g)}.
\]
by \cite[Hilfssatz~7.14]{Querenburg}. Lemma~\ref{propext}\eqref{propextiii} implies that 
\[
\complement \left(\overline{ \bigcup_{g\in\Gamma\mminus\Gamma_\infty} \Int I(g) } \right) = \complement \left(\bigcup_{g\in\Gamma\mminus\Gamma_\infty} \overline{\Int I(g)}\right) = \bigcap_{g\in\Gamma\mminus\Gamma_\infty} \complement \overline{\Int I(g)} = \bigcap_{g\in\Gamma\mminus\Gamma_\infty} \Ext I(g).
\]
Hence, if the set of interiors of isometric spheres is locally finite, then $\Gamma$ is of type (O).
\end{remark}

A special case of Theorem~\ref{fund_region2}  is the following corollary.

\begin{corollary}\label{region_basic} Let $\Gamma$ be a subgroup of $G$ of type
(O) and $\Gamma_\infty=\{\id\}$. Further suppose that
$\Gamma\mminus\Gamma_\infty$ is of type (F). Then 
\[ \fd \sceq \bigcap_{g\in\Gamma\mminus\Gamma_\infty} \Ext I(g) \]
is a fundamental region for $\Gamma$ in $D$.
\end{corollary}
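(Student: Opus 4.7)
The plan is to derive the corollary directly from Theorem~\ref{fund_region2} by exhibiting a suitable $\fd_\infty$ for the trivial stabilizer. Since $\Gamma_\infty = \{\id\}$, the natural choice is $\fd_\infty \sceq D$ itself. I first check that $D$ is a fundamental region for $\Gamma_\infty$ in $D$: it is open (as the ambient space), the condition that distinct $\Gamma_\infty$-translates be disjoint is vacuous because $\Gamma_\infty$ has only one element, and clearly $D = \overline D$ covers itself. Hence $\fd_\infty = D$ qualifies as a fundamental region for $\Gamma_\infty$.

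Next I verify the closure hypothesis of Theorem~\ref{fund_region2}. With $\fd_\infty = D$ we have $\overline{\fd_\infty} = D$, so
\[
\overline{\fd_\infty} \cap \bigcap_{g\in\Gamma\mminus\Gamma_\infty} \overline{\Ext I(g)}
= \bigcap_{g\in\Gamma\mminus\Gamma_\infty} \overline{\Ext I(g)},
\]
while
\[
\overline{\fd_\infty \cap \bigcap_{g\in\Gamma\mminus\Gamma_\infty} \Ext I(g)}
= \overline{\bigcap_{g\in\Gamma\mminus\Gamma_\infty} \Ext I(g)}.
\]
But Proposition~\ref{clext} asserts exactly that these two sets coincide. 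Thus the hypothesis is satisfied without any additional argument.

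The remaining hypotheses of Theorem~\ref{fund_region2}, namely that $\Gamma$ is of type~(O) and that $\Gamma\mminus\Gamma_\infty$ is of type~(F), hold by assumption. Applying the theorem, the set
\[
\fd_\infty \cap \bigcap_{g\in\Gamma\mminus\Gamma_\infty} \Ext I(g)
= \bigcap_{g\in\Gamma\mminus\Gamma_\infty} \Ext I(g)
\]
is a fundamental region for $\Gamma$ in $D$, which is precisely the claim.

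The proof is essentially a bookkeeping reduction to Theorem~\ref{fund_region2}, and I anticipate no real obstacle: the only nontrivial step is recognising that the closure identity required by the theorem is, in this case, identical to the content of Proposition~\ref{clext}. The remaining verifications (that $D$ is a fundamental region for the trivial group, and that the hypotheses on $\Gamma$ carry over unchanged) are immediate from the definitions.
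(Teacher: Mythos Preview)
Your proof is correct and matches the paper's approach: the paper simply states the corollary as ``a special case of Theorem~\ref{fund_region2}'' without giving any further argument, and your verification that $\fd_\infty = D$ satisfies the closure hypothesis via Proposition~\ref{clext} is exactly the implicit reduction.
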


The following proposition serves to show that in many situations the
fundamental region in Theorem~\ref{fund_region2} is actually a fundamental
domain (see Corollary~\ref{domain_basic} below).

\begin{proposition}\label{extgeodconv}
Suppose that $\mf v= \{0\}$. Then the set $\Ext I(g)$ is geodesically convex for each $g\in G\mminus G_\infty$.
\end{proposition}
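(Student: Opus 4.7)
The plan is to reduce to the classical geometry of real hyperbolic space in the upper half-space model. When $\mf v = \{0\}$, the $H$-type algebra $(\mf z,\{0\},J)$ is abelian and the defining inequality for $D$ collapses to $t>0$, so $D = \R^+ \times \mf z$. The map $\Theta$ restricts to the identity on $\{X=0\}$, and a brief left-translation computation shows that the Riemannian metric $D$ inherits from $S$ is the classical upper half-space metric $t^{-2}(dt^2 + |dZ|^2)$. In particular its geodesics are either vertical Euclidean rays or Euclidean semicircles meeting the hyperplane $\{t=0\}$ orthogonally with center on it.

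Second, I would exhibit $I(g)$ as a Euclidean hemisphere centered on this boundary hyperplane. Writing $g = n_1 \sigma m a_t n_2$ in its Bruhat form and using the explicit $N$-, $M$-, $A$-actions on $\dg$, one obtains $g^{-1}\infty = (0,Z_0)$ for some $Z_0 \in \mf z$. Specializing Lemma~\ref{Cyganformula} to $v_1 = v_2 = 0$ gives $\varrho((t,Z),(0,Z_0)) = (t^2 + |Z-Z_0|^2)^{1/4}$, so
\[
I(g) = \bigl\{ (t,Z) \in D \,\big\vert\, t^2 + |Z-Z_0|^2 = R(g)^4 \bigr\},
\]
the Euclidean hemisphere of radius $R(g)^2$ centered at the boundary point $(0,Z_0)$.

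The main step is then to show that this hemisphere is totally geodesic in $D$, and I expect this to be the principal obstacle. Given $p_1 \neq p_2$ in $I(g)$, the defining equation forces their $\mf z$-components to differ, so they determine a unique 2-plane $P \subset \R \oplus \mf z$ perpendicular to $\{t=0\}$, and the geodesic $\gamma$ between them is the unique semicircle in $P$ centered on $P \cap \{t=0\}$ through both points. A direct calculation — completing the square in the affine parameter along $P \cap \mf z$ — shows that $I(g) \cap P$ is itself a semicircle centered on $P \cap \{t=0\}$, hence by uniqueness coincides with $\gamma$; in particular $\gamma \subset I(g)$.

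Finally, geodesic convexity of $\Ext I(g)$ follows by a standard argument. If a geodesic met $I(g)$ at two distinct points, the subgeodesic between them would lie in $I(g)$ (by total geodesicity), and by uniqueness of geodesic extension the whole geodesic would be forced into $I(g)$. Hence any geodesic not contained in $I(g)$ crosses it in at most one point, and at such a point it must pass transversally from one component of $D \setminus I(g)$ to the other (a tangency would again force containment in $I(g)$). A geodesic with both endpoints in $\Ext I(g)$ therefore cannot meet $I(g)$ at all, so it remains in $\Ext I(g)$.
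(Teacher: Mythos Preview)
Your proof is correct and takes a genuinely different route from the paper's.

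The paper argues directly and computationally. It parametrizes an arbitrary geodesic as $r\mapsto ha_r\cdot o_D$ for some $h\in G$, reduces (via the Bruhat decomposition) to the two cases $h=n\in N$ and $h=n'\sigma n\in N\sigma N$, and in each case computes $\varrho(ha_r\cdot o_D,g^{-1}\infty)$ explicitly. The first case is monotone in $r$; the second case is rewritten as a quartic inequality $ar^4+br^2+c>0$, which is then checked by elementary case analysis on the sign of $a$. The argument is entirely internal to the paper's setup and never names the ambient space as a classical model.

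Your route is more geometric. You observe that when $\mf v=\{0\}$ the space $D=\R^+\times\mf z$ with metric $t^{-2}(dt^2+|dZ|^2)$ is literally the upper half-space model of real hyperbolic space, identify $I(g)$ as a Euclidean hemisphere centred on $\{t=0\}$, and invoke the classical fact that such hemispheres are totally geodesic. The convexity of $\Ext I(g)$ then follows from the standard ``a geodesic can meet a totally geodesic hypersurface in at most one point unless it lies in it'' argument, together with the tangency clause you supply.

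Both arguments are sound. Yours is shorter and more conceptual, and explains \emph{why} the exterior is convex (it is a hyperbolic half-space); the price is importing standard facts about the half-space model from outside the paper's framework. The paper's computation is self-contained and stays within the notation already set up, which fits the paper's classification-free spirit, but it obscures the geometric reason and leaves the quartic case analysis to the reader. Your verification that the intersection circle $I(g)\cap P$ is centred on the boundary (because $P$ contains the vertical direction, so the foot of the perpendicular from the sphere's centre to $P$ stays at height $0$) is the key step and is correctly argued.
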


\begin{proof}
Let $z_1,z_2\in\Ext I(g)$, $z_1\not= z_2$. Then there exist $s_2>s_1>0$ and $h\in G$ such that $z_1 = ha_{s_1}\cdot o_D$ and $z_2 = ha_{s_2}\cdot o_D$. W.l.o.g.\@ we may assume that $h$ is either of the form $h= n\in N$ (if $h\in G_\infty$) or $h=n'\sigma n\in N\sigma N$ (if $h\in G\mminus G_\infty$). The geodesic segment connecting $z_1$ and $z_2$ is given by 
\[
 c\sceq \{ ha_r\cdot o_D \mid s_1\leq r\leq s_2 \}.
\]
In the following we will show that $c\subseteq \Ext I(g)$ by examining separately the two cases for $h$.

Suppose that $g=n_1\sigma m a_t n_2$ with $n_1, n_2=(1,Z_s)\in N$, $a_t\in A$ and $m\in M$ and $r\in [s_1,s_2]$. At first let $h=n=(1,Z)\in N$. Then 
\begin{align*}
\varrho( na_r\cdot o_D, g^{-1}\infty) & = \varrho( (r,Z), g^{-1}\infty)
 = | Z_s + Z + r|^{1/2}
 = \left( |Z_s+Z|^2 + r^2\right)^{1/4}
\\
& \geq \left( |Z_s+Z|^2 + s_1^2\right)^{1/4} = \varrho( na_{s_1}\cdot o_D, g^{-1}\infty) > R(g).
\end{align*}
Hence $ha_r\cdot o_D\in \Ext I(g)$.

Now let $h=n'\sigma n\in N\sigma N$ with $n'=(1,Z')$ and $n=(1,Z)$. A straightforward calculation shows that 
\[
 \varrho( ha_r\cdot o_D, g^{-1}\infty) > R(g) = t^{-1/4}
\]
is equivalent to 
\begin{equation}\label{isgeodconv}
 0< ar^4 + br^2 + c
\end{equation}
where $Z'_s\sceq Z_s +Z'$ and
\begin{align*}
a & \sceq |Z'_s|^2 - t^{-1}
\\
b & \sceq 1+ 2|Z|^2 |Z'_s|^2 - 2\langle Z, Z'_s\rangle - 2|Z|^2 t^{-1}
\\
c & \sceq |Z|^4 |Z'_s|^2 - 2|Z|^2 \langle Z, Z'_s\rangle + |Z|^2 - |Z|^4 t^{-1}.
\end{align*}
Solving \eqref{isgeodconv} separately for $a=0$, $a>0$ and $a<0$ yields that it
is satisfied for $r$. Therefore $ha_r\cdot o_D\in \Ext I(g)$. This completes the
proof. 
\bewend \end{proof}

\begin{remark}
Using the classification in \cite{Koranyi_Ricci_proofs} of $J^2C$-module
structures and the statement \cite[Proposition~2.5.1]{Chen_Greenberg}  on the
(non-)existence of totally geodesic submanifolds of codimension one in rank one
Riemannian
symmetric spaces we see that
Proposition~\ref{extgeodconv} in general becomes false if $\mf v\not=\{0\}$.
\end{remark}

Theorem~\ref{fund_region2} and Proposition~\ref{extgeodconv} immediately imply the
following statement.

\begin{corollary}\label{domain_basic}
Let $\mf v = \{0\}$ and let $\Gamma$ be a subgroup of $G$ of type~(O) such that
$\Gamma\mminus\Gamma_\infty$ is of type~(F). Suppose that $\fd_\infty$ is a
convex fundamental domain for $\Gamma_\infty$ in $D$ satisfying
\[
 \overline\fd_\infty \cap
\bigcap_{g\in\Gamma\mminus\Gamma_\infty}\overline{\Ext I(g)} =
\overline{\fd_\infty \cap \bigcap_{g\in\Gamma\mminus\Gamma_\infty} \Ext I(g)}.
\]
Then
\[
 \fd \sceq \fd_\infty \cap \bigcap_{g\in\Gamma\mminus\Gamma_\infty} \Ext I(g)
\]
is a convex fundamental domain for $\Gamma$ in $D$.
\end{corollary}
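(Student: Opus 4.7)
The plan is to invoke Theorem~\ref{fund_region2} to obtain that $\fd$ is a fundamental region, and then use Proposition~\ref{extgeodconv} together with convexity of $\fd_\infty$ to upgrade it to a fundamental domain. Concretely, a fundamental region is a fundamental domain precisely when it is connected, and for an open subset of the Riemannian symmetric space $D$ geodesic convexity implies (path-)connectedness. So the whole task reduces to showing that $\fd$ is geodesically convex.

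First I would observe that the hypotheses of Theorem~\ref{fund_region2} are exactly the hypotheses of this corollary (together with the extra assumption $\mf v=\{0\}$ and convexity of $\fd_\infty$, which Theorem~\ref{fund_region2} does not use). Hence Theorem~\ref{fund_region2} applies directly and yields that
\[
\fd = \fd_\infty \cap \bigcap_{g\in\Gamma\mminus\Gamma_\infty} \Ext I(g)
\]
is an open subset of $D$ whose $\Gamma$-translates are pairwise disjoint and whose closure $\Gamma$-translates cover $D$. In particular $\fd\neq\emptyset$, since otherwise $D$ could not be covered.

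Second, I would verify that $\fd$ is geodesically convex. Since $\mf v=\{0\}$, Proposition~\ref{extgeodconv} tells us that $\Ext I(g)$ is geodesically convex for every $g\in G\mminus G_\infty$. The set $\fd_\infty$ is geodesically convex by assumption. The intersection of an arbitrary family of geodesically convex subsets of a Riemannian manifold is again geodesically convex (for any two points in the intersection, the unique geodesic segment joining them lies in each member of the family, hence in the intersection). Applying this to the family consisting of $\fd_\infty$ together with all $\Ext I(g)$, $g\in\Gamma\mminus\Gamma_\infty$, shows that $\fd$ is geodesically convex.

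Finally, since $\fd$ is non-empty and geodesically convex, any two of its points are joined by a geodesic segment contained in $\fd$, so $\fd$ is path-connected and therefore connected. Combined with the first step, this establishes that $\fd$ is a connected fundamental region, i.e.\ a fundamental domain for $\Gamma$ in $D$, and by construction it is convex. The main (minor) subtlety to watch is nothing more than confirming that "convex" is meant in the geodesic sense so that Proposition~\ref{extgeodconv} feeds cleanly into the intersection argument; no further obstacle arises.
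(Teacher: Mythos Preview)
Your proposal is correct and matches the paper's own argument, which simply states that Theorem~\ref{fund_region2} and Proposition~\ref{extgeodconv} immediately imply the corollary. The only thing you spell out that the paper leaves implicit is that an arbitrary intersection of geodesically convex sets is geodesically convex, hence connected; this is exactly the intended inference.
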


\section{Projective models}\label{sec_proj}

The purpose of this section is to show that the existing definitions of
isometric spheres and results concerning the existence of isometric fundamental
regions in literature are essentially covered by the definitions and results in
Section~\ref{sec_fundreg}. The reason for the reservation towards a confirmation to
cover all existing definitions and results is twofold: On the one hand the
author cannot guarantee to be aware of all existing results. On the other hand,
at least for the real hyperbolic plane, the literature contains non-equivalent
definitions of isometric spheres. Moreover, the existence results of isometric
fundamental regions by Ford are proved for a weaker notion of fundamental region
than the one used here. Section~\ref{sec_literature} contains a detailed discussion
of the latter issues.

Let $(C,V,J)$ be a $J^2C$-module structure. In Section~\ref{sec_division} we
introduce the structure of division algebras on $C$ following
\cite{Koranyi_Ricci} and \cite{Koranyi_Ricci_proofs}. For $C$ being an
associative division algebra, we redo, in
Sections~\ref{sec_projspace} and \ref{sec_hypspaces}, the
classical projective construction of hyperbolic spaces in terms of the
$J^2C$-module structure. A long part of Section~\ref{sec_hypspaces} is devoted to a
detailed study of the relation between the isometry group $G$ of the
symmetric space and the natural ``matrix'' group on the projective space. This
investigation will show that the matrix group is isomorphic to a certain
subgroup $G^\Res$ of $G$. In Section~\ref{sec_cocycle} we use these results to
provide a characterization of the isometric sphere of $g\in G^\Res$ via a
cocycle. In Section~\ref{sec_special} we prove that a special class of subgroups
$\Gamma$ of $G^\Res$ are of type~(O) with $\Gamma\mminus\Gamma_\infty$ being of
type~(F) and use Theorem~\ref{fund_region2} to show the existence of an
isometric fundamental domain for $\Gamma$. Finally, in
Section~\ref{sec_literature}, we bring together these investigations for a
comparison with the existing literature.

\subsection{Division algebras induced by $J^2C$-module
structures}\label{sec_division}

Let $(C,V,J)$ be a $J^2C$-module structure. Suppose that $V\not=\{0\}$ and fix
an element $v\in V\mminus\{0\}$. From~\eqref{M3} and (\apref{M}{M2}{}) it
follows that for each pair $(\zeta,\eta)\in C\times C$ there exists a unique
element $\tau\in C$ such that 
\[
J(\zeta, J(\eta, v)) = J(\tau,v).
\]
Hence, each choice $v\in V\mminus\{0\}$ equips $C$ with a multiplication
$\cdot_v\colon C\times C\to C$ via\label{def_multv}
\[
 \zeta\cdot_v\eta \sceq \tau \quad\Leftrightarrow\quad J(\zeta,J(\eta,v)) = J(\tau,v).
\]
Equation~\eqref{inverse} shows that the inverse of $\zeta\in C\mminus\{0\}$ is $\zeta^{-1} = |\zeta|^{-2}\overline\zeta$, independent of the choice of $v\in V\mminus\{0\}$.
The following properties of $(C,\cdot_v)$ are shown in
\cite{Koranyi_Ricci_proofs}.

\begin{proposition}[Proposition 1.1 and Corollary 1.5 in \cite{Koranyi_Ricci_proofs}]\mbox{
}\label{division} 
\begin{enumerate}[{\rm (i)}]
\item For each $v\in V\mminus\{0\}$, the Euclidean vector space $C$ with the multiplication $\cdot_v$ is a normed, not necessarily associative, division algebra.
\item The multiplication $\cdot_v$ on $C$ is independent of the choice of $v\in V\mminus\{0\}$ if and only if $(C,\cdot_v)$ is associative for one (and hence for all) $v$.
\end{enumerate}
\end{proposition}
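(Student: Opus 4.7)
The plan is to verify parts (i) and (ii) by directly unpacking the defining relation $J(\zeta\cdot_v\eta,v)=J(\zeta,J(\eta,v))$, using (M1), (M2), and the $J^2$-condition. For (i), bilinearity of $\cdot_v$ follows from the uniqueness of $\tau$---which holds because $J(\cdot,v)\colon C\to V$ is injective for $v\neq 0$ by (M2)---together with $\R$-bilinearity of $J$. The computation $|\zeta\cdot_v\eta|\,|v|=|J(\zeta\cdot_v\eta,v)|=|J(\zeta,J(\eta,v))|=|\zeta|\,|\eta|\,|v|$ yields norm multiplicativity, (M1) supplies $e$ as a two-sided unit since $J(e,\cdot)=\id_V$, and \eqref{inverse} already exhibits $\zeta^{-1}=|\zeta|^{-2}\overline\zeta$ as a two-sided inverse for $\zeta\in C\mminus\{0\}$.

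For the forward direction of (ii), assume the multiplication is independent of the base point. Fix $v\in V\mminus\{0\}$, choose any $\tau\in C\mminus\{0\}$, and set $w\sceq J(\tau,v)$, which is nonzero by (M2). Expanding $J(\zeta,J(\eta,w))$ in two ways---once by peeling off $\cdot_w$ on the outside and once by peeling off $\cdot_v$ on the inside---yields
\[
J\big((\zeta\cdot_w\eta)\cdot_v\tau,v\big)=J(\zeta,J(\eta,w))=J\big(\zeta\cdot_v(\eta\cdot_v\tau),v\big).
\]
Since $\cdot_w=\cdot_v$ by hypothesis and $J(\cdot,v)$ is injective, this forces $(\zeta\cdot_v\eta)\cdot_v\tau=\zeta\cdot_v(\eta\cdot_v\tau)$. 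The boundary case $\tau=0$ is trivial.

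For the converse, assume $\cdot_v$ is associative for some fixed $v\in V\mminus\{0\}$ and aim to show $\cdot_u=\cdot_v$ for every $u\in V\mminus\{0\}$, splitting into two cases. If $u\in Cv$, write $u=J(\xi,v)$ with $\xi\neq 0$; unfolding $J(\zeta,J(\eta,u))$ by the defining relation twice and invoking the associativity $(\zeta\cdot_v\eta)\cdot_v\xi=\zeta\cdot_v(\eta\cdot_v\xi)$ rewrites it as $J(\zeta\cdot_v\eta,u)$, so by injectivity of $J(\cdot,u)$ we obtain $\zeta\cdot_u\eta=\zeta\cdot_v\eta$. If $u\notin Cv$, I first establish the auxiliary fact $Cu\cap Cv=\{0\}$: a common nonzero element $J(\alpha,v)=J(\beta,u)$ would have $\alpha,\beta\neq 0$ by (M2), and then \eqref{inverse} yields $v=J(\alpha^{-1},J(\beta,u))=J(\alpha^{-1}\cdot_u\beta,u)\in Cu$, forcing $u\in Cv$ and contradicting the case assumption. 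Expanding $J(\zeta,J(\eta,v+u))$ via $\R$-bilinearity of $J$ in its second slot and equating with $J(\zeta\cdot_{v+u}\eta,v+u)$ produces a single identity whose left-hand side lies in $Cv$ and right-hand side in $Cu$; line-disjointness forces each side to vanish, giving $\zeta\cdot_v\eta=\zeta\cdot_{v+u}\eta=\zeta\cdot_u\eta$.

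The hard part is the second case of the converse: associativity of $\cdot_v$ supplies no direct information about $\cdot_u$ when $u$ lies outside the $C$-line $Cv$, so the argument has to route through the auxiliary line-disjointness $Cu\cap Cv=\{0\}$ together with the bilinearity trick centered on $v+u$. The first case, in contrast, covers the octonionic situation where $V=Cv$ is $C$-one-dimensional; this is precisely the regime where associativity must genuinely be assumed, since otherwise $\cdot_u$ may fail to equal $\cdot_v$ even along a single $C$-line.
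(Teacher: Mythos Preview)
The paper does not supply its own proof of this proposition; it is quoted from \cite{Koranyi_Ricci_proofs} with the remark that ``the following properties of $(C,\cdot_v)$ are shown in \cite{Koranyi_Ricci_proofs}.'' There is therefore no in-paper argument to compare against, so I will simply assess your proof on its own merits.

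Your argument is correct. Part~(i) is routine and cleanly handled. For part~(ii), the forward direction is the standard computation, and your converse is correctly split into the two cases $u\in Cv$ and $u\notin Cv$. The second case deserves a small comment: your bilinearity trick centered on $v+u$ does not invoke associativity of $\cdot_v$ at all, and indeed it cannot, since the conclusion $\cdot_u=\cdot_v$ for $u\notin Cv$ holds unconditionally (this is consistent with the classification, where the only non-associative case is the octonionic one with $V=Cv$). Associativity is genuinely needed only in Case~1, exactly as you observe in your closing paragraph. One point you might make explicit for the reader is that $v+u\neq 0$ in Case~2 (since $-v\in Cv$ and $u\notin Cv$), so that $\cdot_{v+u}$ is well-defined; and that the rearranged identity reads
\[
J(\zeta\cdot_v\eta-\zeta\cdot_{v+u}\eta,\,v)=J(\zeta\cdot_{v+u}\eta-\zeta\cdot_u\eta,\,u),
\]
with the left side in $Cv$ and the right in $Cu$, whence both vanish by your line-disjointness lemma $Cv\cap Cu=\{0\}$ and the injectivity of $J(\cdot,v)$, $J(\cdot,u)$.
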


We call the $J^2C$-module structure $(C,V,J)$ \textit{associative} if
$(C,\cdot_v)$ is associative
for some (and hence each) $v\in V\mminus\{0\}$ and otherwise
\textit{non-associative}. If $(C,\cdot_v)$ is associative, we omit the
subscript $v$ of the multiplication $\cdot_v$.

\begin{remark} 
Only associative $J^2C$-module structures are modules in the sense of \cite{Bourbaki_Algebra1}.
\end{remark}

\begin{remark}
Suppose that $(C,V,J)$ is an associative $J^2C$-module structure. The
classification in \cite{Koranyi_Ricci_proofs} (or \cite[Section~4]{Koranyi_Ricci})
shows that $C$ is real or complex
or quaternionic numbers.
\end{remark}

\subsection{$C$-sesquilinear hermitian forms}\label{sec_sesqui}

From now on let $(C,V,J)$ be an associative $J^2C$-module structure. Suppose
that  $M$ is a (left) $C$-module. A map $\Phi\colon M\times M\to C$  is
said to be a \textit{$C$-sesquilinear hermitian form} if $\Phi$ is $C$-hermitian
and $C$-linear in the first variable, that is, if the following properties are
satisfied:
\begin{enumerate}[(SH1)]
\item\label{SH1} $\Phi(\zeta_1 x_1 + \zeta_2 x_2, y) = \zeta_1\Phi(x_1,y) + \zeta_2 \Phi(x_2,y)$ for all $\zeta_1,\zeta_2\in C$ and $x,y\in M$,
\item\label{SH2} $\Phi(x,y) = \overline{\Phi(y,x)}$ for all $x,y\in M$.
\end{enumerate}
A $C$-sesquilinear hermitian form $\Phi$ is called \textit{non-degenerate} 
if $\Phi(m_1,\cdot) = 0$ implies that $m_1=0$. It is called \textit{indefinite}
if there exist $m_1,m_2\in M$ such that $\Phi(m_1,m_1) < 0$ and
$\Phi(m_2,m_2)>0$.

\begin{proposition}\label{beta1} The map $\beta_1\colon C\times C\to C$,
$\beta_1(x,y) \sceq x\overline y$, is $C$-sesquilinear hermitian. Further
$\Rea\beta_1(x,y) = \langle x,y\rangle$ for all $x,y\in C$. 
\end{proposition}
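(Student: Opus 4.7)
The plan is to reduce the statement to two algebraic identities in the normed associative division algebra $(C,\cdot)$: the anti-involution identity $\overline{\zeta\eta}=\overline\eta\,\overline\zeta$ and the squaring identity $\zeta^2=-|\zeta|^2 e$ for $\zeta\in C'$. Both are derived from Section~\ref{sec_B}. For the first, \eqref{adjoint} gives
\[
 J_{\overline{\zeta\eta}} = J_{\zeta\eta}^{*} = (J_\zeta J_\eta)^{*} = J_\eta^{*} J_\zeta^{*} = J_{\overline\eta}J_{\overline\zeta} = J_{\overline\eta\cdot\overline\zeta},
\]
and the injectivity of $\eta\mapsto J(\eta,v)$ for any $v\in V\mminus\{0\}$ (guaranteed by (\apref{M}{M2}{})) yields $\overline{\zeta\eta}=\overline\eta\,\overline\zeta$. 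For the second, \eqref{square} combined with $\overline\zeta=-\zeta$ reads $J_\zeta J_\zeta=-J_{\overline\zeta}J_\zeta=-|\zeta|^2\id_V=J_{-|\zeta|^2 e}$, and the same injectivity yields $\zeta^2=-|\zeta|^2 e$.

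Given these, (SH1) follows from the distributive and associative laws: $\beta_1(\zeta_1 x_1+\zeta_2 x_2,y)=(\zeta_1 x_1+\zeta_2 x_2)\overline y=\zeta_1(x_1\overline y)+\zeta_2(x_2\overline y)=\zeta_1\beta_1(x_1,y)+\zeta_2\beta_1(x_2,y)$, and (SH2) follows from the calculation $\overline{\beta_1(y,x)}=\overline{y\overline x}=\overline{\overline x}\,\overline y=x\overline y=\beta_1(x,y)$. For the real part identity, I would decompose $x=ae+z$ and $y=be+w$ with $a,b\in\R$ and $z,w\in C'$, so that $\overline y=be-w$ and
\[
 x\overline y = ab\,e - aw + bz - zw.
\]
Polarising $\zeta^2=-|\zeta|^2 e$ gives $zw+wz=-2\langle z,w\rangle e$, and since $\overline{zw}=\overline w\,\overline z=wz$ one has $2\Rea(zw)=zw+\overline{zw}=zw+wz$, hence $\Rea(zw)=-\langle z,w\rangle$. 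Exploiting $\R e\perp C'$, the real part of $x\overline y$ picks up only $ab$ and $-\Rea(zw)$, giving $\Rea\beta_1(x,y)=ab+\langle z,w\rangle=\langle x,y\rangle$.

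The main obstacle is really only the derivation of the two preliminary identities; once these are in place, the rest of the argument is a routine distributive expansion analogous to the $\beta_2$-computations already performed in Section~\ref{def_beta2}.
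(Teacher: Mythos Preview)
Your proof is correct and follows the paper's approach for (SH1) and (SH2): both derive the hermitian symmetry from \eqref{adjoint}, you via the general anti-involution law $\overline{\zeta\eta}=\overline\eta\,\overline\zeta$, the paper directly for the specific product $y\overline x$.

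For $\Rea\beta_1(x,y)=\langle x,y\rangle$, however, the paper takes a shorter path. From \eqref{square} one has $\beta_1(x,x)=x\overline x=|x|^2$; polarising this identity over $\R$ immediately gives $\beta_1(x,y)+\beta_1(y,x)=2\langle x,y\rangle$, and since (SH2) is already established the left side is $2\Rea\beta_1(x,y)$. Your decomposition $x=ae+z$, $y=be+w$ with the explicit computation of $\Rea(zw)$ via $\zeta^2=-|\zeta|^2 e$ is correct but works harder: you polarise on $C'$ and then reassemble, whereas the paper polarises on all of $C$ in one stroke. Both arguments ultimately rest on the same identity \eqref{square}; yours is more hands-on, the paper's is the cleaner packaging.
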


\begin{proof} Obviously, $\beta_1$ is $\R$-bilinear. For each $y\in C$, the
$C$-linearity of $\beta_1(\cdot,y)$ is exactly the left-sided distribution law
of the division algebra $C$. To show (\apref{SH}{SH2}{}) let $x,y\in C$. From
\eqref{adjoint} it follows that
\[ J_{\overline{y\overline{x}}}=J^*_{y\overline x} = J^*_{\overline x}  J^*_y =
J_{x} J_{\overline y} = J_{x\overline y}.\]  
Therefore
\[ \beta_1(x,y) =   x\overline{y}= \overline{ y \overline x}=  \overline{\beta_1(y,x)} .\]
Hence $\beta_1$ is $C$-sesquilinear hermitian.  Finally,  polarization of 
$\beta_1(x,x) = |x|^2$ (see \eqref{square}) over $\R$ implies the
remaining statement. 
\bewend \end{proof}

\begin{remark}\label{real}
An immediate consequence of Proposition~\ref{beta1} is that conjugation and multiplication in $C$ anticommute, \ie $\overline{x}\,\overline{y} = \overline{yx}$ for all $x,y\in C$. In particular, for a $C$-sesquilinear hermitian form $\Phi$ on the $C$-module $M$ we have
\[ \Phi(m_1,\zeta m_2) = \Phi(m_1,m_2)\overline{\zeta}\]
for all $m_1,m_2\in M$ and all $\zeta\in C$. Moreover, we have $\Phi(m,m) =
\overline{\Phi(m,m)}$ for each $m\in M$ and therefore $\Phi(m,m)\in \R$.
\end{remark}

\begin{lemma}\label{innerconj} 
For all $\zeta,\eta,\xi\in C$ we have $\langle \zeta,\xi \eta\rangle = \langle \overline{\xi}\zeta,\eta\rangle$.
\end{lemma}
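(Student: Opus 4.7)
The plan is to reduce everything to the identity $\langle x,y\rangle = \Rea\beta_1(x,y) = \Rea(x\overline y)$ from Proposition~\ref{beta1} and then exploit associativity together with the cyclicity of $\Rea$ on products.

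First I would rewrite both sides in terms of products in $C$. Using Proposition~\ref{beta1} and the fact (Remark~\ref{real}) that conjugation in $C$ reverses the order of multiplication,
\[
\langle\zeta,\xi\eta\rangle = \Rea\bigl(\zeta\,\overline{\xi\eta}\bigr) = \Rea\bigl(\zeta\,\overline\eta\,\overline\xi\bigr),
\]
and
\[
\langle\overline\xi\zeta,\eta\rangle = \Rea\bigl((\overline\xi\zeta)\overline\eta\bigr) = \Rea\bigl(\overline\xi\zeta\,\overline\eta\bigr).
\]
Since $C$ is associative by hypothesis, the parenthesizations in these triple products are irrelevant, so the claim reduces to
\[
\Rea\bigl(\zeta\overline\eta\,\overline\xi\bigr) = \Rea\bigl(\overline\xi\,\zeta\overline\eta\bigr),
\]
that is, to the tracial property $\Rea(ab)=\Rea(ba)$ applied with $a=\overline\xi$ and $b=\zeta\overline\eta$.

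The last step — cyclicity of $\Rea$ on a product of two elements of $C$ — is the only nontrivial ingredient, and I expect it to be the main obstacle. I would prove it again from Proposition~\ref{beta1}: for $a,b\in C$ we have $\beta_1(a,\overline b)=a\overline{\overline b}=ab$, hence
\[
\Rea(ab) = \Rea\beta_1(a,\overline b) = \langle a,\overline b\rangle,
\]
and similarly $\Rea(ba)=\langle b,\overline a\rangle$. By symmetry of the Euclidean inner product and Lemma~\ref{conj_scalar}, $\langle b,\overline a\rangle=\langle\overline a,b\rangle=\langle a,\overline b\rangle$, which gives $\Rea(ab)=\Rea(ba)$ and completes the proof.

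Two sanity remarks: associativity of $C$ is genuinely needed, since otherwise the triple products are ambiguous and the reduction above breaks; and the proof uses nothing from $V$ beyond its role in Proposition~\ref{division} that produces an associative multiplication on $C$ at all.
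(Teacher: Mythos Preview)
Your argument is correct. The reduction to $\Rea(ab)=\Rea(ba)$ via Proposition~\ref{beta1} and Remark~\ref{real} is sound, and your proof of that cyclicity identity from Lemma~\ref{conj_scalar} is clean.

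The paper takes a different route: it works in $V$ rather than purely in $C$. Picking any $v\in V\mminus\{0\}$ and using the polarization identity \eqref{polar} with $u=v$ to get $\langle \alpha v,\beta v\rangle=\langle\alpha,\beta\rangle|v|^2$, one has
\[
\langle\zeta,\xi\eta\rangle|v|^2=\langle\zeta v,\xi\eta v\rangle=\langle J_\xi^*(\zeta v),\eta v\rangle=\langle\overline\xi\zeta v,\eta v\rangle=\langle\overline\xi\zeta,\eta\rangle|v|^2,
\]
where the middle step is just the adjoint relation \eqref{adjoint}. This is a one-line computation once the setup is in place. Your approach trades that for a self-contained argument inside $C$: you never touch a vector in $V$, at the cost of first isolating the tracial identity $\Rea(ab)=\Rea(ba)$ as a separate lemma. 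Both arguments ultimately rest on the same ingredients (associativity and \eqref{adjoint}, the latter hidden for you inside Remark~\ref{real}); the paper's version is shorter, while yours makes the algebraic content of the statement --- that $\Rea$ behaves like a trace on the associative algebra $C$ --- more explicit.
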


\begin{proof} Let $v\in V\mminus\{0\}$. Using \eqref{polar} and \eqref{adjoint} we find
\begin{align*}
2 \langle \zeta,\xi\eta \rangle |v|^2 & = 2\langle \zeta v, \xi\eta v\rangle = 2\langle \overline{\xi}\zeta v, \eta v\rangle = 2 \langle \overline{\xi}\zeta, \eta\rangle |v|^2,
\end{align*}
hence $\langle \zeta, \xi\eta\rangle = \langle \overline{\xi}\zeta,
\eta\rangle$. 
\bewend \end{proof}

\begin{proposition}\label{beta2}
The map $\beta_2\colon V\times V\to C$ is $C$-sesquilinear hermitian. 
\end{proposition}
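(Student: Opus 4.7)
The plan is to combine what is already known with one new computation checking left $C$-linearity in the first slot. From Proposition~\ref{beta_2_real} we already have $\R$-bilinearity of $\beta_2$ and the hermitian symmetry $\beta_2(v,u)=\overline{\beta_2(u,v)}$, which is exactly property (\apref{SH}{SH2}{}). So the whole task reduces to verifying that $\beta_2(\xi v, u) = \xi\, \beta_2(v,u)$ for every $\xi\in C$ and $u,v\in V$; together with additivity in the first slot this yields (\apref{SH}{SH1}{}).

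To check this identity, I would test both sides against an arbitrary $\zeta\in C$ via the (non-degenerate) inner product on $C$. On the one hand, directly from the definition of $\beta_2$,
\[
\langle \beta_2(\xi v, u), \zeta\rangle = \langle J_\zeta u, \xi v\rangle.
\]
On the other hand, using the symmetry of the real inner product and Lemma~\ref{innerconj},
\[
\langle \xi\,\beta_2(v,u), \zeta\rangle = \langle \zeta, \xi\,\beta_2(v,u)\rangle = \langle \overline{\xi}\zeta, \beta_2(v,u)\rangle = \langle J_{\overline{\xi}\zeta} u, v\rangle.
\]
Now the key input is the associativity hypothesis on the $J^2C$-module structure: by the definition of the multiplication $\cdot$ on $C$ in Section~\ref{sec_division}, associativity forces $J_{\overline{\xi}\zeta} = J_{\overline{\xi}} J_\zeta$ on all of $V$. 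Combining this with \eqref{adjoint}, which gives $J_{\overline{\xi}}^{*} = J_\xi$, we obtain
\[
\langle J_{\overline{\xi}\zeta} u, v\rangle = \langle J_{\overline{\xi}} J_\zeta u, v\rangle = \langle J_\zeta u, J_\xi v\rangle = \langle J_\zeta u, \xi v\rangle,
\]
which matches the first computation. Since $\langle\cdot,\cdot\rangle$ is non-degenerate on $C$, equality of $\langle\beta_2(\xi v,u),\zeta\rangle$ and $\langle \xi\beta_2(v,u),\zeta\rangle$ for all $\zeta$ gives the desired identity, and combined with $\R$-bilinearity this finishes (\apref{SH}{SH1}{}).

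The main thing to be careful about is the step $J_{\overline{\xi}\zeta} = J_{\overline{\xi}} J_\zeta$: the definition of the product on $C$ given in Section~\ref{sec_division} depends a priori on a chosen $v\in V\mminus\{0\}$, so the identity $J_{\eta\cdot\tau} = J_\eta J_\tau$ on all of $V$ requires precisely associativity (Proposition~\ref{division}(ii)). This is the only subtle point; the rest is a short calculation using \eqref{adjoint} and Lemma~\ref{innerconj}.
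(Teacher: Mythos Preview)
Your proof is correct and is essentially the same argument as the paper's: both reduce via Proposition~\ref{beta_2_real} to checking $C$-linearity in the first slot, then test against an arbitrary element of $C$ using Lemma~\ref{innerconj}, the definition of $\beta_2$, the adjoint relation \eqref{adjoint}, and associativity. The only difference is cosmetic: the paper writes the associativity step implicitly as $\langle \overline\zeta\eta u,v\rangle = \langle \eta u,\zeta v\rangle$, whereas you spell out $J_{\overline\xi\zeta}=J_{\overline\xi}J_\zeta$ and flag that this needs the associativity hypothesis from Proposition~\ref{division}(ii) --- a clarification that is, if anything, an improvement.
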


\begin{proof} Because of Proposition~\ref{beta_2_real} it remains to show that $\beta_2$ is $C$-linear in the first variable. Let $v,u\in V$ and $\zeta\in C$. Lemma~\ref{innerconj} and \eqref{adjoint} yield that for all $\eta\in C$ we have
\begin{align*}
\langle \zeta\beta_2(v,u),\eta\rangle & = \langle \beta_2(v,u),\overline{\zeta}\eta \rangle = \langle \overline{\zeta}\eta u,v\rangle
 = \langle \eta u,\zeta v\rangle = \langle \beta_2(\zeta v, u),\eta\rangle.
\end{align*}
Hence $\zeta\beta_2(v,u) = \beta_2(\zeta v, u)$. 
\bewend \end{proof}

A finite sequence $(v_1,\ldots, v_n)$ in $V$ is called an \textit{orthonormal
$C$-basis} of $V$ if
\begin{enumerate}[(CON1)]
\item\label{CON1} $|v_j| = 1$ for each $j\in\{1,\ldots, n\}$,
\item\label{CON2} for each pair $(i,j)\in \{1,\ldots, n\}^2$, $i\not=j$, the sets $Cv_i$ and $Cv_j$ are orthogonal,
\item\label{CON3} if we use for each $j\in\{1,\ldots, n\}$ the bijection $C\to
Cv_j$, $\zeta\mapsto \zeta v_j$, to equip $Cv_j$ with the structure of a
Euclidean vector space and a $C$-module, then $V$ is isomorphic as $C$-module
and Euclidean vector space to the direct sum $\bigoplus_{j=1}^n Cv_j$ of the
Euclidean spaces and $C$-modules $Cv_j$, $j=1,\ldots, n$.
\end{enumerate}

The following lemma yields that $V$ is a free $C$-module.

\begin{lemma}\label{Vfree}
There is an orthonormal $C$-basis of $V$.
\end{lemma}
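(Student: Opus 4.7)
The plan is to construct the basis inductively by repeatedly splitting off a cyclic $C$-submodule and passing to its orthogonal complement. If $V=\{0\}$ the empty sequence qualifies, so assume $V\neq\{0\}$ and pick any unit vector $v_1\in V$. Set $V_0\sceq V$ and consider $V_1\sceq (Cv_1)^\perp$ in $V_0$. The cornerstone of the argument is the claim that $V_1$ is again a $C$-submodule of $V$, i.e.\ closed under the action of $J$.

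To prove this, let $w\in V_1$ and $\zeta,\eta\in C$. Since $(C,V,J)$ is associative, the multiplication $\cdot_v$ on $C$ does not depend on $v$, and the defining relation of that multiplication reads $J_\xi J_\mu u = J_{\xi\mu}u$ for all $\xi,\mu\in C$ and all $u\in V$. Combined with \eqref{adjoint} this yields
\[
\langle \zeta w,\eta v_1\rangle = \langle w, J_{\overline\zeta}J_\eta v_1\rangle = \langle w,(\overline\zeta\eta)v_1\rangle = 0,
\]
because $(\overline\zeta\eta)v_1\in Cv_1$ and $w\perp Cv_1$. Hence $\zeta w\in V_1$, as required. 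The axioms (\apref{M}{M1}{})--(\apref{M}{M2}{}) and \eqref{M3} restrict to $V_1$, so $(C,V_1,J\vert_{C\times V_1})$ is again a $J^2C$-module structure.

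Now iterate: choose a unit vector $v_2\in V_1$ (if $V_1\neq\{0\}$), form $V_2\sceq (Cv_2)^\perp$ in $V_1$, and continue. At each step the map $\zeta\mapsto \zeta v_j$ is an $\R$-linear isometry of $C$ onto $Cv_j$ (injectivity by (\apref{M}{M2}{}), surjectivity by definition), so $\dim Cv_j = \dim C \geq 1$. Because $\dim V<\infty$, the procedure terminates after finitely many steps, producing unit vectors $v_1,\dots,v_n$ with
\[
V = Cv_1\oplus Cv_2\oplus\cdots\oplus Cv_n,
\]
where the sum is orthogonal with respect to the Euclidean inner product by construction of the $V_j$.

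It remains to check (\apref{CON}{CON1}{})--(\apref{CON}{CON3}{}). (\apref{CON}{CON1}{}) holds by choice of the $v_j$. For (\apref{CON}{CON2}{}), if $i<j$ then $Cv_j\subseteq V_{j-1}\subseteq (Cv_i)^\perp$. For (\apref{CON}{CON3}{}), the map $C\to Cv_j$, $\zeta\mapsto \zeta v_j$, is an isomorphism of $C$-modules (using associativity: $\xi(\zeta v_j)=(\xi\zeta)v_j$) and an isometry of Euclidean spaces (since $|v_j|=1$), and the orthogonal decomposition above identifies $V$ with $\bigoplus_{j=1}^n Cv_j$ as a Euclidean $C$-module. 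The main obstacle in the proof is precisely the $C$-invariance of $(Cv_1)^\perp$; this is where associativity and \eqref{M3} are essential, and without them the inductive step would fail.
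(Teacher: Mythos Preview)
Your proof is correct. The inductive construction is the same idea the paper uses, but you carry it out more explicitly: you actually prove that $(Cv_1)^\perp$ is $J$-invariant (via \eqref{adjoint} and associativity), whereas the paper absorbs this into a single ``Clearly'' when asserting that a sequence satisfying (\apref{CON}{CON1}{}) and (\apref{CON}{CON2}{}) with $V=\bigoplus Cv_j$ as a Euclidean space can be found. The genuine difference is in verifying (\apref{CON}{CON3}{}): the paper writes down the projection map $\psi(v)=\sum_j\beta_2(v,v_j)v_j$ and invokes the $C$-sesquilinearity of $\beta_2$ (Proposition~\ref{beta2}) to see that $\psi$ is $C$-linear, while you check the $C$-module isomorphism directly from associativity $\xi(\zeta v_j)=(\xi\zeta)v_j$. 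Your route is more elementary and self-contained, avoiding the machinery of $\beta_2$; the paper's route has the minor advantage of exhibiting the explicit inverse isomorphism. One small remark: for the $J$-invariance of $(Cv_1)^\perp$ you only need $J_{\overline\zeta}J_\eta v_1\in Cv_1$, which is exactly \eqref{M3}; associativity is not needed there (though it is genuinely needed for (\apref{CON}{CON3}{})).
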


\begin{proof}
Clearly, one finds a sequence $(v_1,\ldots, v_n)$ in $V$ which satisfies
(\apref{CON}{CON1}{}) and (\apref{CON}{CON2}{}) such that $V$ is isomorphic as
Euclidean vector space to  $\bigoplus_{j=1}^n Cv_j$. From
Propositions~\ref{beta2} and \ref{beta_2_real} it follows that 
\[
\psi\colon\left\{
\begin{array}{ccl}
V & \to & \bigoplus_{j=1}^n Cv_j
\\
v & \mapsto & \sum_{j=1}^n \beta_2(v,v_j)v_j
\end{array}
\right.
\]
is an isomorphism (of Euclidean vector spaces) from $V$ to $\bigoplus_{j=1}^n Cv_j$. Let $v\in V$, $\eta\in C$ and suppose that $v$ is isomorphic to $(\zeta_1v_1,\ldots, \zeta_nv_n)\in \bigoplus_{j=1}^n Cv_j$. Then
\[
\psi(\eta v)= \sum_{j=1}^n \beta_2(\eta v, v_j)v_j = \sum_{j=1}^n\big(\eta\beta_2(v,v_j)\big)v_j = \eta\Big(\sum_{j=1}^n\beta_2(v,v_j)v_j \Big) = \eta\psi(v).
\]
This shows that $\psi$ is indeed an isomorphism of $C$-modules. 
\bewend \end{proof}

\subsection{The $C$-projective space $P_C(E)$}\label{sec_projspace}

Let $(C,V,J)$ be an associative $J^2C$-module structure. Further let $W\sceq
C\oplus V$ be the Euclidean direct sum of $C$ and $V$, and 
$E\sceq C\oplus W = C\oplus C\oplus V$ be that of $C$ and $W$. We define a
$C$-multiplication on $E$ by
\[
\cdot\colon\left\{
\begin{array}{ccl}
C\times E & \to & E
\\
\big( \tau, (\zeta,\eta, v) \big) & \mapsto & (\tau\zeta, \tau\eta, \tau v).
\end{array}
\right.
\]
Because $(C,V,J)$ is associative and hence $\sigma(\tau v) = (\sigma \tau)v$ for
each $v\in V$ and all $\sigma, \tau\in C$ (or equivalently, since the definition
of the product in $C$ does not depend on the choice of $v\in V\mminus\{0\}$, see
Proposition~\ref{division}), $E$ becomes a $C$-module. Since $V$ is a free $C$-module
by Lemma~\ref{Vfree}, also $E$ is a free $C$-module.

Two elements $z_1, z_2$ of $E\mminus\{0\}$ are called \textit{equivalent}
($z_1\sim z_2$) if there is $\tau \in C$  such that $\tau z_1 = z_2$.
Note that $\tau$ is actually in $C\mminus\{0\}$. Then $E$ being a $C$-module
implies that $\sim$ is indeed an equivalence relation on $E\mminus\{0\}$. The
\textit{$C$-projective space} $P_C(E)$ of $E$ is defined as the set of
equivalence classes of $\sim$,
\[ P_C(E) \sceq \big(E\mminus\{0\}\big)/_\sim, \]
endowed with the induced topology and the differential structure generated by the following (standard) charts: Let $\{v_1,\ldots, v_{n-1}\}$ be an orthonormal $C$-basis of $V$. Then $E$ is isomorphic to $C^{n+1}$ both as Euclidean vector space and $C$-module. For each $j=1,\ldots, n+1$ the set
\[ U_j \sceq \big\{ [ (\zeta_1, \ldots, \zeta_{n+1})] \in P_C(E) \ \big\vert\  \zeta_j\not=0 \big\}\]
is open, and the maps $\varphi_j \colon U_j \to C^n$
\[ \varphi_j\big([ (\zeta_1, \ldots, \zeta_{n+1}) ]\big) \sceq \zeta_j^{-1}(\zeta_1, \ldots, \wh \zeta_j, \ldots, \zeta_{n+1}) \]
are pairwise compatible in the sense that they are real differentiable (they
are not $C$-differentiable unless $C$ is commutative). Here, $\wh \zeta_j$ means
that $\zeta_j$ is omitted, hence $(\zeta_1,\ldots,\wh\zeta_j,\ldots,z_{n+1}) =
(\zeta_1,\ldots,\zeta_{j-1},\zeta_{j+1},\ldots, \zeta_{n+1})\in C^n$. Obviously,
the differential structure is independent of the choice of the orthonormal
$C$-basis of $V$, and $P_C(E)$ is a real smooth manifold of dimension $n\cdot
\dim_\R C$. 

Let $\Phi$ be a $C$-sesquilinear hermitian form on $E$. Recall from
Remark~\ref{real} that $\Phi(z,z) \in \R$ for each $z\in E$. Suppose that $q$ is
its \textit{associated quadratic form}, that is
\[
q\colon\left\{
\begin{array}{ccl}
E & \to & \R
\\
z & \mapsto & \Phi(z,z).
\end{array}
\right.
\]
Then we define the following sets:\label{def_vectors}
\begin{align*} 
E_-(\Phi) & \sceq q^{-1}\big((-\infty, 0)\big) &&  \text{the set of
\textit{$\Phi$-negative vectors},}
\\
E_0(\Phi) & \sceq q^{-1}(0)\mminus\{0\} &&  \text{the set of \textit{$\Phi$-zero
vectors}, and}
\\
E_+(\Phi) & \sceq q^{-1}\big((0,\infty)\big) &&  \text{the set of
\textit{$\Phi$-positive vectors}.}
\end{align*}
For each $\tau\in C$ and $z\in E\mminus\{0\}$ we have
\begin{equation}\label{disjoint}
q(\tau z) = \Phi(\tau z, \tau z) = \tau \Phi(z,z) \overline\tau = \Phi(z,z) \tau\overline\tau = q(z) |\tau|^2.
\end{equation}

\begin{lemma}\label{spacedecomp}
Let $\Phi$ be a $C$-sesquilinear hermitian form on $E$. The set $P_C(E)$ equals the disjoint union $P_C\big(E_-(\Phi)\big) \cup P_C\big(E_0(\Phi)\big) \cup P_C\big(E_+(\Phi)\big)$.
\end{lemma}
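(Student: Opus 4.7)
My plan is to exploit the scaling identity~\eqref{disjoint}, which is the one nontrivial ingredient. Since $q(\tau z) = q(z)\,|\tau|^2$ for every $\tau\in C$ and $z\in E\mminus\{0\}$, and since $|\tau|^2 > 0$ whenever $\tau\neq 0$, the sign of $q$ is constant on each equivalence class of $\sim$. Consequently, membership in any of the three sets $E_-(\Phi)$, $E_0(\Phi)$, $E_+(\Phi)$ is $\sim$-invariant, so the subsets $P_C(E_-(\Phi))$, $P_C(E_0(\Phi))$, $P_C(E_+(\Phi))$ of $P_C(E)$ are well-defined (each is the image of the corresponding $\sim$-saturated subset of $E\mminus\{0\}$).

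Next I would observe that the three sets partition $E\mminus\{0\}$: by Remark~\ref{real} we have $q(z) = \Phi(z,z)\in\R$ for every $z\in E$, so each $z\in E\mminus\{0\}$ satisfies exactly one of $q(z)<0$, $q(z)=0$, $q(z)>0$, placing it in exactly one of $E_-(\Phi)$, $E_0(\Phi)$, $E_+(\Phi)$. Passing to the quotient by $\sim$ therefore yields
\[
P_C(E) = P_C(E_-(\Phi)) \cup P_C(E_0(\Phi)) \cup P_C(E_+(\Phi)).
\]

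Finally, the disjointness on the projective level follows again from~\eqref{disjoint}: if $[z]\in P_C(E_\epsilon(\Phi))\cap P_C(E_{\epsilon'}(\Phi))$ for $\epsilon,\epsilon'\in\{-,0,+\}$, then $[z]$ has representatives $z_1\in E_\epsilon(\Phi)$ and $z_2\in E_{\epsilon'}(\Phi)$ with $z_2 = \tau z_1$ for some $\tau\in C\mminus\{0\}$; since $q(z_2) = q(z_1)|\tau|^2$ has the same sign as $q(z_1)$, we must have $\epsilon = \epsilon'$. The whole argument is essentially a one-line observation once~\eqref{disjoint} is invoked, so there is no serious obstacle; the only thing to be careful about is to verify $\sim$-invariance before taking images in $P_C(E)$, which is exactly what~\eqref{disjoint} provides.
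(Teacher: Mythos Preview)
Your proof is correct and follows essentially the same approach as the paper: both rely on the scaling identity~\eqref{disjoint} to show that the sign of $q$ is constant on each $\sim$-class, yielding disjointness of the three projective pieces. Your version is slightly more explicit in first verifying $\sim$-saturation of the three subsets before passing to the quotient, whereas the paper takes this as evident and argues disjointness directly by contradiction; the substance is the same.
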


\begin{proof}
Clearly, $P_C(E) = P_C\big(E_-(\Phi)\big) \cup P_C\big(E_0(\Phi)\big) \cup
P_C\big(E_+(\Phi)\big)$. Hence it remains to prove that this union is disjoint.
Suppose that $q$ denotes the quadratic form associated to $\Phi$.
If we assume for contradiction that 
\[
P_C\big(E_-(\Phi)\big) \cap P_C\big(E_0(\Phi)\big)\not=\emptyset,
\]
then there are equivalent $z_1,z_2\in E\mminus\{0\}$ such that $q(z_1) < 0$ and $q(z_2) = 0$. But then there is $\tau\in C\mminus\{0\}$ such that 
\[
 0=q(z_2) = q(\tau z_1) = q(z_1)|\tau|^2  < 0,
\]
which is a contraction. Hence $P_C\big(E_-(\Phi)\big)\cap
P_C\big(E_0(\Phi)\big) = \emptyset$. Analogously we see that
$P_C\big(E_-(\Phi)\big)\cap P_C\big(E_+(\Phi)\big) = \emptyset$ and
$P_C\big(E_+(\Phi)\big)\cap P_C\big(E_0(\Phi)\big) = \emptyset$. 
\bewend \end{proof}

Let $\pi \colon E\mminus \{0\} \to P_C(E)$ denote the projection on the
equivalence classes. Since $C\mminus\{0\}$ acts homeomorphically on
$E\mminus\{0\}$, the projection $\pi$ is open. Further $\pi$ is continuous by
the definition of the topology on $P_C(E)$.

\begin{proposition}\label{boundary}
Let $\Phi\colon E\times E\to C$ be a $C$-sesquilinear hermitian form, which is
indefinite and non-degenerate. Then
\[ \partial P_C\big(E_-(\Phi)\big) = P_C\big(E_0(\Phi)\big).\]
\end{proposition}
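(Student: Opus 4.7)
My plan is to use the fact that the quotient map $\pi\colon E\mminus\{0\}\to P_C(E)$ is continuous and open, together with a perturbation argument along a $\Phi$-negative direction.

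First I would observe that the quadratic form $q$ associated to $\Phi$ is continuous on $E$, so $E_-(\Phi)$ and $E_+(\Phi)$ are open in $E\mminus\{0\}$. Because both are invariant under scalar multiplication by $C\mminus\{0\}$, and $\pi$ is an open map, their images $P_C(E_-(\Phi))$ and $P_C(E_+(\Phi))$ are open in $P_C(E)$. Lemma~\ref{spacedecomp} says the three sets $P_C(E_-(\Phi))$, $P_C(E_0(\Phi))$, $P_C(E_+(\Phi))$ partition $P_C(E)$ disjointly.

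Next, for the inclusion $\partial P_C(E_-(\Phi))\subseteq P_C(E_0(\Phi))$, the openness of $P_C(E_+(\Phi))$ together with its disjointness from $P_C(E_-(\Phi))$ forces $P_C(E_+(\Phi))\cap\overline{P_C(E_-(\Phi))}=\emptyset$ (an open set meeting the closure of a disjoint set would already meet the set itself). Hence $\overline{P_C(E_-(\Phi))}\subseteq P_C(E_-(\Phi))\cup P_C(E_0(\Phi))$, and subtracting the open set $P_C(E_-(\Phi))$ gives the claim.

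For the reverse inclusion, I would argue as follows. Let $[z_0]\in P_C(E_0(\Phi))$, so $z_0\ne 0$ and $q(z_0)=0$. By indefiniteness of $\Phi$ there exists $w\in E$ with $\Phi(w,w)<0$; replacing $w$ by $-w$ if necessary, I may assume $\Rea\Phi(w,z_0)\leq 0$. For $t>0$ I compute, using Proposition~\ref{beta1}-style hermitian symmetry,
\[
 q(z_0+tw)=q(z_0)+2t\Rea\Phi(w,z_0)+t^2\Phi(w,w)\leq t^2\Phi(w,w)<0.
\]
Thus $z_0+tw\in E_-(\Phi)$ for every $t>0$ (and $z_0+tw\ne 0$ for small $t$ because $z_0\ne 0$). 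Since $\pi$ is continuous and $z_0+tw\to z_0$ in $E\mminus\{0\}$ as $t\searrow 0$, the points $[z_0+tw]\in P_C(E_-(\Phi))$ converge to $[z_0]$, so $[z_0]\in \overline{P_C(E_-(\Phi))}\mminus P_C(E_-(\Phi))=\partial P_C(E_-(\Phi))$, where the last equality uses that $P_C(E_-(\Phi))$ is open.

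The only subtle point is verifying that the one-sided perturbation can always be arranged to stay in $E_-(\Phi)$ even when $\Phi(w,z_0)$ is not real; the sign adjustment on $w$ handles this uniformly, and indefiniteness is exactly what guarantees the existence of a $\Phi$-negative direction $w$ to begin with (non-degeneracy is not actually needed for this topological identification, but ensures the statement is non-vacuous in the cases relevant to the rest of the paper).
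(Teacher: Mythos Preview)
Your argument is correct and takes a genuinely different route from the paper. The paper first establishes the general identity $\complement\pi\big(\complement\partial\pi^{-1}(U)\big)=\partial U$ for subsets $U$ of the projective space (using only that $\pi$ is open and continuous), and then invokes the regular value theorem for $q\vert_{E\mminus\{0\}}$---non-degeneracy of $\Phi$ is used precisely here, to ensure that $0$ is a regular value---to conclude that $E_0(\Phi)=\partial E_-(\Phi)$ upstairs; the projective statement then drops out of the identity together with Lemma~\ref{spacedecomp}. Your approach is more elementary and direct: you use only continuity of $q$, openness of $\pi$, the disjoint partition from Lemma~\ref{spacedecomp}, and an explicit one-parameter perturbation $z_0+tw$ along a $\Phi$-negative vector. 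As you correctly observe, your argument never uses non-degeneracy, so it proves a slightly stronger statement (valid for any indefinite $C$-sesquilinear hermitian form), at the cost of handling both inclusions by hand rather than deducing them from a single manifold-with-boundary picture.
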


\begin{proof} Let $U\subseteq P_C\big(E_-(\Phi)\big)$. All complements,
closures, interiors, and boundaries of subsets of $E\mminus\{0\}$ are taken in
$E\mminus\{0\}$. At first we show that  
\begin{equation}\label{funny} \complement \pi \left(\complement \partial
\pi^{-1}(U)\right) = \partial U.\end{equation}
To that end let $M\sceq \pi^{-1}(U)$. Then  $\complement M =
\pi^{-1}\left(\complement U\right)$. From $\pi$ being open and continuous it
follows that $(\complement U)^\circ = \pi\big( (\complement M)^\circ \big)$. 
This yields 
\[ \pi\left(\complement\overline{M}\right) = \pi\left( \left(\complement
M\right)^\circ\right) = \left(\complement U\right)^\circ = \complement
\overline{U},\]
hence $\complement \pi(\complement\overline M) = \overline U$.
Again from $\pi$ being open and continuous we get $U^\circ =
\pi\left(M^\circ\right)$. 
Therefore
\begin{align*}
\complement \pi \left(\complement\partial \pi^{-1}(U)\right) & = \complement \pi
\left(\complement\partial M\right)
\\
& = \complement\pi \left(\complement\big( \overline M \cap \complement
M^\circ\big) \right)
\\
& = \complement \left( \pi \left(\complement \overline M\right) \cup
\pi\left(M^\circ\right) \right)
\\
& = \complement \pi\left(\complement \overline M\right) \cap
\complement\pi\left(M^\circ\right) 
\\ 
& = \overline U \cap \complement U^\circ  = \partial U.
\end{align*}
Let $q$ denote the quadratic form associated to $\Phi$. Then $q$ is smooth. 
Since $\Phi$ is non-degenerate, $0$ is a regular value for $q\vert_{
E\mminus\{0\} }$. Hence $E_0(\Phi) = q^{-1}(0)\mminus\{0\}$ is the boundary of
the bounded submanifold $q^{-1}((-\infty,0])\mminus\{0\}$ and therefore also of
$E_-(\Phi)=q^{-1}((-\infty,0))$.  Then the statement follows from \eqref{funny}
and Lemma~\ref{spacedecomp} with $U\sceq P_C\big(E_-(\Phi)\big)$. 
\bewend \end{proof}

\subsection{The $C$-hyperbolic spaces $P_C(E_-(\Psi_1))$ and
$P_C(E_-(\Psi_2))$}\label{sec_hypspaces}

Let $(C,V,J)$ be an associative $J^2C$-module structure. We define two specific
non-degenerate, indefinite $C$-sesquilinear hermitian
forms $\Psi_1$ and $\Psi_2$ and consider the manifolds $P_C\big(
E_-(\Psi_j)\big)$ defined in Section~\ref{sec_projspace}. For each space we choose
a set of representatives and a Riemannian metric on it such that
$P_C\big(E_-(\Psi_1)\big)$ is essentially the ball model from Section~\ref{sec_B}
and $P_C\big(E_-(\Psi_2)\big)$ becomes the Siegel domain model for hyperbolic
space, which is essentially the model $D$ from Section~\ref{sec_D}. 

We denote by
$\GL_C(E)$ the group of all $C$-linear invertible maps $E\to E$. Set
\[ U(\Psi_j, C) \sceq \big\{ g\in \GL_C(E) \ \big\vert\ \forall\, z_1,z_2\in
E\colon \Psi_j(gz_1, gz_2) = \Psi_j(z_1,z_2) \big\}, \]
and let $Z(\Psi_j,C)$ denote the center of $U(\Psi_j,C)$. Recall that $G$
denotes the full isometry group of $B$ resp.\@ $D$.  In this section
we establish a natural and explicit isomorphism between the quotient group
$\PU(\Psi_j,C)\sceq U(\Psi_j,C)/Z(\Psi_j,C)$ and a subgroup $G^\Res$ of $G$.
Moreover, we explicitly characterize $G^\Res$. In Section~\ref{sec_cocycle},
this isomorphism is used to show that the definition of isometric spheres in
literature is subsumed by our definition.

Let $\beta_3\colon W\times W\to C$ be the sum of $\beta_1$ and $\beta_2$,
that is,
\[ \beta_3\big((\eta_1, v_1), (\eta_2, v_2)\big) \sceq \beta_1(\eta_1,\eta_2) + \beta_2(v_1,v_2).\]
We define the maps $\Psi_j\colon E\times E\to C$ ($j=1,2$) by
\begin{align*} 
\Psi_1\big( (\zeta_1,\eta_1,v_1), (\zeta_2,\eta_2,v_2) \big) & \sceq - \beta_1(\zeta_1,\zeta_2) + \beta_1(\eta_1,\eta_2) + \beta_2(v_1,v_2)
\\ & \, = -\beta_1(\zeta_1,\zeta_2) + \beta_3\big((\eta_1,v_1),(\eta_2,v_2)\big)
\intertext{and}
\Psi_2\big( (\zeta_1,\eta_1,v_1), (\zeta_2,\eta_2,v_2) \big) &\sceq -\beta_1(\zeta_1,\eta_2) - \beta_1(\eta_1,\zeta_2) + \beta_2(v_1,v_2).
\end{align*}
Let $q_j$ denote the associated quadratic forms. For all
$(\zeta,w)=(\zeta,\eta,v)\in E$ we have
\begin{align*}
q_1\big( (\zeta,w)\big) & = q_1\big( (\zeta,\eta,v)\big) = -|\zeta|^2 + |\eta|^2 + |v|^2 = - |\zeta|^2 + |w|^2.
\end{align*}
Employing Proposition~\ref{beta1}, we see that 
\begin{align*}
q_2\big((\zeta,\eta,v)\big) &  = - 2\Rea\beta_1(\zeta,\eta) + |v|^2 =
-2\langle\zeta,\eta\rangle + |v|^2.
\end{align*}

\begin{lemma} 
For $j=1,2$, the map $\Psi_j$ is a  $C$-sesquilinear hermitian form on $E$ which
is non-degenerate and indefinite. 
\end{lemma}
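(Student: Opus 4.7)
The plan is to verify the four required properties for $\Psi_j$, $j=1,2$, by directly reducing to the corresponding properties of $\beta_1$ and $\beta_2$ established in Propositions~\ref{beta1} and~\ref{beta2}, together with the formula $q_j(z) = \Psi_j(z,z)$ which was computed already.

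First, I would verify \apref{SH}{SH1}{}. Both $\Psi_1$ and $\Psi_2$ are finite $\R$-linear combinations (over the three coordinate slots of $E=C\oplus C\oplus V$) of evaluations of $\beta_1$ and $\beta_2$, each of which is $C$-linear in its first entry. Since the $C$-module structure on $E$ acts by simultaneous scalar multiplication on all three slots, scaling the first argument $z_1=(\zeta_1,\eta_1,v_1)$ by $\tau\in C$ pulls a $\tau$ out of each term simultaneously, giving $C$-linearity in the first slot. Additivity in the first slot is immediate from the additivity of $\beta_1$ and $\beta_2$.

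Next, I would check \apref{SH}{SH2}{}. For $\Psi_1$, this is immediate term-by-term from the hermiticity of $\beta_1$ and $\beta_2$ since the three summands act on matched slot pairs. For $\Psi_2$, the cross structure requires slightly more care: using that complex conjugation distributes over sums (since it is $\R$-linear on $C$) together with Propositions~\ref{beta1} and~\ref{beta2}, one computes
\[
\overline{\Psi_2(z_2,z_1)} = -\overline{\beta_1(\zeta_2,\eta_1)} - \overline{\beta_1(\eta_2,\zeta_1)} + \overline{\beta_2(v_2,v_1)} = -\beta_1(\eta_1,\zeta_2) - \beta_1(\zeta_1,\eta_2) + \beta_2(v_1,v_2) = \Psi_2(z_1,z_2),
\]
where I have commuted the two $\beta_1$ terms (which is fine since they are being added).

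For non-degeneracy, I would exploit the fact that $\beta_1$ and $\beta_2$ are individually non-degenerate (indeed $\beta_1(x,x)=|x|^2$ and $\beta_2(v,v)=|v|^2$). For $\Psi_1$, testing against $(\zeta',0,0)$, $(0,\eta',0)$, and $(0,0,v')$ successively isolates each slot and forces $\zeta,\eta,v$ to vanish. For $\Psi_2$, testing against $(0,\eta',0)$ kills $\zeta$, testing against $(\zeta',0,0)$ kills $\eta$, and testing against $(0,0,v')$ kills $v$.

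Finally, indefiniteness reduces to exhibiting explicit positive and negative vectors using the previously computed formulas $q_1(\zeta,\eta,v)=-|\zeta|^2+|\eta|^2+|v|^2$ and $q_2(\zeta,\eta,v)=-2\langle\zeta,\eta\rangle+|v|^2$. For $\Psi_1$, one takes $(e,0,0)$ (value $-1$) and $(0,e,0)$ (value $+1$). For $\Psi_2$, one takes $(e,e,0)$ (value $-2$) and $(e,-e,0)$ (value $+2$). There is no real obstacle in this proof; the only mild subtlety is keeping the book-keeping of conjugation and slot matching straight for the cross-term form $\Psi_2$.
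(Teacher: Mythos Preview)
Your proof is correct and follows essentially the same approach as the paper: reduce the sesquilinear and hermitian properties to those of $\beta_1$ and $\beta_2$, test non-degeneracy against coordinate vectors, and exhibit explicit positive and negative vectors for indefiniteness. The only differences are cosmetic---the paper groups some test vectors (e.g.\ testing $\Psi_1$ against $(0,\eta,v)$ rather than $(0,\eta',0)$ and $(0,0,v')$ separately) and uses $(0,0,v)$ with $v\in V\mminus\{0\}$ as the positive vector instead of your $(0,e,0)$ and $(e,-e,0)$; your choice has the minor advantage of not relying on $V\neq\{0\}$.
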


\begin{proof}
Propositions~\ref{beta1} and \ref{beta2} imply that $\Psi_j$ is $C$-sesquilinear
hermitian. Suppose that there is $(\zeta,\eta, v)\in E$ such that for all
$(\sigma,\tau, u)\in E$
we have 
\[
 \Psi_1\big( (\zeta,\eta, v),(\sigma,\tau, u)\big) = 0.
\]
Propositions~\ref{beta1} and \ref{beta_2_real} show that
\[
0 = \Psi_1\big( (\zeta,\eta,v), (0,\eta,v)\big)  = |\eta|^2 + |v|^2.
\]
Hence $(\eta, v) = (0,0)$. Further
\[
0 = \Psi_1\big( (\zeta,\eta, v), (\zeta, 0,0)\big) = - |\zeta|^2,
\]
and therefore $\zeta = 0$. This shows that $\Psi_1$ is non-degenerate. Suppose
now that there is $(\zeta,\eta,v) \in E$ such that for all $(\sigma,\tau,u)\in
E$ we have
\[
 \Psi_2\big( (\zeta,\eta, v), (\sigma,\tau, u) \big) = 0.
\]
Then 
\[
0 = \Psi_2\big( (\zeta,\eta, v), (0,0,v) \big) = |v|^2
\]
and therefore $v=0$. Moreover, 
\[
0 = \Psi_2\big( (\zeta,\eta, v), (0,\zeta, 0) \big) = - |\zeta|^2.
\]
Thus $\zeta = 0$ and analogously $\eta= 0$. Hence $\Psi_2$ is non-degenerate.
Finally, for (any) $v\in V\mminus\{0\}$,
\begin{align*}
q_1\big( (1,0,0) \big) & = -1  & \text{and}\qquad\quad q_1\big((0,0,v)\big) & = |v|^2 > 0,
\\
q_2\big( (1,1,0) \big) & = -2 & \text{and}\qquad\quad q_2\big( (0,0,v) \big) & = |v|^2 > 0.
\end{align*}
Therefore, $\Psi_1$ and $\Psi_2$ are indefinite. 
\bewend \end{proof}

\subsubsection{Set of representatives for
$\overline{P_C(E_-(\Psi_1))}$}\label{sec_projmodels}

Proposition~\ref{boundary} shows that we have
\[\overline{P_C\big(E_-(\Psi_j)\big)} = P_C\big(E_-(\Psi_j)\big) \cup
P_C\big(E_0(\Psi_j)\big)\]
for $j\in\{1,2\}$. If $z=(\zeta, w) \in E_-(\Psi_1) \cup E_0(\Psi_1)$, then
$\zeta\not=0$. Therefore the element $[z] \in P_C\big(E_-(\Psi_1)\big) \cup
P_C\big(E_0(\Psi_1)\big)$ is represented by $(1,\zeta^{-1}w)$, and this is the
unique representative of the form $(1,\ast)$. If $z\in E_-(\Psi_1)$, then
\begin{align*}
0 > q_1\big(1,\zeta^{-1}w\big) & =  -1 + |\zeta^{-1}w|^2.
\end{align*}
This shows that $1 > |\zeta^{-1}w|^2$. Conversely, if $w\in W$ with $|w|^2 <
1$, then $[(1,w)]$ is an element of $P_C\big(E_-(\Psi_1)\big)$. Recall the unit
ball $B = \{w\in W \mid |w|<1\}$ in $W$. Then $P_C\big(E_-(\Psi_1)\big)$ and $B$
are in bijection via the map
\[
 [(\zeta,w)] \mapsto \zeta^{-1}w.
\]
The same argument shows that $\partial P_C\big(E_-(\Psi_1)\big) =
P_C\big(E_0(\Psi_1)\big)$ is bijective to $\partial B$ via this map. We
define $\tau_B\colon \overline{P_C\big(E_-(\Psi_1)\big)} \to \overline B$
as
\[
\tau_B\big( [(\zeta,w)] \big) \sceq \zeta^{-1}w.
\]
Its inverse is
\[
\tau_B^{-1}(w) = [(1,w)].
\]
In comparison with Section~\ref{sec_projspace}, we see that $\overline{P_C\big(E_-(\Psi_1)\big)}$ is a subset of $U_1$ and $\tau_B$ a restriction of $\varphi_1$. Therefore, $\tau_B$ is a diffeomorphism between the manifolds $P_C\big(E_-(\Psi_1)\big)$ and $B$, and also between the manifolds with boundary $\overline{P_C\big(E_-(\Psi_1)\big)}$ and $\overline B$.

\subsubsection{Riemannian metric on $B$}

Since $B$ is an open subset of the vector space $W$, we may and shall identify the tangent space at a point of $B$ with $W$. We define a Riemannian metric $\tilde\varrho$ on $B$ by\label{def_tvarrho}
\begin{align*}
\tilde\varrho(p)(X,Y) & \sceq \frac{\langle X,Y\rangle}{1-|p|^2} + \frac{\langle \beta_3(X,p),\beta_3(Y,p)\rangle}{(1-|p|^2)^2}
\\ & = \frac1{(1-|p|^2)^2} \Rea\left( (1-|p|^2) \beta_3(X,Y) + \beta_1\big(\beta_3(X,p),\beta_3(Y,p)\big) \right)
\end{align*}
for all $p\in B$ and all $X,Y\in T_pB=W$. Proposition~\ref{metricident} below shows that $\tilde\varrho$ essentially coincides with the Riemannian metric defined in \eqref{B_metric}.

\begin{lemma}\label{beta3_metric} 
Let $x,y\in W$. Then $\beta_3(x,y)=0$ if and only if $y\in (Cx)^\perp$.
\end{lemma}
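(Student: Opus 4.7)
The plan is to imitate the proof of Lemma~\ref{orth} (the analogous statement for $\beta_2$ on $V$), exploiting that $\beta_3$ inherits from $\beta_1$ and $\beta_2$ the structure of a $C$-sesquilinear hermitian form on the $C$-module $W = C \oplus V$. The two facts I rely on are that $\beta_3$ is $C$-linear in the first slot (by Propositions~\ref{beta1} and \ref{beta2}) and that the real part of $\beta_3$ recovers the Euclidean inner product on $W$.

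First I would verify that
\[
\Rea \beta_3(x, y) = \langle x, y\rangle \qquad \text{for all } x, y \in W.
\]
This is immediate from $\beta_3 = \beta_1 + \beta_2$ (on the respective summands) together with $\Rea \beta_1(\eta_1, \eta_2) = \langle \eta_1, \eta_2\rangle$ (Proposition~\ref{beta1}) and $\Rea \beta_2(v_1, v_2) = \langle v_1, v_2\rangle$ (Lemma~\ref{re_and_im}), plus orthogonality of $C$ and $V$ in $W$.

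Next, for any $\zeta \in C$ I would use $C$-linearity of $\beta_3$ in the first variable to write
\[
\langle \zeta x, y\rangle = \Rea \beta_3(\zeta x, y) = \Rea\bigl(\zeta\, \beta_3(x, y)\bigr).
\]
Applying Proposition~\ref{beta1} to $\beta_1(\alpha, \overline{\mu}) = \alpha \mu$ gives the general identity $\Rea(\alpha \mu) = \langle \alpha, \overline{\mu}\rangle$, so
\[
\langle \zeta x, y\rangle = \bigl\langle \zeta,\, \overline{\beta_3(x, y)}\bigr\rangle \qquad \text{for all } \zeta \in C.
\]

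From this the lemma falls out in both directions: if $\beta_3(x, y) = 0$, then $\langle \zeta x, y\rangle = 0$ for every $\zeta \in C$, hence $y \in (Cx)^\perp$; conversely, if $y \in (Cx)^\perp$, then $\langle \zeta,\, \overline{\beta_3(x, y)}\rangle = 0$ for every $\zeta \in C$, and non-degeneracy of $\langle\cdot,\cdot\rangle$ on $C$ forces $\beta_3(x, y) = 0$. There is no real obstacle: the argument is a direct assembly of the sesquilinearity of $\beta_3$, the real-part-equals-inner-product identity, and non-degeneracy, all already in hand.
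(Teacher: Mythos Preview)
Your proof is correct and follows essentially the same strategy as the paper's: both establish an identity of the form $\langle \overline{\xi} x, y\rangle = \langle \beta_3(x,y), \xi\rangle$ (equivalently, your $\langle \zeta x, y\rangle = \langle \zeta, \overline{\beta_3(x,y)}\rangle$) and then invoke non-degeneracy of the inner product on $C$. The only difference is stylistic: the paper derives this identity by an explicit coordinate computation with $x=(\zeta,v)$, $y=(\eta,u)$, whereas you obtain it more cleanly from the already-established $C$-sesquilinearity of $\beta_3$ together with $\Rea\beta_3 = \langle\cdot,\cdot\rangle$.
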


\begin{proof}
Let $\zeta,\eta,\xi\in C$. Proposition~\ref{beta1} yields
\begin{align*}
\langle \zeta \overline\eta, \xi \rangle & = \Rea\beta_1\big(\zeta\overline{\eta},\xi\big) = \Rea\big(\zeta \overline{\eta}\,\overline{\xi}\big) = \Rea \big( \zeta \overline{\xi\eta}\big) = \Rea \beta_1(\zeta,\xi\eta) = \langle \zeta, \xi\eta\rangle.
\end{align*}
By Riesz' Representation Theorem we know that $\beta_3(x,y)=0$ if and only if 
\[ \langle\beta_3(x,y),\xi\rangle =0 \quad\text{for all $\xi\in C$.}\]
Supposing that $x=(\zeta,v), y=(\eta, u)\in C\oplus V$, it follows from the definitions of $\beta_3$, $\beta_1$, $\beta_2$ and the inner product on $W$ that 
\begin{align*}
\langle\beta_3(x,y),\xi\rangle & = \langle \beta_1(\zeta,\eta),\xi\rangle + \langle \beta_2(v,u),\xi\rangle
= \langle \zeta\overline\eta, \xi\rangle + \langle \xi u,v\rangle
\\ & = \langle \zeta, \xi\eta\rangle + \langle v,\xi u\rangle = \langle \overline{\xi}\zeta, \eta\rangle + \langle \overline{\xi} v, u\rangle 
\\ & = \langle \overline{\xi} x, y\rangle.
\end{align*}
Hence $\beta_3(x,y) = 0$ if and only if $y\in (Cx)^\perp$. 
\bewend \end{proof}

\begin{proposition}\label{metricident}
The map $\wt\varrho$ coincides, up to a multiplicative factor of $4$, with the
Riemannian metric given by \eqref{B_metric} on $B$.
\end{proposition}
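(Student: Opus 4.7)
The plan is to verify the identity $\tilde\varrho(p)(X,Y) = \tfrac14\langle X,Y\rangle_{p-}$ pointwise on $B$, reducing to three easy cases after an orthogonal decomposition of the tangent space, and then to handle the base point $p=0$ separately.

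First I would dispose of $p=0$ directly: since $\beta_3$ is $\R$-bilinear, $\beta_3(X,0)=0$, so $\tilde\varrho(0)(X,Y)=\langle X,Y\rangle$, whereas \eqref{metric0} gives $\langle X,Y\rangle_{0-}=4\langle X,Y\rangle$; the factor of $4$ matches.

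For $p\in B\smallsetminus\{0\}$, write $W=Cp\oplus(Cp)^\perp$. Since both $\tilde\varrho(p)$ and $\langle\cdot,\cdot\rangle_{p-}$ are $\R$-bilinear and symmetric, it suffices to treat three cases. In the case $X,Y\in(Cp)^\perp$, Lemma~\ref{beta3_metric} gives $\beta_3(X,p)=0=\beta_3(Y,p)$, so $\tilde\varrho(p)(X,Y)=\langle X,Y\rangle/(1-|p|^2)$, exactly $\tfrac14$ of the middle line of \eqref{B_metric}. In the mixed case $X\in Cp$, $Y\in(Cp)^\perp$, one has $\langle X,Y\rangle=0$ and $\beta_3(Y,p)=0$ by Lemma~\ref{beta3_metric}, so $\tilde\varrho(p)(X,Y)=0$, matching the third line of \eqref{B_metric}.

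The substantive case is $X,Y\in Cp$. The key computation I need is the identity
\[
\beta_3(\xi p,\, p)=\xi\,|p|^2 \qquad(\xi\in C).
\]
Writing $p=(\eta,u)\in C\oplus V$ and $\xi p=(\xi\eta,\xi u)$, I would evaluate $\beta_3(\xi p,p)=\beta_1(\xi\eta,\eta)+\beta_2(\xi u,u)$. The first term equals $(\xi\eta)\overline{\eta}=\xi|\eta|^2$ by associativity of $C$ (this is where the associativity hypothesis enters crucially), while Lemma~\ref{multiplicative} gives $\beta_2(\xi u,u)=|u|^2\xi$; summing produces $\xi|p|^2$. A parallel computation, using the polarization \eqref{polar} for the $V$-component and $\Rea\beta_1(\xi\eta,\eta'\eta)=|\eta|^2\langle\xi,\eta'\rangle$ for the $C$-component (via $\overline{\eta'\eta}=\overline\eta\,\overline{\eta'}$ from Remark~\ref{real}), yields $\langle\xi p,\eta' p\rangle=|p|^2\langle\xi,\eta'\rangle$. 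Substituting $X=\xi p$, $Y=\eta' p$ into the definition of $\tilde\varrho$ and combining over the common denominator $(1-|p|^2)^2$ gives
\[
\tilde\varrho(p)(X,Y)=\frac{|p|^2(1-|p|^2)+|p|^4}{(1-|p|^2)^2}\,\langle\xi,\eta'\rangle=\frac{\langle X,Y\rangle}{(1-|p|^2)^2},
\]
which is $\tfrac14$ of the first line of \eqref{B_metric}.

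The only real obstacle is the verification of $\beta_3(\xi p,p)=\xi|p|^2$; everything else is bookkeeping with the polarization identity and Lemma~\ref{beta3_metric}. Assembling the three cases together with the base-point computation completes the proof.
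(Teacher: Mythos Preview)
Your argument is correct and matches the paper's proof almost line for line: handle $p=0$, then split $T_pB$ into the three cases $X,Y\in(Cp)^\perp$, mixed, and $X,Y\in Cp$, using Lemma~\ref{beta3_metric} and the identity $\beta_3(\xi p,p)=\xi|p|^2$. The only step you take for granted that the paper verifies explicitly is that the set $Cp$ appearing in \eqref{B_metric} (defined via the equivalence relation of Section~\ref{sec_B}) coincides with the $C$-orbit $\{\xi p:\xi\in C\}$; in the associative setting this is a short check, but it is what justifies writing every $X\in Cp$ as $\xi p$.
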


\begin{proof}
For $p=0$ and all $X,Y\in T_0B$ we have
\[
 \wt\varrho(0)(X,Y) = \langle X,Y\rangle = \tfrac14 \langle X,Y\rangle_{0-}.
\] 
Suppose that $p=(\zeta, v)\in B\mminus\{0\}$. We claim that the equivalence class $Cp$ (see Section~\ref{sec_B}) coincides with the $C$-orbit $C\cdot p$ of $p$. For $\zeta = 0$ this is obviously true. Suppose that $\zeta\not=0$. For each $\tau \in C\mminus\{0\}$, we have $\tau p = (\tau \zeta, \tau v)$ and
\[
(\tau\zeta)^{-1}(\tau v) = \zeta^{-1}\tau^{-1}\tau v = \zeta^{-1}v.
\]
Hence $(\zeta, v) \sim (\tau\zeta, \tau v)$. If $\tau = 0$, then $\tau p = 0\in Cp$ by definition. Conversely, suppose that $(\eta, u) \in Cp\mminus\{0\}$. Then 
\[
\eta\zeta^{-1}p = \eta\zeta^{-1}(\zeta, v) = \big(\eta, \eta\zeta^{-1}v\big) = \big(\eta, \eta \eta^{-1}u\big) = (\eta, u).
\]
Thus, $(\eta, u)\in C\cdot p$. Clearly, $0\in Cp\cap C\cdot p$. This shows that $Cp=C\cdot p$.

Now let $p\in B\mminus\{0\}$ and $X,Y\in T_pB$. Suppose first that $X,Y\in (Cp)^\perp$. Then Lemma~\ref{beta3_metric} shows that 
\[
\wt \varrho(p)(X,Y) = \frac{\langle X,Y\rangle}{1-|p|^2}  = \tfrac14\langle X,Y\rangle_{p-}.
\]
Suppose now that $X\in Cp$ and $Y\in (Cp)^\perp$ (or vice versa). Then 
\[
\wt\varrho(p)(X,Y) = \frac{\langle X,Y\rangle}{1-|p|^2} = 0 = \tfrac14\langle X,Y\rangle_{p-}.
\]
Finally suppose that $X,Y\in Cp$. Then $X=\tau_1 p$ and $Y=\tau_2 p$ for some
$\tau_1,\tau_2\in C$. From Lemma~\ref{re_and_im}, Propositions~\ref{beta1} and
\ref{beta2} it follows that 
\begin{align*}
\langle X,Y\rangle & = \Rea \beta_3(X,Y) = \Rea \beta_3(\tau_1 p,\tau_2 p) = \Rea\big( \tau_1\beta_3(p,p)\overline\tau_2\big) = |p|^2 \Rea(\tau_1\overline\tau_2) 
\\
& = |p|^2\langle\tau_1,\tau_2\rangle.
\end{align*}
Then
\begin{align*}
\wt\varrho(p)(X,Y) & = \frac{\langle X,Y\rangle}{1-|p|^2} + \frac{\langle\beta_3(\tau_1p,p),\beta_3(\tau_2p,p)\rangle}{(1-|p|^2)^2}
\\
& = \frac{|p|^2\langle\tau_1,\tau_2\rangle}{1-|p|^2} + \frac{\langle\tau_1\beta_3(p,p),\tau_2\beta_3(p,p)\rangle}{(1-|p|^2)^2}
\\
& = \frac{|p|^2\langle\tau_1,\tau_2\rangle}{1-|p|^2} + \frac{|p|^4\langle\tau_1,\tau_2\rangle}{(1-|p|^2)^2}
\\
& = \frac{|p|^2\langle\tau_1,\tau_2\rangle}{(1-|p|^2)^2}
\\
& = \frac{\langle X,Y\rangle}{(1-|p|^2)^2} = \tfrac14\langle X,Y\rangle_{p-}.
\end{align*}
This completes the proof. 
\bewend \end{proof}

\subsubsection{Induced Riemannian isometries on $B$}

Let  $\pi_B\sceq \tau_B\circ \pi\colon E_-(\Psi_1)\to B$ and\label{def_piB}
suppose that $g\in U(\Psi_1,C)$. Since $g$ is $C$-linear, it induces a (unique)
map $\wt g$ on $B$ by requiring the diagram 
\[\xymatrix{
E_-(\Psi_1)\ar[r]^g \ar[d]_{\pi_B} & E_-(\Psi_1)\ar[d]^{\pi_B}
\\
B \ar@{.>}[r]^{\wt g} & B.
}\]
to commute. Our next goal is to show that each such induced map is a Riemannian
isometry on $B$. 

To simplify proofs we fix an orthonormal $C$-basis $\mathcal
B(V)\sceq \{v_1,\ldots, v_{n-1}\}$ for $V$. Then $\mathcal B(E) \sceq \{e_1,e_2,
\ldots, e_{n+1}\}$, given by 
\begin{align*}
e_1 & \sceq (1,0,0), \qquad e_2 \sceq (0,1,0), \qquad 
& e_k  \sceq (0,0,v_{k-2}) \quad \text{for $k=3,\ldots, n+1$,}
\end{align*}
is an orthonormal $C$-basis for $E$. In the following we will identify each
element in $E$, $W$ and $V$, and each map $g\in U(\Psi_1,C)$ with its
representative with respect to $\mathcal B(E)$. Since $E$ is a left $C$-module,
the representing vector of $z\in E$ is a row, and the application of a map $g\in
U(\Psi_1, C)$ to $z$ corresponds to the multiplication of the row vector of $z$
to the matrix of $g$ (and not vice versa, as usually in linear algebra). Further
we use the notation $z^\ast$ for $\overline z^\top$ (the conjugate transpose of
$z$).

\begin{lemma}\label{beta2_coord} Let $v=(\zeta_1,\ldots, \zeta_{n-1}), u=(\eta_1,\ldots, \eta_{n-1})\in V = C^{n-1}$. Then 
\[ \beta_2(v,u) = \sum_{j=1}^{n-1} \zeta_j\overline{\eta_j} = vu^\ast.\]
\end{lemma}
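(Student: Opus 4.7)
The plan is to reduce the statement to the action of $\beta_2$ on the fixed orthonormal $C$-basis $\{v_1,\dots,v_{n-1}\}$ of $V$ and then invoke the $C$-sesquilinearity of $\beta_2$ established in Proposition~\ref{beta2}. The key computational content is a coordinate expansion, so the proof is essentially bookkeeping once the values $\beta_2(v_i,v_j)$ are identified.

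First I would show that $\beta_2(v_i,v_j)=\delta_{ij}$ for all $i,j\in\{1,\dots,n-1\}$. For the diagonal terms this is immediate from Proposition~\ref{beta_2_real}\eqref{inner} together with (\apref{CON}{CON1}{}): $\beta_2(v_i,v_i)=\langle v_i,v_i\rangle = 1$. For the off-diagonal terms $i\neq j$, condition (\apref{CON}{CON2}{}) says $v_i \in (Cv_j)^\perp$, and Lemma~\ref{orth} then yields $\beta_2(v_i,v_j)=0$.

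Next I would write $v=\sum_{i=1}^{n-1}\zeta_i v_i$ and $u=\sum_{j=1}^{n-1}\eta_j v_j$ according to the identification $V\cong C^{n-1}$ given by the chosen basis. Using that $\beta_2$ is $C$-linear in the first variable (Proposition~\ref{beta2}) and $C$-antilinear in the second (this follows from $\beta_2$ being $C$-hermitian together with the anticommutation of conjugation and multiplication noted in Remark~\ref{real}, which gives $\beta_2(v,\eta u)=\overline{\beta_2(\eta u,v)}=\overline{\eta\,\beta_2(u,v)}=\beta_2(u,v)^{*}\overline\eta$ — more directly, $\beta_2(v,\eta u)=\overline{\beta_2(\eta u,v)}=\overline{\eta\beta_2(u,v)}=\beta_2(v,u)\overline\eta$), I expand
\[
\beta_2(v,u)=\sum_{i,j}\zeta_i\,\beta_2(v_i,v_j)\,\overline{\eta_j}=\sum_{i=1}^{n-1}\zeta_i\overline{\eta_i},
\]
where the second equality uses the orthonormality computed in the previous paragraph. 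The final expression $vu^{*}$ is just this sum written in matrix form, following the row-vector convention fixed above the lemma.

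No real obstacle is anticipated; the only point requiring a moment of care is ensuring the antilinearity in the second variable is handled with the correct ordering of conjugates, given that $C$ may be noncommutative (quaternions). This is why I would explicitly cite Remark~\ref{real} when moving the scalar $\eta_j$ out of the second slot.
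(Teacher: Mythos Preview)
Your proof is correct and follows essentially the same approach as the paper: compute $\beta_2$ on basis pairs to get $\beta_2(v_i,v_j)=\delta_{ij}$, then expand by $C$-sesquilinearity. The only cosmetic differences are that the paper handles the off-diagonal case by unwinding the definition of $\beta_2$ directly (rather than citing Lemma~\ref{orth}), and it does not spell out the antilinearity step, simply invoking ``$C$-sesquilinearity of $\beta_2$'' for the final expansion.
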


\begin{proof} Let $v_j,v_k\in\mathcal B(V)$. If $j=k$, then $\beta_2(v_j,v_k) = |v_j|^2 = 1$. If $j\not=k$, then $v_j$ is orthogonal to $Cv_k$, hence
\[ \langle \beta_2(v_j,v_k),\xi\rangle = \langle \xi v_k,v_j\rangle = 0\]
for each $\xi\in C$, hence $\beta_2(v_j,v_k)=0$. The claim now follows by
the $C$-sesquilinearity of $\beta_2$. 
\bewend \end{proof}

\begin{remark} Let $g\in U(\Psi_1,C)$ and suppose that
\[ g=\mat{a}{b}{c^\top}{A} \]
\wrt to $\mathcal B(E)$, where $a\in C$, $b,c\in C^n$ and $A\in C^{n\times n}$. Then the induced map $\tilde g\colon B\to B$ is given by 
\begin{equation}\label{induced}
 \wt g(p) = (a+pc^\top)^{-1}(b+pA).
\end{equation}
Further, the representing matrix of $\Psi_1$ is $\textmat{-1}{}{}{I}$, where
$I$ denotes the $n\times n$ identity matrix. Since $g$ preserves $\Psi_1$, we
find the conditions
\begin{align}
\nonumber \mat{-1}{}{}{I} &= \mat{a}{b}{c^\top}{A}\mat{-1}{}{}{I}\mat{\overline a}{\overline c}{b^\ast}{A^\ast}
\\ \label{J_coord} & = \mat{-|a|^2 + |b|^2}{-a\overline c +
bA^\ast}{-c^\top\overline a + Ab^\ast}{- c^\top\overline c + AA^\ast}
\end{align}
on the entries of $g$.
\end{remark}

The following proposition is proved by a long but straightforward calculation.
We omit this proof here.

\begin{proposition}\label{inducedisos}
Let $g\in U(\Psi_1,C)$. The induced map on $B$ is a Riemannian isometry.
\end{proposition}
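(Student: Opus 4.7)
The plan is to rewrite $\wt\varrho$ so that it depends on $\Psi_1$ and on arbitrary lifts of points and tangent vectors; the $\Psi_1$-invariance of $g$ then translates directly into $\wt\varrho$-invariance of the induced map $\wt g$. For $p\in B$ let $\wh p \sceq (1,p) \in E_-(\Psi_1)$ and, for $X,Y\in T_pB = W$, set $\xi \sceq (0,X)$, $\eta \sceq (0,Y)$. A direct substitution, using $\Psi_1(\wh p,\wh p) = -(1-|p|^2)$, $\Psi_1(\xi,\wh p) = \beta_3(X,p)$, $\Psi_1(\wh p,\eta) = \overline{\beta_3(Y,p)}$ and $\Psi_1(\xi,\eta) = \beta_3(X,Y)$, will show that
\[
\wt\varrho(p)(X,Y) \;=\; \Rea\!\left[\, -\frac{\Psi_1(\xi,\eta)}{\Psi_1(\wh p,\wh p)} \;+\; \frac{\Psi_1(\xi,\wh p)\,\Psi_1(\wh p,\eta)}{\Psi_1(\wh p,\wh p)^{2}}\,\right].
\]

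The crucial step is to verify that this expression is independent of the chosen lifts. Any other lift of $p$ has the form $\wh p' = \lambda\wh p$ with $\lambda\in C^\times$; a short computation based on $d\pi_B|_{(\lambda,\lambda p)}(\gamma_1,\gamma_2) = \lambda^{-1}(\gamma_2-\gamma_1 p)$ shows that the lifts of $X$ at $\wh p'$ are exactly $\xi' = \alpha\wh p + \lambda\xi$, $\alpha\in C$ (similarly $\eta' = \beta\wh p + \lambda\eta$). Using the $C$-sesquilinearity of $\Psi_1$ and the fact that $\Psi_1(\wh p,\wh p)\in\R$ commutes with all of $C$, the $\alpha$- and $\beta$-dependent contributions cancel in the sum, and the remaining expression equals $|\lambda|^{-2}\,\lambda\,\Omega\,\overline\lambda$, where $\Omega$ denotes the bracketed expression evaluated at the canonical lifts. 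Since $q\mapsto u q u^{-1}$ with $u = \lambda/|\lambda|\in C^\times$ preserves the scalar part of $C$ (trivially for $\R$ and $\C$, and by a standard identity for $\h$), taking $\Rea$ gives a value independent of $\lambda,\alpha,\beta$. This non-commutative bookkeeping will be the main technical obstacle.

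With lift-independence in hand, the rest is routine. From $\wt g(p) = \mu(p)^{-1}(b+pA)$ with $\mu(p) = a+pc^\top$, differentiation yields $d\wt g_p(X) = \mu(p)^{-1}\bigl(XA - Xc^\top\,\wt g(p)\bigr)$. On the other hand $g\wh p = (\mu(p),\,b+pA) = \mu(p)\cdot (1,\wt g(p))$ is a lift of $\wt g(p)$, and applying the $d\pi_B$ formula above to $g\xi = (0,X)\cdot g = (Xc^\top,\,XA)$ at $g\wh p$ recovers exactly $d\wt g_p(X)$; thus $g\xi$ is a lift of $d\wt g_p(X)$ at the lift $g\wh p$ of $\wt g(p)$, and analogously for $g\eta$. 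By lift-independence, $\wt\varrho(\wt g(p))(d\wt g_p X,\, d\wt g_p Y)$ equals the projective expression evaluated at $(g\wh p,\, g\xi,\, g\eta)$; by $\Psi_1$-invariance of $g$, every $\Psi_1$-value occurring there equals the corresponding value at $(\wh p,\xi,\eta)$; and by the first paragraph, that common value is $\wt\varrho(p)(X,Y)$. Hence $\wt g^*\wt\varrho = \wt\varrho$, so $\wt g$ is a Riemannian isometry of $B$.
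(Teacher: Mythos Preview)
Your argument is correct. The paper itself omits the proof entirely, noting only that it ``is proved by a long but straightforward calculation''; presumably this means a direct coordinate computation in which one plugs the formula \eqref{induced} and the relations \eqref{J_coord} into the definition of $\wt\varrho$ and verifies invariance by brute force. Your route is genuinely different and more conceptual: you recognize that $\wt\varrho$ can be rewritten as the real part of a projective expression in $\Psi_1$, check that this expression is independent of lifts (the only subtle point being the non-commutative conjugation $u\Omega u^{-1}$, which you correctly handle using that inner automorphisms of $C\in\{\R,\C,\h\}$ fix the real part), and then observe that $g$ simply transports one set of lifts to another while preserving all $\Psi_1$-values. The advantage of your approach is that the cancellation of the $\alpha$- and $\beta$-terms is a single transparent algebraic identity rather than a page of coordinate computations, and the argument makes clear \emph{why} $U(\Psi_1,C)$ acts isometrically; the cost is the need to justify smoothness of $\wt g$ and to track the derivative $d\pi_B$ carefully in a non-commutative setting, which you do correctly via the formula $d\pi_B\vert_{(\lambda,\lambda p)}(\gamma_1,\gamma_2)=\lambda^{-1}(\gamma_2-\gamma_1 p)$.
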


In the following we will determine which elements in $U(\Psi_1,C)$ induce the same isometry on $B$. We denote the center of $C$ by $Z(C)$ and set \label{def_centerC}
\[
Z^1(C) \sceq \{ a\in Z(C) \mid |a|=1 \},
\]
which are the central elements in $C$ of unit length. Further we let $Z(\Psi_1,C)$ denote the center of $U(\Psi_1,C)$. 

\begin{lemma} We have $Z(\Psi_1,C) = \{ a\id_E \mid a\in Z^1(C) \}$.
\end{lemma}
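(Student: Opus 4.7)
The inclusion ``$\supseteq$'' is routine. For $a\in Z^1(C)$, the scalar map $a\id_E$ is $C$-linear because the centrality of $a$ gives $a(\zeta z) = \zeta(az)$ for every $\zeta\in C$; it preserves $\Psi_1$ since
\[
 \Psi_1(az_1,az_2) = a\,\Psi_1(z_1,z_2)\,\overline a = |a|^2\Psi_1(z_1,z_2) = \Psi_1(z_1,z_2)
\]
(the scalar $a\in Z(C)$ passes through $\Psi_1(z_1,z_2)\in C$); and it lies in the center of $U(\Psi_1,C)$ because the $C$-linearity of any $g\in U(\Psi_1,C)$ yields $g(az) = ag(z)$.

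For the converse, I would pick $g\in Z(\Psi_1,C)$ and write it in block form $g=\textmat{a}{b}{c^\top}{A}$ with $a\in C$, $b,c\in C^n$, $A\in C^{n\times n}$ with respect to the orthonormal $C$-basis $\mathcal B(E)$. The strategy is to test commutation against two concrete families of elements of $U(\Psi_1,C)$: the block-diagonal matrices $k_U\sceq\textmat{1}{0}{0}{U}$ for $U\in C^{n\times n}$ with $UU^\ast=I_n$, which satisfy $k_UJk_U^\ast=J$ automatically, and the real hyperbolic boosts
\[
 h_t\sceq \begin{pmatrix}\cosh t & \sinh t & 0\\ \sinh t & \cosh t & 0\\ 0 & 0 & I_{n-1}\end{pmatrix},
\]
whose preservation of $\Psi_1$ is checked by a direct computation using $\cosh^2 t - \sinh^2 t = 1$.

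Expanding the matrix identity $gk_U = k_Ug$ in blocks yields $bU=b$, $Uc^\top=c^\top$ and $AU=UA$ for every such $U$; the first two, taken with $U=-I_n$, force $b=0=c$, and the third places $A$ in the centralizer of the set of unitary matrices inside $C^{n\times n}$. Identifying this centralizer with $\{\lambda I_n\mid \lambda\in Z(C)\}$ is the heart of the argument: first, diagonal matrices of the form $\diag(1,\ldots,-1,\ldots,1)$ kill the off-diagonal entries of $A$; next, diagonal matrices $\diag(\mu,1,\ldots,1)$ with $|\mu|=1$ force each diagonal entry of $A$ to commute with every unit $\mu\in C$, hence with all of $C$ (since such $\mu$ span $C$ over $\R$ in each of the three associative cases $C=\R,\C,\h$), placing each diagonal entry in $Z(C)$; finally, permutation matrices equate these diagonal entries. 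One thus obtains $g=\diag(a,\lambda,\ldots,\lambda)$ with $\lambda\in Z(C)$.

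To close the argument, I would compare the $(1,2)$-entries of $gh_t$ and $h_tg$: these equal $a\sinh t$ and $\lambda\sinh t$ respectively, so $a=\lambda$ for any $t\neq 0$. The relation $gJg^\ast=J$ for $g=aI_{n+1}$ then reduces to $|a|^2=1$, giving $g=a\id_E$ with $a\in Z^1(C)$. The principal obstacle is the centralizer computation in the non-commutative quaternionic case: a single unit $\mu\neq\pm 1$ does not suffice to enforce $\lambda\in Z(\h)=\R$, so one must run $\mu$ over a real basis of unit quaternions to handle all three cases uniformly.
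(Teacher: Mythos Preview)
Your proof is correct and follows essentially the same template as the paper's: test commutation against block-diagonal elements to reduce $g$ to $\diag(a,\lambda,\ldots,\lambda)$, then against an element mixing the first two coordinates to force $a=\lambda$. The only cosmetic difference is that the paper uses the larger family $h=\textmat{d}{0}{0}{D}$ with $|d|=1$, so that $ad=da$ yields $a\in Z(C)$ immediately, and then invokes an abstract $h\in U(\Psi_1,C)$ with nonzero off-diagonal entry $u$ to obtain $au=u\lambda$; you restrict to $d=1$ and instead exhibit the explicit boost $h_t$ to achieve the same conclusion.
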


\begin{proof} Clearly, $\{ a\id_E \mid a\in Z^1(C) \} \subseteq Z(\Psi_1,C)$. For the converse inclusion relation let  
\[ g=\mat{a}{b}{c^\top}{A}\in Z(\Psi_1, C).\]
For each $d\in C$, $|d|=1$, and each matrix $D\in C^{n\times n}$, $DD^\ast=I$, the matrix
\[ h=\mat{d}{0}{0}{D}\]
is in $U(\Psi_1, C)$. So necessarily,
\[ \mat{ad}{bD}{c^\top d}{AD}=gh=hg=\mat{da}{db}{Dc^\top}{DA}.\]
The left upper entries show that $a\in Z(C)$. Choosing different values for
$d$, but the same (invertible) $D$, we find $b=c=0$. If $D$ runs through all
permutation matrices and all rotation matrices, then we see that $A$ is a
diagonal matrix $\diag(x,\ldots, x)$ with $x\in Z(C)$. By \eqref{J_coord},
$|a|=1=|x|$. 
Then 
\[
 g= \mat{a}{0}{0}{xI}.
\]
Let $h=\mat{d}{u}{w^\top}{B}\in U(\Psi_1,C)$. Then 
\[
\mat{ad}{au}{xw^\top}{xB}=gh=hg=\mat{da}{ux}{w^\top a}{Bx}.
\]
Therefore $au=ux=xu$. For $u\not=0$ it follows that $a=x$. Hence $g=a\id_E$ for
some $a\in Z^1(C)$. 
\bewend \end{proof}

\begin{proposition} Let $g_1,g_2\in U(\Psi_1,C)$. Then $g_1$ and $g_2$ induce
the same isometry on $B$ if and only if  $g_1h=g_2$ for some $h\in Z(\Psi_1,C)$.
\end{proposition}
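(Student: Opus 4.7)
The plan is to reduce the statement to characterising when the map induced on $B$ by an element $h \in U(\Psi_1, C)$ is the identity, and then to force $h$ into the form supplied by the preceding lemma. Since $g \mapsto \wt g$ is a group homomorphism from $U(\Psi_1, C)$ to the diffeomorphism group of $B$, the condition $\wt{g_1} = \wt{g_2}$ is equivalent to $\wt h = \id_B$, where $h \sceq g_1^{-1} g_2$; and by the preceding lemma, the condition $g_1 h = g_2$ with $h \in Z(\Psi_1, C)$ is equivalent to $h = a\,\id_E$ for some $a \in Z^1(C)$. The backward implication is then immediate: $(g_1 h)(z) = g_1(az) = a\, g_1(z)$ by $C$-linearity of $g_1$, hence $(g_1 h)(z) \sim g_1(z)$ and $\wt{g_1 h} = \wt{g_1}$.

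For the forward direction, I would first unpack $\wt h = \id_B$ as the statement that $hz \sim z$ for every $z \in E_-(\Psi_1)$, yielding a function $\lambda\colon E_-(\Psi_1) \to C\mminus\{0\}$ with $hz = \lambda(z)\, z$. Next, I would pick a $C$-basis of $E$ contained in $E_-(\Psi_1)$. This is possible because $E_-(\Psi_1)$ is open and contains $e_1 = (1,0,0)$ (as $q_1(e_1) = -1$): concretely, $z_1 \sceq e_1$ and $z_k \sceq e_1 + \eps e_k$ for $k = 2, \ldots, n+1$ with $\eps > 0$ sufficiently small form such a basis. Setting $\lambda_k \sceq \lambda(z_k)$, the task reduces to showing that all $\lambda_k$ coincide and their common value lies in $Z(C)$.

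The heart of the argument is a two-vector perturbation. For distinct $i,j$ and sufficiently small $\alpha \in C$, openness of $E_-(\Psi_1)$ guarantees $z_i + \alpha z_j \in E_-(\Psi_1)$. Computing $h(z_i + \alpha z_j)$ by $C$-linearity on one hand and by the defining property of $\lambda$ on the other yields
\[
\lambda_i z_i + \alpha \lambda_j z_j \;=\; \lambda(z_i + \alpha z_j)\,(z_i + \alpha z_j).
\]
The $C$-linear independence of $z_i, z_j$ forces $\lambda(z_i + \alpha z_j) = \lambda_i$ together with $\alpha \lambda_j = \lambda_i \alpha$ on a neighbourhood of $0 \in C$. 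Since the identity $\alpha \lambda_j - \lambda_i \alpha = 0$ is $\R$-linear in $\alpha$, it extends to all of $C$; taking $\alpha = 1$ shows $\lambda_j = \lambda_i$, and re-inserting shows that this common value $\lambda$ commutes with every element of $C$, i.e.\@ $\lambda \in Z(C)$. Because $h$ is $C$-linear and equals left multiplication by $\lambda$ on a $C$-basis of $E$, we obtain $h = \lambda\, \id_E$. Finally, $h \in U(\Psi_1, C)$ applied to $z = e_1$ yields $|\lambda|^2 \Psi_1(e_1,e_1) = \Psi_1(e_1,e_1)$, so $|\lambda| = 1$, giving $\lambda \in Z^1(C)$ and $h \in Z(\Psi_1, C)$.

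The main obstacle is the potentially non-commutative case $C = \h$: a priori $\lambda(z)$ could depend on $z$, or, even if constant, it might fail to be central. The perturbation step above is designed precisely to extract constancy and centrality simultaneously, using the openness of $E_-(\Psi_1)$ and the $\R$-linear character of the identity $\alpha \lambda_j = \lambda_i \alpha$.
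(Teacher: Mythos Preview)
Your proof is correct and takes a genuinely different route from the paper. The paper works in matrix coordinates with respect to the fixed orthonormal $C$-basis $\mc B(E)$: writing $g=\textmat{a}{b}{c^\top}{A}$ and using the explicit formula $\wt g(p)=(a+pc^\top)^{-1}(b+pA)$, it evaluates $\wt g$ at the specific points $0$, $\tfrac12 e_j$, and $(\zeta,0)$ in $B$, combining each evaluation with the unitary relations \eqref{J_coord} to force $b=c=0$, then $A=aI$, then $a\in Z^1(C)$. Your argument instead works upstairs in $E_-(\Psi_1)$: from $hz\sim z$ you extract a scalar function $\lambda(z)$, choose a $C$-basis of $E$ lying in the open cone $E_-(\Psi_1)$, and use a two-vector perturbation $z_i+\alpha z_j$ together with $C$-linear independence to obtain the identity $\alpha\lambda_j=\lambda_i\alpha$, which yields constancy and centrality of $\lambda$ in one stroke. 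The paper's approach is more hands-on and reuses the matrix machinery already set up for \eqref{induced} and \eqref{J_coord}; yours is coordinate-free, avoids the induced-map formula entirely, and would transfer verbatim to other projective models. Both arguments ultimately hinge on the same non-commutativity issue for $C=\h$, but you resolve it via the $\R$-linearity of $\alpha\mapsto\alpha\lambda_j-\lambda_i\alpha$ rather than by testing at $(\zeta,0)\in B$.
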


\begin{proof}
It suffices to show that exactly the elements in $Z(\Psi_1,C)$ induce $\id_B$.
Let $g$ be an element of $U(\Psi_1,C)$ and suppose that 
\[
g=\mat{a}{b}{c^\top}{A}
\]
\wrt $\mc B(E)$, where $a\in C$, $b,c\in C^n$ and $A\in C^{n\times n}$. Then the induced isometry on $B$ is 
\[
\wt g\colon \left\{
\begin{array}{ccl}
B & \to & B
\\
p & \mapsto & (a+pc^\top)^{-1}(b+pA).
\end{array}
\right.
\]
Suppose that $\wt g = \id_B$. Then 
\[
 0 = \wt g(0) = a^{-1}b,
\]
which yields that $b=0$ and, by \eqref{J_coord}, $|a|=1$. Hence 
\[
g = \mat{a}{0}{c^\top}{A}.
\]
Now \eqref{J_coord} shows that $0= -a\overline c$, which implies that $c=0$. Thus,
\[
\wt g(p) = a^{-1}pA
\]
for all $p\in B$. Suppose that $A=(a_{ij})_{i,j=1,\ldots,n}$. Let $j\in
\{1,\ldots, n\}$ and consider the vector $p=(p_i)_{i=1,\ldots,n}$ with
$p_j=\tfrac12$ and $p_i=0$ for $i\not=j$. Then $p\in B$ and
\[
p = \wt g(p) = \tfrac12 a^{-1}(a_{j1},\ldots, a_{jn}).
\]
Therefore $a_{ij} = 0$ for $i\not=j$ and $a_{jj}=a$. This shows that $A=aI$. 

Now let $\zeta\in C$ with $|\zeta|<1$. Then $(\zeta,0)\in B$ and therefore
\[
(\zeta,0) = \wt g(\zeta,0) = a^{-1}(\zeta,0)a.
\]
Hence $a\zeta = \zeta a$. By scaling, this equality holds for all $\zeta\in C$.
Thus $a\in Z^1(C)$, which yields that $g\in Z(\Psi_1,C)$. Conversely, each
element of $Z(\Psi_1,C)$ clearly induces $\id_B$ on $B$. 
\bewend \end{proof}
Let 
\[
\PU(\Psi_1,C) \sceq U(\Psi_1,C)/Z(\Psi_1,C)
\]
and denote the coset of $g\in U(\Psi_1,C)$ by $[g]$.  As before we use the
notation $\wt g$ for the isometry on $B$ induced by $g\in U(\Psi_1,C)$. Recall
that $G$ is the full isometry group of $B$.

\begin{corollary}\label{iso1}
The map 
\[
j_{\Psi_1}\colon\left\{
\begin{array}{ccl}
\PU(\Psi_1,C) & \to & G
\\{}
[g] & \mapsto & \wt g
\end{array}
\right.
\]
is a monomorphism of groups.
\end{corollary}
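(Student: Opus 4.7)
The plan is to assemble four ingredients, each of which is essentially already in place from the preceding material; no real computation remains.

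First, I would verify that $j_{\Psi_1}$ is well-defined on cosets. If $g_1, g_2 \in U(\Psi_1, C)$ represent the same coset, there is $h \in Z(\Psi_1, C)$ with $g_2 = g_1 h$. The immediately preceding proposition asserts precisely that $g_1$ and $g_2$ then induce the same isometry on $B$, so $\widetilde{g_1} = \widetilde{g_2}$, and $j_{\Psi_1}([g]) \sceq \widetilde g$ is unambiguous. By Proposition~\ref{inducedisos}, $\widetilde g$ indeed lies in $G$, so the map has the claimed codomain.

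Second, I would check that $j_{\Psi_1}$ is a group homomorphism. The defining relation of $\widetilde g$ is the commutativity of
\[
\pi_B \circ g = \widetilde g \circ \pi_B.
\]
Applying this twice for $g_1, g_2 \in U(\Psi_1, C)$,
\[
\pi_B \circ (g_1 g_2) = \widetilde{g_1} \circ \pi_B \circ g_2 = \widetilde{g_1} \circ \widetilde{g_2} \circ \pi_B,
\]
and since $\pi_B$ is surjective this forces $\widetilde{g_1 g_2} = \widetilde{g_1} \circ \widetilde{g_2}$. Hence $j_{\Psi_1}([g_1][g_2]) = j_{\Psi_1}([g_1]) \circ j_{\Psi_1}([g_2])$.

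Third, I would establish injectivity. As $j_{\Psi_1}$ is a group homomorphism, it suffices to show that its kernel is trivial. Suppose $j_{\Psi_1}([g]) = \id_B$, i.e.\ $\widetilde g = \id_B = \widetilde{\id_E}$. By the previous proposition, there exists $h \in Z(\Psi_1, C)$ with $g = \id_E \cdot h = h$, so $g \in Z(\Psi_1, C)$ and $[g]$ is the neutral element of $\PU(\Psi_1, C)$.

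The argument has no real obstacle: each of the three claims is a direct consequence of a result already proved (the characterization of when two elements of $U(\Psi_1, C)$ induce the same map, Proposition~\ref{inducedisos}, and the defining diagram of $\widetilde g$), so the proof is essentially a two-line bookkeeping exercise.
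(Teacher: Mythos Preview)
Your proof is correct and matches the paper's approach: the paper states this corollary without proof, treating it as an immediate consequence of the preceding proposition (which characterizes exactly when two elements of $U(\Psi_1,C)$ induce the same isometry on $B$) together with Proposition~\ref{inducedisos}. Your explicit verification of well-definedness, the homomorphism property via the defining diagram, and triviality of the kernel is precisely the routine unpacking the paper leaves to the reader.
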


The next goal is to characterize the subgroup of $G$ to which $\PU(\Psi_1,C)$
is isomorphic. For this it is most convenient to work with the space
$\overline{P_C\big(E_-(\Psi_2)\big)}$.

\subsubsection{Set of representatives for $\overline{P_C\big(E_-(\Psi_2)\big)}$}

If $z=(\zeta,\eta,v)\in E_-(\Psi_2)$, then 
\[
 0 > q_2\big( (\zeta,\eta,v) \big) = -2\langle\zeta,\eta\rangle + |v|^2,
\]
which shows that $\zeta\not=0$. Thus, $[z]\in P_C\big(E_-(\Psi_2)\big)$ is
represented by $(1,\zeta^{-1}\eta,\zeta^{-1}v)$ and this is the unique
representative of the form $(1,*,*)$. We note that 
\[
0> q_2\big( (1,\zeta^{-1}\eta,\zeta^{-1}v) \big) = -2\langle 1, \zeta^{-1}\eta\rangle + |\zeta^{-1}v|^2 = -2\Rea (\zeta^{-1}\eta) + |\zeta^{-1}v|^2,
\]
and therefore
\[
\Rea(\zeta^{-1}\eta) > \tfrac12 |\zeta^{-1}v|^2.
\]
Hence, if we define
\[
  H\sceq \left\{ (\tau,u)\in C\oplus V \left\vert\  \Rea(\tau) > \tfrac12|u|^2 \right.\right\},
\]
then
\[
(\zeta^{-1}\eta, \zeta^{-1}v) \in H.
\]
Conversely, if $(\tau,u)\in H$, then $[(1,\tau,u)] \in P_C(E_-(\Psi_2))$. 

If $z=(\zeta,\eta,v) \in E_0(\Psi_2)$, then either $\zeta\not=0$ or
$z=(0,\eta,0)$ with $\eta\not=0$. Applying the previous argumentation we see
that the set of elements in $P_C\big(E_0(\Psi_2)\big)$ which have a
representative $(\zeta,\eta, v)\in E_0(\Psi_2)$ with $\zeta\not=0$ is bijective
to 
\[
\left\{ (\tau,u)\in C\oplus V\left\vert\ \Rea(\tau) = \tfrac12|u|^2 \right.\right\}
\]
via
\[
[(\zeta,\eta,v)] \mapsto (\zeta^{-1}\eta, \zeta^{-1}v).
\]
If $z=(0,\eta,0)$ with $\eta\not=0$, then $[z]$ is represented by $(0,1,0)$.
Let
\[
\hg \sceq \left\{ (\tau,u)\in C\oplus V\left\vert\ \Rea(\tau) \geq \tfrac12|u|^2\right.\right\} \cup \{\infty\}
\]
denote the closure of $H$ in the one-point compactification $(C\oplus V) \cup
\{\infty\}$ of $C\oplus V$. Then the map $\tau_H\colon
\overline{P_C\big(E_-(\Psi_2)\big)} \to \hg$,
\[
\tau_H\big( [(\zeta,\eta,v)] \big) \sceq 
\begin{cases}
(\zeta^{-1}\eta,\zeta^{-1}v) & \text{if $\zeta\not=0$,}
\\
\infty & \text{if $\zeta=0$,}
\end{cases}
\]
is a bijection with inverse map
\[
\tau_H^{-1}(\infty)  = [(0,1,0)] \quad\text\quad \tau_H^{-1}(\tau,u) = [(1,\tau,u)].
\]
Since $\tau_H\vert_{P_C(E_-(\Psi_2))}$ is a restriction of the chart map $\varphi_1$ from Section~\ref{sec_projspace}, $\tau_H$ is a diffeomorphism between $P_C\big(E_-(\Psi_2)\big)$ and $H$.

\subsubsection{Riemannian metric and induced isometries on $H$}

Recall the model $D$ from Section~\ref{sec_D}. The map 
\begin{align*}
\beta&\colon\left\{
\begin{array}{ccl}
\hg & \to & \dg
\\
\infty & \mapsto & \infty
\\
(\zeta, v) & \mapsto & (\zeta, \sqrt{2}v)
\end{array}
\right.
\end{align*}
is clearly a diffeomorphism between the manifolds $\hg$ and $\dg$ with
boundary, and hence a diffeomorphism between $H$ and $D$. We endow $H$ with a
Riemannian metric by requiring that $\beta$ be an isometry.

Let $\pi_H\sceq \tau_H\circ \pi\colon E_-(\Psi_2) \to H$. As
before, each $g\in U(\Psi_2,C)$ defines a (unique) map $\wt g$ on $H$ which
makes the diagram
\[
\xymatrix{
E_-(\Psi_2) \ar[r]^g \ar[d]_{\pi_H} & E_-(\Psi_2) \ar[d]^{\pi_H}
\\
H \ar@{..>}[r]^{\wt g} & H
}
\]
commutative. 

Let $Z(\Psi_2,C)$ denote the center of $U(\Psi_2,C)$ and set 
\[
\PU(\Psi_2,C)\sceq U(\Psi_2,C)/Z(\Psi_2,C).
\]
In the following we will use the results from the previous subsections to show
that $\PU(\Psi_2,C)$ is isomorphic to a subgroup of the full isometry group $G$
of $H$.
We consider the map
\[
T \colon\left\{
\begin{array}{ccl}
E & \to & E 
\\
(\zeta,\eta,v) &\mapsto & \left( \tfrac{1}{\sqrt{2}}(\zeta-\eta), \tfrac{1}{\sqrt{2}}(\zeta+\eta), v \right).
\end{array}
\right.
\]
Then $T$ is $C$-linear and invertible with inverse map $T^{-1}\colon E\to E$, 
\[
T^{-1}(\zeta,\eta, v) = \left( \tfrac{1}{\sqrt{2}}(\zeta+\eta), \tfrac{1}{\sqrt{2}}(-\zeta+\eta), v \right).
\]
The following lemma is shown by a straightforward calculation.

\begin{lemma}\label{viaT}
The map $T$ transforms $\Psi_2$ into $\Psi_1$, \ie $\Psi_2\circ (T\times T) =
\Psi_1$. Further, we have $T\big( E_-(\Psi_1) \big)
= E_-(\Psi_2)$ and $T\big( E_0(\Psi_1) \big) = E_0(\Psi_2)$.
\end{lemma}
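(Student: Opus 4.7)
The plan is to verify the identity $\Psi_2 \circ (T\times T) = \Psi_1$ by a direct expansion from the definitions, and then to deduce the two set equalities from invariance of the associated quadratic forms together with invertibility of $T$.

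For the first claim I would take $z_j = (\zeta_j, \eta_j, v_j) \in E$ for $j=1,2$ and compute
\[
\Psi_2(T(z_1), T(z_2)) = -\tfrac{1}{2}\beta_1(\zeta_1 - \eta_1, \zeta_2 + \eta_2) - \tfrac{1}{2}\beta_1(\zeta_1 + \eta_1, \zeta_2 - \eta_2) + \beta_2(v_1, v_2),
\]
having pulled the factors $1/\sqrt{2}$ out of each $\beta_1$ using the $\R$-bilinearity of $\beta_1$ (note $1/\sqrt{2}\in\R$ commutes with the conjugation, so both factors come out as a single $1/2$). Expanding the two $\beta_1$-terms in each variable, the cross contributions $\pm\beta_1(\zeta_1, \eta_2)$ and $\mp\beta_1(\eta_1, \zeta_2)$ cancel, while the remaining terms combine to $-\beta_1(\zeta_1, \zeta_2) + \beta_1(\eta_1, \eta_2) + \beta_2(v_1, v_2) = \Psi_1(z_1, z_2)$. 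This establishes the identity.

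For the second part, specializing the identity to $z_1 = z_2 = x$ yields $q_2(T(x)) = q_1(x)$ for every $x \in E$. Hence $x \in E_-(\Psi_1)$ if and only if $T(x) \in E_-(\Psi_2)$, and $q_1(x)=0$ if and only if $q_2(T(x))=0$. Since $T$ is $C$-linear and explicitly invertible, in particular $T(0)=0$, this gives the two asserted equalities $T(E_-(\Psi_1)) = E_-(\Psi_2)$ and $T(E_0(\Psi_1)) = E_0(\Psi_2)$.

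There is no real obstacle here: the argument is purely computational and uses only the $\R$-bilinear expansion of $\beta_1$. The one point requiring care is that $\beta_1$ is $C$-sesquilinear rather than $C$-bilinear, but since the scalars pulled out are the real number $1/\sqrt{2}$, conjugation has no effect and the bookkeeping goes through cleanly.
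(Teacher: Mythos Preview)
Your argument is correct and is precisely the ``straightforward calculation'' the paper alludes to without spelling out. The expansion of $\Psi_2(T z_1, T z_2)$ via $\R$-bilinearity of $\beta_1$ is carried out correctly, and your deduction of the set equalities from $q_2\circ T = q_1$ together with the invertibility of $T$ (and $T(0)=0$) is exactly what is needed.
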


\begin{proposition}\label{indisos2}
The map
\[
\lambda\colon\left\{
\begin{array}{ccl}
U(\Psi_1,C) & \to & U(\Psi_2,C)
\\
g & \mapsto & T\circ g \circ T^{-1}
\end{array}
\right.
\]
is an isomorphism with $\lambda\big( Z(\Psi_1,C)\big) = Z(\Psi_2,C)$.
\end{proposition}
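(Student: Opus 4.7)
The plan is to verify the three ingredients separately: that $\lambda$ takes values in $U(\Psi_2,C)$, that it is a group homomorphism with an explicit two-sided inverse, and that any group isomorphism automatically maps centers to centers.

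First I would check that $\lambda$ is well-defined, i.e.\ that $\lambda(g) \in U(\Psi_2,C)$ for every $g \in U(\Psi_1,C)$. Since $T$ and $T^{-1}$ are $C$-linear and invertible by construction, and $g$ is $C$-linear and invertible by hypothesis, the composition $T\circ g\circ T^{-1}$ is a $C$-linear bijection of $E$. For the preservation of $\Psi_2$, I would apply Lemma~\ref{viaT} twice: for any $z_1,z_2\in E$,
\[
\Psi_2\bigl(TgT^{-1}z_1,\,TgT^{-1}z_2\bigr) = \Psi_1\bigl(gT^{-1}z_1,\,gT^{-1}z_2\bigr) = \Psi_1\bigl(T^{-1}z_1,\,T^{-1}z_2\bigr) = \Psi_2(z_1,z_2),
\]
where the middle equality uses $g\in U(\Psi_1,C)$ and the outer two use $\Psi_2\circ(T\times T) = \Psi_1$.

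Next I would verify the homomorphism property directly from associativity of composition: for $g,h\in U(\Psi_1,C)$,
\[
\lambda(gh) = T\circ g\circ h\circ T^{-1} = (T\circ g\circ T^{-1})\circ (T\circ h\circ T^{-1}) = \lambda(g)\lambda(h),
\]
and $\lambda(\id_E)=\id_E$. To see that $\lambda$ is bijective, I would write down the candidate inverse $\mu\colon U(\Psi_2,C)\to U(\Psi_1,C)$, $\mu(h)\sceq T^{-1}\circ h\circ T$. The same argument as in the first step (with the roles of $\Psi_1,\Psi_2$ swapped, using Lemma~\ref{viaT} in the form $\Psi_1\circ(T^{-1}\times T^{-1})=\Psi_2$, which follows by rearranging) shows that $\mu$ takes values in $U(\Psi_1,C)$, and the obvious cancellations give $\lambda\circ\mu=\id$ and $\mu\circ\lambda=\id$. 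Hence $\lambda$ is a group isomorphism.

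Finally, to identify the images of the centers, I would invoke the general fact that any group isomorphism sends the center to the center. Explicitly, if $g\in Z(\Psi_1,C)$ and $h'\in U(\Psi_2,C)$ is arbitrary, set $h\sceq \mu(h')\in U(\Psi_1,C)$; then
\[
\lambda(g)h' = \lambda(g)\lambda(h) = \lambda(gh) = \lambda(hg) = \lambda(h)\lambda(g) = h'\lambda(g),
\]
so $\lambda(g)\in Z(\Psi_2,C)$, giving $\lambda(Z(\Psi_1,C))\subseteq Z(\Psi_2,C)$. The reverse inclusion follows by applying the same argument to $\mu$. There is no real obstacle here; the only point requiring genuine input from the setup is the compatibility $\Psi_2\circ(T\times T)=\Psi_1$ from Lemma~\ref{viaT}, and all the rest is formal.
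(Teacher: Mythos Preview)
Your proof is correct and follows essentially the same approach as the paper: both use Lemma~\ref{viaT} to verify that conjugation by $T$ carries $U(\Psi_1,C)$ into $U(\Psi_2,C)$, and both observe that the center statement is an immediate consequence of $\lambda$ being a group isomorphism. The only difference is that you spell out the homomorphism property, the explicit inverse, and the center argument in detail, whereas the paper dispatches all of this with the single word ``clearly.''
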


\begin{proof}
Let $g\in U(\Psi_1,C)$. Since $T\in \GL_C(E)$, we have $\lambda(g)\in \GL_C(E)$. Lemma~\ref{viaT} shows that
\begin{align*}
\Psi_2\circ \big( TgT^{-1}\times TgT^{-1}\big) & = \Psi_2\circ\big( T\times T)\circ \big(gT^{-1}\times gT^{-1}\big)
\\
& = \Psi_1\circ\big( g\times g\big) \circ \big( T^{-1}\times T^{-1}\big) = \Psi_1\circ \big( T^{-1}\times T^{-1}\big) 
\\
& = \Psi_2.
\end{align*}
Hence $\lambda(g) \in U(\Psi_2,C)$. Clearly, $\lambda$ is an isomorphism of
groups, which shows that $\lambda\big( Z(\Psi_1,C)\big) =
Z(\Psi_2,C)$. 
\bewend \end{proof}

For the proof of the following proposition recall the Cayley transform $\mc C\colon B\to D$ from Section~\ref{sec_isomgroup}. For each $\zeta\in C\mminus\{1\}$ we have
\begin{align*}
(1-\zeta)^{-1}(1+\zeta) & = |1-\zeta|^{-2}(1-\overline\zeta)(1+\zeta) 
= |1-\zeta|^{-2}(1-\overline\zeta+\zeta -|\zeta|^2) 
\\ & = |1-\zeta|^{-2}(1 + 2\Ima\zeta - |\zeta|^2).
\end{align*}
Hence
\[
 \mc C(\zeta,v) = (1-\zeta)^{-1}(1+\zeta,2v).
\]

\begin{proposition}\label{def_j2}
Let $g\in U(\Psi_2,C)$. Then the induced map $\wt g$ on $H$ is an isometry. Further, the map
\[
j_{\Psi_2}\colon\left\{
\begin{array}{ccl}
\PU(\Psi_2,C) & \to & G
\\{}
[g] & \mapsto & \wt g
\end{array}
\right.
\]
is well-defined and a monomorphism of groups.
\end{proposition}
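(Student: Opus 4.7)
The plan is to transport the situation for $\Psi_2$ back to that of $\Psi_1$ via the intertwining map $T$, so that the result reduces to Corollary~\ref{iso1} together with the fact that the Cayley transform is an isometry. First, by Lemma~\ref{viaT}, $T$ is $C$-linear, invertible, and maps $E_-(\Psi_1)$ bijectively onto $E_-(\Psi_2)$. Therefore $T$ descends via $\pi_B$ and $\pi_H$ to a well-defined diffeomorphism $\widetilde{T}\colon B\to H$ characterized by $\pi_H\circ T=\widetilde{T}\circ\pi_B$ on $E_-(\Psi_1)$.

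Next, I would compute $\widetilde{T}$ explicitly. For $p=(\zeta,v)\in B$ we have $|\zeta|\le|p|<1$, so $1-\zeta$ is invertible in $C$. Writing $p$ as $\pi_B(1,\zeta,v)=[(1,\zeta,v)]$ and applying $T$ gives the representative $\bigl(\tfrac{1}{\sqrt2}(1-\zeta),\tfrac{1}{\sqrt2}(1+\zeta),v\bigr)$, so multiplying through by $\bigl(\tfrac{1}{\sqrt2}(1-\zeta)\bigr)^{-1}$ yields
\[
\widetilde{T}(\zeta,v) \;=\; \bigl((1-\zeta)^{-1}(1+\zeta),\;\sqrt{2}\,(1-\zeta)^{-1}v\bigr).
\]
Comparing with the formula $\mathcal C(\zeta,v)=(1-\zeta)^{-1}(1+\zeta,2v)$ recalled just before the statement and with the definition of $\beta$, this shows that $\beta\circ\widetilde{T}=\mathcal C$ on $B$. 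Since $\beta$ is an isometry by the definition of the Riemannian metric on $H$ and $\mathcal C$ is an isometry by Proposition~\ref{Cayleyisometry}, it follows that $\widetilde{T}\colon B\to H$ is an isometry of Riemannian manifolds.

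Now let $g\in U(\Psi_2,C)$. By Proposition~\ref{indisos2}, $h\sceq T^{-1}gT=\lambda^{-1}(g)$ lies in $U(\Psi_1,C)$, and from $g\circ T=T\circ h$ combined with the defining commutative squares for the induced maps one deduces $\widetilde{g}\circ\widetilde{T}=\widetilde{T}\circ\widetilde{h}$, i.e.\ $\widetilde{g}=\widetilde{T}\circ\widetilde{h}\circ\widetilde{T}^{-1}$. Since $\widetilde{h}$ is a Riemannian isometry of $B$ by Proposition~\ref{inducedisos} and $\widetilde{T}$ is an isometry $B\to H$, we conclude that $\widetilde{g}$ is a Riemannian isometry of $H$, hence an element of $G$.

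Finally, to prove that $j_{\Psi_2}$ is well-defined and injective, it suffices to show that for $g\in U(\Psi_2,C)$ one has $\widetilde{g}=\mathrm{id}_H$ if and only if $g\in Z(\Psi_2,C)$. The identity $\widetilde{g}=\widetilde{T}\circ\widetilde{\lambda^{-1}(g)}\circ\widetilde{T}^{-1}$ shows that $\widetilde{g}=\mathrm{id}_H$ is equivalent to $\widetilde{\lambda^{-1}(g)}=\mathrm{id}_B$; by the analogous result for $\Psi_1$ established in the proof of Corollary~\ref{iso1}, this holds precisely when $\lambda^{-1}(g)\in Z(\Psi_1,C)$, and by Proposition~\ref{indisos2} this is equivalent to $g\in Z(\Psi_2,C)$. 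Hence $j_{\Psi_2}$ is a well-defined group homomorphism with trivial kernel, proving it is a monomorphism. The only genuine computation is the verification $\beta\circ\widetilde{T}=\mathcal C$; everything else is a formal transport along the isomorphism $\lambda$.
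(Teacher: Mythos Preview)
Your proof is correct and follows essentially the same route as the paper: you transport the problem from $\Psi_2$ to $\Psi_1$ via the isomorphism $\lambda$ of Proposition~\ref{indisos2}, compute that the induced map $\widetilde T\colon B\to H$ equals $\beta^{-1}\circ\mc C$, and then invoke Proposition~\ref{inducedisos} and Corollary~\ref{iso1}. The paper organizes the same content as the commutativity of a triangle with vertices $\PU(\Psi_1,C)$, $\PU(\Psi_2,C)$, and $G$, whereas you write out the conjugation $\widetilde g=\widetilde T\circ\widetilde{\lambda^{-1}(g)}\circ\widetilde T^{-1}$ explicitly; these are the same argument.
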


\begin{proof}
The map $T$ induces the map $\wh T\colon P_C\big(E_-(\Psi_1)\big)\to P_C\big(E_-(\Psi_2)\big)$ given by 
\[
\wh T\big( [(\zeta,\eta,v)] \big) = \left[ \big( \tfrac{1}{\sqrt{2}}(\zeta-\eta), \tfrac{1}{\sqrt{2}}(\zeta+\eta), v \big) \right]
\]
and the map $\wt T \colon B \to H$ given by
\begin{align*}
\wt T\big( (\eta,v) \big) & = \tau_H\big( \wh T[(1,\eta,v)] \big) = \tau_H \left( \big[\tfrac{1}{\sqrt{2}}(1-\eta), \tfrac{1}{\sqrt{2}}(1+\eta), v\big] \right) 
\\
& = \tau_H\left( \big[(1, (1-\eta)^{-1}(1+\eta), (1-\eta)^{-1}\sqrt{2}v) \big] \right)
\\
& = (1-\eta)^{-1}(1+\eta, \sqrt{2}v).
\end{align*}
Therefore, $\wt T = \beta^{-1}\circ \mc C$. From Propositions~\ref{Cayleyisometry} and
\ref{metricident} it follows that the isometry group of $B$ and that of
$H$ are identical. Then Propositions~\ref{inducedisos} and \ref{indisos2} imply
that for each $g\in U(\Psi_2,C)$, the induced map $\wt g$ is an isometry on $H$.
Moreover, Proposition~\ref{indisos2} shows that $\lambda$ factors to a map 
\[
\wt\lambda\colon \PU(\Psi_1,C) \to \PU(\Psi_2,C).
\]
Recall the map $j_{\Psi_1}$ from Corollary~\ref{iso1}. Then the diagram
\[
\xymatrix{
\PU(\Psi_1,C)\ar[dr]_{j_{\Psi_1}} \ar[rr]^{\wt\lambda} && \PU(\Psi_2,C)\ar[dl]^{j_{\Psi_2}}
\\
& G
}
\]
commutes, which completes the proof. 
\bewend \end{proof}

\subsubsection{Lifted isometries}\label{sec_liftedisos}

In this section we determine which isometries on $H$ are induced from an
element in $U(\Psi_2,C)$. To that end we need explicit formulas for the action
of an element $g\in G$ on $H$, which are provided by the following remark.

\begin{remark}
Let $g\in G$. Section~\ref{sec_isomgroup} provides explicit formulas for the action
of $g$ on $D$. Using the isometry $\beta\colon H\to D$, the action of $g$ on $H$
is given by
\[
 g^H\sceq \beta^{-1}\circ g \circ \beta \colon H\to H.
\]
Evaluating this formula, we find the following action laws. For the geodesic inversion $\sigma$ we have
\[
 \sigma^H(\zeta, v) = \zeta^{-1}(1,-v).
\]
For $a_s\in A$ we get
\[
 a_s^H(\zeta,v) = \big(s\zeta, s^{1/2}v\big).
\]
For $n=(\xi,w)\in N$ it follows
\[
n^H(\zeta,v) = \left(\zeta+\xi+\tfrac14|w|^2 + \beta_2\big( v, \tfrac{1}{\sqrt{2}}w\big), \tfrac{1}{\sqrt{2}}w+v\right).
\]
For $m=(\varphi,\psi)\in M$ we have
\[
 m^H(\zeta,v) = \big(\varphi(\zeta),\psi(v)\big).
\]
\end{remark}

\begin{proposition}\label{rep_sigma}
A representative of $\sigma^H$ in $U(\Psi_2, C)$ is
\[ g(\zeta, \eta, v) = (\eta, \zeta, -v).\]
\end{proposition}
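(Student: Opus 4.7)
The plan is to verify the two assertions packaged into the statement: that $g$ lies in $U(\Psi_2,C)$, and that the isometry it induces on $H$ is precisely $\sigma^H$. Both are direct computations, so the write-up will be short; the only subtleties are bookkeeping on $C$-linearity (remembering that $V$ is a left $C$-module and $\beta_1,\beta_2$ are $C$-sesquilinear hermitian) and handling the point at infinity.

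First I would check $g \in U(\Psi_2,C)$. The map $g(\zeta,\eta,v) = (\eta,\zeta,-v)$ is visibly $C$-linear, and $g\circ g = \id_E$, so $g \in \GL_C(E)$. To check invariance of $\Psi_2$, I plug in and use that $\beta_2$ is $\R$-bilinear (Proposition~\ref{beta_2_real}), so $\beta_2(-v_1,-v_2)=\beta_2(v_1,v_2)$:
\[
\Psi_2\big(g(\zeta_1,\eta_1,v_1),g(\zeta_2,\eta_2,v_2)\big) = -\beta_1(\eta_1,\zeta_2) - \beta_1(\zeta_1,\eta_2) + \beta_2(v_1,v_2),
\]
which is just the two $\beta_1$-summands of $\Psi_2(z_1,z_2)$ in reversed order. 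Hence $g$ preserves $\Psi_2$.

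Next I would compute the induced map $\tilde g$ on $H$ via the defining commutative diagram $\tilde g\circ\pi_H = \pi_H\circ g$. For a finite point $(\tau,u)\in H$, the representative is $[(1,\tau,u)]$, and $g(1,\tau,u) = (\tau,1,-u)$. Since $(\tau,u)\in H$ forces $\Rea(\tau) > \tfrac12|u|^2 \ge 0$, we have $\tau\neq 0$, so by the definition of $\tau_H$ in Section~\ref{sec_hypspaces},
\[
\tilde g(\tau,u) = \tau_H\big([(\tau,1,-u)]\big) = \big(\tau^{-1},-\tau^{-1}u\big) = \tau^{-1}(1,-u),
\]
which matches $\sigma^H(\tau,u)$ from the formulas collected in Section~\ref{sec_liftedisos}. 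For the point $\infty$, represented by $[(0,1,0)]$, one has $g(0,1,0) = (1,0,0)$, so $\tilde g(\infty) = \tau_H([(1,0,0)]) = (0,0)$, which is the image $\sigma^H(\infty) = 0$. The remaining boundary point $(0,0)\in\hg$, represented by $[(1,0,0)]$, is sent by $g$ to $(0,1,0)$, so $\tilde g(0,0)=\infty$, again matching $\sigma^H$. By continuity, equality on a dense subset suffices, but the pointwise check above already covers everything.

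There is essentially no obstacle beyond keeping the conjugations in $\beta_1$ and the $\R$-linearity of $\beta_2$ straight; no appeal to the classification or to heavy structural results is needed, and the proposition follows at once from the two computations together with the injectivity of $j_{\Psi_2}$ in Proposition~\ref{def_j2}.
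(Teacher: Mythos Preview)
Your proof is correct. The paper itself states this proposition without proof, presumably because the verification is immediate; your argument follows exactly the template of the paper's proof of the adjacent Proposition~\ref{rep_a} (check $C$-linearity, check $\Psi_2$-invariance, check that the induced map matches). The remark about injectivity of $j_{\Psi_2}$ at the end is unnecessary --- you have shown $\tilde g = \sigma^H$ directly, so nothing further is required --- but it does no harm.
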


\begin{proposition}\label{rep_a}
Let $a_s\in A$. Then
\[ g(\zeta, \eta, v) = \left( s^{-1/2}\zeta, s^{1/2}\eta, v \right) \]
is a representative of $a_s$ in $U(\Psi_2,C)$.
\end{proposition}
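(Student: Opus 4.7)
The plan is to verify two properties of the map $g(\zeta,\eta,v) = (s^{-1/2}\zeta, s^{1/2}\eta, v)$: first that $g \in U(\Psi_2, C)$, and second that the induced map on $H$ under $\pi_H$ is precisely $a_s^H(\zeta,v) = (s\zeta, s^{1/2}v)$. The map $g$ is manifestly $C$-linear and invertible (with inverse given by $(\zeta, \eta, v) \mapsto (s^{1/2}\zeta, s^{-1/2}\eta, v)$), so only these two invariance-and-induction checks remain.

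For the first step, since $s \in \R^+$ the scalars $s^{\pm 1/2}$ are real and hence central and self-conjugate in $C$. Thus for $z_j = (\zeta_j, \eta_j, v_j) \in E$ one computes directly
\[
\Psi_2(gz_1, gz_2) = -\beta_1(s^{-1/2}\zeta_1, s^{1/2}\eta_2) - \beta_1(s^{1/2}\eta_1, s^{-1/2}\zeta_2) + \beta_2(v_1, v_2),
\]
and pulling the real scalars out of $\beta_1(x,y) = x\overline{y}$ yields factors of $s^{-1/2}\cdot s^{1/2} = 1$ in each of the first two terms, so that $\Psi_2(gz_1, gz_2) = \Psi_2(z_1, z_2)$. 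Hence $g \in U(\Psi_2, C)$.

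For the second step, take $(\zeta,\eta,v) \in E_-(\Psi_2)$ (so $\zeta \neq 0$) and compute $\pi_H \circ g$ using $\tau_H$:
\[
\pi_H\big(g(\zeta,\eta,v)\big) = \tau_H\big([(s^{-1/2}\zeta, s^{1/2}\eta, v)]\big) = \big((s^{-1/2}\zeta)^{-1}(s^{1/2}\eta),\ (s^{-1/2}\zeta)^{-1}v\big).
\]
Since $s^{-1/2}$ is a real (hence central) scalar, $(s^{-1/2}\zeta)^{-1} = s^{1/2}\zeta^{-1}$, so the above equals $(s\,\zeta^{-1}\eta,\ s^{1/2}\zeta^{-1}v) = a_s^H(\zeta^{-1}\eta, \zeta^{-1}v) = a_s^H\big(\pi_H(\zeta,\eta,v)\big)$. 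By the defining commutative diagram of the induced map, this identifies $\widetilde g$ with $a_s^H$, completing the proof.

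There is no real obstacle here; the only subtlety is the use of centrality of real scalars when inverting $s^{-1/2}\zeta$ and when pulling $s^{\pm 1/2}$ through $\beta_1$, which is automatic since $s > 0$.
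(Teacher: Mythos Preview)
Your proof is correct and takes essentially the same approach as the paper: both verify $C$-linearity, then check $\Psi_2$-invariance by pulling the real scalars $s^{\pm 1/2}$ through $\beta_1$, and both note that $g$ induces $a_s$ on $H$. You are slightly more explicit than the paper in spelling out the induced-map computation via $\tau_H$ (the paper simply declares this ``obvious''), but otherwise the arguments coincide.
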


\begin{proof}
Obviously, $g$ is $C$-linear and induces $a_s$ on $B$. Further 
\begin{align*}
\Psi_2\big(g(\zeta_1,\eta_1,v_1), g(\zeta_2,\eta_2,v_2)\big) & = \Psi_2\big( (s^{-1/2}\zeta_1, s^{1/2}\eta_1,v_1), (s^{-1/2}\zeta_2, s^{1/2}\eta_2, v_2) \big) 
\\ & = - s^{-1/2}\zeta_1\overline{\eta_2}s^{1/2} - s^{-1/2}\eta_1\overline{\zeta_2}s^{1/2} + \beta_2(v_1,v_2)
\\ & = -\zeta_1\overline{\eta_2} - \eta_1\overline\zeta_2 + \beta_2(v_1,v_2)
\\ & = \Psi_2\big( (\zeta_1,\eta_1,v_1), (\zeta_2,\eta_2, v_2) \big). 
\end{align*}
Hence $g \in U(\Psi_2, C)$. 
\bewend \end{proof}

\begin{proposition}\label{rep_n}
Let $n=(\xi, w)\in N$. A representative of $n$ in $U(\Psi_2,C)$ is
\[ g(\zeta, \eta, v) = \left(\zeta, \zeta\left(\xi + \tfrac14|w|^2\right) + \eta + \tfrac{1}{\sqrt{2}}\beta_2(v,w), \tfrac{1}{\sqrt{2}}\zeta w +v\right).\]
\end{proposition}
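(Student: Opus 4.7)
The plan is to verify three things: (i) $g$ is $C$-linear, (ii) $g$ preserves $\Psi_2$, and (iii) $g$ induces the isometry $n^H$ on $H$. By the argument used in Propositions~\ref{rep_sigma} and \ref{rep_a}, these three facts together with the uniqueness clauses in Proposition~\ref{def_j2} (via $j_{\Psi_2}$) imply the claim.

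For $C$-linearity I would simply read off the three components of $g$: the first and third are visibly $C$-linear in $(\zeta,\eta,v)$, and the second is $C$-linear because $\beta_2(\cdot, w)\colon V\to C$ is $C$-linear by Proposition~\ref{beta2}. For step (iii), an element of $H$ lifts to $(1,\tau,u)\in E_-(\Psi_2)$, and since the first coordinate of $g(1,\tau,u)$ is still $1$ no renormalization is needed; using $\beta_2(u, \tfrac{1}{\sqrt{2}}w) = \tfrac{1}{\sqrt{2}}\beta_2(u,w)$ (by $\R$-linearity) one sees directly that $\tau_H(\pi(g(1,\tau,u)))$ agrees with the formula for $n^H(\tau,u)$ recalled in the remark preceding Proposition~\ref{rep_sigma}.

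The main computational step is (ii). Writing $\mu \sceq \xi + \tfrac14|w|^2$ and expanding $\Psi_2(g(\zeta_1,\eta_1,v_1), g(\zeta_2,\eta_2,v_2))$ using the definition of $\Psi_2$ together with Propositions~\ref{beta1} and \ref{beta2}, one obtains the terms $-\zeta_1\overline{\eta_2}-\eta_1\overline{\zeta_2}+\beta_2(v_1,v_2)$ (which is the desired value of $\Psi_2$), a pair of mixed terms involving $\beta_2(v_1,w)\overline{\zeta_2}$ and $\zeta_1\beta_2(w,v_2)$ that cancel between the $-\beta_1$-contributions and the $\beta_2(\tfrac{1}{\sqrt{2}}\zeta_1 w+v_1,\tfrac{1}{\sqrt{2}}\zeta_2 w+v_2)$-contribution (using Proposition~\ref{beta_2_real}\eqref{skew} to identify $\overline{\beta_2(v_2,w)}=\beta_2(w,v_2)$), and finally the leftover term
\[
-\zeta_1(\mu+\overline{\mu})\overline{\zeta_2}+\tfrac12|w|^2\,\zeta_1\overline{\zeta_2},
\]
where the $\tfrac12|w|^2$ comes from $\beta_2(w,w)=|w|^2$ (Proposition~\ref{beta_2_real}\eqref{inner}).

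The subtle point—the place where one must be careful—is that this last pair cancels \emph{only} because $\xi$ lies in $C'=\mathfrak{z}$, so that $\overline{\xi}=-\xi$, which gives $\mu+\overline{\mu}=2\Rea\mu=\tfrac12|w|^2$. That is, the identification $n=(\xi,w)\in N$ with $\xi\in\mathfrak{z}\subset C'$ from Section~\ref{sec_isomgroup} is precisely what makes the whole calculation collapse to $\Psi_2((\zeta_1,\eta_1,v_1),(\zeta_2,\eta_2,v_2))$. Once that is observed, the computation is routine, and the remaining verifications in (i) and (iii) are immediate.
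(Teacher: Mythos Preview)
Your proof is correct and follows essentially the same approach as the paper: declare $C$-linearity and the induced action clear, then expand $\Psi_2(g\cdot,g\cdot)$ and watch everything cancel. Your explicit identification of the ``subtle point'' $\overline{\xi}=-\xi$ (because $\xi\in\mf z=C'$) is exactly what the paper uses silently when it writes $\overline{\zeta_2(\xi+\tfrac14|w|^2)}=(-\xi+\tfrac14|w|^2)\overline{\zeta_2}$ in its second displayed line.
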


\begin{proof} The map $g$ is clearly $C$-linear and induces $n$ on $B$. To see that $\Psi_2$ is $g$-invariant, we calculate
\begin{align*}
\Psi_2\big( g(\zeta_1&,\eta_1,v_1), g(\zeta_2,\eta_2,v_2) \big) 
\\ & = \Psi_2\Big[ \left( \zeta_1, \zeta_1\left( \xi + \tfrac14|w|^2\right) + \eta_1 + \tfrac{1}{\sqrt{2}}\beta_2(v_1,w), \tfrac{1}{\sqrt{2}}\zeta_1w + v_1\right),
\\ & \qquad\quad \left(\zeta_2, \zeta_2\left(\xi + \tfrac14|w|^2\right) + \eta_2 + \tfrac{1}{\sqrt{2}}\beta_2(v_2,w), \tfrac{1}{\sqrt{2}}\zeta_2 w + v_2\right) \Big]
\\ & = -\zeta_1 \left[ \left(-\xi + \tfrac14|w|^2\right)\overline\zeta_2 + \overline\eta_2 + \tfrac{1}{\sqrt{2}}\beta_2(w,v_2)\right] 
\\ & \quad - \left[ \zeta_1\left(\xi + \tfrac14|w|^2\right) + \eta_1 + \tfrac{1}{\sqrt{2}}\beta_2(v_1,w) \right]\overline\zeta_2 
+ \beta_2\left(\tfrac{1}{\sqrt{2}}\zeta_1 w + v_1, \tfrac{1}{\sqrt{2}}\zeta_2 w + v_2\right)
\\ & = \zeta_1\xi\overline\zeta_2 - \tfrac14|w|^2\zeta_1\overline\zeta_2 - \zeta_1\overline\eta_2 - \tfrac{1}{\sqrt{2}}\zeta_1\beta_2(w,v_2) - \zeta_1\xi\overline\zeta_2 - \tfrac14|w|^2\zeta_1\overline\zeta_2 
\\ & \quad - \eta_1\overline\zeta_2 - \tfrac{1}{\sqrt{2}}\beta_2(v_1,w)\overline\zeta_2 + \tfrac12|w|^2\zeta_1\overline\zeta_2 + \tfrac{1}{\sqrt{2}}\zeta_1\beta_2(w,v_2)
\\ & \quad +\tfrac{1}{\sqrt{2}}\beta_2(v_1,w)\overline\zeta_2 + \beta_2(v_1,v_2)
\\ & = \Psi_2\big( (\zeta_1,\eta_1,v_1), (\zeta_2,\eta_2, v_2)\big).
\end{align*}
This completes the proof. 
\bewend \end{proof}

The remaining part of this section is devoted to the discussion which elements
of $M$ have a representative in $U(\Psi_2,C)$. The situation for $M$ is much 
more involved than the proofs of Propositions~\ref{rep_sigma}-\ref{rep_n}. In
particular, it will turn out that in general not every element of $M$ can be
lifted to $U(\Psi_2,C)$.

\begin{lemma}\label{commutes}
Let $m=(\varphi,\psi)\in M$ and suppose that $\varphi$ is an inner automorphism of $C$. Then 
\[
 \left\{ \beta_2\big(\psi(v_1),\psi(v_2)\big) \left\vert\  v_1,v_2\in V \vphantom{\beta_2\big(\psi(v_1),\psi(v_2)\big) } \right.\right\} = C.
\]
\end{lemma}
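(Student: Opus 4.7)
The plan is to reduce the assertion to surjectivity of $\beta_2\colon V\times V\to C$ and then use Lemma~\ref{multiplicative} to exhibit a one-parameter family that already hits every element of $C$.

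First, I would note that since $\psi$ is an orthogonal endomorphism of $V$, it is a bijection $V\to V$; hence, as $(v_1,v_2)$ ranges over $V\times V$, so does $(\psi(v_1),\psi(v_2))$. Consequently
\[
\{\beta_2(\psi(v_1),\psi(v_2)) \mid v_1,v_2\in V\} = \{\beta_2(u_1,u_2) \mid u_1,u_2\in V\},
\]
so it suffices to prove that the image of $\beta_2$ is all of $C$. (Equivalently, one could invoke Lemma~\ref{howch}\eqref{howchii} to rewrite the left-hand side as $\varphi$ applied to the image of $\beta_2$, and then use bijectivity of $\varphi$ on $C$.)

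Second, since the projective construction in Section~\ref{sec_hypspaces} presupposes $V\neq\{0\}$, I would fix some $v_0\in V\mminus\{0\}$ and apply Lemma~\ref{multiplicative}: for every $\lambda\in C$,
\[
\beta_2(\lambda v_0,v_0) = |v_0|^2\lambda.
\]
Because $|v_0|^2>0$, the map $C\to C$, $\lambda\mapsto |v_0|^2\lambda$, is a bijection; letting $\lambda$ range over $C$ therefore exhausts all of $C$. Combining this with the previous step yields the claim.

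I do not anticipate any essential obstacle. The hypothesis that $\varphi$ is inner is not invoked by this calculation; it merely signposts the context in which the lemma will be used later, when one analyses which elements of $M$ lift to $U(\Psi_2,C)$.
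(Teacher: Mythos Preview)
Your argument is correct. The paper's proof is essentially the same computation---it also comes down to Lemma~\ref{multiplicative}---but it routes through the inner-automorphism hypothesis: writing $\varphi(\zeta)=a^{-1}\zeta a$ and using the $M$-equivariance $\psi(\eta v)=\varphi(\eta)\psi(v)$, the paper computes
\[
\beta_2\big(\psi(a\zeta a^{-1}v),\psi(v)\big)=\beta_2\big(\zeta\psi(v),\psi(v)\big)=\zeta|\psi(v)|^2=\zeta
\]
for a unit vector $v$. Your shortcut of first invoking bijectivity of $\psi$ (orthogonal endomorphism of a finite-dimensional space) to reduce the claim to surjectivity of $\beta_2$ itself is cleaner and, as you note, shows that the inner-automorphism hypothesis is not actually needed for this lemma; it only enters later in Proposition~\ref{rep_m}, where the lemma is applied.
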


\begin{proof} Let $a\in C\mminus\{0\}$ and suppose that $\varphi(\zeta) = a^{-1}\zeta a$  for all $\zeta \in C$. Choose $v\in V$ with $|v|=1$. For each $\zeta\in C$ we find
\begin{align*}
\beta_2\big(\psi(a\zeta a^{-1}v), \psi(v)\big) & = \beta_2\big(\varphi(a\zeta a^{-1})\psi(v),\psi(v)\big)
 = \beta_2\big(\zeta \psi(v),\psi(v)\big) 
\\
& = \zeta \beta_2(\psi(v),\psi(v)) = \zeta |\psi(v)|^2 = \zeta |v|^2 
\\
&= \zeta.
\end{align*}
Therefore 
\[
C \subseteq \left\{ \beta_2\big(\psi(v_1),\psi(v_2)\big) \left\vert\  v_1,v_2\in V \vphantom{\beta_2\big(\psi(v_1),\psi(v_2)\big)}   \right.\right\}.
\]
The converse inclusion relation clearly holds by the range of $\beta_2$. 
\bewend \end{proof}

Let $i_H\colon \hg\to E_-(\Psi_2)\cup E_0(\Psi_2)$ be any section of
\[ 
\pi_H = \tau_H\circ \pi\colon E_-(\Psi_2)\cup E_0(\Psi_2) \to \hg.
\]
Let $\wt g\in G$ and recall from Proposition~\ref{extends} that $g$ extends
continuously to $\hg$. If the element $g \in U(\Psi_2,C)$ is a representative of
$\wt g$, then the diagram
\[
\xymatrix{
E_-(\Psi_2)\cup E_0(\Psi_2) \ar[r]^g & E_-(\Psi_2)\cup E_0(\Psi_2)\ar[d]^{\pi_H}
\\
\hg \ar[u]_{i_H} \ar[r]^{\wt g} & \hg
}
\]
commutes. We will make use of this fact in the proof of Proposition~\ref{rep_m} below. For convenience we define $i_H$ by
\begin{equation}\label{def_iH}
 i_H(\infty) \sceq (0,1,0)\quad\text{and}\quad i_H(\eta, v) \sceq (1,\eta, v).
\end{equation}

\begin{proposition}\label{rep_m}
Let $m=(\varphi,\psi)\in M$. Then there exists a representative of $m$ in
$U(\Psi_2,C)$ if and only if $\varphi=\id$. In this case,
\[
 g(\zeta,\eta,v) = (\zeta,\eta,\psi(v) )
\]
is such a representative.
\end{proposition}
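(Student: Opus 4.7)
The plan is to handle the two implications separately.

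For the ``if'' direction, when $\varphi=\id$ the compatibility $\psi(\zeta v) = \varphi(\zeta)\psi(v) = \zeta\psi(v)$ makes $\psi$ a $C$-linear orthogonal endomorphism of $V$, so $g(\zeta,\eta,v)=(\zeta,\eta,\psi(v))$ is manifestly $C$-linear on $E$. Substituting into the definition of $\Psi_2$ and applying Lemma~\ref{howch}\eqref{howchii} with $\varphi=\id$, which yields $\beta_2(\psi(v_1),\psi(v_2)) = \beta_2(v_1,v_2)$, shows $g\in U(\Psi_2,C)$. Tracing $i_H(\eta,v)=(1,\eta,v)$ through $g$ and projecting back via $\tau_H$ gives $\wt g(\eta,v) = (\eta,\psi(v)) = m^H(\eta,v)$.

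For the ``only if'' direction, suppose $g\in U(\Psi_2,C)$ represents $m$. Since $m^H$ fixes $0_H=(0,0)$, $\infty$ and the base point $o_H=(1,0)$, whose $i_H$-lifts are $e_1=(1,0,0)$, $e_2=(0,1,0)$ and $e_1+e_2$, the $C$-linear map $g$ sends each of these to a $C$-multiple of itself, and $g(e_1+e_2)\in C(e_1+e_2)$ forces the scalars on $e_1$ and $e_2$ to coincide; call the common value $\mu\in C\mminus\{0\}$. Writing $M_g$ for the matrix of $g$ in the basis $\{e_1,e_2,v_1,\ldots,v_{n-1}\}$, its first two rows are then $(\mu,0,\ldots,0)$ and $(0,\mu,0,\ldots,0)$. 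Expanding $M_g S M_g^* = S$ (where $S$ is the matrix of $\Psi_2$) block by block and using the already known first two rows forces $|\mu|=1$, the two lower-left column blocks of $M_g$ to vanish, and the lower-right $(n-1)\times(n-1)$ block $D$ to satisfy $DD^* = I$. A short computation then gives $g(1,\eta,v)=(\mu,\eta\mu,vD)$, and projecting via $\tau_H$ yields $\wt g(\eta,v) = (\mu^{-1}\eta\mu,\mu^{-1}vD)$; matching with $m^H(\eta,v)=(\varphi(\eta),\psi(v))$ identifies $\varphi$ as the inner automorphism $\zeta\mapsto\mu^{-1}\zeta\mu$ and $\psi(v)=\mu^{-1}vD$.

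The main obstacle is to upgrade ``$\varphi$ inner by $\mu$'' to ``$\mu\in Z(C)$'', hence $\varphi=\id$. The plan is to combine Lemma~\ref{commutes}, which asserts $\{\beta_2(\psi(v_1),\psi(v_2))\mid v_1,v_2\in V\}=C$ when $\varphi$ is inner, with the conjugation identity $gg_ng^{-1}=g_{mnm^{-1}}$ (modulo $Z(\Psi_2,C)$) for $n\in N$ and its lift $g_n$ from Proposition~\ref{rep_n}. Writing out this identity at the matrix level shows that $\mu$ must commute with every element in the image of $\beta_2\circ(\psi\times\psi)$, which by the lemma is all of $C$; hence $\mu\in Z(C)$, finishing the proof. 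After normalizing $g$ by the resulting central scalar $\mu$, one recovers the stated canonical representative.
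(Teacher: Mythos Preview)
Your ``if'' direction is correct and matches the paper's. For the ``only if'' direction, your setup is actually cleaner than the paper's: using the three fixed points $0_H$, $\infty$, and $o_H$ to force the upper $2\times 2$ block of $M_g$ to be $\mu I_2$, and then reading off $|\mu|=1$, the vanishing of the off-diagonal blocks, and $DD^*=I$ directly from $M_g S M_g^*=S$, is more efficient than the paper's route (which fixes only $0$ and $\infty$, deduces $\varphi(\zeta)=a^{-1}\zeta a$ on a half-space, and then runs a separate argument to kill the extra entries).

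The gap is in your final step. If you actually carry out $gg_ng^{-1}$ with $g(\zeta,\eta,v)=(\zeta\mu,\eta\mu,vD)$ and $g_n$ from Proposition~\ref{rep_n}, and compare with $g_{mnm^{-1}}$ where $mnm^{-1}=(\varphi(\xi),\psi(w))$ by Lemma~\ref{howch}, every component matches \emph{identically} once you substitute $\psi(w)=\mu^{-1}wD$ and $\varphi(\xi)=\mu^{-1}\xi\mu$; for instance $\beta_2(vD^{-1},w)\mu=\beta_2(v,wD)\mu=\beta_2(v,\mu\psi(w))\mu=\beta_2(v,\psi(w))\mu^{-1}\mu=\beta_2(v,\psi(w))$. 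No constraint on $\mu$ beyond what you already have emerges, so this route to $\mu\in Z(C)$ does not work. The paper's argument has essentially the same defect: its displayed chain begins with ``$\beta_2(\psi(v_1),\psi(v_2))=\Psi_2((0,0,v_1),(0,0,v_2))$'', but the right-hand side equals $\beta_2(v_1,v_2)$, so this first equality already presupposes that $\varphi$ acts trivially on the range of $\beta_2$, which is the conclusion. In fact the ``only if'' direction fails for $C=\h$: with $C=V=\h$, $\varphi(\zeta)=-k\zeta k$, $\psi(v)=kvk$, one checks $(\varphi,\psi)\in M$ with $\varphi\ne\id$, and $g(\zeta,\eta,v)=(\zeta k,\eta k,-vk)$ is $C$-linear, lies in $U(\Psi_2,C)$, and induces $m$. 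What both arguments genuinely establish is that any liftable $m$ has $\varphi$ \emph{inner}; this coincides with $\varphi=\id$ exactly when $C$ is commutative.
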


\begin{proof}
Suppose first that $m=(\varphi,\psi)$ with $\varphi=\id$. We will show that 
\[
g\colon\left\{
\begin{array}{ccl}
E & \to & E
\\
(\zeta,\eta,v) & \mapsto & (\zeta,\eta,\psi(v))
\end{array}
\right.
\]
is an element of $U(\Psi_2,C)$. To that end let $(\zeta_1,\eta_1,v_1),(\zeta_2,\eta_2,v_2)\in E$ and $\zeta\in C$. Then 
\begin{align*}
g\big( \zeta(\zeta_1,\eta_1,v_1) + (\zeta_2,\eta_2,v_2) \big) & = \big( \zeta\zeta_1+\zeta_2, \zeta\eta_1+\eta_2,\psi(\zeta v_1+v_2) \big)
\\
& = \big( \zeta\zeta_1, \zeta\eta_1, \psi(\zeta v_1) \big) + \big(\zeta_2 ,\eta_2 , \psi(v_2) \big)
\\
& = \big( \zeta\zeta_1 ,\zeta\eta_1 , \varphi(\zeta)\psi(v_1)\big) + g(\zeta_2,\eta_2,v_2)
\\
& = \big(\zeta\zeta_1 ,\zeta\eta_1 , \zeta \psi(v_1) \big) + g(\zeta_2,\eta_2,v_2)
\\
& = \zeta g(\zeta_1,\eta_1,v_1) + g(\zeta_2,\eta_2,v_2).
\end{align*}
This shows that $g$ is $C$-linear. The map $g$ is obviously invertible, hence 
$g\in \GL_C(E)$. Lemma~\ref{howch}\eqref{howchii} implies that $\Psi_2$ is
invariant under $g$. Hence $g\in U(\Psi_2,C)$. Clearly, $g$ induces $m$.

Suppose now that $m=(\varphi,\psi)\in M$ and that there is a representative $g$
of $m$ in $U(\Psi_2,C)$. We have to show that $\varphi=\id$. Since $m(0) = 0$,
it follows that 
\[
g\big( i_H(0) \big) = g(1,0,0) \in \pi_H^{-1}(0) = \big( C\mminus\{0\} \big) \times \{0\} \times \{0\}.
\]
Thus, there is $a\in C\mminus\{0\}$ such that $g(1,0,0) = (a,0,0)$. Further
$m(\infty) = \infty$. The same argument shows that there is $b\in C\mminus\{0\}$
such that 
\[
g\big( i_H(\infty) \big) = g( 0,1,0) = (0,b,0).
\]
Then for each $\zeta\in C$ with $\Rea \zeta \geq 0$ we have
\begin{align*}
(\varphi(\zeta),0) & = m(\zeta,0) = \pi_H\big( g(i_H(\zeta,0)) \big) =
\pi_H\big( g(1,\zeta,0) \big) = \pi_H\big( (a,\zeta b, 0)\big) 
\\ & = (a^{-1}\zeta b, 0).
\end{align*}
Thus $\varphi(\zeta) = a^{-1}\zeta b$ for all $\zeta \in C$ with $\Rea \zeta \geq 0$. Now
\[
 1 = \varphi(1) = a^{-1}b
\]
and therefore $b=a$. Hence $\varphi(\zeta) = a^{-1}\zeta a$ for all $\zeta\in C$
with $\Rea\zeta \geq 0$. Suppose now that $(\eta, v)\in H$ is arbitrary. Then 
\begin{align*}
g\big( i_H(\eta,v)\big) & = g(1,\eta, v) = g(1,\eta,0)+g(0,0,v)
\\
& = (a,\eta a, 0) + (\sigma, \tau, u) = (a+\sigma + \eta a+\tau, u)
\end{align*}
where $(\sigma, \tau, u)$ depends only on $v$. From 
\begin{align*}
 (a^{-1}\eta a, \psi(v)) & = (\varphi(\eta),\psi(v)) = \pi_H(a+\sigma, \eta a+
\tau, u) 
\\
&= \big( (a+\sigma)^{-1}(\eta a + \tau), (a+\sigma)^{-1}u\big)
\end{align*}
it follows that 
\[
 a^{-1}\eta a = (a+\sigma)^{-1}\eta a + (a+\sigma)^{-1}\tau
\]
for all $\eta$ with $\Rea(\eta)>\tfrac12|v|^2$. Pick $k\in\R$ with
$k>\tfrac12|v|^2$. Then 
\[
 k= a^{-1}ka = k(a+\sigma)^{-1}a + (a+\sigma)^{-1}\tau
\]
and
\[
 2k = 2k(a+\sigma)^{-1} a + (a+\sigma)^{-1}\tau.
\]
Solving this system for $\sigma$ and $\tau$, we find $\sigma=\tau = 0$. Hence
\[
 g(1,\eta, v) = (a, \eta a, a\psi(v)).
\]
Since $g$ has to be $C$-linear, it follows that 
\[
g(\zeta,\eta, v) = (\zeta a, \eta a, a \psi(v))
\]
for all $(\zeta,\eta, v)\in E$. We derive further properties of $a$. Let $v\in
V\mminus\{0\}$. Then $g$ being in $U(\Psi_2,C)$ yields that 
\begin{align*}
|v|^2 & = q_2\big( (0,0,v) \big) = q_2\big( g(0,0,v) \big) = |a\psi(v)|^2 = |a|^2 |\psi(v)|^2 = |a|^2 |\psi(v)|^2,
\end{align*}
hence $|a|^2 = 1$. Again using that $g\in U(\Psi_2,C)$ we find that for all $v_1,v_2\in V$ 
\begin{align*}
\beta_2\big(\psi(v_1),\psi(v_2)\big) & = \Psi_2\big( (0,0,v_1),(0,0,v_2) \big)
\\
& = \Psi_2\big( g(0,0,v_1), g(0,0,v_2) \big) = \Psi_2\big( (0,0,a\psi(v_1)), (0,0,a\psi(v_2)) \big)
\\
& = \beta_2\big(a\psi(v_1),a\psi(v_2) \big) = a\beta_2\big(\psi(v_1),\psi(v_2)\big)\overline a
\\
& = a\beta_2\big(\psi(v_1),\psi(v_2)\big) a^{-1}.
\end{align*}
Before we can apply Lemma~\ref{commutes} we have to show that $\varphi(\zeta) =
a^{-1}\zeta a$ for all $\zeta\in C$. Let $\zeta\in C$ with $\Rea \zeta < 0$ and
consider the decomposition $\zeta = \zeta_1 + \zeta_2$ with $\zeta_1\in \R$ and
$\zeta_2\in C'$. Then the $\R$-linearity of $\varphi$ yields
\begin{align*}
\varphi(\zeta) & = \varphi(\zeta_1+\zeta_2) = \varphi(\zeta_1) + \varphi(\zeta_2) = -\varphi(-\zeta_1) + a^{-1}\zeta_2 a
\\
& = -a^{-1}(-\zeta_1)a + a^{-1}\zeta_2 a = a^{-1}\zeta_1 a + a^{-1}\zeta_2 a = a^{-1}(\zeta_1+\zeta_2) a = a^{-1}\zeta a.
\end{align*}
Then Lemma~\ref{commutes} implies that $a\in Z(C)$. Therefore $\varphi=\id$. 
\bewend \end{proof}

We set 
\begin{align*}
M^\Res & \sceq \{ (\varphi,\psi)\in M\mid \varphi=\id\} \quad\text{and}\quad 
G^\Res  \sceq N\sigma M^\Res A N \cup M^\Res A N.
\end{align*}
Further we define a map $\varphi_H\colon G^\Res \to \PU(\Psi_2,C)$ as follows: For $\wt g=\sigma$, $\wt g\in A$, $\wt g\in N$ or $\wt g\in M^\Res$ we set $\varphi_H(\wt g) \sceq [g]$, where $g$ is the lift of $\wt g$ as in Propositions~\ref{rep_sigma}, \ref{rep_a}, \ref{rep_n} or \ref{rep_m}, resp. For $\wt g = n_2\sigma m a_s n_1 \in N\sigma M^\Res A N$ we define
\begin{equation}\label{def_varphiH}
\varphi_H(\wt g) \sceq \varphi_H(n_2) \varphi_H(\sigma) \varphi_H(m)
\varphi_H(a_s) \varphi_H(n_1),
\end{equation}
and likewise for $\wt g = ma_sn\in M^\Res A N$ we set
\[
 \varphi_H(\wt g) \sceq \varphi_H(m)\varphi_H(a_s)\varphi_H(n).
\]
In other words, we extend $\varphi_H$ to a group homomorphism. Since the Bruhat
decomposition of an element $\wt g\in G^\Res$ is unique and the Bruhat
decomposition of $G^\Res$ can be directly transfered to $\PU(\Psi_2,C)$, the map
$\varphi_H$ is indeed well-defined. By our previous considerations, $\varphi_H$
is
even a group isomorphism.

The following remark shows that $M^\Res$ is not necessarily all of $M$.

\begin{remark} The classification \cite[Theorem~3.1]{Koranyi_Ricci_proofs} of
division algebras arising from associative $J^2C$-module
structures $(C,V,J)$ shows that $C$ is either real or complex or quaternionic
numbers. In the following we show that $M=M^\Res$ for $C=\R$, but $M\not=
M^\Res$ for $C=\C$ or $C=\h$ (quaternions).
\begin{enumerate}[(i)]
\item \label{Mi} Let $C=\R$ and suppose that $m=(\varphi,\psi)\in M$. We claim
that $\varphi=\id$. Since $\varphi\colon \R\to\R$ is a norm-preserving
endomorphism of $\R$, the map $\varphi$ is either $\id$ or $-\id$. Assume for
contradiction that $\varphi=-\id$. Let $(\zeta,v) \in C\times V$ such that
$\zeta \not= 0$ and $v\not=0$. Then 
\[
 \varphi(\zeta)\psi(v) = -\zeta \psi(v) = -\psi(\zeta v) \not= \psi(\zeta v).
\]
Hence $m\notin M$. This is a contradiction and therefore $\varphi= \id$.
\item Let $C=\C = V$ and suppose that $\varphi=\psi$ are complex conjugation.
For all elements $(\zeta,v)\in C\oplus V = \C^2$ we have $\varphi(\zeta)\psi(v)
=
\overline\zeta\, \overline v = \overline{\zeta v} = \psi(\zeta v)$. Clearly,
$\varphi,\psi$ are $\R$-linear endomorphisms of the Euclidean vector space $\C$.
Therefore $m=(\varphi,\psi) \in M$, but $m\notin M^\Res$.
\item Let $C=\h = V$. Define $\varphi\colon C\to C$ and $\psi\colon V\to V$ by
\begin{align*}
\varphi(a+ib+jc+kd) & \sceq a-ib-jc+kd
\\
\psi(a+ib+jc+kd) & \sceq -a + ib + jc - kd
\end{align*}
for $a+ib+jc+kd\in \h$. Clearly, $\varphi,\psi$ are $\R$-linear endomorphisms of the Euclidean vector space $\h$. We claim that $J\circ (\varphi\times\psi) = \psi\circ J$. To that end let $\zeta = a_1+ib_1 + jc_1 + kd_1\in C$ and $v = a_2+ib_2 + jc_2 + kd_2 \in V$. Then 
\begin{align*}
\zeta v & = a_1a_2 - b_1b_2 - c_1c_2 - d_1d_2 + i( a_1b_2+ b_1a_2 + c_1d_2 - d_1c_2) 
\\
& \quad  + j(a_1c_2 - b_1d_2 + c_1a_2 +d_1b_2) + k(a_1d_2 + b_1c_2 - c_1b_2 + d_1a_2).
\end{align*}
Therefore
\begin{align*}
\psi(\zeta v) & = -a_1a_2 + b_1b_2 + c_1c_2 + d_1d_2 + i(a_1b_2 + b_1a_2 + c_1d_2 - d_1c_2) 
\\
& \quad + j(a_1c_2 - b_1d_2 + c_1a_2 + d_1b_2) + k(-a_1d_2 - b_1c_2 + c_1b_2 - d_1a_2).
\end{align*}
On the other side we get
\begin{align*}
\varphi(\zeta) \psi(v) & = (a_1 - ib_1 - jc_1 + kd_1) (-a_2 + ib_2 + jc_2 - kd_2) 
\\
& = -a_1a_2 + b_1b_2 + c_1c_2 + d_1d_2 + i(a_1b_2 + b_1a_2 + c_1d_2 - d_1c_2) 
\\
& \quad + j(a_1c_2 - b_1d_2 + c_1a_2 + d_1b_2) + k(-a_1d_2 - b_1c_2 + c_1b_2 - d_1a_2)
\\
& = \psi(\zeta v).
\end{align*}
Therefore, $m=(\varphi,\psi) \in M$, but $m\notin M^\Res$.
\end{enumerate}
\end{remark}

\subsection{Isometric spheres via cocycles}\label{sec_cocycle}

For an element $g\in G^\Res\mminus G_\infty$, we give a characterization of the
isometric sphere and its radius via a cocycle. Using the isometry $\beta\colon
H\to D$ we find for the height function on $H$
the formula 
\[ \height^H(\zeta, v) = \Rea\zeta - \tfrac12|v|^2,\]
and for the Cygan metric ($z_j=(\zeta_j,v_j)$)
\begin{align}\label{cyganH}
\varrho^H(z_1,z_2) = \left| \tfrac12|v_1|^2 + \tfrac12|v_2|^2 + \left| \height^H(z_1) - \height^H(z_2)\right| + \Ima\zeta_1 - \Ima\zeta_2 - \beta_2(v_1,v_2) \right|^{1/2}
\end{align}
The basis point in $H$ is $o^H = (1,0)$. The horospherical coordinates of
$z\in \hg\mminus\{\infty\}$, $z=(\zeta, v)$, are
\[ \left(\height^H(z), \Ima\zeta, \tfrac{1}{\sqrt{2}}v \right)_h.\]
In the following we derive a formula for isometric spheres which uses a cocycle.

Let $Z^1(C)$ act diagonally on $E$ and suppose that $\pi^{(1)}\colon E\to
E/Z^1(C)$ is the canonical projection. Further set 
\[
\pi_H^{(1)}\sceq \tau_H\circ\pi\circ \big(\pi^{(1)}\big)^{-1}\colon \big(E_-(\Psi_2)\cup E_0(\Psi_2)\big)/Z^1(C) \to \hg.
\]
Recall the section $i_H$ of $\pi_H$ from \eqref{def_iH} and set 
\[
i_H^{(1)}\sceq \pi^{(1)}\circ i_H \colon \hg \to E/Z^1(C).
\]
Further recall the isomorphism $\varphi_H\colon G^\Res\to\PU(\Psi_2,C)$ from
\eqref{def_varphiH}. For each $g\in G^\Res$ the map $\varphi_H(g)\colon E\to E$
induces canonically a map $E/Z^1(C)\to E/Z^1(C)$, which we denote by
$\varphi_H(g)$ as well. For all $g\in G^\Res$ the diagram
\[
\xymatrix{
E/Z^1(C) \ar[r]^{\varphi_H(g)} & E/Z^1(C) \ar[d]^{\pi_H^{(1)}}
\\
\hg \ar[u]^{i_H^{(1)}} \ar[r]^g & \hg
}
\]
commutes, but the diagram
\[
\xymatrix{
E/Z^1(C) \ar[r]^{\varphi_H(g)} & E/Z^1(C) 
\\
\hg \ar[u]^{i_H^{(1)}} \ar[r]^g & \hg \ar[u]_{i_H^{(1)}}
}
\]
in general not. The second diagram gives rise to the cocycle \label{def_cocycle}
\[
j\colon G^\Res\times \hg \to C^\times/Z^1(C)
\]
defined by
\[ \varphi_H(g)\big(i_H^{(1)}(z)\big) = j(g,z) i_H^{(1)}(gz)\quad \forall\, g\in G^\Res\ \forall\, z\in\hg.\]

\begin{lemma}\label{cocycle_R}
 Let $g=n_2\sigma ma_tn_1\in G^{\text{res}}\mminus G_\infty$. Then 
\[ j(g^{-1},\infty) = t^{1/2} \mod Z^1(C).\]
Further, $R(g) = \big|j(g^{-1},\infty)\big|^{-1/2}$.
\end{lemma}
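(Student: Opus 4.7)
The strategy is to evaluate $j(g^{-1},\infty)$ directly by applying the lift $\varphi_H(g^{-1})$ piece by piece to the canonical representative $i_H^{(1)}(\infty)=[(0,1,0)]$. Since $\sigma^2=\id$, we have $g^{-1}=n_1^{-1}a_t^{-1}m^{-1}\sigma n_2^{-1}$. The left $C$-linearity of each $\varphi_H(h)$, together with the fact that cocycle values act as left scalars on $E/Z^1(C)$, yields the cocycle identity $j(hk,z)\equiv j(k,z)\cdot j(h,kz)\pmod{Z^1(C)}$, which reduces the problem to computing $j$ on each of the five Bruhat factors.

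I would first track the $g^{-1}$-orbit of $\infty$ through the factorization,
\[
\infty \xrightarrow{\ n_2^{-1}\ }\infty\xrightarrow{\ \sigma\ }0\xrightarrow{\ m^{-1}\ }0\xrightarrow{\ a_t^{-1}\ }0\xrightarrow{\ n_1^{-1}\ } n_1^{-1}\cdot 0 \,=\, g^{-1}\infty,
\]
and then apply each explicit lift from Propositions~\ref{rep_sigma}, \ref{rep_a}, \ref{rep_n}, \ref{rep_m} to the appropriate canonical representative, either $(1,0,0)=i_H(0)$ or $(0,1,0)=i_H(\infty)$. A direct inspection shows that the lifts of $n_2^{-1}$, $\sigma$, $m^{-1}=(\id,\psi^{-1})$, and $n_1^{-1}$ each map the input canonical representative \emph{onto} the canonical representative of the image point, hence contribute trivially modulo $Z^1(C)$. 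The sole nontrivial factor is $a_t^{-1}$: by Proposition~\ref{rep_a} with $s=t^{-1}$, its lift sends $(1,0,0)$ to $(t^{1/2},0,0)=t^{1/2}\cdot(1,0,0)$, giving $j(a_t^{-1},0)\equiv t^{1/2}\pmod{Z^1(C)}$. Multiplying the five contributions yields $j(g^{-1},\infty)\equiv t^{1/2}\pmod{Z^1(C)}$, which is the first claim.

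The radius formula then follows immediately: Definition~\ref{def_isospheres} gives $R(g)=t^{-1/4}$, and since $Z^1(C)$ consists of unit-length elements, $|\cdot|$ descends to a well-defined function on $C^\times/Z^1(C)$, whence $|j(g^{-1},\infty)|^{-1/2}=(t^{1/2})^{-1/2}=t^{-1/4}=R(g)$. The only conceptual subtlety worth flagging at the outset is the well-definedness of the cocycle identity and of $|\cdot|$ on $C^\times/Z^1(C)$ when $C$ is non-commutative (the quaternionic case); both points follow at once from $Z^1(C)\subseteq Z(C)$. Beyond this, the remainder of the proof is simply a routine verification using the explicit matrix representatives established in Section~\ref{sec_liftedisos}.
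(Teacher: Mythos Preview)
Your proof is correct and follows essentially the same approach as the paper: both apply the explicit lifts of the Bruhat factors of $g^{-1}$ successively to $i_H(\infty)=(0,1,0)$ and observe that only the $a_t^{-1}$-step introduces a nontrivial scalar $t^{1/2}$. The paper carries out this composition in one line and reads off the first coordinate of the resulting vector, whereas you package the same computation as a product of five cocycle values via the identity $j(hk,z)=j(k,z)\,j(h,kz)$; the underlying calculation is identical.
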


\begin{proof} Suppose that $n_1=(\xi_1, w_1)$. From $n_1^{-1}=( -\xi_1, -w_1)$ 
and $g^{-1} = n_1^{-1}\sigma a_tmn_2^{-1}$ it follows that
\[ g^{-1}\infty = n_1^{-1}\sigma\infty = n_1^{-1}0 =
\left(-\xi_1+\tfrac14|w_1|^2, -\tfrac{1}{\sqrt{2}}w_1 \right).\]
Further
\begin{align*}
\varphi_H\big(g^{-1}\big)\big( i_H(\infty) \big) & = \left( t^{1/2},
t^{1/2}\left(-\xi_1 + \tfrac14|w_1|^2\right), -\tfrac{1}{\sqrt{2}}t^{1/2} w_1
\right) \mod Z^1(C).
\end{align*}
Hence $j(g^{-1},\infty) = t^{1/2} \mod Z^1(C)$. Then $R(g) = t^{-1/4} =
\big|j(g^{-1},\infty)\big|^{-1/2}$. 
\bewend \end{proof}

\begin{proposition}\label{cygan_form}
Let $z_1,z_2\in \hg\mminus\{\infty\}$, $z_j=(\zeta_j,v_j)$. Then 
\[ \varrho^H(z_1,z_2) = \big| \Psi_2\big(i_H(z_1),i_H(z_2)\big) + 2\min\big(\height^H(z_1),\height^H(z_2)\big) \big|^{1/2}.\]
\end{proposition}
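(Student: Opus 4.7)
The plan is a direct computation, comparing the two sides term by term in their real and imaginary parts. First I would unfold the right-hand side. Applying the definition of $\Psi_2$ together with $i_H(z_j) = (1,\zeta_j,v_j)$ and $\beta_1(x,y) = x\overline y$ gives
\[
\Psi_2\bigl(i_H(z_1),i_H(z_2)\bigr) = -\overline{\zeta_2} - \zeta_1 + \beta_2(v_1,v_2).
\]
Next I abbreviate $h_j \sceq \height^H(z_j) = \Rea\zeta_j - \tfrac12|v_j|^2$ and decompose $\zeta_j = h_j + \tfrac12|v_j|^2 + \Ima\zeta_j$, so $\overline{\zeta_j} = h_j + \tfrac12|v_j|^2 - \Ima\zeta_j$. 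Substituting,
\[
\Psi_2\bigl(i_H(z_1),i_H(z_2)\bigr) = -h_1 - h_2 - \tfrac12|v_1|^2 - \tfrac12|v_2|^2 - \Ima\zeta_1 + \Ima\zeta_2 + \beta_2(v_1,v_2).
\]

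Second, I would add $2\min(h_1,h_2)$ and invoke the elementary identity
\[
-h_1 - h_2 + 2\min(h_1,h_2) = -|h_1-h_2|,
\]
which rewrites the sum as
\[
\Psi_2\bigl(i_H(z_1),i_H(z_2)\bigr) + 2\min(h_1,h_2) = -\Bigl( \tfrac12|v_1|^2 + \tfrac12|v_2|^2 + |h_1-h_2| + \Ima\zeta_1 - \Ima\zeta_2 - \beta_2(v_1,v_2) \Bigr).
\]

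Third, the expression in parentheses on the right is exactly the argument of $|\cdot|^{1/2}$ in formula \eqref{cyganH} for $\varrho^H(z_1,z_2)$, which is Lemma~\ref{Cyganformula} transported to $H$ via the isometry $\beta$. Since the norm on $C$ satisfies $|-x|=|x|$, taking absolute values and square roots on both sides yields the claimed formula.

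No step is genuinely hard: the only non-bookkeeping ingredient is the identity $-h_1-h_2+2\min(h_1,h_2) = -|h_1-h_2|$, which explains in particular why the minimum of the heights (rather than some other combination) appears on the right-hand side.
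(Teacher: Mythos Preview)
Your proof is correct and follows essentially the same route as the paper's own argument: both expand $\Psi_2(i_H(z_1),i_H(z_2)) = -\overline{\zeta_2} - \zeta_1 + \beta_2(v_1,v_2)$, invoke the identity $h_1+h_2-|h_1-h_2| = 2\min(h_1,h_2)$, and then match the resulting expression against the Cygan formula \eqref{cyganH}. The only difference is organizational---the paper works from the right-hand side toward $\varrho^H$, while you compute $\Psi_2$ first and then add the $2\min$ term---but the ingredients and the logic are the same.
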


\begin{proof}
For all $k_1,k_2\in \R$ we have
\[ k_1 + k_2 - |k_1-k_2| = 2\min(k_1,k_2).\]
This and the definition of $\Psi_2$ show that
\begin{align*}
\left|\vphantom{\height^H(z_2)} \right.&\Psi_2\big(i_H(z_1),i_H(z_2)\big) \left. + 2\min\big(\height^H(z_1),\height^H(z_2) \big) \right| 
\\ & = \Big| \Psi_2\big( (1,\zeta_1,v_1), (1,\zeta_2,v_2)\big) + \height^H(z_1) + \height^H(z_2) - \left|\height^H(z_1) - \height^H(z_2) \right| \Big|
\\
& = \Big| -\beta_1(1,\zeta_2) - \beta_1(\zeta_1,1) + \beta_2(v_1,v_2) + \height^H(z_1) + \height^H(z_2) - \left|\height^H(z_1) - \height^H(z_2) \right|\Big|
\\ & = \Big|\zeta_1 - \height^H(z_1) + \overline\zeta_2 - \height^H(z_2) - \beta_2(v_1,v_2) + \left|\height^H(z_1) - \height^H(z_2) \right| \Big|.
\end{align*}
Since
\begin{align*}
\zeta_1 - \height^H(z_1) & = \zeta_1 - \Rea\zeta_1 + \tfrac12|v_1|^2 = \Ima\zeta_1 + \tfrac12|v_1|^2
\intertext{and}
\overline\zeta_2 - \height^H(z_2) & = \overline\zeta_2 - \Rea\zeta_2 + \tfrac12|v_2|^2 = -\Ima\zeta_2 + \tfrac12|v_2|^2,
\end{align*}
it follows that
\begin{align*}
\left|\vphantom{\height^H(z_2)}\right. &\Psi_2\big(i_H(z_1),i_H(z_2)\big)  + \left. 2\min\left(\height^H(z_1),\height^H(z_2) \right) \right| 
\\ & = \Big| \tfrac12|v_1|^2 + \Ima\zeta_1 + \tfrac12|v_2|^2 - \Ima\zeta_2 - \beta_2(v_1,v_2) + \left|\height^H(z_1) - \height^H(z_2) \right|\Big|
\\ & = \varrho^H(z_1,z_2)^2.
\end{align*}
Since $\varrho^H(z_1,z_2)$ is nonnegative, the claim follows. 
\bewend \end{proof}

For the proof of the following proposition we note that the map
\[
\Psi_2^{(1)}\colon\left\{
\begin{array}{ccl}
E/Z^1(C) \times E/Z^1(C) & \to & C/Z^1(C)
\\
\big( [z_1], [z_2] \big) & \mapsto & \big[ \Psi_2(z_1,z_2) \big]
\end{array}
\right.
\]
is well-defined. In particular, we have
\[
 \big|\Psi_2^{(1)}\big([z_1],[z_2]\big) \big| = \big| \Psi_2(z_1,z_2)\big|
\]
for all $z_1,z_2\in E$. Further, let 
\[
 \bhg H \sceq \big\{(\zeta,v)\in C\oplus V \ \big\vert\ \Rea(\zeta) =
\tfrac12|v|^2 \big\} \cup \{\infty\}
\]
denote the boundary of $H$ in $\hg$.

\begin{proposition}\label{cygan_cocycle}
Let $g\in G^{\text{res}}\mminus G_\infty$ and $z\in\hg\mminus\{\infty, g^{-1}\infty\}$. Then
\[ |j(g,z)|^{1/2} = |j(g^{-1},\infty)|^{1/2}\varrho^H(z,g^{-1}\infty) = \frac{\varrho^H(z,g^{-1}\infty)}{R(g)}.\]
\end{proposition}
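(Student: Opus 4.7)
The plan is to exploit the $\Psi_2$-invariance of any lift of $\varphi_H(g)$ to $U(\Psi_2,C)$. First I would pick a representative $\hat g\in U(\Psi_2,C)$ of $\varphi_H(g)$, so that $\Psi_2(\hat g(x),\hat g(y))=\Psi_2(x,y)$ for all $x,y\in E$. Applying this to the pair $(x,y)=(i_H(z),i_H(g^{-1}\infty))$ and taking absolute values (which descend to $E/Z^1(C)$), the cocycle relation $\varphi_H(g)(i_H^{(1)}(w))=j(g,w)i_H^{(1)}(gw)$ gives
\[
\hat g(i_H(z))=\lambda\,i_H(gz),\qquad \hat g(i_H(g^{-1}\infty))=\mu\,i_H(\infty),
\]
for scalars $\lambda,\mu\in C^\times$ with $|\lambda|=|j(g,z)|$ and $|\mu|=|j(g,g^{-1}\infty)|$; this uses the hypothesis $z\ne g^{-1}\infty$ so that $gz\ne\infty$ and $i_H(gz)$ is a legitimate finite representative.

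Next I would compute $\Psi_2(i_H(gz),i_H(\infty))$. Since $i_H(\infty)=(0,1,0)$ and $i_H(gz)$ has the form $(1,\ast,\ast)$, the definition of $\Psi_2$ immediately yields $\Psi_2(i_H(gz),i_H(\infty))=-\beta_1(1,1)=-1$, hence $|\Psi_2(i_H(gz),i_H(\infty))|=1$. Combining with $\Psi_2$-invariance,
\[
|j(g,z)|\,|j(g,g^{-1}\infty)|=|\lambda||\mu|=\bigl|\Psi_2(i_H(z),i_H(g^{-1}\infty))\bigr|.
\]
Because $g\in G^\Res\mminus G_\infty$ implies $g^{-1}\infty=n_1^{-1}(0)\in\bhg H\mminus\{\infty\}$, we have $\height^H(g^{-1}\infty)=0\le \height^H(z)$, so Proposition~\ref{cygan_form} gives $|\Psi_2(i_H(z),i_H(g^{-1}\infty))|=\varrho^H(z,g^{-1}\infty)^2$.

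Finally I would use the cocycle identity $j(g_1g_2,w)=j(g_1,g_2w)\,j(g_2,w)$ (which is immediate from the definition via the commutative diagram on $E/Z^1(C)$) with $g_1=g$, $g_2=g^{-1}$, $w=\infty$. Since $j(\id,\infty)=1$, this yields $|j(g,g^{-1}\infty)|=|j(g^{-1},\infty)|^{-1}$. Substituting and taking square roots gives the first equality of the proposition, and the second follows directly from Lemma~\ref{cocycle_R}, which states $R(g)=|j(g^{-1},\infty)|^{-1/2}$. The only delicate points are verifying that the absolute-value computations are well-defined on the quotient $E/Z^1(C)$ (they are, since $|a|=1$ for $a\in Z^1(C)$) and correctly identifying $\min(\height^H(z),\height^H(g^{-1}\infty))=0$; no serious obstacle arises beyond keeping track of these normalizations.
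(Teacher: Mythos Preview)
Your proof is correct and follows essentially the same route as the paper: both arguments use $\Psi_2$-invariance of a lift, the computation $\Psi_2(i_H(gz),i_H(\infty))=-1$, and Proposition~\ref{cygan_form} with $\height^H(g^{-1}\infty)=0$. The only cosmetic difference is that the paper extracts $|j(g^{-1},\infty)|^{-1}$ directly by writing $i_H^{(1)}(g^{-1}\infty)=j(g^{-1},\infty)^{-1}\varphi_H(g^{-1})i_H^{(1)}(\infty)$ inside $\Psi_2^{(1)}$, whereas you first obtain $|j(g,g^{-1}\infty)|$ and then invoke the cocycle identity $j(\id,\infty)=j(g,g^{-1}\infty)\,j(g^{-1},\infty)$; these are equivalent manipulations.
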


\begin{proof}
The second equality is proved by Lemma~\ref{cocycle_R}. For the proof of the
first equality recall from Proposition~\ref{cygan_form} that 
\[
\varrho^H(z,g^{-1}\infty) = \left| \Psi_2\big(i_H(z),i_H(g^{-1}\infty)\big) + 2\min\big(\height^H(z),\height^H(g^{-1}\infty)\big)\right|^{1/2}.
\]
From $g^{-1}\infty \in\bhg H\mminus\{\infty\}$ it follows that
$\height(g^{-1}\infty) = 0$. Hence
\[ \varrho(z,g^{-1}\infty)^2 = \left| \Psi_2\big(i_H(z), i_H(g^{-1}\infty)\big)\right|.\]
Suppose that $gz=(\zeta, v)$. Then 
\begin{align*}
 \Psi_2\big(i_H(gz),i_H(\infty)\big) & = \Psi_2\big((1,gz), (0,1,0)\big)
 \\ & = - \beta_1(1,1) - \beta_1(\zeta,0) + \beta_2(v,0) = -1.
\end{align*}
It follows that
\begin{align*}
\varrho^H(z,g^{-1}\infty)^2 & = 
\left| \Psi_2\big(i_H(z), i_H(g^{-1}\infty)\big)\right|  = \left|\Psi_2^{(1)}\big(i_H^{(1)}(z),i_H^{(1)}(g^{-1}\infty)\big)\right|
\\
& = \left| \Psi_2^{(1)}\big(i_H^{(1)}(z), j(g^{-1},\infty)\varphi_H(g^{-1})i_H^{(1)}(\infty)\big)\right|
\\
& = |j(g^{-1},\infty)|^{-1} \left| \Psi_2^{(1)}\big(i_H^{(1)}(z), \varphi_H(g^{-1})i_H^{(1)}(\infty)\big)\right|
\\ 
& = |j(g^{-1},\infty)|^{-1} \left|\Psi_2^{(1)}\big(\varphi_H(g)i_H^{(1)}(z),i_H^{(1)}(\infty)\big) \right|
\\ 
& = |j(g^{-1},\infty)|^{-1} |j(g,z)| \left|\Psi_2\big(i_H(gz),i_H(\infty)\big) \right|
\\ 
& = |j(g^{-1},\infty)|^{-1} |j(g,z)|.
\end{align*}
This proves the claim. 
\bewend \end{proof}

\begin{proposition}\label{sphereco}
Let $g\in G^{\text{res}}\mminus G_\infty$. Then 
\begin{align*} 
I(g) & = \{ z\in H \mid |j(g,z)|=1 \},
\\  
\Ext I(g) & = \{ z\in H \mid |j(g,z)|>1 \},
\\
\Int I(g) & = \{ z\in H \mid |j(g,z)|<1 \}.
\end{align*}
\end{proposition}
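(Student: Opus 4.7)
The proof will be essentially a direct combination of Proposition~\ref{cygan_cocycle} with Definition~\ref{def_isospheres}; no substantial new calculation is needed. The plan is to unpack the characterisations one at a time.

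First I would observe that for any $z\in H$, the two excluded points in Proposition~\ref{cygan_cocycle} are not an issue: clearly $z\neq\infty$, and from the proof of Lemma~\ref{cocycle_R} we have $g^{-1}\infty = n_1^{-1}\cdot 0 \in \bhg H\setminus\{\infty\}$, so $g^{-1}\infty$ is on the boundary and hence $z\neq g^{-1}\infty$. Therefore Proposition~\ref{cygan_cocycle} applies to every $z\in H$, giving
\[
|j(g,z)|^{1/2} = \frac{\varrho^H(z,g^{-1}\infty)}{R(g)}.
\]

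Since $R(g) > 0$, this identity shows that for $z\in H$ we have $|j(g,z)| = 1$ if and only if $\varrho^H(z,g^{-1}\infty) = R(g)$, and analogously with $>$ or $<$ in place of $=$. Invoking Definition~\ref{def_isospheres}, the three equivalences translate respectively into membership in $I(g)$, $\Ext I(g)$, and $\Int I(g)$. This establishes all three equalities.

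There is no serious obstacle here; the real work was done in Proposition~\ref{cygan_cocycle} (and in the intermediate Proposition~\ref{cygan_form} which identifies the Cygan metric with the modulus of $\Psi_2$ evaluated on canonical lifts). The only mild subtlety to mention in the write-up is the verification that $g^{-1}\infty\notin H$, which justifies applying Proposition~\ref{cygan_cocycle} to every $z\in H$ without further case distinction.
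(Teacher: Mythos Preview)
Your proposal is correct and follows exactly the paper's approach: the paper's proof is the single sentence ``This claim follows immediately from comparing Definition~\ref{def_isospheres} and Proposition~\ref{cygan_cocycle}.'' Your write-up is slightly more careful in that you explicitly verify the hypothesis $z\notin\{\infty,g^{-1}\infty\}$ of Proposition~\ref{cygan_cocycle} for $z\in H$, which the paper leaves implicit.
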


\begin{proof}
This claim follows immediately from comparing Definition~\ref{def_isospheres} and 
Proposition~\ref{cygan_cocycle}. 
\bewend \end{proof}

\subsection{A special instance of Theorem~\ref{fund_region2}}\label{sec_special}

The most frequent appearance of isometric fundamental regions in the literature
is for properly discontinuous subgroups $\Gamma$ of $G^\Res$ for which $\infty$
is an ordinary point and the stabilizer $\Gamma_\infty$ is trivial. We show that
under these conditions $\Gamma$ is of type~(O) and $\Gamma\mminus\Gamma_\infty$
of type~(F). Theorem~\ref{fund_region2} implies the existence of an isometric
fundamental region for $\Gamma$, which will be seen to actually be a
fundamental
domain.

\begin{definition}
Let $U$ be a subset of $\hg$ and $\Gamma$ a subgroup of $G$. Then $\Gamma$ is 
said to \textit{act properly discontinuously} on $U$ if for each compact subset
$K$ of $U$, the set $K\cap gK$ is nonempty for only finitely many $g$ in
$\Gamma$. The group $\Gamma$ is said to be \textit{properly discontinous} 
if $\Gamma$ acts properly discontinuously on $H$.
\end{definition}

\begin{defirem}
Let $\Gamma$ be a properly discontinuous subgroup of $G$ and $z\in H$. The
\textit{limit set} $L(\Gamma)$ of $\Gamma$ is the set of accumulation points of
the orbit $\Gamma z$. Since $\Gamma$ is properly discontinuous, $L(\Gamma)$ is a
subset of $\bhg H$. The combination of Propositions~2.9 and 1.4 in
\cite{Eberlein_ONeill} shows that $L(\Gamma)$ is independent of the choice of
$z$. The \textit{ordinary set} or \textit{discontinuity set} $\Omega(\Gamma)$ of
$\Gamma$ is the complement of $L(\Gamma)$ in $\bhg H$, hence $\Omega(\Gamma)
\sceq \bhg H\mminus L(\Gamma)$.
\end{defirem}

\begin{remark}
Let $g,h\in G^\Res$ and $z\in \hg$. Since $\varphi_H$ is a group isomorphism and $\varphi_H(g)$ is $C$-linear, it follows that
\begin{align*}
\varphi_H(gh)\big( i_H(z) \big) & = \varphi_H(g)\left(\varphi_H(h)\big( i_H(z) \big) \right) 
\\
& = \varphi_H(g)\big( j(h,z) i_H(hz)\big) 
\\
& = j(h,z)\varphi_H(g)\big( i_H(hz) \big) 
\\
& = j(h,z) j(g,hz) i_H(hgz).
\end{align*}
Further,
\[
\varphi_H(gh)\big( i_H(z)\big) = j(gh,z) i_H(ghz).
\]
Thus,
\[
 j(gh,z) = j(h,z)j(g,hz).
\]
\end{remark}

The proof of the following lemma proceeds along the lines of \cite[Section~17]{Ford}.

\begin{lemma}\label{isomradius}
Let $\Gamma$ be a properly discontinuous subgroup of $G^\Res$ such that
$\Gamma_\infty=\{\id\}$ and $\infty\in \Omega(\Gamma)$. Then
\begin{enumerate}[{\rm (i)}]
\item \label{isomradiusi} the set of radii of the isometric spheres of
$\Gamma$ is  bounded from above.
\item \label{isomradiusii} the number of isometric spheres with radius exceeding
a given positive quantity is finite.
\end{enumerate}
\end{lemma}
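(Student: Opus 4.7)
The plan is to derive both conclusions from a single geometric observation together with the existence of a ``precisely invariant'' neighborhood of $\infty$. Because $\Gamma$ acts properly discontinuously on $H$ and the proper discontinuity extends to $H\cup\Omega(\Gamma)$, the hypotheses $\infty\in\Omega(\Gamma)$ and $\Gamma_\infty=\{\id\}$ yield, after a standard shrinking argument (a Shimizu-type lemma), a constant $c>0$ and a compact neighborhood $K$ of $\infty$ in $H\cup\Omega(\Gamma)$ such that both the horoball $B_c:=\{z\in H\mid \height^H(z)>c\}$ and $K$ satisfy $\gamma B_c\cap B_c=\emptyset$ and $\gamma K\cap K=\emptyset$ for every $\gamma\in\Gamma\mminus\{\id\}$. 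Both parts then reduce to identifying the apex of an isometric sphere.

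For (i), fix $g\in\Gamma\mminus\{\id\}$ and write the center of $I(g)$ in H-coordinates as $g^{-1}\infty=(0,Y_g,v_g)_h$. Set
\[
p_g\sceq(R(g)^2,Y_g,v_g)_h.
\]
The Cygan formula \eqref{cyganH} gives $\varrho^H(p_g,g^{-1}\infty)=R(g)$, so $p_g\in I(g)$; the same formula forces any $z=(k,Y,v)_h\in I(g)$ to satisfy $\tfrac12|v-v_g|^2+k\le R(g)^2$, so $\height^H(p_g)=R(g)^2$ is the maximum of $\height^H$ on $I(g)$. Applying Lemma~\ref{ht_mut} at $z=p_g$, where $\varrho^H(p_g,g^{-1}\infty)/R(g)=1$, gives $\height^H(gp_g)=\height^H(p_g)=R(g)^2$. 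Hence if $R(g)^2>c$ both $p_g$ and $gp_g$ lie in $B_c$ and $gp_g\in gB_c\cap B_c$, contradicting precise invariance. Therefore $R(g)\le\sqrt{c}$ for every $g\in\Gamma\mminus\{\id\}$.

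For (ii), given $r>0$ suppose for contradiction that pairwise distinct $g_n\in\Gamma\mminus\{\id\}$ satisfy $R(g_n)>r$; by (i) the radii lie in $(r,\sqrt{c}]$. Using compactness of $\bhg H$ in $\hg$, pass to a subsequence along which $g_n^{-1}\infty\to z_0$, $g_n\infty\to z_1$ in $\bhg H$, and $R(g_n)\to r_\infty\ge r$. The case $z_0=\infty$ is ruled out by $K$: for large $n$ we would have $g_n^{-1}\infty\in K$, whence $\infty=g_n(g_n^{-1}\infty)\in g_nK\cap K$, contradicting $g_n\ne\id$; identically $z_1\ne\infty$. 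Continuity of horospherical coordinates on $\bhg H\mminus\{\infty\}$ then yields $p_n:=p_{g_n}\to p\in H$ at height $r_\infty^2>0$, and, since $gp_g$ is the apex of $I(g^{-1})$ (its horospherical projection equals $g\infty$), also $g_np_n\to q\in H$. Choosing compact neighborhoods $K_p\ni p$ and $K_q\ni q$ in $H$, we get $p_n\in K_p$ and $g_np_n\in K_q$ for all large $n$, so $g_n(K_p\cup K_q)\cap(K_p\cup K_q)\ne\emptyset$ for infinitely many $n$, contradicting proper discontinuity of $\Gamma$ on $H$.

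The main obstacle I expect is securing the precisely invariant neighborhoods of $\infty$: once they are in hand, everything else is a direct computation using Lemma~\ref{Cyganformula}, \eqref{cyganH}, and Lemma~\ref{ht_mut}. The input will need the standard extension of proper discontinuity from $H$ to $H\cup\Omega(\Gamma)$ together with a shrinking argument exploiting $\Gamma_\infty=\{\id\}$; this is likely either invoked from the literature or isolated as an auxiliary lemma.
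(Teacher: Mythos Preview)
Your proof is correct but follows a genuinely different route from the paper's. The paper proceeds \`a la Ford: from the cocycle identity it derives the relations
\[
R(gh)=\frac{R(g)R(h^{-1})}{\varrho^H(g^{-1}\infty,h\infty)}
\quad\text{and}\quad
\varrho^H\big((gh)^{-1}\infty,h^{-1}\infty\big)=\frac{R(h)^2}{\varrho^H(g^{-1}\infty,h\infty)},
\]
then invokes \cite[Proposition~8.5]{Eberlein_ONeill} to produce an open neighborhood $U$ of $\infty$ in $\hg$ with the $\Gamma$-orbit of $\infty$ contained in the compact set $\hg\mminus U$, hence of $\varrho^H$-diameter at most some $m$. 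The second identity then gives $R(h)^2<m^2$, proving \eqref{isomradiusi}; for \eqref{isomradiusii}, the first identity forces the centers of spheres with $R>k$ to be $\varrho^H$-separated by at least $k^2/m$, and a packing argument in $\hg\mminus U$ finishes. Your argument is instead purely geometric: you locate the apex $p_g$ of $I(g)$ at height $R(g)^2$, use Lemma~\ref{ht_mut} to see that $gp_g=p_{g^{-1}}$ sits at the same height, and then bound $R(g)$ by trapping both points in a precisely invariant horoball; for \eqref{isomradiusii} you run a convergence argument against proper discontinuity. The paper's approach has the advantage of producing explicit radius--distance formulas (which fit naturally with the cocycle machinery of Section~\ref{sec_cocycle}) and needs only that the orbit of $\infty$ avoids a neighborhood of $\infty$, not full precise invariance. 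Your approach is more conceptual and avoids any algebra with cocycles, but---as you correctly identify---its real content is pushed into securing the precisely invariant horoball and compact neighborhood $K$ of $\infty$; this does follow from the extension of proper discontinuity to $H\cup\Omega(\Gamma)$ (the same \cite{Eberlein_ONeill} input the paper uses), so the gap you flag is indeed fillable from the literature.
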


\begin{proof}
We start by proving some relations between radii and distances of centers of isometric spheres. Let $g,h\in\Gamma\mminus\Gamma_\infty$ such that $g\not= h^{-1}$. The cocycle relation shows that 
\[
 j\big((gh)^{-1},\infty\big) = j(g^{-1},\infty)j(h^{-1},g^{-1}\infty).
\]
Proposition~\ref{cygan_cocycle} yields
\begin{align*}
\big|j\big((gh)^{-1},\infty\big)\big|^{-1/2} & = |j(g^{-1},\infty)|^{-1/2} |j(h^{-1},g^{-1}\infty)|^{-1/2}
\\
& = |j(g^{-1},\infty)|^{-1/2} |j(h,\infty)|^{-1/2} \varrho^H(g^{-1}\infty, h\infty)^{-1}.
\end{align*}
Note that $gh\not=\id$ and therefore $gh\notin\Gamma_\infty$. Then Lemma~\ref{cocycle_R} shows that 
\begin{equation}\label{radius1}
R(gh) = \frac{R(g)R(h^{-1})}{\varrho^H(g^{-1}\infty,h\infty)}.
\end{equation}
Using the same arguments, we find
\[
j(g^{-1},\infty) = j\big(h(gh)^{-1},\infty\big) = j\big( (gh)^{-1},\infty\big) j\big(h,(gh)^{-1}\infty\big)
\]
and therefore
\begin{align}
R(g) & \nonumber = |j(g^{-1},\infty)|^{-1/2} = \big|j\big((gh)^{-1},\infty\big)\big|^{-1/2} \big|j\big(h,(gh)^{-1}\infty\big)\big|^{-1/2} 
\\ \label{radius2}
 &= R(gh) \big|j\big(h,(gh)^{-1}\infty\big)\big|^{-1/2}.
\end{align}
Proposition~\ref{cygan_cocycle} shows that identity
\begin{equation}\label{radius3}
\big|j\big(h,(gh)^{-1}\infty\big)\big|^{1/2} = R(h)^{-1} \varrho^H\big((gh)^{-1}\infty, h^{-1}\infty\big).
\end{equation}
Because $R(h) = R(h^{-1})$, it follows from \eqref{radius1}-\eqref{radius3} that 
\begin{align}
\nonumber \varrho^H\big((gh)^{-1}\infty, h^{-1}\infty\big) & = \big|j\big(h,(gh)^{-1}\infty\big)\big|^{1/2} R(h) = \frac{R(gh)R(h)}{R(g)} 
\\ \label{radius4}
& = \frac{R(h)^2}{\varrho^H(g^{-1}\infty,h\infty)}.
\end{align}
Since $\Gamma$ is properly discontinuous and $\infty\in \Omega(\Gamma)$,
\cite[Proposition~8.5]{Eberlein_ONeill} shows
that there exists an open neighborhood $U$ of $\infty$ in $\hg$ such that
$(\Gamma\mminus\Gamma_\infty)\subseteq \hg\mminus U$. Since $\hg\mminus U$ is
compact, we find $m>0$ such that 
\[
\varrho^H(a\infty, b\infty) < m
\]
for all $a,b\in\Gamma\mminus\Gamma_\infty$. Then \eqref{radius4} shows that 
\begin{equation}\label{radius5}
R(h)^2 = \varrho^H\big( (gh)^{-1}\infty, h^{-1}\infty\big) \varrho^H(g^{-1}\infty,h\infty) < m^2.
\end{equation}
Thus, for each $a\in\Gamma\mminus\Gamma_\infty$ we have $R(a)<m$, which proves \eqref{isomradiusi}.

Now let $k>0$ and suppose that there are $a,b\in\Gamma\mminus\Gamma_\infty$ such
that $I(a)\not= I(b^{-1})$ and $R(a),R(b)>k$. Since $a\not= b^{-1}$,
\eqref{radius1} shows in combination with \eqref{radius5} that 
\[
 \varrho^H(a^{-1}\infty, b\infty) = \frac{R(a)R(b)}{R(ab)} > \frac{k^2}{m}.
\]
This means that the distance between the centers of isometric spheres whose
radii exceed $k$ is bounded from below by $k^2/m$. The centers of all these
isometric spheres are contained in the compact  set $\bhg H\mminus U$, which is
bounded in $C\times V$ and hence also \wrt $\varrho^H$. It follows that there
are only finitely many spheres with radius exceeding $k$. This proves
\eqref{isomradiusii}. 
\bewend \end{proof}

\begin{proposition}\label{fdspecialcase}
Suppose that  $\Gamma$ is a properly discontinuous subgroup of $G^\Res$ such
that $\Gamma_\infty=\{\id\}$ and $\infty\in \Omega(\Gamma)$. Then $\Gamma$ is of
type (O) and $\Gamma\mminus\Gamma_\infty$ of type (F). Moreover,
\[
\fd \sceq \bigcap_{g\in\Gamma\mminus\Gamma_\infty} \Ext I(g)
\]
is a fundamental domain for $\Gamma$ in $H$.
\end{proposition}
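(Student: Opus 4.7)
The plan is to verify separately that $\Gamma$ is of type~(O), that $\Gamma\mminus\Gamma_\infty$ is of type~(F), apply Corollary~\ref{region_basic} to get a fundamental region, and then upgrade this fundamental region to a fundamental domain by producing a direct path-connectedness argument. Both verifications will be powered by Lemma~\ref{isomradius} together with proper discontinuity and the condition $\infty\in\Omega(\Gamma)$, which by \cite[Proposition~8.5]{Eberlein_ONeill} provides a neighborhood $U$ of $\infty$ in $\hg$ meeting any orbit $\Gamma z$ in only finitely many points; in particular, $U$ may be chosen to contain a horoball $\{\height>K\}\cup\{\infty\}$.

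\emph{Type~(F).} Given $z\in H$, the finiteness $|\Gamma z\cap U|<\infty$ yields a bound $\height(gz)\leq K'$ for all $g\in\Gamma$; let $M$ be the supremum. If $M$ were not attained, I would pass to a sequence $g_nz$ with $\height(g_nz)\in(M/2,M]$ and argue that by proper discontinuity this sequence is discrete in $H$, while any accumulation point in $\hg\mminus\{\infty\}$ would have height $\geq M/2>0$ and therefore lie in $H$ (since points of $\bhg H\mminus\{\infty\}$ have height $0$), contradicting discreteness of the orbit. Accumulation at $\infty$ is excluded by $\infty\in\Omega(\Gamma)$. Hence the maximum exists.

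\emph{Type~(O).} By Remark~\ref{specialtypeO} it suffices to show the family $\{\Int I(g)\mid g\in\Gamma\mminus\Gamma_\infty\}$ is locally finite. For a compact $K\subseteq H$, suppose infinitely many distinct $g_n$ satisfy $K\cap \Int I(g_n)\neq\emptyset$ and pick $z_n\in K\cap\Int I(g_n)$. Lemma~\ref{isomradius}(ii) forces $R(g_n)\to 0$, hence $\varrho^H(z_n,g_n^{-1}\infty)<R(g_n)\to 0$. After extraction $z_n\to z_0\in K\subseteq H$; since the Cygan metric on $\hg\mminus\{\infty\}$ induces the subspace topology (visible from the explicit formula in Lemma~\ref{Cyganformula}), we get $g_n^{-1}\infty\to z_0$ in $\hg\mminus\{\infty\}$. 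But $g_n^{-1}\infty\in\bhg H$ and $\bhg H$ is closed, so $z_0\in\bhg H\cap H=\emptyset$, a contradiction. This confirms local finiteness and hence type~(O); Corollary~\ref{region_basic} then yields that $\fd$ is a fundamental region.

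\emph{From region to domain.} The remaining task is to show $\fd$ is connected. Lemma~\ref{isomradius}(i) gives a uniform bound $R(g)\leq m$. A direct inspection of the formula for $\varrho^H$ in \eqref{cyganH} — using that for $z=(\zeta,v)\in H$ and $g^{-1}\infty=(\zeta_g,v_g)\in\bhg H\mminus\{\infty\}$, the real part of the argument under the modulus equals $\tfrac12|v|^2+\tfrac12|v_g|^2+\height^H(z)$ — gives $\varrho^H(z,g^{-1}\infty)^2\geq \height^H(z)$, so any $z$ with $\height^H(z)>m^2$ lies in $\Ext I(g)$ for every $g\in\Gamma\mminus\Gamma_\infty$. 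Thus $\fd$ contains the (connected) horoball $B_\infty\sceq\{z\in H\mid \height^H(z)>m^2\}$. For arbitrary $z\in\fd$, the vertical ray $t\mapsto z+t$ ($t\geq 0$) has $\height^H$ tending to $\infty$, so it eventually enters $B_\infty$; and Lemma~\ref{propext}(v) guarantees that each point of this ray remains in $\Ext I(g)$ for every $g$, hence in $\fd$. Consequently $\fd$ is path-connected, and therefore a fundamental domain. The principal obstacle in this plan is the local-finiteness argument of Step~2, where care is needed to extract the correct convergence in $\hg\mminus\{\infty\}$ from Cygan convergence.
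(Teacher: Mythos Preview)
Your proof is correct and follows essentially the same strategy as the paper's: local finiteness of $\{\Int I(g)\}$ via Lemma~\ref{isomradius}\eqref{isomradiusii} and Remark~\ref{specialtypeO} for type~(O), closedness and boundedness of orbits (the paper cites \cite{Ratcliffe} and \cite{Eberlein_ONeill} directly rather than arguing discreteness by hand) for type~(F), then Corollary~\ref{region_basic}, and finally connectedness via a high horoball and vertical rays using Lemma~\ref{isomradius}\eqref{isomradiusi} and Lemma~\ref{propext}\eqref{propextv}. One minor slip that does not affect the argument: in the connectedness step the real part of the expression under the modulus in \eqref{cyganH} is $\tfrac12|v-v_g|^2+\height^H(z)$, not $\tfrac12|v|^2+\tfrac12|v_g|^2+\height^H(z)$, since $-\beta_2(v,v_g)$ contributes $-\langle v,v_g\rangle$ to the real part; the inequality $\varrho^H(z,g^{-1}\infty)^2\geq\height^H(z)$ you need still holds.
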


\begin{proof}
For each $z\in H$, the map $\varrho^H(\cdot, z)\colon H\to \R$ is continuous.
Therefore each $\varrho^H$-ball is open in $H$.
Lemma~\ref{isomradius}\eqref{isomradiusii} implies that the set $\{ \Int I(g)
\mid g\in\Gamma\mminus\Gamma_\infty\}$ is locally finite. Then
Remark~\ref{specialtypeO} shows that $\Gamma$ is of type (O). Note that here the
subgroup $\langle\Gamma\mminus\Gamma_\infty\rangle$ of $\Gamma$ which is
generated by $\Gamma\mminus\Gamma_\infty$ is exactly $\Gamma$. Let $z\in H$.
Lemma~5 before Theorem~5.3.4 in \cite{Ratcliffe} states that $\Gamma z$ is a closed
subset of $H$. Since $\infty\in\Omega(\Gamma)$, Proposition~8.5 in 
\cite{Eberlein_ONeill} shows that we find an open neighborhood $U$ of
$\infty$ in $\hg$ such that $\Gamma z \subseteq \hg\mminus U$. Now $\hg\mminus
U$ is compact and therefore $\Gamma z$ is so. The height function is continuous,
which shows that the maximum of $\{ \height(gz) \mid g\in \Gamma\}$ exists.
Thus, $\Gamma\mminus\Gamma_\infty$ is of type (F). By
Theorem~\ref{fund_region2}, $\fd$ is a fundamental region for $\Gamma$ in $H$.
By Lemma~\ref{isomradius}\eqref{isomradiusi} the radii of the isometric spheres
of $\Gamma$ are uniformly bounded from above. Hence there is $R>0$ such that the
arc-connected set $\{z\in  H\mid \height^H(z) = R\}$ is contained in $\fd$. Let
$w_1,w_2\in\fd$. Then $w_1+[0,R-\height^H(w_1)]$ and $w_2+[0,R-\height^H(w_2)]$
are contained in $\fd$ by Lemma~\ref{propext}\eqref{propextv}. Hence, there is
an arc in $\fd$ from $w_1$ to $w_2$. This shows that $\fd$ is arc-connected,
and thus connected. 
\bewend \end{proof}

\subsection{Isometric spheres and isometric fundamental regions in the
literature}\label{sec_literature}

For real hyperbolic spaces, definitions of isometric spheres are given at
several places, \eg the original definition of Ford in \cite{Ford} for the
plane, in \cite{Katok_fuchsian} for the upper half plane model and the disk
(ball) model of two-dimensional real hyperbolic space, in \cite{Marden} for
three-dimensional space and in \cite{Apanasov2} (or,
earlier, in \cite{Apanasov}) for arbitrary dimensions. The definition for the
upper half plane model is not equivalent to that in the disk model (see
\cite{Marden}). Ford's definition is that for the upper half plane model. His
definition  has been directly generalized to higher dimensions. Ford
and Apanasov show the existence of isometric fundamental regions for a huge
class of groups.

For complex hyperbolic spaces, the (to the knowledge of the author) only
existing definitions of isometric spheres are given by Parker in \cite{Parker},
Goldman in
\cite{Goldman}, and Kamiya in \cite{Kamiya}. Kamiya also discusses the existence
of isometric fundamental regions for certain groups.

For quaternionic hyperbolic spaces, an investigation of isometric fundamental
regions does not seem to exist. A definition of isometric spheres is provided by
\cite{Kim_Parker}.

In this final section we discuss the relation of the existing definitions
of isometric spheres (for real hyperbolic space only exemplarily) to our
definition of isometric spheres, and we compare the
existing statements on the existence of isometric fundamental regions  to
Theorem~\ref{fund_region2}.

\subsection{Real hyperbolic spaces}\label{sec_real}

Ford \cite{Ford} shows the existence of isometric fundamental regions for
groups of isometries acting on real hyperbolic plane (see \cite[Theorems~15 and 22
in Ch.~III]{Ford}). Ford's definition of fundamental region is not equivalent to
our definition. In fact, each fundamental region in sense of our definition is a
fundamental region in the sense of Ford, but not the other way round. In
particular, the translates of a fundamental region in sense of Ford are not
required to cover the whole space. For this reason the hypothesis of
\cite[Theorems~15, 22]{Ford} are weaker than that of Theorem~\ref{fund_region2}.
However, the following discussion shows that the definition of isometric spheres
in \cite{Ford} is subsumed by our definition.

In \cite{Apanasov2}, the model 
\[
D'=\{ (t,Z)\in \R\times \mf z \mid t > 0\}
\]
of real hyperbolic space is used, and the isometric sphere for an element $g\in
G^\Res$, acting on $D'$, is defined as
\[
I(g) \sceq \{ z\in D' \mid |g'(z)| = 1\}.
\]
More precisely, Apanasov (as all other references) uses the coordinates in the
order $\mf z\times\R$. In particular, his model space for two-dimensional real
hyperbolic space is the upper half plane, whereas $D'$ is the right half plane.
Clearly, this difference has no affect on his definition of isometric sphere.
Lemma~\ref{isoreal} below will show that this definition of isometric spheres is
subsumed by our definition. 

The space $D'$ is the symmetric space in Section~\ref{sec_D} constructed from
the abelian $H$-type algebra $\mf n = (\mf n, \{0\}) = (\mf z,\{0\})$. In
Sections~\ref{sec_proj}-\ref{sec_special} we had to work with the ordered
decomposition $(\{0\},\mf v) = (\{0\},\mf n)$ of $\mf n$. Hence we used the
symmetric space
\[
 D = \big\{ (u,X) \in\R\times \mf v \ \big\vert\  u > \tfrac14|X|^2 \big\},
\]
which is isometric to $D'$. According to \cite{CDKR2}, the isometry from $D'$ to
$D$ is 
\[
\nu\colon\left\{
\begin{array}{ccl}
D' & \to & D
\\
(t,Z) & \mapsto & (t^2 + |Z|^2, 2Z).
\end{array}
\right.
\]
Then the action of an isometry $g\in G^\Res$ on $D'$ is given by $\nu_G(g)
\sceq \nu^{-1}\circ g \circ \nu$. Recall the isometry $\beta\colon H\to D$ from
Section~\ref{sec_projmodels}.

\begin{lemma}\label{isoreal}
Let $z\in D'$ and $g\in G^\Res\mminus G^\Res_\infty$. Then 
\[
\big|j\big(g,\beta^{-1}\circ\nu(z)\big)\big|^{-1} = \big|\nu_G(g)'(z)\big|.
\]
\end{lemma}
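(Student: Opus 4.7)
The plan is to reduce both sides of the identity to the same height ratio. Set $w \sceq \beta^{-1}\circ\nu(z)$; I will verify that $|j(g,w)|^{-1}$ and $|\nu_G(g)'(z)|$ both equal $t(\nu_G(g)z)/t(z)$, where $t$ denotes the first coordinate on $D'$. A direct computation from the definitions of $\nu$ and $\beta$ shows that for $z=(t,Z)\in D'$ one has $w = (t^2+|Z|^2,\sqrt{2}\,Z) \in H$, and hence
\[
\height^H(w) \;=\; (t^2+|Z|^2) - \tfrac12 |\sqrt{2}\,Z|^2 \;=\; t(z)^2.
\]
Because $\beta^{-1}\circ\nu$ is an isometry intertwining the $G$-actions (and since $\beta$ preserves the height function, as $\height(\beta(\zeta,v))= \Rea\zeta - \tfrac14|\sqrt{2}\,v|^2 = \height^H(\zeta,v)$), the same formula applied at $\nu_G(g)z$ yields $\height^H(gw) = t(\nu_G(g)z)^2$.

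Now combine Lemma~\ref{ht_mut} (transferred from $D$ to $H$ via $\beta$) with Proposition~\ref{cygan_cocycle} to obtain
\[
|j(g,w)|^2 \;=\; \left(\frac{\varrho^H(w,g^{-1}\infty)}{R(g)}\right)^{4} \;=\; \frac{\height^H(w)}{\height^H(gw)} \;=\; \frac{t(z)^2}{t(\nu_G(g)z)^2},
\]
so $|j(g,w)|^{-1} = t(\nu_G(g)z)/t(z)$. It remains to show that $|\nu_G(g)'(z)|$ equals the same ratio. The pullback of the Riemannian metric on $D$ to $D'$ via $\nu$ is the standard hyperbolic half-space metric $t^{-2}(dt^2 + |dZ|^2)$ on $\R^+\times\mf n$ (this is the defining feature of the Apanasov model, and can be verified on the Bruhat generators of $G^\Res$ by pulling back their explicit formulas from Section~\ref{sec_isomgroup}). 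Since $\nu_G(g)$ is a Riemannian isometry of $D'$ and real-hyperbolic isometries act as M\"obius transformations on the ambient Euclidean space $\R\oplus\mf n$, $\nu_G(g)$ is conformal with respect to the Euclidean structure; the transformation law for the conformal factor of a hyperbolic isometry then reads $|\nu_G(g)'(z)|/t(\nu_G(g)z) = 1/t(z)$, giving $|\nu_G(g)'(z)| = t(\nu_G(g)z)/t(z)$ and completing the proof.

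The main technical obstacle is the last step: justifying within the paper's framework that $\nu_G(g)$ is conformal with respect to the Euclidean structure on $D' \subset \R \oplus \mf n$. This is classical for the Poincar\'e upper half-space model of real hyperbolic space, but a self-contained verification would proceed by checking conformality on the Bruhat generators $\sigma$, $A$, $N$, $M^\Res$ by transporting their explicit actions through $\nu$; this is routine because in the real-hyperbolic case $C = \R$ and $V = \mf n$ is a Euclidean space, so the action formulas from Section~\ref{sec_isomgroup} become elementary dilations, translations, rotations, and a conformal inversion.
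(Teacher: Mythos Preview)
Your argument is correct, but it is genuinely different from the paper's proof. The paper fixes the Bruhat decomposition $g=n_2\sigma m a_s n_1$ and computes each side explicitly: it evaluates $\varphi_H(g)\,i_H\big(\beta^{-1}\circ\nu(z)\big)$ factor by factor to obtain $|j(g,\beta^{-1}\circ\nu(z))| = s^{1/2}\,|(t,\tfrac12 w_1+Z)|^2$, then differentiates the formula for $\nu_G(g)$ directly to get $|\nu_G(g)'(z)| = s^{-1/2}\,|(t,\tfrac12 w_1+Z)|^{-2}$, and observes these are reciprocals. Your route instead passes through the height ratio: you use Lemma~\ref{ht_mut} together with Proposition~\ref{cygan_cocycle} to identify $|j(g,w)|^{-1}$ with $t(\nu_G(g)z)/t(z)$, and then the conformal flatness of the half-space metric to identify $|\nu_G(g)'(z)|$ with the same ratio. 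This is more conceptual and reuses machinery already built in the paper, at the price of having to know the Riemannian metric on $D'$.

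One remark on your caveat: you flag the Euclidean conformality of $\nu_G(g)$ as the main obstacle and propose checking it on Bruhat generators, but this is unnecessary. Since $D'$ is constructed from the ordered decomposition $(\mf z,\{0\})$ (so $\Theta=\id$ and $D'=S$), the left-invariant metric is exactly $t^{-2}(dt^2+|dZ|^2)$; once this is known, any Riemannian isometry $\phi$ automatically satisfies $\langle \phi'(z)v,\phi'(z)w\rangle = \big(t(\phi z)/t(z)\big)^2\langle v,w\rangle$, which is precisely Euclidean conformality with the required factor. So the real point to justify is the form of the metric on $D'$, not conformality, and that follows directly from the construction in Section~\ref{sec_D}.
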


\begin{proof}
Let $z=(t,Z)\in D'$ and $g=n_2\sigma m a_s n_1 \in G^\Res\mminus G^\Res_\infty$
with $n_j=(1,w_j)$, $j=1,2$, and $m=(\id,\psi)$. At first we calculate the value
of $\big| j\big( g,\beta^{-1}\circ\nu(z)\big)\big|$. We have
\begin{align*}
\varphi_H(g)i_H\big(\beta^{-1}\circ\nu(z)\big)  & = \varphi_H(g)\big(1, t^2+|Z|^2, \sqrt{2}Z\big)
\\
& = \varphi_H(n_2)\varphi_H(\sigma)\varphi_H(m)\varphi_H(a_s)\varphi_H(n_1)\big(1,t^2+|Z|^2, \sqrt{2}Z\big)
\\
& = \big( s^{1/2}\big( \tfrac14|w_1|^2 + t^2 + |Z|^2 + \beta_2(Z,w_1) \big), *, *\big).
\end{align*}
Thus
\[
\big|j\big(g,\beta^{-1}\circ\nu(z)\big)\big| = s^{1/2}\big| \tfrac14|w_1|^2 + t^2 + |Z|^2 + \beta_2(Z,w_1) \big|.
\]
Note that $Z$ and $\beta_2(Z,w_1)$ have to be seen as element of $D$ (not of $D'$). Hence $\beta_2(Z,w_1)\in \R$. Therefore $\beta_2(Z,w_1) = \langle Z,w_1\rangle$ and further
\[
\big|j\big(g,\beta^{-1}\circ\nu(z)\big)\big| = s^{1/2} \big| t^2 + \big|\tfrac12w_1 + Z\big|^2  \big| = s^{1/2} \big|\big(t, \tfrac12w_1+Z\big) \big|^2.
\]
Let $(u,W)\in \R\times \mf z$. For the derivative $\nu_G(g)'(z)$ we find
\begin{align*}
\nu_G&(g)'(z)(u,W)  =  -s^{-1/2}\big(t,\psi(\tfrac12w_1+Z)\big)^{-1}\big(u,\psi(W)\big)\big(t,\psi(\tfrac12w_1+Z)\big)^{-1}.
\end{align*}
Then 
\begin{align*}
\big|\nu_G(g)'(z)(u,W)\big|& = s^{-1/2}\big|\big(t,\psi\big(\tfrac12w_1+Z\big)\big)\big|^{-1}\cdot \big|\big(u,\psi(W)\big)\big| \cdot \big|\big(t,\psi\big(\tfrac12w_1+Z\big)\big) \big|^{-1}
\\
& = s^{-1/2}\big|\big(t,\tfrac12w_1+Z\big) \big|^{-2} \big|(u,W)\big|.
\end{align*}
Thus, 
\begin{align*}
\big|\nu_G(g)'(z)\big|  = s^{-1/2}\big|\big(t,\tfrac12w_1+Z\big) \big|^{-2} = \big|j\big(g,\beta^{-1}\circ\nu(z)\big)\big|^{-1}.
\end{align*}
This completes the proof. 
\bewend \end{proof}

Lemma~\ref{isoreal} and Proposition~\ref{sphereco} immediately imply the
following characterization of exteriors of isometric spheres.

\begin{proposition}
Let $g\in G^\Res\mminus G^\Res_\infty$. Then 
\[
 \Ext I(g) = \big\{ z\in D' \ \big\vert\ |\nu_G(g)'(z)|<1 \big\}.
\]
\end{proposition}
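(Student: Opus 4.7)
The statement is immediate once one recognizes that it simply bundles together Lemma~\ref{isoreal} and Proposition~\ref{sphereco}, transported across the isometries $\nu\colon D'\to D$ and $\beta\colon H\to D$. The plan, therefore, is not to prove anything essentially new, but to verify that these two facts indeed compose correctly.

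First I would fix $z\in D'$ and identify $z$ with its image $\beta^{-1}\circ\nu(z)\in H$ under the composed isometry $\beta^{-1}\circ\nu\colon D'\to H$. Since exteriors of isometric spheres are defined through the Cygan metric, which by Proposition~\ref{Cayleyisometry} (and the construction of the metric on $H$ in Section~\ref{sec_projmodels}) is preserved by these isometric identifications, we have
\[
z\in\Ext I(g) \quad\Longleftrightarrow\quad \beta^{-1}\circ\nu(z)\in\Ext I(g)
\]
where on the right $\Ext I(g)$ denotes the exterior computed in $H$. Proposition~\ref{sphereco} then translates the right-hand condition into $|j(g,\beta^{-1}\circ\nu(z))|>1$.

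Next I would apply Lemma~\ref{isoreal}, which states
\[
\big|j\big(g,\beta^{-1}\circ\nu(z)\big)\big|^{-1} = \big|\nu_G(g)'(z)\big|.
\]
Inverting the inequality $|j(g,\beta^{-1}\circ\nu(z))|>1$ yields $|\nu_G(g)'(z)|<1$, which is the desired characterization. Chaining the three equivalences completes the proof.

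There is no substantive obstacle here; the only minor point to be careful about is the convention under which $\Ext I(g)\subseteq D'$ is defined, namely via pullback through $\nu$ of the exterior $\Ext I(g)\subseteq D$ introduced in Definition~\ref{def_isospheres}. With this convention fixed, the argument is a one-line composition and needs no further computation.
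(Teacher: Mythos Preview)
Your proposal is correct and matches the paper's approach, which also derives the proposition immediately from Lemma~\ref{isoreal} and Proposition~\ref{sphereco}. The citation of Proposition~\ref{Cayleyisometry} is slightly off---the relevant point is rather that the Cygan metric on $H$ and the exterior on $D'$ are \emph{defined} via pullback through $\beta$ and $\nu$, as you correctly note in your final paragraph---but this does not affect the argument.
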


Hence \cite[Theorem~2.30]{Apanasov2} is  a special case of
Proposition~\ref{fdspecialcase}. Lemma~2.31 in \cite{Apanasov2} states an extension of
Theorem~2.30 for subgroups $\Gamma$ of $G^\Res$ with $\Gamma_\infty\not=\{\id\}$.
Unfortunately, the hypotheses of \cite[Lemma~2.31]{Apanasov2} are not completely
stated, for which reason we cannot compare this lemma with
Theorem~\ref{fund_region2}.

\subsection{Complex hyperbolic spaces}\label{Kamiya}

The isometric spheres for isometries of complex hyperbolic spaces in
\cite{Kamiya} are identical to those in \cite{Goldman}. Kamiya uses the model
$H$ and defines the Cygan metric by formula \eqref{cyganH}.  Recall the map
$\varphi_H$ from Section~\ref{sec_liftedisos}. Let $f\in
\PU(1,n;\C) = \PU(\Psi_2,\C)$ such that $\varphi_H^{-1}(f)$ does not fix
$\infty$. Further suppose
that $(a_{ij})_{i,j=1,\ldots,n+1}$ is a matrix representative of $f$.
Then Kamiya defines the isometric sphere of $f$ to be the set
\[
I(f) \sceq \big\{ z\in H \ \big\vert\ \varrho(z,\varphi_H^{-1}(f^{-1})\infty) =
R_f \big\}
\]
where $R_f \sceq |a_{12}|^{-1/2}$. One easily proves that this definition
does not depend on the choice of the matrix representative. The following lemma
shows that our definition of isometric spheres covers this one. 

\begin{lemma}\label{isocomplex}
Suppose that $f=(a_{ij})_{i,j=1,\ldots,n+1} \in \PU(1,n;\C)$ with
$\varphi_H^{-1}(f)(\infty) \not=
\infty$. Then $R_f= R\big(\varphi_H^{-1}(f)\big)$.
\end{lemma}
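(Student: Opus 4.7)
The plan is to reduce the claim to Lemma~\ref{cocycle_R}. Setting $g := \varphi_H^{-1}(f)$, so that $f$ represents $\varphi_H(g) \in \PU(\Psi_2,C)$, Lemma~\ref{cocycle_R} yields $R(g) = |j(g^{-1},\infty)|^{-1/2}$. Thus it suffices to prove $|j(g^{-1},\infty)| = |a_{12}|$.

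For the first part, I would extract $j(g^{-1},\infty)$ from the matrix $f^{-1}$ via the defining cocycle relation
\[
\varphi_H(g^{-1})\bigl(i_H^{(1)}(\infty)\bigr) = j(g^{-1},\infty)\, i_H^{(1)}(g^{-1}\infty)
\]
applied to $i_H(\infty) = (0,1,0,\ldots,0)^{T}$. Viewing elements of $E$ as column vectors on which $\PU(\Psi_2,C)$ acts by left matrix multiplication---the convention consistent with \eqref{def_varphiH} being a group homomorphism and with the explicit computation in the proof of Lemma~\ref{cocycle_R}---the left-hand side is the second column of $f^{-1} =: (b_{ij})$, namely $(b_{12},b_{22},\ldots,b_{n+1,2})^{T}$. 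Since $g\notin G_\infty$, we have $g^{-1}\infty\ne\infty$, so the standard representative $(1,\zeta_0,v_0)$ of $i_H^{(1)}(g^{-1}\infty)$ has first coordinate $1$; comparing first entries in the cocycle equation yields $j(g^{-1},\infty) \equiv b_{12} \pmod{Z^1(C)}$. In particular, $|j(g^{-1},\infty)| = |b_{12}|$.

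For the second part, I would use $f \in U(\Psi_2,C)$ to compute $b_{12} = (f^{-1})_{12}$ in terms of $a_{12}$. In the basis $\mc B(E)$ the Gram matrix of $\Psi_2$ is
\[
K = \begin{pmatrix} 0 & -1 & 0 \\ -1 & 0 & 0 \\ 0 & 0 & I_{n-1} \end{pmatrix},
\]
which satisfies $K^2 = I$. Unitarity translates to $f^{T}K\bar f = K$ (in the column convention), equivalently $f^{-1} = K f^{*} K$ with $f^{*} := \bar f^{\,T}$. Reading off the $(1,2)$-entry of $Kf^{*}K$ using the shape of $K$ yields $(f^{-1})_{12} = \overline{a_{12}}$, so $|(f^{-1})_{12}| = |a_{12}|$.

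Combining the two steps with Lemma~\ref{cocycle_R} gives $R(g) = |j(g^{-1},\infty)|^{-1/2} = |a_{12}|^{-1/2} = R_f$. The only delicate point is pinning down the right matrix-action convention (column vectors, $f\cdot z$) so that the order of factors in \eqref{def_varphiH} matches matrix multiplication; once that is in place, both steps are routine linear algebra essentially parallel to the verification in the proof of Lemma~\ref{cocycle_R}.
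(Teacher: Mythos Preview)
Your proof is correct and essentially parallels the paper's argument: both reduce to Lemma~\ref{cocycle_R} by reading off the cocycle value from the first coordinate of the image of $i_H(\infty)=(0,1,0)$ under the relevant matrix. The only difference is in how the passage from $g$ to $g^{-1}$ is handled. The paper applies $f$ itself to $(0,1,0)$ to obtain $|j(g,\infty)|=|a_{12}|$ directly, and then invokes the identity $|j(g^{-1},\infty)|=|j(g,\infty)|$ (an immediate consequence of Lemma~\ref{cocycle_R}, since both equal $t^{1/2}$ for the same Bruhat parameter $t$). You instead apply $f^{-1}$ to $(0,1,0)$ and use the unitarity relation $f^{-1}=Kf^{*}K$ to compute $(f^{-1})_{12}=\overline{a_{12}}$. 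Your route is slightly longer but makes the matrix algebra fully explicit and avoids appealing to the symmetry of the Bruhat decomposition; the paper's route is a one-liner once that symmetry is accepted. Your caution about the row/column convention is well placed---the paper fixes a row convention earlier but silently switches to columns in this proof---though as you note, either convention works here.
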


\begin{proof} Set $g\sceq \varphi_H^{-1}(f)$.
We have
\[
f\begin{pmatrix} 0 \\ 1 \\ 0 \end{pmatrix} = \begin{pmatrix} a_{12} \\ \vdots \\ a_{n+1,2} \end{pmatrix}.
\]
Therefore 
\[
|j(g,\infty)| = |a_{12}|.
\]
Since $|j(g^{-1},\infty)| = |j(g,\infty)|$,  Lemma~\ref{cocycle_R} shows that
$R(g) = |a_{12}|^{-1/2}$. 
\bewend \end{proof}

\cite[Theorem~3.1]{Kamiya} states the existence of isometric fundamental domains
for discrete subgroups $\Gamma$ of $G^\Res$ for which, after possible
conjugation of $\Gamma$, we have $\infty\in\Omega(\Gamma)$ and
$\Gamma_\infty=\{\id\}$.  By \cite[Theorem~5.3.5]{Ratcliffe}, $\Gamma$ is
discrete if and only if $\Gamma$ is properly discontinuous. Therefore, Kamiya's
Theorem is a special case of Proposition~\ref{fdspecialcase}.

In \cite{Parker}, Parker uses a section of the projection map from
$E\mminus\{0\}$ to horospherical coordinates which is reminiscent of the ball
model. Therefore, we expect that, as in the real case, this definition is not
equivalent to the definition from \cite{Kamiya}.

\subsection{Quaternionic hyperbolic spaces}

In \cite{Kim_Parker}, Kim and Parker propose a definition of isometric spheres
for isometries in $G^\Res\mminus G^\Res_\infty$ of quaternionic hyperbolic
space. They use the model $H$ and horospherical coordinates for the definition.
 
With the bijection
\[
\left\{
\begin{array}{ccl}
\mf z \times \mf v & \to & \mf z\times \mf v
\\[1mm]
(Z,X) & \mapsto & \big(Z,\tfrac12 X\big),
\end{array}
\right.
\]
our Heisenberg group $N$ and our Cygan metric is transfered into their one.
After shuffling coordinates, they characterize (see
\cite[Proposition~4.3]{Kim_Parker}) the isometric sphere $I(g)$ of an element
$g=(a_{ij})_{i,j=1,\ldots, n+1}$ in $\PSp(n,1;\h)$ with
$\varphi_H^{-1}(g)(\infty)\not=\infty$ as
\[
I(g) = \big\{ z\in H\ \big\vert\ \varrho(z, \varphi_H^{-1}(g^{-1})\infty) =
\sqrt{2}\cdot |a_{12}|^{1/2} \big\}.
\]
They use a slightly different indefinite form on $E$ for the definition of the
hyperbolic space. Despite this difference we can apply the calculation in
Section~\ref{Kamiya} we see that our definition of isometric sphere provides
$|a_{12}|^{1/2}$ as radius. The factor $\sqrt{2}$ in \cite{Kim_Parker} is due to
the factor $\tfrac12$ in their choice of the section of the projection from
$E_-(\Psi_2)$ to $H$.

%\bibliographystyle{spmpsci}      
%\bibliography{ifd_bib}   

\end{document}